\newcommand\bigcheck[1]{#1 \raise1ex\hbox{$\hspace{-1ex}{}^\vee$}}
\newcommand\sucheck[1]{#1 \raise0.5ex\hbox{$\hspace{-1ex}{}^\vee$}}
\newcommand{\alphaparenlist}{
  \renewcommand{\theenumi}{\alph{enumi}}%
  \renewcommand{\labelenumi}{(\theenumi)}%
}
\newcommand{\Der}{{\rm Der}}
\newcommand{\End}{\mathop{\rm End }}
\newcommand{\im}{\mathop{\rm Im  \, }}
\newcommand{\Ker}{\mathop{\rm Ker \, }}
\newcommand{\Hom}{\mathop{\rm Hom }}
\newcommand{\chom}{\mathop{\rm Chom }}
\newcommand{\cend}{\mathop{\rm Cend }}
\newcommand{\cder}{\mathop{\rm Cder }}
\newcommand{\Tor}{\mathop{\rm Tor }}
\renewcommand{\hat}{\widehat}
\newcommand{\Vir}{\mathop{\rm Vir }}
\newcommand{\incl}[1][r]{\ar@<-0.24pc>@{^(-}[#1] \ar@<+0.22pc>@{-}[#1]}
\newcommand{\equal}[1][r]{\ar@<-0.2pc>@{^-}[#1] \ar@<+0.2pc>@{-}[#1]}
\renewcommand\section{\@startsection {section}{1}{\z@}%
                                   {-3.5ex \@plus -1ex \@minus -.2ex}%
                                   {2.3ex \@plus.2ex}%
                                   {\normalfont\large\bfseries}}
\renewcommand\subsection{\@startsection{subsection}{2}{\z@}%
                                     {-3.25ex\@plus -1ex \@minus -.2ex}%
                                     {0ex \@plus .0ex}%
                                     {\normalfont\normalsize\bfseries}}
\newtheorem{theorem}{Theorem}[section]
\newtheorem{lemma}[theorem]{Lemma}
\newtheorem{corollary}[theorem]{Corollary}
\newtheorem{proposition}[theorem]{Proposition}
\newtheorem*{lemma*}{Lemma}
\theoremstyle{definition}
\newtheorem{definition}[theorem]{Definition}
\theoremstyle{remark}
\newtheorem{remark}[theorem]{Remark}
\newtheorem{example}[theorem]{Example}
\def\@maketitle{\newpage
 \null
 \vskip 2em
 \begin{center}%
  \vskip 3em
  {\Large\bf \@title \par}%
  \vskip 1.5em
  {\normalsize
   \lineskip .5em
   \begin{tabular}[t]{c}\@author
   \end{tabular}\par}%
  \vskip 2em

 \end{center}%
 \par
 \vskip 2.5em}
\newcommand{\mc}[1]{{\mathcal #1}}
\newcommand{\mf}[1]{{\mathfrak #1}}
\newcommand{\mb}[1]{{\mathbb #1}}
\newcommand\tint{{\textstyle\int}}
\newcommand{\id}{{1 \mskip -5mu {\rm I}}}
\renewcommand{\tilde}{\widetilde}
\definecolor{light}{gray}{.9}
\begin{document}


\begin{center}
{\Large {\bf
Lie conformal algebra cohomology
and the variational complex
}
}

\vspace{20pt}

{\large
\begin{tabular}[t]{c}
$\mbox{Alberto De Sole}^{1}\phantom{m} and
\phantom{mm}
\mbox{Victor G. Kac}^{2}$
\\
\end{tabular}
\par
}

\bigskip

{\small
\begin{tabular}[t]{ll}
{1} &  {\it Dipartimento di Matematica, Universit\'a di Roma ``La Sapienza"} \\
&  Citt\'{a} Universitaria, 00185 Roma, Italy \\
& E-mail: {\tt desole@mat.uniroma1.it} \\
{2} & Department of Mathematics, MIT \\
& 77 Massachusetts Avenue, Cambridge, MA 02139, USA \\
& E-mail: {\tt kac@math.mit.edu}
\end{tabular}
}
\end{center}

\vspace{4pt}

\begin{center}
\emph{Dedicated to Corrado De Concini on his 60-th birthday.}
\end{center}

\vspace{2pt}

\begin{abstract}
\noindent 
We find an interpretation of the complex of variational calculus 
in terms of the Lie conformal algebra cohomology theory.
This leads to a better understanding of both theories.
In particular, we give 
an explicit construction of the Lie conformal algebra cohomology complex,
and endow it with a structure of a $\mf g$-complex.
On the other hand,
we give an explicit construction of the complex of variational
calculus in terms of skew-symmetric poly-differential operators.
\end{abstract}

\section*{Introduction.}
\label{sec:intro}

Lie conformal algebras encode the properties of operator product
expansions in conformal field theory, and, at the same time, of
local Poisson brackets in the theory of integrable evolution
equations.

Recall \cite{K} that a \emph{Lie conformal algebra} over a field
$\mb F$ is an $\mb F
[\partial]$-module $A$, endowed with a $\lambda$-\emph{bracket},
that is an $\mb F$-linear map $A \otimes A \to \mb F [\lambda]
\otimes A$ denoted by $a \otimes b \mapsto [a_\lambda b]$,
satisfying the two \emph{sesquilinearity} properties
\begin{equation}
  \label{eq:0.1}
  [\partial a_\lambda b] = - \lambda [a_\lambda b]\, , \qquad
  [a_\lambda \partial b] = (\partial + \lambda) [a_\lambda b]\, ,
\end{equation}
such that the \emph{skew-symmetry}
\begin{equation}
  \label{eq:0.2}
  [a_\lambda b] = - [b_{-\partial -\lambda}a]
\end{equation}
and the \emph{Jacobi identity}
\begin{equation}
  \label{eq:0.3}
  [a_\lambda [b_\mu c]] - [b_\mu [a_\lambda c]] 
     = [[a_\lambda b]_{\lambda +\mu}c]
\end{equation}
hold for any $a,b,c \in A$.
It is assumed in (\ref{eq:0.2}) that $\partial$ is moved to the left.

A module over a Lie conformal algebra~$A$ is an $\mb F
[\partial]$-module~$M$, endowed with a $\lambda$-\emph{action},
that is an $\mb F$-linear map $A \otimes M \to \mb F [\lambda]\otimes
M$, denoted by $a \otimes b \to a_\lambda b$, such that
sesquilinearity (\ref{eq:0.1}) holds for $a \in A$, $b \in M$ and
Jacobi identity (\ref{eq:0.3}) holds for $a,b \in A$, $c \in M$.

A cohomology theory for Lie conformal algebras was developed in
\cite{BKV}.  Given a Lie conformal algebra~$A$ and an $A$-module~$M$,
one first defines the \emph{basic cohomology complex}
$\tilde{\Gamma}^\bullet (A,M) = \sum_{k \in \mb Z_+} \tilde{\Gamma}^k$,
where $\tilde{\Gamma}^k$ consists of $\mb F$-linear maps
$\tilde{\gamma}:A^{\otimes k} \to \mb F [\lambda_1,\ldots
,\lambda_{k}]\otimes M$, satisfying certain sesquilinearity and
skew-symmetry properties, and endows this complex with a
differential $\delta:\,\tilde\Gamma^k\to\tilde\Gamma^{k+1}$, 
such that $\delta^2 =0$.  This complex is isomorphic to the
Lie algebra cohomology complex for the annihilation Lie algebra $\mf g_-$
of~$A$ with coefficients in the $\mf g_-$-module~$M$ \cite[Theorem~6.1]{BKV}.

Next, one endows $\tilde{\Gamma}^\bullet (A,M)$ with a structure of a
$\mb F [\partial]$-module, such that $\partial$ commutes
with~$\delta$, which  allows one to define the reduced
cohomology complex 
$\Gamma^\bullet (A,M) = \tilde{\Gamma}^\bullet
(A,M) / \partial \tilde{\Gamma}^\bullet (A,M)$, 
and this is the Lie
conformal algebra cohomology complex, introduced in 
\cite{BKV}.

Our first contribution to this theory is 
a more explicit construction of the reduced cohomology
complex.  Namely, we introduce a new cohomology complex
$C^\bullet (A,M)=\oplus_{k \in \mb Z_+} C^k$, where $C^0 =
M/\partial M$, $C^1 = \Hom_{\mb F [\partial]} (A,M)$, and for $k
\geq 2$, $C^k$ consists of poly $\lambda$-brackets,
namely
of $\mb F$-linear maps $c: A^{\otimes k}
\to \mb F [\lambda_1,\cdots,\lambda_{k-1}]\otimes M$, satisfying
certain sesquilinearity and skew-symmetry conditions, and we endow
$C^\bullet (A,M)$ with a square zero differential~$d$.  We
construct embeddings of complexes:
\begin{equation}
\label{eq:0.3a} 
 \Gamma^\bullet (A,M) \subset \bar{C}^\bullet (A,M)\subset C^\bullet (A,M)\, ,
\end{equation}
where $\bar{C}^\bullet (A,M)$ consists of cocycles which vanish
if one of the arguments is a torsion element of~$A$.  In
fact, $\bar{C}^k = C^k$, unless $k=1$.

We show that $\Gamma^\bullet (A,M) = \bar{C}^\bullet (A,M)$,
provided that, as an $\mb F [\partial]$-module, $A$~is isomorphic
to a direct sum of its  torsion and a free $\mb F [\partial]$-module
(which is always the case if~$A$ is a finitely generated $\mb F
[\partial]$-module).  Our opinion is that the slightly larger
complex $C^\bullet (A,M)$ is a more correct Lie conformal algebra
cohomology complex than the complex $\Gamma^\bullet (A,M)$ of
\cite{BKV}.  This is illustrated by our Theorem~3.1(c), which says
that the $\mb F [\partial]$-split abelian extensions of~$A$ by~$M$
are parameterized by $H^2 (A,M)$ for the complex $C^\bullet
(A,M)$.  This holds for the cohomology theory of
\cite{BKV} only if~$A$ is a free $\mb F [\partial]$-module.

Following \cite{BKV}, we also consider the superspace of basic chains
$\tilde{\Gamma}_\bullet (A,M)$ and its subspace of reduced chains
$\Gamma_\bullet (A,M)$ (they are not complexes in general).
Corresponding to the embeddings of complexes (\ref{eq:0.3a}),
we introduce the vector superspaces of chains $C_\bullet (A,M)$ and $\bar{C}
_\bullet (A,M)$, and the maps:
\begin{equation}
\label{eq:0.3b}  
C_\bullet (A,M) \twoheadrightarrow \bar{C}_\bullet (A,M)\to \Gamma_\bullet (A,M)\, .
\end{equation}

We develop the theory further in the important for the calculus
of variations case, when the $A$-module $M$ is endowed with a
commutative associative product, such that $\partial$ and
$a_\lambda$ for all $a \in A$ are derivations of this product.
In this case one can endow the superspace $\tilde{\Gamma}^\bullet (A,M)$ 
with a commutative associative product \cite{BKV}.
Furthermore, we introduce a Lie algebra bracket on the space 
$\mf g :=\Pi \tilde{\Gamma}_1 (A,M)$
($\Pi$, as usual, stands for reversing of the parity).
Let $\hat{\mf g} =\eta \mf g \oplus \mf g \oplus \mb F \partial_\eta$ be a
$\mb Z$-graded Lie superalgebra extension of~$\mf g$, where $\eta$ is an
odd indeterminate, $\eta^2 =0$.  We endow $\tilde{\Gamma}^\bullet(A,M)$ 
with a structure of a $\mf g$-\emph{complex}, which is
a $\mb Z$-grading preserving Lie superalgebra
homomorphism $\varphi : \hat{\mf g} \to \End_{\mb F}
\tilde{\Gamma}^\bullet (A,M)$, such that $\varphi (\partial_\eta)
= \delta$.  We also show that $\varphi (\hat{\mf g})$ lies in the
subalgebra of derivations of the superalgebra $\tilde{\Gamma}^\bullet
(A,M)$.  For each $X \in \mf g$ we thus have the Lie derivative
$L_X = \varphi (X)$ and the contraction operator $\iota_X
=\varphi (\eta X)$, satisfying all usual relations, in particular,
the Cartan formula $L_X = \iota_X \delta + \delta \iota_X$.

Denoting by $\mf g^\partial$ the centralizer of $\partial$ in
$\mf g$, we obtain the induced structure of a $\mf g^\partial$-complex
for $\Gamma^\bullet (A,M)$, which we, furthermore, extend to the
larger complex $C^\bullet (A,M)$.  Namely, we introduce a
canonical Lie algebra bracket on all spaces of $1$-chains with
reversed parity (see (\ref{eq:0.3b})), so that all the maps 
$\Pi C_1  \twoheadrightarrow \Pi
\bar{C}_1  \to \Pi \Gamma_1 \hookrightarrow \Pi \tilde{\Gamma}_1$\
are Lie algebra homomorphisms, and the embeddings (\ref{eq:0.3a}) are
morphisms of complexes, endowed with a corresponding Lie algebra structure.

What does it all have to do with the calculus of variations?
In order to explain this, introduce the notion of an
\emph{algebra of differentiable functions} (in
$\ell$~variables).  This is a differential algebra, i.e.,~a
unital commutative associative algebra~$\mc V$ with a derivation
$\partial$, endowed with commuting derivations
$\frac{\partial}{\partial u_i^{(n)}}$, $i \in I = \{ 1,\ldots ,\ell
\}$, $n \in \mb Z_+$, such that only a finite number of
$\frac{\partial f}{\partial u_i^{(n)}}$ are non-zero for each $f
\in \mc V$, and the following commutation rules with~$\partial$ hold:
\begin{equation}
  \label{eq:0.4}
  \left[  \frac{\partial}{\partial u_i^{(n)}}, \partial \right] =
       \frac{\partial}{\partial u_i^{(n-1)}}
\,\,\,\,
\text{ (the RHS is $0$ if $n=0$) }\,.
\end{equation}

The most important example is the algebra of differential
polynomials $\mb F [u_i^{(n)} | i \in I$, $n \in \mb Z_+]$
with $\partial (u_i^{(n)}) = u^{(n+1)}_i$,
$n \in \mb Z_+$, $i \in I$.  Other examples include any
localization by a multiplicative subset or any algebraic
extension of this algebra.  

The basic de~Rham complex
$\tilde{\Omega}^\bullet = \tilde{\Omega}^\bullet (\mc V)$ over $\mc V$ is
defined as an exterior superalgebra over the free $\mc V$-module
$\tilde{\Omega}^1 = \sum_{i \in I \, , \, n \in \mb Z_+} \mc V \delta
u^{(n)}_i$ on generators $\delta u^{(n)}_i$ with odd parity.  We
have:  $\tilde{\Omega}^\bullet = \bigoplus_{k \in \mb Z_+}
\tilde{\Omega}^k$, where $\tilde{\Omega}^0 =\mc V$,
$\tilde{\Omega}^k = \Lambda^k_{\mc V} \tilde{\Omega}^1$.  This
$\mb Z$-graded superalgebra is endowed by an odd derivation $\delta$ of
degree~$1$, such that $\delta f = \sum_{i \in I \, , \, n \in
  \mb Z_+} \frac{\partial f}{\partial u^{(n)}_i} \delta u^{(n)}_i$ for 
$f \in \tilde{\Omega}^0$ and
$\delta (\delta u^{(n)}_i) =0$.  One easily checks that $\delta^2
=0$, so that $\tilde{\Omega}^\bullet$ is a cohomology complex.

Let $\mf g$ be the Lie algebra of derivations of the algebra $\mc V$ of
the form
\begin{equation}
  \label{eq:0.5}
  X=\sum_{i \in I \, , \, n \in \mb Z_+} P_{i,n} 
     \frac{\partial}{\partial u^{(n)}_i}\, , \quad
     \mbox{\,\, where \,\,} P_{i,n} \in \mc V\, .
\end{equation}
To any such derivation $X$ we associate 
an even derivation $L_X$ (Lie derivative) and an odd derivation $\iota_X$ (contraction) of the
superalgebra $\tilde{\Omega}^\bullet$ by letting 
${L_X|}_{\mc V}=X,\,
L_X (\delta u^{(n)}_i) = \delta P_{i,n},\,
{\iota_X|}_{\mc V}=0,\,
\iota_X (\delta u^{(n)}_i) = P_{i,n}$.  
This provides $\tilde{\Omega}^\bullet$
with a structure of a $\mf g$-complex,
by letting $\varphi(X)=L_X$ and $\varphi(\eta X)=\iota_X$.  
Also, the
derivation~$\partial$ extends to an (even) derivation of
$\tilde{\Omega}^\bullet$ by letting $\partial (\delta u^{(n)}_i)
= \delta u^{(n+1)}_i$.

It is easy to check, using (\ref{eq:0.4}), that $\partial$ and $\delta$
commute, hence we can consider the reduced complex
\begin{displaymath}
\Omega^\bullet (\mc V) = \tilde{\Omega}^\bullet(\mc V) / 
     \partial \tilde{\Omega}^\bullet (\mc V) \, ,   
\end{displaymath}
which is called the \emph{variational complex}.  This is, of
course, a $\mf g^\partial$-complex.

Our main observation  is the interpretation of the variational
complex $\Omega^\bullet (\mc V)$ in terms of Lie conformal algebra
cohomology, given by Theorem~0.1 below.

Let $R=\bigoplus_{i \in I} \mb F [\partial]u_i$ be a free $\mb F
[\partial]$-module of rank~$\ell$, endowed with the trivial
$\lambda$-bracket $[a_\lambda b]=0$ for all $a,b \in R$.
Let~$\mc V$ be an algebra of differentiable functions.  
We endow $\mc V$ with
the structure of an $R$-module by letting
\begin{displaymath}
  u_{i_\lambda} f = 
  \sum_{n\in\mb Z_+} \lambda^n \frac{\partial f}{\partial u_i^{(n)}}\, , \qquad
  i \in I \, , 
\end{displaymath}
and extending to $R$ by sesquilinearity.  Let $\mf g$ be the Lie
algebra of derivations of $\mc V$ of the form (\ref{eq:0.5}), and let
$\mf g^\partial$ be the subalgebra of $\mf g$, consisting of
derivations commuting with $\partial$.
\begin{theorem}
  \label{th:0.1}
The $\mf g^\partial$-complexes $C^\bullet (R,\mc V)$ and
$\Omega^\bullet (\mc V)$ are isomorphic.
\end{theorem}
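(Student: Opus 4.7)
My plan is to construct an explicit isomorphism $\Phi\colon C^\bullet(R,\mc V)\to\Omega^\bullet(\mc V)$ of $\mf g^\partial$-complexes, using that $R$ is free over $\mb F[\partial]$ on the generators $u_i$. For $k=0$, both sides equal $\mc V/\partial\mc V$ by definition. For $k\geq 1$, any $c\in C^k$ is determined by sesquilinearity from its values
$c_{i_1,\ldots,i_k}(\lambda_1,\ldots,\lambda_{k-1}):=c(u_{i_1},\ldots,u_{i_k})\in\mc V[\lambda_1,\ldots,\lambda_{k-1}]$. Writing these as $\sum_{\vec n} p^{\vec n}_{\vec i}\,\lambda_1^{n_1}\cdots\lambda_{k-1}^{n_{k-1}}$, I define
\[
\Phi(c) \;=\; \sum_{\vec i,\vec n} p^{\vec n}_{\vec i}\,\delta u_{i_1}^{(n_1)}\wedge\cdots\wedge\delta u_{i_{k-1}}^{(n_{k-1})}\wedge \delta u_{i_k} \quad \bmod\ \partial\tilde\Omega^k,
\]
or equivalently, under the dictionary $\lambda_j\leftrightarrow\partial_j$ (the operator raising the upper index in the $j$-th wedge factor by $1$), $\Phi(c)=\sum_{\vec i}c_{i_1,\ldots,i_k}(\partial_1,\ldots,\partial_{k-1})\bigl(\delta u_{i_1}\wedge\cdots\wedge \delta u_{i_k}\bigr)$.

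For well-definedness and bijectivity, the key observation is that on $\tilde\Omega^k$ the total derivative decomposes as $\partial=\partial_{\mc V}+\partial_1+\cdots+\partial_k$, so in the quotient $\Omega^k$ one has the operator identity $\partial_k\equiv -\partial_{\mc V}-\partial_1-\cdots-\partial_{k-1}$. This mirrors exactly the sesquilinearity rule $\lambda_k=-\partial-\lambda_1-\cdots-\lambda_{k-1}$ built into the definition of $C^k$. The permutation skew-symmetry of $c$ in the first $k-1$ slots matches the skew-symmetry of the wedge, while the twisted skew-symmetry between the $k$-th slot and an earlier one matches the freedom gained by modding out by $\partial\tilde\Omega^k$. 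Once these matches are carefully verified, surjectivity is obtained by repeated integration by parts to put any representative in the canonical shape with $n_k=0$, and injectivity follows because such a canonical representative determines the polynomial $c_{\vec i}$ uniquely.

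Finally, I would verify that $\Phi$ intertwines the differentials and the $\mf g^\partial$-action. Because the $\lambda$-bracket on $R$ is trivial, the explicit formula for $d$ on $C^\bullet(R,\mc V)$ collapses to the ``Chevalley'' terms involving only the $\lambda$-action $u_{i\lambda}f=\sum_n\lambda^n\,\partial f/\partial u_i^{(n)}$; under $\lambda_j\leftrightarrow\partial_j$ this is precisely the expansion of $\delta f=\sum_{i,n}(\partial f/\partial u_i^{(n)})\,\delta u_i^{(n)}$, with the alternating sum in $d$ corresponding to inserting $\delta$ into a wedge. For the Lie superalgebra action, I identify an evolutionary vector field $X=\sum_{i,n}(\partial^n P_i)\,\partial/\partial u_i^{(n)}\in\mf g^\partial$ with the $\mb F[\partial]$-homomorphism $u_i\mapsto P_i$, i.e., with an element of $\Pi C_1(R,\mc V)$, and compare $L_X$ and $\iota_X$ on both sides on the generators $f\in\mc V$ and $\delta u_i^{(n)}$, the equality propagating by the derivation property. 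The main obstacle I foresee is the bookkeeping in step two: matching the twisted skew-symmetry of the $k$-th slot with integration by parts in $\tilde\Omega^k/\partial\tilde\Omega^k$ without sign errors or stray boundary terms. Once the vector-space isomorphism is in place, the vanishing of the $\lambda$-bracket on $R$ makes the comparison of differentials essentially formal.
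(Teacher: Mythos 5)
Your overall strategy --- read off the coefficient array of a $k$-$\lambda$-bracket, send $\lambda_j^{n_j}$ to $\delta u_{i_j}^{(n_j)}$ with the $k$-th slot carrying $\delta u_{i_k}^{(0)}$, and let the quotient by $\partial\tilde\Omega^k$ absorb the substitution $\lambda_k\mapsto\lambda_k^\dagger$ --- is exactly the composite of the two isomorphisms the paper uses (namely $C^\bullet\simeq\Gamma^\bullet$ from Theorem \ref{th:red}, applicable since $R$ is free, followed by $\Gamma^\bullet\simeq\Omega^\bullet$ from Theorem \ref{th:july24}), so the route is sound, and your treatment of the differential (the bracket on $R$ being zero) would go through as sketched. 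One small caveat on the $\mf g^\partial$-equivariance: $C^\bullet$ carries no superalgebra structure, so you cannot ``propagate by the derivation property'' on that side; you must compare $\iota_X$ against the explicit contraction formulas \eqref{eq:sat_8_1}--\eqref{eq:sat_8} rather than just on generators.

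The genuine gap is in the bijectivity of your $\Phi$ on $C^k$ for $k\geq 2$, specifically in injectivity, where two distinct issues are conflated. First, the element $\omega_c=\sum p^{\vec n}_{\vec i}\,\delta u_{i_1}^{(n_1)}\wedge\cdots\wedge\delta u_{i_{k-1}}^{(n_{k-1})}\wedge\delta u_{i_k}$ of $\tilde\Omega^k$ does \emph{not} determine all the coefficients $p^{\vec n}_{\vec i}$: whenever the same wedge monomial arises from several orderings of the slots (already for $k=2$ the monomials $\delta u_i\wedge\delta u_j$ see only the antisymmetric part of the $\lambda$-degree-zero coefficients), only a partial antisymmetrization of the $p$'s survives, and you must invoke the twisted skew-symmetry B3 to show the invisible components are recoverable from the visible ones; this is fixable but must be done. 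Second, and more seriously, ``the canonical representative determines $c$ uniquely'' does not yield injectivity into $\Omega^k=\tilde\Omega^k/\partial\tilde\Omega^k$: what is needed is that a nonzero canonical-form element is not itself a total derivative, i.e.\ that the span of canonical forms meets $\partial\tilde\Omega^k$ only in $0$. That statement is the technical heart of the theorem, not bookkeeping; it is precisely what the paper isolates as $\Ker\psi^k=\partial\tilde\Gamma^k$ in Lemma \ref{lem:laif}(b), proved by Taylor-expanding the cochain around $\lambda_k=\lambda_k^\dagger$ and exhibiting an explicit preimage under $\partial$. You need either that argument transported to $\tilde\Omega^k$, or a filtration/leading-term argument on the highest $n$ occurring in the last slot; as written, this step is asserted rather than proved.
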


As a result, we obtain the following interpretation of the
complex $\Omega^\bullet (\mc V)$, which explains the name ``calculus
of variations''.

We have:  $\Omega^0 = \mc V /\partial \mc V$, $\Omega^1 = \Hom_{\mb F [\partial]} (R,\mc V) 
= \mc V^{\oplus\ell}$.  Elements of
$\Omega^0$ are called local functionals and the image of $f
\in \mc V$ in $\Omega^0$ is denoted by $\int f$.  Elements of
$\Omega^1$ are called local $1$-forms.  The differential $\delta:\,\Omega^0 \to \Omega^1$ 
is identified with the variational derivative:
$\delta \int f = \bigg( \frac{\delta \int f}{\delta  u_i}\bigg)_{i \in I} = \frac{\delta f}{\delta u}$,
where
\begin{equation}\label{eq:dic15_3}
\frac{\delta f}{\delta u_i} = \sum_{n \in \mb Z_+}(-\partial)^n
        \frac{\partial f}{\partial u^{(n)}_i}\, .
\end{equation}

Furthermore, the space of $2$-cochains $C^2$ is identified with
the space of skew-adjoint differential operators by associating
to the $\lambda$-bracket 
$\{\cdot\,_\lambda\,\cdot\}:\, R^{\otimes 2} \to \mb F [\lambda] \otimes \mc V$ 
the $\ell \times \ell$ matrix $S_{ij} (\partial) = 
\{{u_j}_\partial u_i\}_\to$, where the arrow means that $\partial$ is moved
to the right.  The differential $\delta : \Omega^1 \to \Omega^2$
is expressed in terms of the Frechet derivative
\begin{equation}\label{eq:dic15_2}
  D_F (\partial)_{ij} = \sum_{n \in \mb Z_+}
     \frac{\partial F_i}{\partial u^{(n)}_j} \partial^n\, , 
     \qquad   i,j \in I \, ,
\end{equation}
which defines an $\mb F$-linear map: $\mc V^{\ell}\to\mc V^{\oplus\ell}$.  Namely:
$\delta F = D_F (\partial) - D_F (\partial)^*$.  
The subspace of closed $2$-cochains in $C^2$ is identified with the space
of symplectic differential operators.

A $2$-cochain, which is a skew-adjoint differential operator $S_{ij} (\partial)$, 
can be identified with the  corresponding $\mb F$-linear map $(\mc V^\ell)^2\to\mc V/\partial\mc V$,
of  ``differential type'',  given by
$$
S(P,Q)
\,=\, \int \sum_{i,j\in I} Q_iS_{ij}(\partial)P_j\,.
$$
Skew-adjointness of $S$ translates to the skew-symmetry condition $S(P,Q)=-S(Q,P)$.

More generally, the space of $k$-cochains $C^k$ 
for $k \geq 2$ is identified with the space of
all skew-symmetric $\mb F$-linear maps 
$S:\,(\mc V^\ell)^k\to\mc V/\partial\mc V$, 
of ``differential type'':
$$
S(P^1,\cdots,P^k)
\,=\, \int \sum_{\substack{i_1,\cdots,i_{k}\in I \\ n_1,\cdots,n_{k}\in\mb Z_+}}
f^{n_1,\cdots,n_k}_{i_1,\cdots,i_k}
(\partial^{n_1}P^1_{i_1})\cdots(\partial^{n_k}P^k_{i_k})
\,\,,\,\,\,\,
\text{ where } f^{n_1,\cdots,n_k}_{i_1,\cdots,i_k}\in\mc V\,.
$$
The skew-symmetry condition is simply
$S(P^1,\cdots,P^k)=
\text{sign}(\sigma)S(P^{\sigma(1)},\cdots,P^{\sigma(k)})$, for every $\sigma \in S_k$.
The subspace of closed $k$-cochains for $k \geq 2$ is the subspace 
of ``symplectic'' $k-1$-differential operators.

We prove in \cite{BDK} that the cohomology $H^j$ of the complex
$\Omega^\bullet (\mc V)$ is zero for $j \geq 1$ and 
$H^0 =\mc C/(\mc C\cap\partial\mc V)$, 
where $\mc C:=\{f\in\mc V\,|\,\frac{\partial f}{\partial u_i^{(n)}}\,\forall i\in I,n\in\mb Z_+\}$, 
provided that $\mc V$ is normal, as defined in Section \ref{sec:final}.
(Any algebra of differentiable functions can be included in a normal one.)
As a corollary, we obtain (cf.~\cite{D}) 
that $\Ker \frac{\delta}{\delta u} = \partial \mc V + \mc C$, 
and $F \in \im \frac{\delta}{\delta u}$ iff $D_F(\partial)$ is a self-adjoint
differential operator, provided that
$\mc V$ is normal. The first result can be found in \cite{D}
(see also \cite{Di} and \cite{Vi}, where it is proved under stronger conditions
on $\mc V$), but it is certainly much older. The second result, at least under 
stronger conditions on $\mc V$, goes back to \cite{H}, \cite {V}. 
We also obtain the classification of symplectic differential operators
(cf. \cite{D}) and of
symplectic poly-differential operators for normal~$\mc V$, which
seems to be a new result.

Thus, the interaction between the Lie conformal algebra
cohomology and the variational calculus has led to  progress in
both theories.  On the one hand, the variational calculus
motivated some of our constructions 
in the Lie conformal algebra cohomology.  On the other hand, the Lie
conformal algebra cohomology interpretation of the variational
complex has led to a better understanding of this complex and to
a classification of symplectic and poly-symplectic differential
operators.  

The ground field is an arbitrary field $\mb F$ of
characteristic $0$.

\section{Lie conformal algebra cohomology complexes.}\label{sec:1}

\subsection{The basic cohomology complex $\tilde\Gamma^\bullet$ 
and the reduced cohomology complex $\Gamma^\bullet$.}~~
\label{sec:1.1}
Let us review, following \cite{BKV}, the definition of the basic 
and reduced cohomology complexes
associated to a Lie conformal algebra $A$ and an $A$-module $M$.
A $k$-\emph{cochain} of $A$ with coefficients in $M$ is an $\mb F$-linear map
$$
\tilde\gamma:\, A^{\otimes k}\to \mb F[\lambda_1,\dots,\lambda_{k}]\otimes M\,\,,\,\,\,\,
a_1\otimes\cdots\otimes a_{k}\mapsto \tilde\gamma_{\lambda_1,\cdots,\lambda_{k}}
(a_1,\cdots,a_{k})\,,
$$
satisfying the following two conditions:
\begin{enumerate}
\item[A1.] $\tilde\gamma_{\lambda_1,\cdots,\lambda_{k}}(a_1,\cdots,\partial a_i,\cdots,a_{k})=
-\lambda_i\tilde\gamma_{\lambda_1,\cdots,\lambda_{k}}(a_1,\cdots,a_{k})$ for all $i$,
\item[A2.] $\tilde\gamma$ is skew-symmetric with respect to simultaneous permutations
of the $a_i$'s and the $\lambda_i$'s.
\end{enumerate}
\begin{remark}\label{rem:anniv}
Note that condition A1. implies that $\tilde\gamma_{\lambda_1,\cdots,\lambda_k}(a_1,\cdots,a_{k})$
is zero if one of the elements $a_i$ is a torsion element of the $\mb F[\partial]$-module $A$.
\end{remark}

We let $\tilde\Gamma^k=\tilde\Gamma^k(A,M)$ be the space of all $k$-cochains, 
and $\tilde\Gamma^\bullet=\tilde\Gamma^\bullet(A,M)=\bigoplus_{k\geq0}\tilde\Gamma^k$.
The differential $\delta$ of a $k$-cochain $\tilde\gamma$ is defined by the following formula:
\begin{eqnarray}\label{eq:july24_7}
(\delta\tilde\gamma)_{\lambda_1,\cdots,\lambda_{k+1}}(a_1,\cdots,a_{k+1})
=\sum_{i=1}^{k+1} (-1)^{i+1} {a_i}_{\lambda_i}
\Big(\tilde\gamma_{\lambda_1,\stackrel{i}{\check{\cdots}},\lambda_{k+1}}
(a_1,\stackrel{i}{\check{\cdots}},a_{k+1}) \Big) \\
+ \sum_{\substack{i,j=1\\i<j}}^{k+1} (-1)^{k+i+j+1} 
\tilde\gamma_{\lambda_1,\stackrel{i}{\check{\cdots}}\stackrel{j}{\check{\cdots}},
\lambda_{k+1},\lambda_i+\lambda_j}
(a_1,\stackrel{i}{\check{\cdots}}\stackrel{j}{\check{\cdots}},a_{k+1},[{a_i}_{\lambda_i} a_j])\,. \nonumber
\end{eqnarray}
One checks that $\delta$ maps $\tilde\Gamma^k$ to $\tilde\Gamma^{k+1}$,
and that $\delta^2=0$.
The $\mb Z$-graded space $\tilde\Gamma^\bullet(A,M)$ with the differential $\delta$ 
is called the \emph{basic cohomology complex} associated to $A$ and $M$.

Define the structure of an $\mb F[\partial]$-module on $\tilde\Gamma^\bullet$ by letting
\begin{equation}\label{eq:july24_8}
(\partial\tilde\gamma)_{\lambda_1,\cdots,\lambda_{k}}(a_1,\cdots,a_{k})
=(\partial^M+\lambda_1+\cdots+\lambda_{k})
\Big(\tilde\gamma_{\lambda_1,\cdots,\lambda_{k}}
(a_1,\cdots,a_{k})\Big)\,,
\end{equation}
where $\partial^M$ denotes the action of $\partial$ on $M$.
One checks that $\delta$ and $\partial$ commute, and therefore 
$\partial\tilde\Gamma^\bullet\subset\tilde\Gamma^\bullet$ is a subcomplex.
We can consider the \emph{reduced cohomology complex}
$\Gamma^\bullet(A,M)=\tilde\Gamma^\bullet(A,M)/\partial\tilde\Gamma^\bullet(A,M)
=\bigoplus_{k\in\mb Z_+}\Gamma^k(A,M)$.
For example, $\Gamma^0=M/\partial^MM$, and we denote,
as in the calculus of variations, by $\tint m$ the image of $m\in M$ in $M/\partial^MM$.
As before we let, for brevity, $\Gamma^\bullet=\Gamma^\bullet(A,M)$ 
and $\Gamma^k=\Gamma^k(A,M),\,k\in\mb Z_+$.

In the following sections we will find a simpler construction 
of the reduced cohomology complex $\Gamma^\bullet$,
in terms of poly $\lambda$-brackets.

\vspace{3ex}
\subsection{Poly $\lambda$-brackets.}~~
\label{sec:1.3_a}
Let $A$ and $M$ be $\mb F[\partial]$-modules, and, as before, denote by 
$\partial^M$ the action of $\partial$ on $M$.
For $k\geq1$, a \emph{$k$-$\lambda$-bracket} on $A$ with coefficients in $M$ 
is, by definition, an $\mb F$-linear map
$c:\,A^{\otimes k}\to\mb F[\lambda_1,\dots,\lambda_{k-1}]\otimes M$,
denoted by
$$
a_1\otimes\cdots\otimes a_k\,\mapsto\,\{{a_1}_{\lambda_1}\cdots {a_{k-1}}_{\lambda_{k-1}} a_k\}_c\,,
$$
satisfying the following conditions:
\begin{enumerate}
\item[B1.] $\{{a_1}_{\lambda_1}\cdots (\partial a_i)_{\lambda_i}\cdots {a_{k-1}}_{\lambda_{k-1}} a_k\}_c
=-\lambda_i\{{a_1}_{\lambda_1}\cdots {a_{k-1}}_{\lambda_{k-1}} a_k\}_c$,
for $1\leq i\leq k-1$;
\item[B2.] $\{{a_1}_{\lambda_1}\cdots {a_{k-1}}_{\lambda_{k-1}} (\partial a_k)\}_c
=(\lambda_1+\cdots+\lambda_k+\partial^M)
\{{a_1}_{\lambda_1}\cdots {a_{k-1}}_{\lambda_{k-1}} a_k\}_c$;
\item[B3.] $c$ is skew-symmetric with respect to simultaneous permutations
of the $a_i$'s and the $\lambda_i$'s in the sense that,
for every permutation $\sigma$ of the indices $\{1,\dots,k\}$, we have:
$$
\{{a_1}_{\lambda_1}\cdots {a_{k-1}}_{\lambda_{k-1}} a_k\}_c
=\text{sign}(\sigma)
\{{a_{\sigma(1)}}_{\lambda_{\sigma(1)}}\cdots {a_{\sigma(k-1)}}_{\lambda_{\sigma(k-1)}} 
a_{\sigma(k)}\}_c\,\Big|_{\lambda_k\mapsto\lambda_k^\dagger}\,.
$$
The notation in the RHS means that $\lambda_k$ is replaced 
by $\lambda_k^\dagger=-\sum_{j=1}^{k-1}\lambda_j-\partial^M$,
if it occurs, and $\partial^M$ is moved to the left.
\end{enumerate}
\begin{remark}\label{rem:sep23}
A structure of a Lie conformal algebra on $A$ is a 2-$\lambda$-bracket on $A$
with coefficients in $A$, satisfying the Jacobi identity \eqref{eq:0.3}.
\end{remark}

We let $C^0=M/\partial^MM$ and, for $k\geq1$,
we denote by $C^k=C^k(A,M)$ the space of all $k$-$\lambda$-brackets
on $A$ with coefficients in $M$.
For example, $C^1$ is the space of all $\mb F[\partial]$-module homomorphisms 
$c:\,A\to M$.
We let $C^\bullet=\bigoplus_{k\in\mb Z_+}C^k$, 
the space of all \emph{poly} $\lambda$-\emph{brackets}.

We also define $\bar C^\bullet=\bigoplus_{k\in\mb Z_+}\bar C^k$,
where $\bar C^0=C^0=M/\partial^MM$,
and $\bar C^k\subset C^k$ is the subspace 
of  $k$-$\lambda$-brackets $c$ with the following additional property:
$\{{a_1}_{\lambda_1}\cdots {a_{k-1}}_{\lambda_{k-1}} a_k\}_c$ is zero
if one of the elements $a_i$ is a torsion element in $A$.
Clearly, $\bar C^1$ needs not be equal to $C^1$.
On the other hand, 
it is easy to check, using the sesquilinearity conditions B1. and B2.,
that $\bar C^k=C^k$ for $k\geq2$.

\vspace{3ex}
\subsection{The complex of poly $\lambda$-brackets.}~~
\label{sec:1.3_b}
We next define a differential $d$ on the space $C^\bullet$ of poly $\lambda$-brackets
such that $d(C^k)\subset C^{k+1}$ and $d^2=0$,
thus making $C^\bullet$ a cohomology complex.

For $\tint m\in C^0=M/\partial^MM$, we let $d\tint m\in C^1$ be the following
$\mb F[\partial]$-module homomorphism:
\begin{equation}\label{eq:d0}
\big(d\tint m\big)(a)\,
\Big(=\{a\}_{d\tint m}\Big)
\,:=\,
a_{-\partial^M}m\,.
\end{equation}
This is well defined since, if $m\in\partial^MM$, the RHS is zero due to sesquilinearity.
For $c\in C^k$, with $k\geq1$, we let
$dc\in C^{k+1}$ be the following poly $\lambda$-bracket:
\begin{eqnarray}\label{eq:d>}
&\displaystyle{
\{{a_1}_{\lambda_1}\cdots {a_{k}}_{\lambda_{k}} a_{k+1}\}_{dc} 
\,:=\,
\sum_{i=1}^k (-1)^{i+1} {a_i}_{\lambda_i}
\big\{{a_1}_{\lambda_1}\stackrel{i}{\check{\cdots}}{a_{k}}_{\lambda_{k}} a_{k+1}\big\}_{c} 
}\nonumber\\
&\displaystyle{
+ \sum_{\substack{i,j=1\\i<j}}^k (-1)^{k+i+j+1} 
\big\{{a_1}_{\lambda_1}\stackrel{i}{\check{\cdots}}\stackrel{j}{\check{\cdots}}
{a_{k}}_{\lambda_{k}} {a_{k+1}}_{\lambda_{k+1}^\dagger}[{a_i}_{\lambda_i} a_j]\big\}_{c} 
}\\
&\displaystyle{
+(-1)^k {a_{k+1}}_{\lambda_{k+1}^\dagger}
\big\{{a_1}_{\lambda_1}\cdots{a_{k-1}}_{\lambda_{k-1}} a_{k}\big\}_{c} 
+ \sum_{i=1}^k (-1)^{i}
\big\{{a_1}_{\lambda_1}\stackrel{i}{\check{\cdots}}{a_{k}}_{\lambda_{k}} 
[{a_i}_{\lambda_i} a_{k+1}]\big\}_{c}\,,
}\nonumber
\end{eqnarray}
where, as before,
$\lambda_{k+1}^\dagger=-\sum_{j=1}^k\lambda_j-\partial^M$, and $\partial^M$ is moved to the left.

For example, for an $\mb F[\partial]$-module homomorphism $c:\,A\to M$, we have
\begin{equation}\label{eq:july29_1}
\{a_\lambda b\}_{dc}
=a_\lambda c(b)-b_{-\lambda-\partial}c(a)-c([a_\lambda b])\,.
\end{equation}
\begin{proposition}\label{prop:anniv}
\begin{enumerate}
\alphaparenlist
\item For $c\in C^k$, we have $d(c)\in C^{k+1}$ and $d^2(c)=0$.
This makes $(C^\bullet,d)$ a cohomology complex.
\item $d(\bar C^k)\subset\bar C^{k+1}$ for all $k\geq0$. 
Hence $(\bar C^\bullet,d)$ is a cohomology subcomplex of $(C^\bullet,d)$.
\end{enumerate}
\end{proposition}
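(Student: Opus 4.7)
I would deduce Proposition~\ref{prop:anniv}(a) from the already-established fact that $(\tilde\Gamma^\bullet,\delta)$ is a complex by relating $C^\bullet$ to $\tilde\Gamma^\bullet$ via the substitution map
\[
\Psi:\tilde\Gamma^k\longrightarrow C^k,\qquad
(\Psi\tilde\gamma)_{\lambda_1,\dots,\lambda_{k-1}}(a_1,\dots,a_k)
:=\tilde\gamma_{\lambda_1,\dots,\lambda_{k-1},\lambda_k^\dagger}(a_1,\dots,a_k),
\]
with $\lambda_k^\dagger:=-\lambda_1-\cdots-\lambda_{k-1}-\partial^M$ and $\partial^M$ moved to the left to act on $M$ (for $k=0$, $\Psi$ is just the quotient $M\to M/\partial^MM$). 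Condition A1 applied to the first $k-1$ arguments gives B1; A1 applied to the $k$-th argument, combined with the identity $-\lambda_k^\dagger=\sum_{j<k}\lambda_j+\partial^M$, gives B2; and A2 specialized under the substitution produces B3 in the stated ``$\lambda_k\mapsto\lambda_k^\dagger$'' form. From (\ref{eq:july24_8}) one checks $\Psi(\partial\tilde\gamma)=0$, so $\Psi$ descends to a well-defined map $\Gamma^k\to C^k$.

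\textbf{Intertwining and $d^2=0$.} Next I would verify the identity $d\circ\Psi=\Psi\circ\delta$ by substituting $\lambda_{k+1}\mapsto\lambda_{k+1}^\dagger$ in (\ref{eq:july24_7}) and matching terms against (\ref{eq:d>}): the $i\le k$ contributions of the first sum in $\delta\tilde\gamma$ become the first sum of $dc$; the $i=k+1$ contribution becomes the third summand $(-1)^k{a_{k+1}}_{\lambda_{k+1}^\dagger}\{\cdots\}_c$ of $dc$; the bracket contributions with $i<j\le k$ become the second sum of $dc$ (the shortened tuple inherits $[{a_i}_{\lambda_i}a_j]$ as its inner argument, with the variable $\lambda_{k+1}^\dagger$ playing the role of the new $\lambda_k^\dagger$ after the substitution); and those with $j=k+1$ become the fourth sum of $dc$. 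Since the formula (\ref{eq:d>}) is a universal polynomial expression in the defining operations of a Lie conformal algebra and its module, it suffices to verify $d^2c=0$ in the ``generic'' situation where $c$ admits a lift $\tilde\gamma\in\tilde\Gamma^k$ under $\Psi$, for instance when $A$ is free over $\mb F[\partial]$, so that any poly $\lambda$-bracket extends by A1 to a basic cochain. In that case $dc=\Psi(\delta\tilde\gamma)\in C^{k+1}$ and $d^2c=\Psi(\delta^2\tilde\gamma)=0$; the conclusion transfers to arbitrary $A$ by universality of the formula. This simultaneously proves $d(C^k)\subset C^{k+1}$ and $d^2=0$.

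\textbf{Part (b) and the main obstacle.} Part (b) is a direct consequence of the observation that torsion elements of $A$ have trivial $\lambda$-bracket and trivial $\lambda$-action on $M$: if $p(\partial)a=0$ for a nonzero polynomial $p$, then (\ref{eq:0.1}) gives $p(-\lambda)[a_\lambda b]=0=p(-\lambda)\,a_\lambda m$, and the torsion-freeness of $\mb F[\lambda]\otimes A$ and $\mb F[\lambda]\otimes M$ over $\mb F[\lambda]$ forces both to vanish. Feeding this into (\ref{eq:d>}), each of the four sums in the formula for $dc$ vanishes when any single argument $a_m$ is torsion: either $a_m$ is a direct argument of $c\in\bar C^k$, or it is acted on via ${a_m}_{\lambda_m}$ on an $M$-valued output, or it sits inside a $\lambda$-bracket $[{a_i}_{\lambda_i}a_m]$ or $[{a_m}_{\lambda_m}a_{k+1}]$ that is itself zero. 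The main obstacle in executing the plan is the verification of $d\circ\Psi=\Psi\circ\delta$: tracking the signs, the ``$\partial^M$-moved-to-the-left'' convention, and the subtle way individual bracket terms of $\delta\tilde\gamma$ split across different summands of $dc$ after the $\lambda_{k+1}\mapsto\lambda_{k+1}^\dagger$ substitution requires meticulous bookkeeping and constitutes the computational heart of the argument.
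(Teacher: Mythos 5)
Your treatment of part (b) is correct (for $k\ge 1$ it is in fact automatic once one knows $\bar C^j=C^j$ for $j\ge 2$, so only $k=0$ needs the torsion argument), and your idea of deducing part (a) from $\delta^2=0$ on $\tilde\Gamma^\bullet$ via the substitution map $\Psi$ (the paper's $\psi^k$ of \eqref{eq:5}) and the intertwining $d\circ\Psi=\Psi\circ\delta$ is sound in spirit. But there are two genuine gaps. First, the surjectivity of $\Psi$ in the free case does not follow from ``extending by A1'': the naive lift $\tilde\gamma_{\lambda_1,\dots,\lambda_k}(u_1,\dots,u_k):=\{{u_1}_{\lambda_1}\cdots u_k\}_c$ does satisfy A1 and recovers $c$ under $\lambda_k\mapsto\lambda_k^\dagger$, but it violates the skew-symmetry A2 for every permutation moving the last index (already for $k=2$ and $A=\mb F[\partial]u$, A2 would force $\{u_{\lambda_1} u\}_c=-\{u_{\lambda_2} u\}_c$ for all $\lambda_1,\lambda_2$, i.e.\ $c=0$). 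The correct lift is the shifted one \eqref{eq:anniv_5} of Lemma \ref{lem:laif}(c), $\{{u_1}_{\lambda_1-(\Lambda+\partial^M)/k}\cdots u_k\}_c$, and verifying that it lies in $\tilde\Gamma^k$ and maps to $c$ is a nontrivial step you are silently assuming. Second, the ``transfer to arbitrary $A$ by universality of the formula'' is not an argument: $d^2c=0$ is an assertion about a particular triple $(A,M,c)$, and not every $A$ admits the decomposition \eqref{eq:anniv_1} (nor is $A/\Tor A$ free in general, since non-finitely-generated torsion-free $\mb F[\partial]$-modules need not be free). To make the transfer rigorous you would need a surjection $\pi\colon F\twoheadrightarrow A$ from a Lie conformal algebra $F$ that is free as an $\mb F[\partial]$-module, together with the observations that $\pi^*$ is injective on cochains and commutes with $d$; the existence of such an $F$ is an external input you neither state nor prove.

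The paper avoids all of this by verifying B1--B3 for $dc$ and the vanishing of $d^2c$ directly from \eqref{eq:d>}: the terms in \eqref{eq:fe_2}--\eqref{eq:fe_3} cancel pairwise or vanish by the Jacobi identity and by B3. Note also that the intertwining computation you describe as the ``computational heart'' of your plan is exactly the content of Theorem \ref{th:red}, which the paper proves afterwards and which logically presupposes Proposition \ref{prop:anniv}; so your route does not save the bookkeeping --- it relocates it and adds the lifting and transfer steps on top. If you want to pursue it, the honest version is: prove Lemma \ref{lem:laif} and the identity $d\circ\psi^k=\psi^{k+1}\circ\delta$ first, and then supply the reduction of a general $A$ to a free one.
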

\begin{proof}
We prove part (b) first. For $k\geq1$ there is nothing to prove. For $k=0$
just notice that, if $\tint m\in M/\partial^MM$ and $a\in A$ is a torsion element, 
then, by \eqref{eq:d0}, we have $\big(d\tint m\big)(a)=0$,
since torsion elements of $A$ act trivially in any module \cite{K}.
Hence $d\tint m\in\bar C^1$.
In order to prove part (a) we have to check that, if $c\in C^k$,
then $dc$, defined by \eqref{eq:d0} and \eqref{eq:d>},
satisfies conditions B1., B2., B3., and $d(dc)=0$.
To simplify the arguments, we rewrite equation \eqref{eq:d>} in a more concise form:
\begin{eqnarray}\label{eq:d>>}
&\displaystyle{
\{{a_1}_{\lambda_1}\cdots {a_{k}}_{\lambda_{k}} a_{k+1}\}_{dc} 
\,:=\,
\bigg(
\sum_{i=1}^{k+1} (-1)^{i+1} {a_i}_{\lambda_i}
\big\{{a_1}_{\lambda_1}\stackrel{i}{\check{\cdots}}{a_{k}}_{\lambda_{k}} a_{k+1}\big\}_{c} 
}\nonumber\\
&\displaystyle{
+ \sum_{\substack{i,j=1\\i<j}}^{k+1} (-1)^{k+i+j+1} 
\big\{{a_1}_{\lambda_1}\stackrel{i}{\check{\cdots}}\stackrel{j}{\check{\cdots}}
{a_{k+1}}_{\lambda_{k+1}}[{a_i}_{\lambda_i} a_j]\big\}_{c} 
\bigg)\bigg|_{\lambda_{k+1}=\lambda_{k+1}^\dagger}\,,
}
\end{eqnarray}
where the RHS is evaluated at 
$\lambda_{k+1}=\lambda_{k+1}^\dagger=-\sum_{j=1}^k\lambda_j-\partial^M$,
with $\partial^M$ acting from the left.
The above equation should be interpreted by saying that, 
in the first term in the RHS, for $i=k+1$, 
the last index $\lambda_k$ does not appear in the poly $\lambda$-bracket.
Let us replace $a_h$ by $\partial a_h$ in equation \eqref{eq:d>>}.
It is not hard to check, using conditions B1. and B2. for $c$
and the sesquilinearity of the $\lambda$-action of $A$ on $M$,
that, for $1\leq h\leq k$,
each term in the RHS of \eqref{eq:d>>} gets multiplied by $-\lambda_h$,
while, for $h=k+1$, 
each term in the RHS of \eqref{eq:d>>} gets multiplied by 
$-\lambda_{k+1}^\dagger=\sum_{j=1}^k\lambda_j+\partial^M$.
Hence $dc$ satisfies conditions B1. and B2.
In order to prove condition B3., let $\sigma$ be a permutation of the set $\{1,\cdots,k+1\}$.
A basic observation is that, if we \emph{first} replace $\lambda_{\sigma(k+1)}$ 
by 
$\lambda_{\sigma(k+1)}^\dagger
=-\lambda_1-\stackrel{\sigma(k+1)}{\check{\cdots}}-\lambda_{k+1}-\partial^M$,
and \emph{then} we replace 
$\lambda_{k+1}$ by 
$\lambda_{k+1}^\dagger
=-\lambda_1\cdots-\lambda_k-\partial^M$,
as a result $\lambda_{\sigma(k+1)}$ stays unchanged.
Notice, moreover, that, for $1\leq i\leq k+1$, 
$\{\sigma(1),\stackrel{i}{\check{\cdots}},\sigma(k+1)\}$
is a permutation of $\{1,\stackrel{\sigma(i)}{\check{\cdots}},k+1\}$,
and its sign is $(-1)^{i+\sigma(i)}\text{sign}(\sigma)$.
Hence, using the assumption B3. on $c$, we get
\begin{eqnarray}\label{eq:paolo_1}
&& {a_{\sigma(i)}}_{\lambda_{\sigma(i)}}
\big\{{a_{\sigma(1)}}_{\lambda_{\sigma(1)}} \stackrel{i}{\check{\cdots}}
{a_{\sigma(k)}}_{\lambda_{\sigma(k)}} a_{\sigma(k+1)}\big\}_{c}\,
\Big|_{\lambda_{k+1}=\lambda_{k+1}^\dagger} \\
&& =
\text{sign}(\sigma)(-1)^{i+\sigma(i)}
{a_{\sigma(i)}}_{\lambda_{\sigma(i)}}
\big\{{a_1}_{\lambda_1} \stackrel{\sigma(i)}{\check{\cdots}}
{a_k}_{\lambda_k} a_{k+1}\big\}_{c}\,
\Big|_{\lambda_{k+1}=\lambda_{k+1}^\dagger}\,.\nonumber
\end{eqnarray}
Similarly, for the second term in \eqref{eq:d>>}, we notice that 
$\{\sigma(1),\stackrel{i}{\check{\cdots}}\stackrel{j}{\check{\cdots}},\sigma(k+1)\}$
is a permutation of $\{1,\stackrel{\sigma(i)}{\check{\cdots}}\stackrel{\sigma(j)}{\check{\cdots}},k+1\}$,
and its sign is $(-1)^{i+j+\sigma(i)+\sigma(j)}\text{sign}(\sigma)$ if $\sigma(i)<\sigma(j)$,
and it is $(-1)^{i+j+\sigma(i)+\sigma(j)+1}\text{sign}(\sigma)$ if $\sigma(i)>\sigma(j)$.
Hence, for $\sigma(i)<\sigma(j)$ we have
\begin{eqnarray}\label{eq:paolo_2}
&& 
\big\{{a_{\sigma(1)}}_{\lambda_{\sigma(1)}} 
\stackrel{i}{\check{\cdots}}\stackrel{j}{\check{\cdots}}
{a_{\sigma(k+1)}}_{\lambda_{\sigma(k+1)}} 
[{a_{\sigma(i)}}_{\lambda_{\sigma(i)}} a_{\sigma(j)}]
\big\}_{c}\,
\Big|_{\lambda_{k+1}=\lambda_{k+1}^\dagger} \\
&& 
= \text{sign}(\sigma)(-1)^{i+j+\sigma(i)+\sigma(j)}
\big\{{a_1}_{\lambda_1} 
\stackrel{\sigma(i)}{\check{\cdots}}\stackrel{\sigma(j)}{\check{\cdots}}
{a_{k+1}}_{\lambda_{k+1}} 
[{a_{\sigma(i)}}_{\lambda_{\sigma(i)}} a_{\sigma(j)}]
\big\}_{c}\,
\Big|_{\lambda_{k+1}=\lambda_{k+1}^\dagger} \,,\nonumber
\end{eqnarray}
while for $\sigma(i)>\sigma(j)$ we have, by the skew-symmetry of the $\lambda$-bracket in $A$,
\begin{eqnarray}\label{eq:paolo_3}
&& 
\big\{{a_{\sigma(1)}}_{\lambda_{\sigma(1)}} 
\stackrel{i}{\check{\cdots}}\stackrel{j}{\check{\cdots}}
{a_{\sigma(k+1)}}_{\lambda_{\sigma(k+1)}} 
[{a_{\sigma(i)}}_{\lambda_{\sigma(i)}} a_{\sigma(j)}]
\big\}_{c}\,
\Big|_{\lambda_{k+1}=\lambda_{k+1}^\dagger} \nonumber\\
&& 
= \text{sign}(\sigma)(-1)^{i+j+\sigma(i)+\sigma(j)}
\big\{{a_1}_{\lambda_1} 
\stackrel{\sigma(j)}{\check{\cdots}}\stackrel{\sigma(i)}{\check{\cdots}}
{a_{\sigma(k+1)}}_{\lambda_{k+1}} 
[{a_{\sigma(j)}}_{-\lambda_{\sigma(i)}-\partial} a_{\sigma(i)}]
\big\}_{c}\,
\Big|_{\lambda_{k+1}=\lambda_{k+1}^\dagger} \\
&&
= \text{sign}(\sigma)(-1)^{i+j+\sigma(i)+\sigma(j)}
\big\{{a_1}_{\lambda_1} 
\stackrel{\sigma(j)}{\check{\cdots}}\stackrel{\sigma(i)}{\check{\cdots}}
{a_{\sigma(k+1)}}_{\lambda_{k+1}} 
[{a_{\sigma(j)}}_{\lambda_{\sigma(j)}} a_{\sigma(i)}]
\big\}_{c}\,
\Big|_{\lambda_{k+1}=\lambda_{k+1}^\dagger} \,.\nonumber
\end{eqnarray}
In the last identity we used the assumption that $c$ satisfies condition B2.
Clearly, equations \eqref{eq:paolo_1}, \eqref{eq:paolo_2} and \eqref{eq:paolo_3},
together with the definition \eqref{eq:d>>} of $dc$,
imply that $dc$ satisfies condition B3.
We are left to prove that $d^2c=0$.
We have, by \eqref{eq:d>>},
\begin{eqnarray}\label{eq:fe_1}
&\displaystyle{
\{{a_1}_{\lambda_1}\cdots {a_{k+1}}_{\lambda_{k+1}} a_{k+2}\}_{d^2c} 
\,=\,
\bigg(
\sum_{i=1}^{k+2} (-1)^{i+1} {a_i}_{\lambda_i}
\big\{{a_1}_{\lambda_1}\stackrel{i}{\check{\cdots}}{a_{k+1}}_{\lambda_{k+1}} a_{k+2}\big\}_{dc} 
}\nonumber\\
&\displaystyle{
+ \sum_{\substack{i,j=1\\i<j}}^{k+2} (-1)^{k+i+j} 
\big\{{a_1}_{\lambda_1}\stackrel{i}{\check{\cdots}}\stackrel{j}{\check{\cdots}}
{a_{k+2}}_{\lambda_{k+2}}[{a_i}_{\lambda_i} a_j]\big\}_{dc} 
\bigg)\bigg|_{\lambda_{k+2}=\lambda_{k+2}^\dagger}\,,
}
\end{eqnarray}
where, in the RHS, we replace $\lambda_{k+2}$ by 
$\lambda_{k+2}^\dagger=-\sum_{j=1}^{k+1}\lambda_j-\partial^M$,
and $\partial^M$ is moved to the left.
Again by \eqref{eq:d>>} and by sesquilinearity of the $\lambda$-action of $A$ on $M$,
the first term in the RHS of \eqref{eq:fe_1} is
\begin{eqnarray}\label{eq:fe_2}
&&\displaystyle{
\Bigg(
\sum_{\substack{i,j=1\\i\neq j}}^{k+2} (-1)^{i+j} \epsilon(i,j)
{a_j}_{\lambda_j} \Big({a_i}_{\lambda_i}
\big\{{a_1}_{\lambda_1}
\stackrel{i}{\check{\cdots}}\stackrel{j}{\check{\cdots}}
{a_{k+1}}_{\lambda_{k+1}} a_{k+2}\big\}_{c} \Big)
}\\
&&\displaystyle{
+ \sum_{\substack{i,j,h=1\\i<j \\ i,j\neq h}}^{k+2} (-1)^{k+i+j+h} \epsilon(i,h)\epsilon(j,h)
{a_h}_{\lambda_h}
\big\{{a_1}_{\lambda_1}\stackrel{i}{\check{\cdots}}\stackrel{j}{\check{\cdots}}\stackrel{h}{\check{\cdots}}
{a_{k+2}}_{\lambda_{k+2}}[{a_i}_{\lambda_i} a_j]\big\}_{c} 
\Bigg)\Bigg|_{\lambda_{k+2}=\lambda_{k+2}^\dagger}\,,
}\nonumber
\end{eqnarray}
where $\epsilon(i,j)$ is $+1$ if $i<j$ and $-1$ if $i>j$.
Similarly, by \eqref{eq:d>} the second term in the RHS of \eqref{eq:fe_1} is
\begin{eqnarray}\label{eq:fe_3}
&\displaystyle{
\bigg(
\sum_{\substack{i,j,h=1\\i<j \\ i,j\neq h}}^{k+2} (-1)^{k+i+j+h+1} \epsilon(h,i)\epsilon(h,j)
{a_h}_{\lambda_h}
\big\{{a_1}_{\lambda_1}
\stackrel{h}{\check{\cdots}}\stackrel{i}{\check{\cdots}}\stackrel{j}{\check{\cdots}}
{a_{k+2}}_{\lambda_{k+2}} [{a_i}_{\lambda_i}a_j]\big\}_{c} 
}\nonumber\\
&\displaystyle{
+\sum_{\substack{i,j=1\\i< j}}^{k+2} (-1)^{i+j} 
{[{a_i}_{\lambda_i}a_j]}_{\lambda_i+\lambda_j}
\big\{{a_1}_{\lambda_1}
\stackrel{i}{\check{\cdots}}\stackrel{j}{\check{\cdots}}
{a_{k+1}}_{\lambda_{k+1}} a_{k+2}\big\}_{c} 
}\\
&\displaystyle{
\!\!\!\!\!\!\!\!\!\!\!\!\!\!\!\!\!\!\!\!\!\!\!\!\!\!\!\!\!\!\!\!\!\!\!\!\!\!\!\!\!\!\!\!\!\!\!\!\!\!\!\!\!\!\!\!\!\!\!\!\!\!\!\!\!\!
+ \sum_{\substack{i,j,p,q=1\\i<j,p<q \\ \{i,j\}\cap\{p,q\}=\emptyset}}^{k+2} 
(-1)^{i+j+p+q} \epsilon(p,i)\epsilon(p,j)\epsilon(q,i)\epsilon(q,j)
}\nonumber\\
&\displaystyle{
\,\,\,\,\,\,\,\,\,\,\,\,\,\,\,\,\,\,
\times\,\big\{{a_1}_{\lambda_1}
\stackrel{p}{\check{\cdots}}\stackrel{q}{\check{\cdots}}
\stackrel{i}{\check{\cdots}}\stackrel{j}{\check{\cdots}}
{a_{k+2}}_{\lambda_{k+2}}
[{a_i}_{\lambda_i} a_j]_{\lambda_i+\lambda_j}
[{a_p}_{\lambda_p} a_q]\big\}_{c} 
}\nonumber\\
&\displaystyle{
+ \sum_{\substack{i,j,h=1\\i<j \\ i,j\neq h}}^{k+2} 
(-1)^{k+i+j+h} \epsilon(h,i)\epsilon(h,j)
\big\{{a_1}_{\lambda_1}
\stackrel{h}{\check{\cdots}}\stackrel{i}{\check{\cdots}}\stackrel{j}{\check{\cdots}}
{a_{k+2}}_{\lambda_{k+2}}
[{a_h}_{\lambda_h}[{a_i}_{\lambda_i} a_j]]
\big\}_{c} 
\bigg)\bigg|_{\lambda_{k+2}=\lambda_{k+2}^\dagger}\,.
}\nonumber
\end{eqnarray}
Notice that the first term in \eqref{eq:fe_2} is the negative of the second term in \eqref{eq:fe_3},
and the second term in \eqref{eq:fe_2} is the negative of the first term in \eqref{eq:fe_3}.
Moreover, it is not hard to check,
using the Jacobi identity for the $\lambda$-bracket on $A$,
that the last tern in \eqref{eq:fe_3} is identically zero,
and, using the skew-symmetry condition B3. on $c$,
that also the third term in \eqref{eq:fe_3} is zero.
In conclusion, $d^2c=0$, as we wanted.
\end{proof}
In the next section we shall embed the cohomology complex
$\Gamma^\bullet$, introduced in Section~1.1, in the cohomology
complex $\bar{C}^\bullet$, and we shall prove that,
if the $\mb F[\partial]$-module $A$ decomposes as a direct sum of the torsion
and a free submodule,
then this embedding is an isomorphism.
We believe that the (slightly) bigger cohomology complex $C^\bullet$ 
is a more natural and a more correct definition for the Lie conformal algebra 
cohomolgy complex.
This will be clear when interpreting in Section \ref{sec:2} the cohomology $H(C^\bullet,d)$
in terms of abelian Lie conformal algebra extensions of $A$ by
the module $M$.

\vspace{3ex}
\subsection{Isomorphism of the cohomology complexes $\Gamma^\bullet$ and $\bar C^\bullet$.}~~
\label{sec:1.4}
We define, for $k\geq1$, 
an $\mb F$-linear map $\psi^k:\,{\tilde\Gamma}^k\to C^k$, as follows.
Given $\tilde\gamma\in\tilde\Gamma^k$, we define 
$\psi^k(\tilde\gamma):\,A^{\otimes k}\to\mb F[\lambda_1,\dots,\lambda_{k-1}]\otimes M$, by:
\begin{equation}\label{eq:5}
\{{a_1}_{\lambda_1}\cdots{a_{k-1}}_{\lambda_{k-1}}a_k\}_{\psi^k(\tilde\gamma)}
=
\tilde\gamma_{\lambda_1,\cdots,\lambda_{k-1},\lambda_k^\dagger}(a_1,\cdots,a_{k})\,,
\end{equation}
where, as before, 
$\lambda_k^\dagger=-\sum_{j=1}^{k-1}\lambda_j-\partial^M$,
and $\partial^M$ is moved to the left.
\begin{lemma}\label{lem:laif}
\begin{enumerate}
\alphaparenlist
\item 
For $\tilde\gamma\in\tilde\Gamma^k$, we have $\psi^k(\tilde\gamma)\in\bar C^k$.
\item 
We have $\Ker(\psi^k)=\partial\tilde\Gamma^k$.
Hence $\psi^k$ induces an injective $\mb F$-linear map 
$\psi^k:\,\Gamma^k=\tilde\Gamma^k/\partial\tilde\Gamma^k\hookrightarrow\bar C^k\subset C^k$.
\item 
Suppose that the Lie conformal algebra $A$ decomposes, 
as $\mb F[\partial]$-module, as
\begin{equation}\label{eq:anniv_1}
A=T\oplus\big(\mb F[\partial]\otimes U\big)\,,
\end{equation}
where $T$ is the torsion of $A$ and $\bar A=\mb F[\partial]\otimes U$ is a complementary
free submodule.
Then $\psi^k(\tilde\Gamma^k)=\bar C^k$,
hence $\psi^k$ induces a bijective $\mb F$-linear map 
$\psi^k:\,\Gamma^k\stackrel{\sim}{\rightarrow}\bar C^k$.
\end{enumerate}
\end{lemma}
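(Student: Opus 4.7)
For part (a), I would verify the four conditions defining $\bar C^k$ directly from A1, A2 of $\tilde\gamma$. Conditions B1 and B2 are immediate: substituting $\partial a_i$ in position $i<k$ produces the factor $-\lambda_i$ via A1, while in position $k$ it produces $-\lambda_k^\dagger = \sum_{j<k}\lambda_j + \partial^M$, matching B2 after the substitution defining $\psi^k$. Condition B3 follows from A2 of $\tilde\gamma$ by a direct calculation: applying A2 with the permutation $\sigma$ places at position $p := \sigma(k)$ the value $-\sum_{i\neq p}\lambda_i - \partial^M$; then the substitution $\lambda_k\mapsto\lambda_k^\dagger=-\sum_{j<k}\lambda_j-\partial^M$ simplifies this value back to $\lambda_p$, leaving $\lambda_k^\dagger$ at position $k$, which is exactly the form required by B3. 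Vanishing on torsion arguments is Remark 1.1.1 applied to $\tilde\gamma$.

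For part (b), the inclusion $\partial\tilde\Gamma^k \subseteq \Ker\psi^k$ is a one-line check: by (1.7), $\psi^k(\partial\tilde\gamma')=(\partial^M+\lambda_1+\cdots+\lambda_{k-1}+\lambda_k^\dagger)\,\tilde\gamma'_{\lambda_1,\ldots,\lambda_{k-1},\lambda_k^\dagger}=0$, since the prefactor vanishes by definition of $\lambda_k^\dagger$. For the reverse inclusion, I would note that the $\partial$-action on $\tilde\Gamma^k$ is multiplication by $\Phi := \lambda_k + \sum_{j<k}\lambda_j + \partial^M$, a monic polynomial of degree one in $\lambda_k$. Regarding $\tilde\gamma_{\lambda_1,\ldots,\lambda_k}(a_1,\ldots,a_k)$ as a polynomial in $\lambda_k$ with coefficients in $\mathbb F[\lambda_1,\ldots,\lambda_{k-1}]\otimes M$, polynomial division by $\Phi$ gives a unique decomposition $\tilde\gamma = \Phi\tilde\gamma' + \rho$ with $\rho$ independent of $\lambda_k$. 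Evaluating at $\lambda_k=\lambda_k^\dagger$ (where $\Phi=0$) identifies $\rho=\psi^k(\tilde\gamma)$. Injectivity of multiplication by the monic $\Phi$ (by comparing leading coefficients in $\lambda_k$) shows the quotient $\tilde\gamma'$ is well-defined, and the $S_k$-equivariance of $\Phi$ together with this injectivity forces $\tilde\gamma'$ to inherit A1 and A2. Hence if $\psi^k(\tilde\gamma)=0$, then $\tilde\gamma = \partial\tilde\gamma'$ with $\tilde\gamma'\in\tilde\Gamma^k$.

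For part (c), given the splitting $A = T \oplus \bar A$ with $\bar A = \mathbb F[\partial]\otimes U$, I would construct a preimage of $c\in\bar C^k$ by signed symmetrization. On $U^{\otimes k}$, set
$$\tilde\gamma_{\lambda_1,\ldots,\lambda_k}(u_1,\ldots,u_k) \,:=\, \frac{1}{k!}\sum_{\sigma\in S_k} \mathrm{sign}(\sigma)\, c_{\lambda_{\sigma(1)},\ldots,\lambda_{\sigma(k-1)}}(u_{\sigma(1)},\ldots,u_{\sigma(k)})\,,$$
extend to $\bar A^{\otimes k}$ by the sesquilinear rule $\tilde\gamma(\ldots, P(\partial)u_i,\ldots) = P(-\lambda_i)\tilde\gamma(\ldots,u_i,\ldots)$, and set $\tilde\gamma=0$ whenever an argument lies in $T$. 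Skew-symmetry A2 is built in by the alternating sum; A1 follows from the sesquilinear extension on $\bar A$ together with $\partial T\subseteq T$ and the fact that $\bar A$ is torsion-free (so the decomposition $a = t + b$ transforms compatibly under $\partial$). To verify $\psi^k(\tilde\gamma)=c$ on $U^{\otimes k}$, substitute $\lambda_k\mapsto\lambda_k^\dagger$ in each summand of the symmetrization: condition B3 of $c$ asserts precisely that each such summand equals $c_{\lambda_1,\ldots,\lambda_{k-1}}(u_1,\ldots,u_k)$, so the $k!$ identical terms average to give $c$. The identity then extends to $\bar A^{\otimes k}$ by sesquilinearity and to arguments containing torsion since both sides vanish there (the latter by hypothesis $c\in\bar C^k$).

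The main obstacle is part (c), where one must produce a skew-symmetric lift compatible with the asymmetric role of $\lambda_k$. The tension is that $\tilde\gamma$ must be $S_k$-equivariant in the full set of variables $\lambda_1,\ldots,\lambda_k$, yet $c$ only has variables $\lambda_1,\ldots,\lambda_{k-1}$ and carries its "hidden" $\lambda_k^\dagger$-symmetry via B3. The symmetrization succeeds because B3 of $c$ was formulated precisely so as to convert each of the $k!$ alternating summands into the same expression after the substitution $\lambda_k\mapsto\lambda_k^\dagger$, forcing the averaging to collapse exactly onto $c$.
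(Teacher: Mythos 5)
Your proposal is correct; the only place where it genuinely departs from the paper is in part (c). For (a) your argument is the paper's. For (b), the paper produces the quotient $\tilde\vartheta$ with $\tilde\gamma=(\partial^M+\lambda_1+\cdots+\lambda_k)\tilde\vartheta$ by an explicit Taylor expansion in $\lambda_k^\dagger-\lambda_k$ (equation \eqref{eq:laif_2}); your division by the monic degree-one polynomial $\Phi=\lambda_k+\sum_{j<k}\lambda_j+\partial^M$ yields exactly the same $\tilde\vartheta$ (the Taylor formula is just the closed form of the quotient), and your uniqueness/injectivity argument for transferring A1 and A2 to the quotient is the same in substance as the paper's remark that the identity $\tilde\gamma=(\partial^M+\Lambda)\tilde\vartheta$ forces $\tilde\vartheta$ to inherit these conditions; note that you only need this in the case $\rho=0$, which is the case at hand. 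In part (c) the paper does \emph{not} symmetrize: it defines $\tilde\gamma$ on $U^{\otimes k}$ by the single term \eqref{eq:anniv_5}, i.e.\ $\{{u_1}_{\lambda_1-\frac{\Lambda+\partial^M}{k}}\cdots u_k\}_c$, exploiting that the shifted variable $\lambda_k-\frac{\Lambda+\partial^M}{k}$ is a fixed point of the $\dagger$ operation, so B3 for $c$ directly yields A2, and $\psi^k$ recovers $c$ because $\Lambda+\partial^M$ becomes $0$ under $\lambda_k\mapsto\lambda_k^\dagger$. Your alternating average over $S_k$ achieves the same two verifications by a different mechanism: A2 is built into the sum, and $\psi^k(\tilde\gamma)=c$ because B3 collapses each of the $k!$ summands to $c$ after the substitution. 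Both constructions need characteristic zero (division by $k$ versus by $k!$), both extend to $\bar A^{\otimes k}$ by sesquilinearity and vanish on the torsion, and the closing steps (reduction to $U^{\otimes k}$ via B1, B2) are identical; the paper's shift buys a one-term closed formula, while your symmetrization is the more mechanical route and makes the skew-symmetry manifest at the cost of a sum over $S_k$.
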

\begin{proof}
Let $\tilde\gamma\in\tilde\Gamma^k$, and consider $c=\psi^k(\tilde\gamma)$.
We want to prove that $c\in \bar C^k$.
It is clear that $c$ satisfies conditions B1. and B2. Let us check that it also satisfies condition B3.
Let $\sigma$ be a permutation of the set $\{1,\dots,k\}$, and let $i=\sigma(k)$.
Since $\tilde\gamma$ satisfies the skew-symmetry condition A2., 
we have
\begin{eqnarray}\label{eq:anniv_4}
& 
\{{a_{\sigma(1)}}_{\lambda_{\sigma(1)}}\cdots{a_{\sigma(k-1)}}_{\lambda_{\sigma(k-1)}}a_{\sigma(k)}\}_c
=\tilde\gamma_{\lambda_{\sigma(1)},\cdots,\lambda_{\sigma(k-1)},\lambda_{\sigma(k)}^\dagger}
(a_{\sigma(1)},\cdots,a_{\sigma(k)}) \nonumber\\
& =\text{sign}(\sigma) \tilde\gamma_{\lambda_{1},\cdots,\lambda_i^\dagger,\cdots,\lambda_{k}}
(a_1,\cdots,a_{k})\,.
\end{eqnarray}
If we then replace $\lambda_k$ by $\lambda_k^\dagger$, as prescribed by condition B3.,
we get
\begin{equation}\label{eq:anniv_3}
\lambda_i^\dagger\,\mapsto\,
-\lambda_1-\stackrel{i}{\check{\cdots}}-\lambda_{k-1}-\lambda_k^\dagger-\partial^M=\lambda_i\,.
\end{equation}
Therefore the RHS of \eqref{eq:anniv_4} becomes 
sign$(\sigma)\{{a_1}_{\lambda_1}\cdots{a_{k-1}}_{\lambda_{k-1}}a_k\}_c$,
as required.
It is also clear that $c$ vanishes on the torsion of $A$, thanks to Remark \ref{rem:anniv},
so that $c\in\bar C^k$.
This proves part (a).

By the definition \eqref{eq:july24_8} of the action of $\partial$ on $\tilde\Gamma^k$,
and the definition \eqref{eq:5} of $\psi^k$, we have
$$
\{{a_1}_{\lambda_1}\cdots{a_{k-1}}_{\lambda_{k-1}}a_k\}_{\psi^k\partial\tilde\gamma}
\,=\,
(\partial\tilde\gamma)_{\lambda_1,\cdots,\lambda_{k-1},\lambda_k^\dagger}(a_1,\cdots,a_{k})
\,=\,0\,,
$$
since $-\lambda_1-\cdots-\lambda_{k-1}-\lambda_k^\dagger-\partial^M=0$.
Hence $\partial\tilde\Gamma^k\subset\Ker(\psi^k)$.
For the opposite inclusion,
let $\tilde\gamma\in\Ker(\psi^k)$.
Namely,
$$
\tilde\gamma_{\lambda_1,\cdots,\lambda_{k-1},\lambda_k^\dagger}(a_1,\cdots,a_k)\,=\,0\,.
$$
By Taylor expanding in $\lambda_k^\dagger-\lambda_k$, we have
\begin{equation}\label{eq:laif_1}
\sum_{n=0}^\infty
\frac1{n!}(-\Lambda-\partial^M)^n\frac{d^n}{d\lambda_k^n}
\tilde\gamma_{\lambda_1,\cdots,\lambda_k}(a_1,\cdots,a_k)\,=\,0\,,
\end{equation}
where $\Lambda=\sum_{j=1}^k\lambda_j$.
We denote by $\tilde\vartheta:\,A^{\otimes k}\to\mb F[\lambda_1,\dots,\lambda_k]\otimes M$
the following $\mb F$-linear map
$$
\tilde\vartheta_{\lambda_1,\cdots,\lambda_k}(a_1,\cdots,a_k)
\,=\,
\sum_{n=1}^\infty
\frac1{n!}(-\Lambda-\partial^M)^{n-1}\frac{d^n}{d\lambda_k^n}
\tilde\gamma_{\lambda_1,\cdots,\lambda_k}(a_1,\cdots,a_k)\,.
$$
Equation \eqref{eq:laif_1} can then be rewritten as
\begin{equation}\label{eq:laif_2}
\tilde\gamma_{\lambda_1,\cdots,\lambda_k}(a_1,\cdots,a_k)
\,=\,
(\partial^M+\lambda_1+\cdots+\lambda_k)
\tilde\vartheta_{\lambda_1,\cdots,\lambda_k}(a_1,\cdots,a_k)\,.
\end{equation}
It follows from equation \eqref{eq:laif_2} that
$\tilde\vartheta$ satisfies conditions A1. and A2., 
since $\tilde\gamma$ does.
Hence $\tilde\vartheta\in\tilde\Gamma^k$.
Equation \eqref{eq:laif_2} then implies that 
$\tilde\gamma=\partial\tilde\vartheta\in\partial\tilde\Gamma^k$,
thus proving (b).

Assume next that $A$ decomposes as in \eqref{eq:anniv_1}.
We need to prove that, for $c\in\bar C^k$,
we can find $\tilde\gamma\in\tilde\Gamma^k$ such that 
\begin{equation}\label{eq:anniv_6}
\psi^k(\tilde\gamma)\,=\,c\,.
\end{equation}
Such a $k$-cochain can be constructed as follows.
For $u_1,\dots,u_{k}\in U$, we let
\begin{equation}\label{eq:anniv_5}
\tilde\gamma_{\lambda_1,\cdots,\lambda_{k}}(u_1,\cdots,u_{k})
\,=\, 
\{{u_1}_{\lambda_1-\frac{\Lambda+\partial^M}{k}}
\cdots
{u_{k-1}}_{\lambda_{k-1}-\frac{\Lambda+\partial^M}{k}}u_k\}_c\,,
\end{equation}
where $\Lambda=\sum_{i=0}^{k-1}\lambda_i$,
and we extend it to $\big(\mb F[\partial]\otimes U\big)^{\otimes k}$ 
by the sesquilinearity condition A1., and to $A^{\otimes k}$ letting it zero 
if one of the arguments is in the torsion $T$.
We need to check that $\tilde\gamma$ satisfies conditions A1., A2. and \eqref{eq:anniv_6}.
Condition A1. is obvious.
It suffices to check condition A2. for elements $a_i=u_i\in U,\,i=1,\dots,k$.
Let $\sigma$ be a permutation of the indices $\{1,\dots,k\}$.
We have,
\begin{equation}\label{eq:anniv_7}
\tilde\gamma_{\lambda_{\sigma(1)},\cdots,\lambda_{\sigma(k)}}(u_{\sigma(1)},\cdots,u_{\sigma(k)})
=
\{{u_{\sigma(1)}}_{\lambda_{\sigma(1)}-\frac{\Lambda+\partial^M}{k}}
\cdots
{u_{\sigma(k-1)}}_{\lambda_{\sigma(k-1)}-\frac{\Lambda+\partial^M}{k}}u_{\sigma(k)}\}_c\,.
\end{equation}
We then observe that
$$
\Big(\lambda_k-\frac{\Lambda+\partial^M}{k}\Big)^\dagger
=-\sum_{i=1}^{k-1}\Big(\lambda_i-\frac{\Lambda+\partial^M}{k}\Big)-\partial^M
=\lambda_k-\frac{\Lambda+\partial^M}{k}\,.
$$
Hence, since $c$ satisfies the skew-symmetry condition B3., the RHS of \eqref{eq:anniv_7}
is equal to
$$
\text{sign}(\sigma) 
\{{u_1}_{\lambda_1-\frac{\Lambda+\partial^M}{k}}
\cdots
{u_{k-1}}_{\lambda_{k-1}-\frac{\Lambda+\partial^M}{k}}u_k\}_c
=
\text{sign}(\sigma) 
\tilde\gamma_{\lambda_1,\cdots,\lambda_{k}}(u_1,\cdots,u_{k})\,.
$$
Finally, we prove that \eqref{eq:anniv_6} holds. We have, for $u_1,\dots,u_{k}\in U$,
\begin{equation}\label{eq:anniv_8}
\{{u_1}_{\lambda_1}\cdots{u_{k-1}}_{\lambda_{k-1}}u_k\}_{\psi^k(\tilde\gamma)}
=
\tilde\gamma_{\lambda_1,\cdots,\lambda_{k-1},\lambda^\dagger_k}(u_1,\cdots,u_{k})\,.
\end{equation}
Note that, if we replace $\lambda_k$ by $\lambda_k^\dagger$, $\Lambda+\partial^M$ becomes 0.
Hence, by the definition \eqref{eq:anniv_5} of $\tilde\gamma$, the RHS 
of \eqref{eq:anniv_8} is equal to 
$\{{u_1}_{\lambda_1}\cdots{u_{k-1}}_{\lambda_{k-1}}u_k\}_c$.
This proves that \eqref{eq:anniv_6} holds for elements of $U$.
Clearly both sides of \eqref{eq:anniv_6} are zero if one of the elements $a_i$ is in $T$.
Since both $\psi^k(\tilde\gamma)$ and $c$ satisfy the sesquilinearity conditions B1. and B2.,
we conclude that \eqref{eq:anniv_6} holds for every $a_i\in A$.
\end{proof}
\begin{theorem}\label{th:red}
The identity map on $M/\partial M$ and the maps $\psi^k,\,k\geq1$, 
induce an embedding of cohomology complexes $\Gamma^\bullet\hookrightarrow\bar C^\bullet$.
If, moreover, the Lie conformal algebra $A$ decomposes, as $\mb F[\partial]$-module, in
a direct sum of a free module and the torsion,
this map is an isomorphism of complexes: $\Gamma^\bullet\simeq\bar C^\bullet$.
\end{theorem}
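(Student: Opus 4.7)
The plan is to combine Lemma~\ref{lem:laif}, which does the linear-algebra half of the work, with a direct verification that the family of maps $(\id_{M/\partial^M M},\psi^1,\psi^2,\ldots)$ intertwines the differentials. Lemma~\ref{lem:laif}(a)--(b) already shows that each $\psi^k$ (for $k\ge 1$) factors through $\Gamma^k$ to give an injective $\mb F$-linear map $\Gamma^k \hookrightarrow \bar C^k$, and part~(c) upgrades this to a bijection once $A$ decomposes as a direct sum of its torsion and a free $\mb F[\partial]$-submodule. So the only remaining task is to check the cochain-map identity $d\circ\psi^k = \psi^{k+1}\circ\delta$ for every $k\ge 0$.

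For $k=0$ this is a one-line computation: for $m\in\tilde\Gamma^0=M$ formula~\eqref{eq:july24_7} specializes to $(\delta m)_\lambda(a)=a_\lambda m$, whence $\psi^1(\delta m)(a)=(\delta m)_{-\partial^M}(a)=a_{-\partial^M}m$, which coincides with $d\tint m(a)$ by~\eqref{eq:d0}.

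For $k\ge 1$, the concise form~\eqref{eq:d>>} of $d$ is arranged so that $\{\cdots\}_{d(\psi^k\tilde\gamma)}$ mirrors term by term the expression $(\delta\tilde\gamma)_{\lambda_1,\ldots,\lambda_{k+1}}$ evaluated at $\lambda_{k+1}=\lambda_{k+1}^\dagger$. I would check the matching in two batches. In the ``second-sum'' terms of~\eqref{eq:d>>} (and the $j=k+1$ pieces of the fourth sum of~\eqref{eq:d>}), the verification is purely algebraic: unfolding $\psi^k\tilde\gamma$ via~\eqref{eq:5} and collecting the hidden dagger, one sees that the dagger slot reduces to $\lambda_i+\lambda_j$ (respectively to $\lambda_i+\lambda_{k+1}^\dagger$) by a one-line identity among the $\lambda$'s and $\partial^M$, exactly matching the corresponding slot of $\tilde\gamma$ in~\eqref{eq:july24_7}.

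The genuine technical obstacle lies in the first-sum terms $(a_i)_{\lambda_i}\{a_1,\ldots,a_{k+1}\}_{\psi^k\tilde\gamma}$ (with $a_i$ removed, $1\le i\le k$). Here the inner dagger slot of $\psi^k\tilde\gamma$ evaluates to $X_i:=-\sum_{j\neq i,\,j\le k}\lambda_j-\partial^M$, which differs from $\lambda_{k+1}^\dagger=-\sum_{j\le k}\lambda_j-\partial^M$ by $-\lambda_i$. The decisive identity, a direct consequence of the sesquilinearity rule $(a_i)_{\lambda_i}\partial^M=(\partial^M+\lambda_i)(a_i)_{\lambda_i}$, is
\[
(a_i)_{\lambda_i}\circ X_i \,=\, \lambda_{k+1}^\dagger\circ (a_i)_{\lambda_i},
\]
where $X_i$ and $\lambda_{k+1}^\dagger$ both act on $\mb F[\lambda_1,\ldots,\lambda_k]\otimes M$ by scalar multiplication together with the $-\partial^M$ operator. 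Iterating yields $(a_i)_{\lambda_i}\bigl[X_i^{n}m\bigr]=(\lambda_{k+1}^\dagger)^{n}(a_i)_{\lambda_i}m$, which is exactly what is needed to match $(a_i)_{\lambda_i}\tilde\gamma_{\,\cdots,X_i}(\cdots)$ with $(a_i)_{\lambda_i}\tilde\gamma_{\,\cdots,\lambda_{k+1}^\dagger}(\cdots)$ monomial by monomial in the last slot of $\tilde\gamma$. An analogous push-through matches the $i=k+1$ piece of the first sum with the third term of~\eqref{eq:d>}. Once this is carried out, $d\circ\psi^k=\psi^{k+1}\circ\delta$ follows, and combined with Lemma~\ref{lem:laif} this yields both the embedding of complexes $\Gamma^\bullet\hookrightarrow\bar C^\bullet$ and, under the decomposition hypothesis, the isomorphism $\Gamma^\bullet\simeq\bar C^\bullet$.
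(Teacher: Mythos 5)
Your proposal is correct and follows essentially the same route as the paper: Lemma~\ref{lem:laif} supplies the injectivity/bijectivity of the $\psi^k$, and the remainder is the verification $d\circ\psi^k=\psi^{k+1}\circ\delta$, which the paper carries out exactly as you describe (your identity $(a_i)_{\lambda_i}\circ X_i=\lambda_{k+1}^\dagger\circ(a_i)_{\lambda_i}$ is precisely the sesquilinearity step the paper invokes in the last equality of its computation, and the reduction of the dagger slot to $\lambda_i+\lambda_j$, resp.\ $\lambda_i+\lambda_{k+1}^\dagger$, matches the remaining terms). No gaps.
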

\begin{proof}
By Lemma \ref{lem:laif} we already know that $\psi^k$ factors through an injective
$\mb F$-linear map $\psi^k:\,\Gamma^k\hookrightarrow \bar C^k$,
and that, if $A$ decomposes as in \eqref{eq:anniv_1},
this map is bijective.
Hence, in order to prove the theorem, we only have to prove that the following diagrams
are commutative:
\begin{equation}\label{eq:diagr}
\UseTips
\xymatrix{
& \bar C^{1} & \\
M/\partial^MM \ar[ur]^{d}  \ar[r]_{\delta}   & \Gamma^1 \ar[u]_{\psi^1} & ,
}\qquad
\xymatrix{
\bar C^k \ar[r]^{d}   
& \bar C^{k+1} & \\
\ar@{>}[u]^{\psi^k} \Gamma^k \ar[r]_{\delta}   
& \Gamma^{k+1} \ar@{>}[u]_{\psi^{k+1}} & ,\,\forall k\geq1\,.
}
\end{equation}
First, given $\tint m\in M/\partial^MM$, we have
$\big(\delta m\big)_\lambda(a)=a_\lambda m$,
so that
$\big(\psi^1\delta m\big)(a)=a_{-\partial^M} m=\big(d\tint m\big)(a)$,
namely the first diagram in \eqref{eq:diagr} is indeed commutative.
Next, given $k\geq1$, 
let $\tilde\gamma\in\tilde\Gamma^k$
be a representative of $\gamma\in\Gamma^k$.
We need to prove that
\begin{equation}\label{eq:ago20_8}
d\psi^k(\tilde\gamma)\,=\,\psi^{k+1}(\delta\tilde\gamma)\,.
\end{equation}
From \eqref{eq:d>} and \eqref{eq:5}, we have
\begin{eqnarray}\label{eq:ago21_1}
&&\displaystyle{
\{{a_1}_{\lambda_1}\cdots{a_{k}}_{\lambda_{k}}a_{k+1}\}_{d\psi^k(\tilde\gamma)}
}\nonumber\\
&&\displaystyle{
= \sum_{i=1}^k (-1)^{i+1} {a_i}_{\lambda_i}
\big\{{a_1}_{\lambda_1}\stackrel{i}{\check{\cdots}}
{a_{k}}_{\lambda_{k}} a_{k+1}\big\}_{\psi^k(\tilde\gamma)} 
+(-1)^k {a_{k+1}}_{\lambda_{k+1}^\dagger}
\big\{{a_1}_{\lambda_1}\cdots{a_{k-1}}_{\lambda_{k-1}} a_{k}\big\}_{\psi^k(\tilde\gamma)} 
}\nonumber\\
&&\displaystyle{
\,\,\,\,\,\,\,
+ \sum_{\substack{i,j=1\\i<j}}^k (-1)^{k+i+j+1} 
\big\{{a_1}_{\lambda_1}\stackrel{i}{\check{\cdots}}\stackrel{j}{\check{\cdots}}
{a_{k}}_{\lambda_{k}} {a_{k+1}}_{\lambda_{k+1}^\dagger}
[{a_i}_{\lambda_i} a_j]\big\}_{\psi^k(\tilde\gamma)} 
}\nonumber\\
&&\displaystyle{
\,\,\,\,\,\,\,
+ \sum_{i=1}^k (-1)^{i}
\big\{{a_1}_{\lambda_1}\stackrel{i}{\check{\cdots}}{a_{k}}_{\lambda_{k}} 
[{a_i}_{\lambda_i} a_{k+1}]\big\}_{\psi^k(\tilde\gamma)}
}\\
&&\displaystyle{
= \sum_{i=1}^k (-1)^{i+1} {a_i}_{\lambda_i}
\tilde\gamma_{\lambda_1,\stackrel{i}{\check{\cdots}},\lambda_{k},\lambda_{k+1}^\dagger}
(a_1,\stackrel{i}{\check{\cdots}},a_{k+1})
+(-1)^k {a_{k+1}}_{\lambda_{k+1}^\dagger}
\tilde\gamma_{\lambda_1,\cdots,\lambda_{k}}(a_1,\cdots,a_{k})
}\nonumber\\
&&\displaystyle{
\,\,\,\,\,\,\,
+ \sum_{\substack{i,j=1\\i<j}}^k (-1)^{k+i+j+1} 
\tilde\gamma_{\lambda_1,
\stackrel{i}{\check{\cdots}}\stackrel{j}{\check{\cdots}},
\lambda_{k},\lambda_{k+1}^\dagger,\lambda_i+\lambda_j}
(a_1,\stackrel{i}{\check{\cdots}}\stackrel{j}{\check{\cdots}},a_{k+1},[{a_i}_{\lambda_i} a_j])
}\nonumber\\
&&\displaystyle{
\,\,\,\,\,\,\,
+ \sum_{i=1}^k (-1)^{i}
\tilde\gamma_{\lambda_1,
\stackrel{i}{\check{\cdots}},
\lambda_{k},\lambda_i+\lambda_{k+1}^\dagger}
(a_1,\stackrel{i}{\check{\cdots}},a_{k},[{a_i}_{\lambda_i} a_{k+1}])\,.
}\nonumber
\end{eqnarray}
In the last equality we used the sesquilinearity of the $\lambda$-action of $A$ on $M$.
Clearly, the RHS of \eqref{eq:ago21_1} is the same as 
$\{{a_1}_{\lambda_1}\cdots{a_{k}}_{\lambda_{k}}a_{k+1}\}_{\psi^{k+1}(\delta\tilde\gamma})$.
This proves equation \eqref{eq:ago20_8} and the theorem.
\end{proof}

\vspace{3ex}
\subsection{Exterior multiplication on $\tilde\Gamma^\bullet$.}~~
\label{sec:3.1}
To complete the section, we review the definition of the wedge product 
on the basic Lie conformal algebra cohomology complex  $\tilde\Gamma^\bullet(A,M)$
(cf. \cite{BKV}).
We assume that 
$A$ is a Lie conformal algebra
and $M$ is an $A$-module endowed with a commutative, associative product
such that $\partial^M:\,M\to M$, and 
$a^M_\lambda:\,M\to\mb F[\lambda]\otimes M$, 
are (ordinary) derivations of this product.

Consider the basic Lie conformal algebra 
cohomology complex $\tilde\Gamma^\bullet(A,M)$
introduced in Section \ref{sec:1.1}.
Given two cochains $\tilde\alpha\in\tilde\Gamma^h$ and $\tilde\beta\in\tilde\Gamma^k$,
we define their \emph{exterior multiplication} $\tilde\alpha\wedge\tilde\beta\in\tilde\Gamma^{k+h}$ 
by the following formula:
\begin{eqnarray}\label{eq:sfor_2}
&& (\tilde\alpha\wedge\tilde\beta)_{\lambda_1,\cdots,\lambda_{h+k}}(a_1,\cdots,a_{h+k}) \\
&& =
\sum_{\sigma\in S_{h+k}} \frac{\text{sign}(\sigma)}{h!k!}
\tilde\alpha_{\lambda_{\sigma(1)},\cdots,\lambda_{\sigma(h)}}
(a_{\sigma(1)},\cdots,a_{\sigma(h)})
\tilde\beta_{\lambda_{\sigma(h+1)},\cdots,\lambda_{\sigma(h+k)}}
(a_{\sigma(h+1)},\cdots,a_{\sigma(h+k)})\,,\nonumber
\end{eqnarray}
where the sum is over the set $S_{h+k}$ of all permutations of $\{1,\dots,h+k\}$.
\begin{proposition}\label{prop:sfor}
\begin{enumerate}
\alphaparenlist
\item The exterior multiplication \eqref{eq:sfor_2} 
makes $\tilde\Gamma^\bullet$ into a $\mb Z$-graded commutative associative superalgebra,
generated by $\tilde\Gamma^0\oplus\tilde\Gamma^1$,
$M=\tilde\Gamma^0$ being an even subalgebra.
\item The operator $\partial$, acting on $\tilde\Gamma^\bullet$ by \eqref{eq:july24_8},
is an even derivation of the superalgebra $\tilde\Gamma^\bullet$.
\item The differential $\delta$, defined by \eqref{eq:july24_7},
is an odd derivation of the superalgebra $\tilde\Gamma^\bullet$.
\end{enumerate}
\end{proposition}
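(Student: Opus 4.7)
The plan is to verify the three parts in order, relying throughout on two structural facts: that $\partial^M$ and each $a^M_\lambda$ are derivations of the commutative product on $M$, and that in \eqref{eq:sfor_2} the sum runs over all of $S_{h+k}$ weighted by $1/(h!\,k!)$, so that the expression can be reindexed by left composition with any permutation without effort.

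\textbf{Part (a).} First I would check that $\tilde\alpha\wedge\tilde\beta$ genuinely lies in $\tilde\Gamma^{h+k}$. Condition A1 is immediate because a given $a_i$ appears in exactly one of the two factors under each $\sigma$, and A1 for that factor pulls out $-\lambda_i$ uniformly. Condition A2 follows by absorbing a transposition of arguments into the indexing permutation $\sigma$. Super-commutativity $\tilde\alpha\wedge\tilde\beta=(-1)^{hk}\,\tilde\beta\wedge\tilde\alpha$ is obtained by the substitution $\sigma\mapsto\sigma\circ\tau$, where $\tau$ is the block-swap of sign $(-1)^{hk}$, together with commutativity of the product on $M$. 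Associativity follows from the standard shuffle identity: both $(\tilde\alpha\wedge\tilde\beta)\wedge\tilde\gamma$ and $\tilde\alpha\wedge(\tilde\beta\wedge\tilde\gamma)$ can be rewritten as the single sum over $S_{h+k+\ell}$ with weight $1/(h!\,k!\,\ell!)$. For the generation claim, I would argue by induction on degree, exploiting A1 and A2 to recover any $\tilde\gamma\in\tilde\Gamma^k$ from wedges of $1$-cochains with $0$-cochain (i.e.\ $M$-valued) coefficients.

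\textbf{Part (b).} Applying \eqref{eq:july24_8} to $\tilde\alpha\wedge\tilde\beta$, the prefactor $\partial^M+\lambda_1+\cdots+\lambda_{h+k}$ is applied to each summand indexed by~$\sigma$ in \eqref{eq:sfor_2}. The variable sum splits canonically as $(\lambda_{\sigma(1)}+\cdots+\lambda_{\sigma(h)})+(\lambda_{\sigma(h+1)}+\cdots+\lambda_{\sigma(h+k)})$, and $\partial^M$ is a derivation of the product on~$M$; hence the prefactor acts as a derivation on the product of $\tilde\alpha$- and $\tilde\beta$-values, giving exactly $(\partial\tilde\alpha)\wedge\tilde\beta+\tilde\alpha\wedge(\partial\tilde\beta)$.

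\textbf{Part (c).} This is the main step and the principal obstacle. The target identity is the super-Leibniz rule $\delta(\tilde\alpha\wedge\tilde\beta)=(\delta\tilde\alpha)\wedge\tilde\beta+(-1)^h\,\tilde\alpha\wedge(\delta\tilde\beta)$. Expanding the left-hand side by \eqref{eq:july24_7}, the first group of terms contains $a_{i\,\lambda_i}$ acting on the value of $\tilde\alpha\wedge\tilde\beta$ with the $i$-th argument omitted; since $a^M_{i\,\lambda_i}$ is a derivation of the product on~$M$, each such term bifurcates according to whether $a_i$ lands in the $\tilde\alpha$-factor or the $\tilde\beta$-factor of each shuffle. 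The second group, inserting $[a_{i\,\lambda_i}a_j]$ at variable $\lambda_i+\lambda_j$, bifurcates into three cases: both of $i,j$ inside the arguments of $\tilde\alpha$, both inside those of $\tilde\beta$, or one in each; the mixed contributions cancel once the signs $(-1)^{k+i+j+1}$ are combined with the shuffle sign and the roles of $i$ and $j$ are swapped. Reassembling the homogeneous pieces and matching them against the expansions of $(\delta\tilde\alpha)\wedge\tilde\beta$ and $(-1)^h\,\tilde\alpha\wedge(\delta\tilde\beta)$ produces the required equality; the sign $(-1)^h$ arises precisely from moving $\delta$ past the $h$ arguments of $\tilde\alpha$ in the shuffle. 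The only delicate point is the combinatorial sign bookkeeping, which is best organised by rewriting \eqref{eq:sfor_2} as a sum over $(h,k)$-shuffles so that the ordering inside each factor is fixed.
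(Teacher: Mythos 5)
Your overall strategy for part (c) — expand $\delta(\tilde\alpha\wedge\tilde\beta)$ by \eqref{eq:july24_7}, use the fact that $a_{i\,\lambda_i}$ is a derivation of the product on $M$ to split the first group of terms, and redistribute the bracket-insertion terms between the two factors — is the same as the paper's, and parts (a) and (b) are fine (the paper declares them straightforward). But the case analysis you give for the second group of terms in part (c) is wrong as stated. In $\delta(\tilde\alpha\wedge\tilde\beta)$ the pair $a_i,a_j$ is \emph{merged into a single new argument} $[a_{i\,\lambda_i}a_j]$ \emph{before} the shuffle expansion of $\tilde\alpha\wedge\tilde\beta$ is applied; consequently that argument lands wholly in the $\tilde\alpha$-factor or wholly in the $\tilde\beta$-factor. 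There are exactly two cases, not three: there is no ``one in each'' case, hence no mixed contributions and nothing to cancel. If you set out to locate and cancel mixed terms you will not find them; the bracket terms instead match the bracket terms of $(\delta\tilde\alpha)\wedge\tilde\beta$ and of $(-1)^h\tilde\alpha\wedge(\delta\tilde\beta)$ \emph{directly and bijectively} (you may be pattern-matching from the proof of $d^2=0$, where genuine cancellations between different groups of terms do occur).

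The step you are missing is the one that makes this matching work: after the merged argument lands in, say, the $\tilde\alpha$-factor at some position $s\in\{1,\dots,h\}$, you must use the skew-symmetry A2 of $\tilde\alpha$ to move it to the last slot (picking up $(-1)^{h+s}$), and then re-identify the shuffles of the $h+k$ post-merge arguments having the bracket at position $s$ with the shuffles of the $h+k-1$ remaining original arguments; summing over $s$ produces the multiplicity $h$ (resp.\ $k$ for the $\tilde\beta$-side), which is exactly what converts the weight $1/(h!\,k!)$ of the left-hand side into the weight $1/((h+1)!\,k!)$ appearing in the expansion of $(\delta\tilde\alpha)\wedge\tilde\beta$, where each unordered pair of arguments assigned to the $\tilde\alpha$-block is counted once for each of its $\binom{h+1}{2}$ placements. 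The paper organizes this sign and multiplicity bookkeeping through a small combinatorial lemma (Lemma~\ref{lem:ago15}) identifying permutations with a prescribed value at one point with bijections onto a set with one element removed, together with the sign relation ${\rm sign}(\tau)=(-1)^{s+t}{\rm sign}(\sigma)$; you will need some equivalent device, and you should also note that the skew-symmetry of the $\lambda$-bracket on $A$ together with A1 is what lets you restrict the double sum to $s<t$ at the cost of a factor of $2$. With the case analysis corrected in this way your plan goes through and coincides with the paper's proof.
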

\begin{proof}
Parts (a) and (b) are straightforward.
For part (c), we will need the following simple combinatorial lemma.
\begin{lemma}\label{lem:ago15}
Let $S=\{x_1,\cdots,x_{N+1}\}$ be an ordered set of $N+1$ elements.
Let $\varepsilon$ be the order preserving bijective map 
$\{1,\cdots,{N+1}\}\stackrel{\sim}{\to}S$,
given by $\varepsilon(i)=x_i$.
It induces 
a bijective map between the set of all permutations $\sigma$ of $S$, 
and the set of all bijective maps $\tau:\,\{1,\cdots,N+1\}\stackrel{\sim}{\to}S$, 
given by $\sigma\mapsto\tau=\sigma\circ\varepsilon$.
We then define the sign of $\tau=\sigma\circ\varepsilon$ as
${\rm sign}(\tau):={\rm sign}(\sigma)$.

Let $s,t\in\{1,\cdots,N+1\}$, 
and let $\varepsilon_s:\,\{1,\cdots,N\}\stackrel{\sim}{\to}\{x_1,\stackrel{s}{\check{\cdots}},x_{N+1}\}$,
be the order preserving bijective map.
There is a bijective map
between the set of all permutations $\sigma$ of $S$ such that $\sigma(x_s)=x_t$,
and the set of all bijective maps 
$\tau:\,\{1,\cdots,N\}\stackrel{\sim}{\to}\{x_1,\stackrel{t}{\check{\cdots}},x_{N+1}\}$,
given by $\sigma\mapsto\tau=\sigma\circ\varepsilon_s$.
Moreover, if $\tau=\sigma\circ\varepsilon_s$, we have
${\rm sign}(\tau)=(-1)^{s+t}{\rm sign}(\sigma)$.
\end{lemma}
\begin{proof}
The statement of the lemma is obvious if $s=t=1$.
In the general case, we just notice that  there is a natural bijection
between the set of all permutations $\sigma$ of $S$ such that $\sigma(x_s)=x_t$,
and the set of all permutations $\sigma'$ of $S$ such that $\sigma(x_1)=x_1$,
given by $\sigma\mapsto\sigma'=(x_1,x_2,\cdots,x_t)\circ\sigma\circ(x_s,\cdots,x_2,x_1)$,
so that ${\rm sign}(\sigma')=(-1)^{s+t}{\rm sign}(\sigma)$.
\end{proof}
Going back to the proof of Proposition \ref{prop:sfor},
we have, from \eqref{eq:july24_7},
\begin{eqnarray}\label{eq:ago14_1}
&& (\delta(\tilde\alpha\wedge\tilde\beta))_{\lambda_1,\cdots,\lambda_{h+k+1}}(a_1,\cdots,a_{h+k+1}) 
\nonumber\\
&& =\,\sum_{i=1}^{h+k+1} (-1)^{i+1} {a_i}_{\lambda_i}
\Big((\tilde\alpha\wedge\tilde\beta)_{\lambda_1,\stackrel{i}{\check{\cdots}},\lambda_{h+k+1}}
(a_1,\stackrel{i}{\check{\cdots}},a_{h+k+1}) \Big) \\
&& + \sum_{\substack{i,j=1\\i<j}}^{h+k+1} (-1)^{h+k+i+j+1} 
(\tilde\alpha\wedge\tilde\beta)_{
\lambda_1,\stackrel{i}{\check{\cdots}}\stackrel{j}{\check{\cdots}},\lambda_{h+k+1},\lambda_i+\lambda_j}
(a_1,\stackrel{i}{\check{\cdots}}\stackrel{j}{\check{\cdots}},a_{h+k+1},[{a_i}_{\lambda_i} a_j])\,. 
\nonumber
\end{eqnarray}
Consider the first term in the RHS of \eqref{eq:ago14_1}.
As in Lemma \ref{lem:ago15}, we can identify the set of all permutations 
of $\{1,\stackrel{i}{\check{\cdots}},h+k+1\}$,
with the set ${\mc T}_i$ of bijective maps 
$\tau:\,\{1,\cdots,h+k\}\stackrel{\sim}{\to}\{1,\stackrel{i}{\check{\cdots}},h+k+1\}$.
Hence, by \eqref{eq:sfor_2}, we have
\begin{eqnarray}\label{eq:ago14_2}
&& (\tilde\alpha\wedge\tilde\beta)_{\lambda_1,\stackrel{i}{\check{\cdots}},\lambda_{h+k+1}}
 (a_1,\stackrel{i}{\check{\cdots}},a_{h+k+1}) \\
&& =
\sum_{\tau\in{\mc T}_i} \frac{\text{sign}(\tau)}{h!k!}
\tilde\alpha_{\lambda_{\tau(1)},\cdots,\lambda_{\tau(h)}}
(a_{\tau(1)},\cdots,a_{\tau(h)})
\tilde\beta_{\lambda_{\tau(h+1)},\cdots,\lambda_{\tau(h+k)}}
(a_{\tau(h+1)},\cdots,a_{\tau(h+k)})\,.\nonumber
\end{eqnarray}
Similarly, 
we can identify the set of all permutations 
of $\{1,\stackrel{i}{\check{\cdots}}\stackrel{j}{\check{\cdots}},h+k+1,h+k+2\}$,
and the set ${\mc T}^{h+k+2}_{i,j}$ of all bijective maps
$\tilde\tau:\,\{1,\cdots,h+k\}
\stackrel{\sim}{\to}\{1,\stackrel{i}{\check{\cdots}}\stackrel{j}{\check{\cdots}},h+k+1,h+k+2\}$.
Hence, by \eqref{eq:sfor_2}, we have
\begin{eqnarray}\label{eq:ago15_1}
&&
(\tilde\alpha\wedge\tilde\beta)_{
\lambda_1,\stackrel{i}{\check{\cdots}}\stackrel{j}{\check{\cdots}},\lambda_{h+k+1},\lambda_i+\lambda_j}
(a_1,\stackrel{i}{\check{\cdots}}\stackrel{j}{\check{\cdots}},a_{h+k+1},[{a_i}_{\lambda_i} a_j]) \\
&& 
=\sum_{\tilde\tau\in{\mc T}^{h+k+2}_{i,j}} \frac{\text{sign}(\tilde\tau)}{h!k!}
\tilde\alpha_{\lambda_{\tilde\tau(1)},\cdots,\lambda_{\tilde\tau(h)}}
(a_{\tilde\tau(1)},\cdots,a_{\tilde\tau(h)})
\tilde\beta_{\lambda_{\tilde\tau(h+1)},\cdots,\lambda_{\tilde\tau(h+k)}}
(a_{\tilde\tau(h+1)},\cdots,a_{\tilde\tau(h+k)})\,,\nonumber
\end{eqnarray}
where, in the RHS, we have to replace $\lambda_{h+k+2}=\lambda_i+\lambda_j$
and $a_{h+k+2}=[{a_i}_{\lambda_i}a_j]$.
In particular, if $\tilde\tau\in{\mc T}^{h+k+2}_{i,j}$ is such that $\tilde\tau(s)=h+k+2$,
with $1\leq s\leq h$,
we can use the skew-symmetry condition A2. on $\tilde\alpha$ to get
$$
\tilde\alpha_{\lambda_{\tilde\tau(1)},\cdots,\lambda_{\tilde\tau(h)}}
(a_{\tilde\tau(1)},\cdots,a_{\tilde\tau(h)})
=
(-1)^{h+s}
\tilde\alpha_{
\lambda_{\tilde\tau(1)},\stackrel{s}{\check{\cdots}},\lambda_{\tilde\tau(h)},\lambda_i+\lambda_j}
(a_{\tilde\tau(1)},\stackrel{s}{\check{\cdots}},a_{\tilde\tau(h)},[{a_i}_{\lambda_i}a_j])\,,
$$
and similarly, if $h+1\leq s\leq h+k$, we have
\begin{eqnarray*}
&& \tilde\beta_{\lambda_{\tilde\tau(h+1)},\cdots,\lambda_{\tilde\tau(h+k)}}
(a_{\tilde\tau(h+1)},\cdots,a_{\tilde\tau(h+k)}) \\
&& =
(-1)^{h+k+s}
\tilde\beta_{
\lambda_{\tilde\tau(h+1)},\stackrel{s}{\check{\cdots}},\lambda_{\tilde\tau(h+k)},\lambda_i+\lambda_j}
(a_{\tilde\tau(h+1)},\stackrel{s}{\check{\cdots}},a_{\tilde\tau(h+k)},[{a_i}_{\lambda_i}a_j])\,.
\end{eqnarray*}
Moreover, 
by Lemma \ref{lem:ago15},
we then identify the set of  elements
$\tilde\tau\in{\mc T}^{h+k+2}_{i,j}$ such that $\tilde\tau(s)=h+k+2$,
with the set ${\mc T}_{i,j}$ of all bijective maps
$\tau:\,\{1,\cdots,h+k-1\}\stackrel{\sim}{\to}
\{1,\stackrel{i}{\check{\cdots}}\stackrel{j}{\check{\cdots}},h+k+1\}$.
The corresponding relation among the signs is 
${\rm sign}(\tau)={\rm sign}(\tilde\tau)(-1)^{h+k+s}$.
We can therefore rewrite equation \eqref{eq:ago15_1} as follows
\begin{eqnarray}\label{eq:ago15_2}
&&
\!\!\!\!\!\!\!\!\!\!
(\tilde\alpha\wedge\tilde\beta)_{
\lambda_1,\stackrel{i}{\check{\cdots}}\stackrel{j}{\check{\cdots}},\lambda_{h+k+1},\lambda_i+\lambda_j}
(a_1,\stackrel{i}{\check{\cdots}}\stackrel{j}{\check{\cdots}},a_{h+k+1},[{a_i}_{\lambda_i} a_j]) \\
&& 
= (-1)^k \sum_{s=1}^{h} \sum_{\tau\in{\mc T}_{i,j}} \frac{\text{sign}(\tau)}{h!k!}
\tilde\alpha_{
\lambda_{\tau(1)},\cdots,\lambda_{\tau(h-1)},\lambda_i+\lambda_j}
(a_{\tau(1)},\cdots,a_{\tau(h-1)},[{a_i}_{\lambda_i}a_j]) \nonumber\\
&& \qquad\qquad\qquad\qquad 
\times \tilde\beta_{\lambda_{\tau(h)},\cdots,\lambda_{\tau(h+k-1)}}
(a_{\tau(h)},\cdots,a_{\tau(h+k-1)}) \nonumber\\
&& 
+ \sum_{s=h+1}^{h+k} \sum_{\tau\in{\mc T}_{i,j}} \frac{\text{sign}(\tau)}{h!k!}
\tilde\alpha_{\lambda_{\tau(1)},\cdots,\lambda_{\tau(h)}}
(a_{\tau(1)},\cdots,a_{\tau(h)}) \nonumber\\
&& \qquad\qquad\qquad\qquad \times 
\tilde\beta_{
\lambda_{\tau(h+1)},\cdots,\lambda_{\tau(h+k-1)},\lambda_i+\lambda_j}
(a_{\tau(h+1)},\cdots,a_{\tau(h+k-1)},[{a_i}_{\lambda_i}a_j])\,. \nonumber
\end{eqnarray}
Combining \eqref{eq:ago14_1}, \eqref{eq:ago14_2} and \eqref{eq:ago15_2}
we get
\begin{eqnarray}\label{eq:ago15_3}
&& (\delta(\tilde\alpha\wedge\tilde\beta))_{\lambda_1,\cdots,\lambda_{h+k+1}}(a_1,\cdots,a_{h+k+1}) \\
&&\qquad  =
\sum_{i=1}^{h+k+1} \sum_{\tau\in{\mc T}_i} (-1)^{i+1}  \frac{\text{sign}(\tau)}{h!k!} {a_i}_{\lambda_i}
\Big(
\tilde\alpha_{\lambda_{\tau(1)},\cdots,\lambda_{\tau(h)}}
(a_{\tau(1)},\cdots,a_{\tau(h)}) \nonumber\\
&&\qquad  \qquad\qquad\qquad\qquad 
\times \tilde\beta_{\lambda_{\tau(h+1)},\cdots,\lambda_{\tau(h+k)}}
(a_{\tau(h+1)},\cdots,a_{\tau(h+k)})
\Big) \nonumber\\
&&\qquad  + h \sum_{\substack{i,j=1\\i<j}}^{h+k+1} 
\sum_{\tau\in{\mc T}_{i,j}} (-1)^{h+i+j+1}  \frac{\text{sign}(\tau)}{h!k!}
\tilde\alpha_{
\lambda_{\tau(1)},\cdots,\lambda_{\tau(h-1)},\lambda_i+\lambda_j}
(a_{\tau(1)},\cdots,a_{\tau(h-1)},[{a_i}_{\lambda_i}a_j]) \nonumber\\
&&\qquad  \qquad\qquad\qquad\qquad 
\times \tilde\beta_{\lambda_{\tau(h)},\cdots,\lambda_{\tau(h+k-1)}}
(a_{\tau(h)},\cdots,a_{\tau(h+k-1)}) \nonumber\\
&&\qquad  
+ k \sum_{\substack{i,j=1\\i<j}}^{h+k+1} 
\sum_{\tau\in{\mc T}_{i,j}} (-1)^{k+i+j+1}  \frac{\text{sign}(\tau)}{h!k!}
\tilde\alpha_{\lambda_{\tau(1)},\cdots,\lambda_{\tau(h)}}
(a_{\tau(1)},\cdots,a_{\tau(h)}) \nonumber\\
&&\qquad  \qquad\qquad\qquad\qquad \times 
\tilde\beta_{
\lambda_{\tau(h+1)},\cdots,\lambda_{\tau(h+k-1)},\lambda_i+\lambda_j}
(a_{\tau(h+1)},\cdots,a_{\tau(h+k-1)},[{a_i}_{\lambda_i}a_j])
\,. \nonumber
\end{eqnarray}
On the other hand, by \eqref{eq:july24_7} and \eqref{eq:sfor_2} we have
\begin{eqnarray}\label{eq:ago15_4}
&& ((\delta\tilde\alpha)\wedge\tilde\beta)_{\lambda_1,\cdots,\lambda_{h+k+1}}(a_1,\cdots,a_{h+k+1}) \\
&& =
\sum_{i=1}^{h+1} \sum_{\sigma\in S_{h+k+1}} \frac{\text{sign}(\sigma)}{(h+1)!k!}(-1)^{i+1}
{a_{\sigma(i)}}_{\lambda_{\sigma(i)}}
\Big(\tilde\alpha_{\lambda_{\sigma(1)},\stackrel{i}{\check{\cdots}},\lambda_{\sigma(h+1)}}
(a_{\sigma(1)},\stackrel{i}{\check{\cdots}},a_{\sigma(h+1)}) \Big) \nonumber\\
&&\,\,\,\,\,\,\,\,\,\,\,\,\,\,\,\,\,\,\,\,\,\,\,\,\,\,\,\,\,\,\,\,\,\,\,\,\,\,\,\,\,\,\,\,\,\,\,\,\,\,\,\,\,\,
\times \tilde\beta_{\lambda_{\sigma(h+2)},\cdots,\lambda_{\sigma(h+k+1)}}
(a_{\sigma(h+2)},\cdots,a_{\sigma(h+k+1)}) \nonumber\\
&& +
\sum_{\substack{i,j=1\\i<j}}^{h+1} \sum_{\sigma\in S_{h+k+1}} 
\frac{\text{sign}(\sigma)}{(h+1)!k!} (-1)^{h+i+j+1} \nonumber\\
&&\,\,\,\,\,\,\,\,\,\,\,\,\,\,\,\,\,\,\,\,\,\,\,\,\,\,\,\,\,\,\,\,\,\,\,\,\,
\times\tilde\alpha_{
\lambda_{\sigma(1)},\stackrel{i}{\check{\cdots}}\stackrel{j}{\check{\cdots}},\lambda_{\sigma(h+1)},
\lambda_{\sigma(i)}+\lambda_{\sigma(j)}}
(a_{\sigma(1)},\stackrel{i}{\check{\cdots}}\stackrel{j}{\check{\cdots}},a_{\sigma(h+1)},
[{a_{\sigma(i)}}_{\lambda_{\sigma(i)}} a_{\sigma(j)}]) \nonumber\\
&&\,\,\,\,\,\,\,\,\,\,\,\,\,\,\,\,\,\,\,\,\,\,\,\,\,\,\,\,\,\,\,\,\,\,\,\,\,\,\,\,\,\,\,\,\,\,\,\,\,\,\,\,\,\,
\times \tilde\beta_{\lambda_{\sigma(h+2)},\cdots,\lambda_{\sigma(h+k+1)}}
(a_{\sigma(h+2)},\cdots,a_{\sigma(h+k+1)})\,.\nonumber
\end{eqnarray}
By Lemma \ref{lem:ago15}, we can identify the set of all permutations
$\sigma$ such that $\sigma(i)=s$,
with the set ${\mc T}_s$ of all bijective maps 
$\tau:\,\{1,\cdots,h+k\}\stackrel{\sim}{\to}\{1,\stackrel{s}{\check{\cdots}},h+k+1\}$,
and the relation among the corresponding signs is ${\rm sign}{\tau}=(-1)^{i+s}{\rm sign}(\sigma)$.
Hence, the first term in the RHS of \eqref{eq:ago15_4} can be rewritten as follows
\begin{eqnarray}\label{eq:ago15_5}
&& \sum_{i=1}^{h+1} \sum_{s=1}^{h+k+1}
\sum_{\tau\in{\mc T}_s} 
\frac{\text{sign}(\tau)}{(h+1)!k!}(-1)^{s+1}
{a_s}_{\lambda_s}
\Big(\tilde\alpha_{\lambda_{\tau(1)},\cdots,\lambda_{\tau(h)}}
(a_{\tau(1)},\cdots,a_{\tau(h)}) \Big) \\
&&\,\,\,\,\,\,\,\,\,\,\,\,\,\,\,\,\,\,\,\,\,\,\,\,\,\,\,\,\,\,\,\,\,\,\,\,\,\,\,\,\,\,\,\,\,\,\,\,\,\,\,\,\,\,
\times \tilde\beta_{\lambda_{\tau(h+1)},\cdots,\lambda_{\tau(h+k)}}
(a_{\tau(h+1)},\cdots,a_{\tau(h+k)}) \,.\nonumber
\end{eqnarray}
Similarly, applying twice Lemma \ref{lem:ago15},
we can identify the set of all permutations
$\sigma$ such that $\sigma(i)=s$ and $\sigma(j)=t$,
with the set ${\mc T}_{s,t}$ of all bijective maps 
$\tau:\,\{1,\cdots,h+k-1\}\stackrel{\sim}{\to}\{1,\stackrel{s,t}{\check{\cdots}},h+k+1\}$,
and the relation among the corresponding signs is 
${\rm sign}(\tau)=(-1)^{i+j+s+t}\epsilon(s,t){\rm sign}(\sigma)$,
where, as in the proof of Proposition \ref{prop:anniv},
we let $\epsilon(s,t)$ be $+1$ if $s<t$ and $-1$ if $s>t$.
Hence, the second term in the RHS of \eqref{eq:ago15_4} becomes
\begin{eqnarray*}
&&
\!\!\!\!\!\!\!\!\!\!\!\!
\sum_{\substack{i,j=1\\i<j}}^{h+1} \sum_{\substack{s,t=1\\s\neq t}}^{h+k+1}
\sum_{\tau\in{\mc T}_{s,t}}
\frac{\text{sign}(\tau)}{(h+1)!k!} (-1)^{h+s+t+1}\epsilon(s,t) 
\tilde\alpha_{
\lambda_{\tau(1)},\cdots,\lambda_{\tau(h-1)},\lambda_{s}+\lambda_{t}}
(a_{\tau(1)},\cdots,a_{\tau(h-1)},[{a_s}_{\lambda_s} a_t]) \\
&&\,\,\,\,\,\,\,\,\,\,\,\,\,\,\,\,\,\,\,\,\,\,\,\,\,\,\,\,\,\,\,\,\,\,\,\,\,\,\,\,\,\,\,\,\,\,\,\,\,\,\,\,\,\,
\times \tilde\beta_{\lambda_{\tau(h)},\cdots,\lambda_{\tau(h+k-1)}}
(a_{\tau(h)},\cdots,a_{\tau(h+k-1)})\,.
\end{eqnarray*}
By skew-symmetry of the $\lambda$-bracket 
and the sesquilinearity condition A1. for $\tilde\alpha$,
the above expression can be rewritten as
\begin{eqnarray}\label{eq:ago15_6}
&& 2 \sum_{\substack{i,j=1\\i<j}}^{h+1} \sum_{\substack{s,t=1\\s< t}}^{h+k+1}
\sum_{\tau\in{\mc T}_{s,t}}
\frac{\text{sign}(\tau)}{(h+1)!k!} (-1)^{h+s+t+1}
\tilde\alpha_{
\lambda_{\tau(1)},\cdots,\lambda_{\tau(h-1)},\lambda_{s}+\lambda_{t}}
(a_{\tau(1)},\cdots,a_{\tau(h-1)},[{a_s}_{\lambda_s} a_t]) \nonumber\\
&&\,\,\,\,\,\,\,\,\,\,\,\,\,\,\,\,\,\,\,\,\,\,\,\,\,\,\,\,\,\,\,\,\,\,\,\,\,\,\,\,\,\,\,\,\,\,\,\,\,\,\,\,\,\,
\times \tilde\beta_{\lambda_{\tau(h)},\cdots,\lambda_{\tau(h+k-1)}}
(a_{\tau(h)},\cdots,a_{\tau(h+k-1)})\,.
\end{eqnarray}
Combining equations \eqref{eq:ago15_4}, \eqref{eq:ago15_5} and \eqref{eq:ago15_6},
we then get
\begin{eqnarray}\label{eq:ago15_7}
&& ((\delta\tilde\alpha)\wedge\tilde\beta)_{\lambda_1,\cdots,\lambda_{h+k+1}}(a_1,\cdots,a_{h+k+1}) \\
&& \qquad=
\sum_{s=1}^{h+k+1}
\sum_{\tau\in{\mc T}_s} 
\frac{\text{sign}(\tau)}{h!k!}(-1)^{s+1}
{a_s}_{\lambda_s}
\Big(\tilde\alpha_{\lambda_{\tau(1)},\cdots,\lambda_{\tau(h)}}
(a_{\tau(1)},\cdots,a_{\tau(h)}) \Big) \nonumber\\
&&\qquad \,\,\,\,\,\,\,\,\,\,\,\,\,\,\,\,\,\,\,\,\,\,\,\,\,\,\,\,\,\,\,\,\,\,\,\,\,\,\,\,\,\,\,\,\,\,\,\,\,\,\,\,\,\,
\times \tilde\beta_{\lambda_{\tau(h+1)},\cdots,\lambda_{\tau(h+k)}}
(a_{\tau(h+1)},\cdots,a_{\tau(h+k)}) \nonumber\\
&&\qquad +
h \sum_{\substack{s,t=1\\s< t}}^{h+k+1}
\sum_{\tau\in{\mc T}_{s,t}}
\frac{\text{sign}(\tau)}{h!k!} (-1)^{h+s+t+1}
\tilde\alpha_{\lambda_{\tau(1)},\cdots,\lambda_{\tau(h-1)},\lambda_{s}+\lambda_{t}}
(a_{\tau(1)},\cdots,a_{\tau(h-1)},[{a_s}_{\lambda_s} a_t]) \nonumber\\
&&\qquad \,\,\,\,\,\,\,\,\,\,\,\,\,\,\,\,\,\,\,\,\,\,\,\,\,\,\,\,\,\,\,\,\,\,\,\,\,\,\,\,\,\,\,\,\,\,\,\,\,\,\,\,\,\,
\times \tilde\beta_{\lambda_{\tau(h)},\cdots,\lambda_{\tau(h+k-1)}}
(a_{\tau(h)},\cdots,a_{\tau(h+k-1)})\,.\nonumber
\end{eqnarray}
With a similar computation we also get
\begin{eqnarray}\label{eq:ago15_8}
&& (\tilde\alpha\wedge(\delta\tilde\beta))_{\lambda_1,\cdots,\lambda_{h+k+1}}(a_1,\cdots,a_{h+k+1}) \\
&& \qquad=
(-1)^h\sum_{s=1}^{h+k+1}
\sum_{\tau\in{\mc T}_s} 
\frac{\text{sign}(\tau)}{h!k!}(-1)^{s+1}
\tilde\alpha_{\lambda_{\tau(1)},\cdots,\lambda_{\tau(h)}}
(a_{\tau(1)},\cdots,a_{\tau(h)}) \nonumber\\
&&\qquad \,\,\,\,\,\,\,\,\,\,\,\,\,\,\,\,\,\,\,\,\,\,\,\,\,\,\,\,\,\,\,\,\,\,\,\,\,\,\,\,\,\,\,\,\,\,\,\,\,\,\,\,\,\,
\times {a_s}_{\lambda_s}
\Big(\tilde\beta_{\lambda_{\tau(h+1)},\cdots,\lambda_{\tau(h+k)}}
(a_{\tau(h+1)},\cdots,a_{\tau(h+k)}) \Big) \nonumber\\
&&\qquad +
(-1)^h k \sum_{\substack{s,t=1\\s< t}}^{h+k+1}
\sum_{\tau\in{\mc T}_{s,t}}
\frac{\text{sign}(\tau)}{h!k!} (-1)^{k+s+t+1}
\tilde\alpha_{\lambda_{\tau(1)},\cdots,\lambda_{\tau(h)}}
(a_{\tau(1)},\cdots,a_{\tau(h)}) \nonumber\\
&&\qquad \,\,\,\,\,\,\,\,\,\,\,\,\,\,\,\,\,\,\,\,\,\,\,\,\,\,\,\,\,\,\,\,\,\,\,\,\,\,\,\,\,\,\,\,\,\,\,\,\,\,\,\,\,\,
\times \tilde\beta_{
\lambda_{\tau(h+1)},\cdots,\lambda_{\tau(h+k-1)},\lambda_{s}+\lambda_{t}}
(a_{\tau(h+1)},\cdots,a_{\tau(h+k-1)},[{a_s}_{\lambda_s} a_t])\,.\nonumber
\end{eqnarray}
Part (c) follows from equations \eqref{eq:ago15_3},
\eqref{eq:ago15_7} and \eqref{eq:ago15_8}.
\end{proof}

\section{Cohomology and extenstions}\label{sec:2}

In this section we interpret the chomology of the complex $(C^\bullet,d)$
in terms of extensions of the Lie conformal algebra $A$ and its modules.

We start by reviewing the notions of extensions of a module over a Lie conformal algebra
and of a Lie conformal algebra.
Let $A$ be a Lie conformal algebra and let $M,\,N$ be $A$-modules.
We denote by $\partial^M$ and $\partial^N$ the $\mb F[\partial]$-module structure
on $M$ and $N$ respectively,
and by $a_\lambda^M$ and $a_\lambda^N$
the $\lambda$-action of $a\in A$ on $M$ and $N$ respectively.
An \emph{extension} of $M$ by $N$ is, by definition, an $A$-module $E$
together with a short exact sequence of $A$-modules
$$
0\,\to\,N\,\to\,E\,\to\,M\,\to\,0\,.
$$
We can fix a splitting $E=M\oplus N$ as $\mb F$-vector spaces.
This space is an $A$-module extension of $M$ by $N$
if it is endowed with:
\begin{enumerate}
\item an endomorphism $\partial^E$ of $M\oplus N$, 
such that $\partial^E|_N=\partial^N$
and $\partial^Em-\partial^Mm\in N$ for every $m\in M$,
which makes $M\oplus N$ into an $\mb F[\partial]$-module;
\item a $\lambda$-action of $A$ on $M\oplus N$ such that
$a_\lambda^E|_N=a_\lambda^N$ for every $a\in A$
and $a_\lambda^E m-a_\lambda^M m\in \mb F[\lambda]\otimes N$ for every $a\in A$ and $m\in M$,
which makes $M\oplus N$ into an $A$-module.
\end{enumerate}
In this setting, two structures of $A$-module extensions $E$ and $E'$ on $M\oplus N$
are \emph{isomorphic} if there is an $A$-module isomorphism
$\sigma:\,E\to E'$ such that
$\sigma|_N=\id_N$ 
and $\sigma(m)-m\in N$ for every $m\in M$.
An extension $E$ is \emph{split}
if it is isomorphic to $M\oplus N$ as an $A$-module,
and it is said to be $\mb F[\partial]$-\emph{split}
if it is isomorphic to $M\oplus N$ as an $\mb F[\partial]$-module,
namely if we can chose the $\mb F$-vector space splitting $E=M\oplus N$
such that $\partial^E=\partial^M\oplus\partial^N$.

We can also talk about extensions of a Lie conformal algebra.
Let $A,\, B$ be two Lie conformal algebras, and assume that
the $\mb F[\partial]$-module $B$ is endowed with a structure of an $A$-module.
We denote by $\partial^A$ and $\partial^B$ the $\mb F[\partial]$-module structure
on $A$ and $B$ respectively,
and by $[\cdot\,_\lambda\,\cdot]^A$ and $[\cdot\,_\lambda\,\cdot]^B$
the $\lambda$-brackets on $A$ and $B$ respectively.
An \emph{extension} of $A$ by $B$ is, by definition, 
a Lie conformal algebra $E$
together with a short exact sequence of Lie conformal algebras
$$
0\,\to\,B\,\to\,E\,\to\,A\,\to\,0\,.
$$
In other words, if we fix a splitting $E=A\oplus B$ as $\mb F$-vector spaces,
the structure of a Lie conformal algebra extension on $E$ consists of:
\begin{enumerate}
\item an endomorphism $\partial^E$ of $A\oplus B$, 
such that $\partial^E|_B=\partial^B$
and $\partial^Ea-\partial^Aa\in B$ for every $a\in A$,
which makes $A\oplus B$ into an $\mb F[\partial]$-module,
\item a $\lambda$-bracket on $A\oplus B$ such that
$[\cdot\,_\lambda\,\cdot]^E|_B=[\cdot\,_\lambda\,\cdot]^B$,
$[a_\lambda b]^E=a_\lambda b$ for every $a\in A$ and $b\in B$,
and $[a_\lambda a']^E-[a_\lambda a']^A\in \mb F[\lambda]\otimes B$ for every $a,a'\in A$,
which makes $A\oplus B$ into a Lie conformal algebra.
\end{enumerate}
As before, two structures of Lie conformal algebra extensions $E$ and $E'$ 
on $A\oplus B$ are \emph{isomorphic}
if there is a Lie conformal algebra isomorphism $\sigma:\,E\to E'$
such that
$\sigma|_B=\id_B$ and $\sigma(a)-a\in B$ for every $a\in A$.
$E$ is a \emph{split} extension
if it isomorphic, as a Lie conformal algebra, 
to the semi-direct sum of $A$ and $B$,
and it is said to be $\mb F[\partial]$-\emph{split}
if it is isomorphic to $A\oplus B$ as an $\mb F[\partial]$-module, 
namely if we can chose the $\mb F$-vector space splitting $E=A\oplus B$
such that $\partial^E=\partial^A\oplus\partial^B$.

We next review the construction of the module $\chom(M,N)$ of conformal homomorphisms
\cite{K}.
A \emph{conformal homomorphism} from the $\mb F[\partial]$-module $M$
to the $\mb F[\partial]$-module $N$
is an $\mb F$-linear map $\varphi_\lambda:\,M\to \mb F[\lambda]\otimes N$
such that
$$
\varphi_\lambda(\partial^M m)=(\partial^N +\lambda)\varphi_\lambda(m)\,.
$$
We denote by $\chom(M,N)$ the space of all conformal homomorphisms from $M$ to $N$.
It has the structure of an $\mb F[\partial]$-module given by
$$
(\partial \varphi)_\lambda=-\lambda\varphi_\lambda\,.
$$
If, moreover, $M$ and $N$ are modules over the Lie conformal algebra $A$,
then $\chom(M,N)$ has the structure of an $A$-module, given by
$$
(a_\lambda\varphi)_\mu
\,=\,a^N_\lambda\circ\varphi_{\mu-\lambda}-\varphi_{\mu-\lambda}\circ a^M_\lambda\,.
$$

In the following theorem, we denote by $H^\bullet(A,M)=\bigoplus_{k\in\mb Z_+} H^k(A,M)$ 
the cohomology of the complex $(C^\bullet,d)$ associated to the Lie conformal algebra $A$
and the $A$-module $M$ (see Section \ref{sec:1.3_b}).
\begin{theorem}\label{th:ago8}
\begin{enumerate}
\alphaparenlist
\item $H^0(A,M)$ is naturally identified with the set of isomorphism classes of extensions of $\mb F$,
considered as $A$-module with trivial action of $\partial$ and trivial $\lambda$-action of $A$,
by the $A$-module $M$.
\item $H^1(A,\chom(M,N))$ is naturally identified with the set of 
isomorphism classes of $\mb F[\partial]$-split 
extensions of the $A$-module $M$ by the $A$-module $N$.
\item $H^2(A,M)$ is naturally identified with the set of 
isomorphism classes of $\mb F[\partial]$-split 
extensions of the Lie conformal algebra $A$ by the $A$-module $M$,
viewed as a Lie conformal algebra with the zero $\lambda$-bracket.
\end{enumerate}
\end{theorem}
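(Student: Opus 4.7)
The plan is to establish, for each of the three parts, a bijection between isomorphism classes of the stated type of extension and elements of the indicated cohomology group, by a common recipe: choose an appropriate splitting of the extension, extract a cochain from the extension data, check that the extension axioms translate to the cocycle condition, verify that two splittings differ by a coboundary, and reverse the process to build an extension out of any cocycle.

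For part (a), I will fix an $\mb F$-linear lift $e\in E$ of $1\in\mb F$ and set $n_0:=\partial^E e\in M$; replacing $e$ by $e+n$ with $n\in M$ changes $n_0$ by $\partial^M n$, so $\int n_0\in C^0=M/\partial^M M$ is canonically attached to the extension class. The auxiliary data $\varphi(a):=a_\lambda^E e\in\mb F[\lambda]\otimes M$ is forced by $n_0$ through the sesquilinearity identity $a_\lambda^M n_0=(\partial^M+\lambda)\varphi(a)$, because multiplication by $\lambda+\partial^M$ is injective on $\mb F[\lambda]\otimes M$; such a $\varphi(a)$ exists if and only if $a_{-\partial^M}n_0=0$, i.e.\ $(d\int n_0)(a)=0$. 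Going backwards, the Jacobi identity for the resulting $A$-module structure on $E$ follows by applying the same injective operator $\lambda+\mu+\partial^M$ to the candidate identity and invoking Jacobi in $M$.

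For parts (b) and (c), I will fix instead an $\mb F[\partial]$-linear section $\sigma$ of $E\to M$ (resp.\ $E\to A$) and define the cochain by the failure of $\sigma$ to intertwine the actions. In (b), set $\psi(a)_\lambda(m):=a_\lambda^E\sigma(m)-\sigma(a_\lambda^M m)\in\mb F[\lambda]\otimes N$; sesquilinearity in $m$ combined with the $\mb F[\partial]$-linearity of $\sigma$ gives $\psi(a)\in\chom(M,N)$, sesquilinearity in $a$ gives $\psi\in C^1=\Hom_{\mb F[\partial]}(A,\chom(M,N))$, and Jacobi for the $A$-action on $E$, rewritten using the $A$-action on $\chom(M,N)$, is precisely $d\psi=0$ via \eqref{eq:july29_1}. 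In (c), set $c(a,b)(\lambda):=[\sigma(a)_\lambda\sigma(b)]^E-\sigma([a_\lambda b]^A)\in\mb F[\lambda]\otimes M$; conditions B1., B2.\ are immediate from sesquilinearity of $[\cdot\,_\lambda\,\cdot]^E$, while B3.\ (with $\lambda_2^\dagger=-\lambda_1-\partial^M$) reproduces the skew-symmetry of $[\cdot\,_\lambda\,\cdot]^E$ once one recalls that $\partial^E=\partial^A\oplus\partial^M$ on the splitting; Jacobi for $[\cdot\,_\lambda\,\cdot]^E$ unwinds to $dc=0$ via \eqref{eq:d>}. In both (b) and (c), replacing $\sigma$ by $\sigma+\xi$ for $\xi\in C^0$ (resp.\ $\xi\in C^1$) modifies the cochain by $d\xi$, and any cocycle yields an extension through the obvious semidirect-type construction.

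The main obstacle will be the bookkeeping in (c): expanding $[\sigma(a)_\lambda[\sigma(b)_\mu\sigma(c)]^E]^E$ and its two cyclic variants produces triple-bracket contributions in $A$ (which cancel by Jacobi in $A$) together with mixed terms where the $A$-action of one argument hits the $c$-value of the other two, and these mixed terms must be grouped to match exactly the four sums appearing in \eqref{eq:d>}. A shorter but similar matching is required in (b). The inverse constructions and independence of the cohomology class from the chosen splitting then follow by running the same computations in reverse.
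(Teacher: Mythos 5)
Your proposal is correct and follows essentially the same route as the paper: in each part one fixes a splitting (an $\mb F$-linear lift of $1$ for (a), an $\mb F[\partial]$-splitting for (b) and (c)), extracts the cochain as the defect of the splitting, matches the module/Lie conformal algebra axioms with the cocycle condition via \eqref{eq:july29_1} and \eqref{eq:d>}, and identifies change of splitting with adding a coboundary. The only cosmetic difference is that you phrase the data via a section $\sigma$ while the paper fixes the direct-sum decomposition from the start, and your key observation in (a) — injectivity of $\lambda+\partial^M$ forcing the $\lambda$-action from $\partial^E 1$ — is exactly the paper's argument.
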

\begin{proof}
By definition, $H^0(A,M)$ consists of elements $\tint m\in M/\partial^MM$ in the kernel of $d$,
namely such that $a_{-\partial^M}m=0$ for every $a\in A$.
In other words,
\begin{equation}\label{eq:vic_1}
H^0(A,M)=\big\{m\in M\,|\,a_{-\partial^M}m=0\,,\,\,\forall a\in A\big\}/\partial^MM\,.
\end{equation}
On the other hand, as discussed above, a structure of an $A$-module extension $E$
of $\mb F$ by $M$ on the space $M\oplus\mb F$ 
is uniquely defined by an element $\partial^E1=m\in M$
such that $a^M_\lambda m\in (\partial^M+\lambda)\mb F[\lambda]\otimes M$ 
(or, equivalently, such that $a_{-\partial^M}m=0$)
for every $a\in M$.
Indeed, the corresponding $\lambda$-action of $a^E_\lambda 1\in M[\lambda]$,
is then uniquely defined by the equation
$(\partial^M +\lambda) \big(a^E_\lambda 1\big)=a^M_\lambda m$,
imposed by sesquilinearity.
It is immediate to check that this construction makes $E=M\oplus\mb F$
into an $A$-module.
Furthermore,  let $m,\,m'\in M$ be such that $a_{-\partial^M}m=a_{-\partial^M}m'=0$
for every $a\in A$, 
and consider the corresponding structures of $A$-module extensions $E$ and $E'$ 
on $M\oplus\mb F$.
An isomorphism of $A$-module extensions $\sigma:\,E\to E'$
is completely defined by an element $\sigma(1)-1=n\in M$,
such that $\partial^E \sigma(1)=\sigma(\partial^{E'}1)$,
or, equivalently, $m=m'+\partial n$.
Hence, $m$ and $m'$ correspond to isomorphic extensions
if and only if they differ by an element of $\partial M$.
This proves part (a).

By definition, $C^1(A,\chom(M,N))$ is the space of $\mb F[\partial]$-linear maps $c:\,A\to\chom(M,N)$.
It can be identified, letting $c(a)_\lambda(m)=a^c_\lambda m$,
with the space of $\mb F$-linear maps $A\otimes M\to \mb F[\lambda]\otimes N$,
satisfying the following sesquilinearity conditions (for $a\in A,\, m\in M$):
\begin{equation}\label{eq:ago9_1}
(\partial a)^c_\lambda m=-\lambda a^c_\lambda m
\,\,\,\,\,\,\,\,,\,\,\,\,\,\,\,\,\,\,\,\,
a^c_\lambda(\partial m)=(\lambda+\partial^N)(a^c_\lambda m)\,.
\end{equation}
The equation $dc=0$ for $c$ to be closed can then be rewritten,
recalling \eqref{eq:d>} and using the above notation, 
as follows ($a,b\in A,\,m\in M$):
\begin{equation}\label{eq:ago9_2}
a^N_\lambda (b^c_\mu m) + a^c_\lambda (b^M_\mu m)
- b^N_\mu (a^c_\lambda m)- b^c_\mu (a^M_\lambda m)
-[a_\lambda b]^c_{\lambda+\mu}m\,=\,0\,.
\end{equation}
Notice that, if $\varphi_\lambda\in\chom(M,N)$, then $\varphi_0$ is an $\mb F[\partial]$-linear 
map from $M$ to $N$,
and conversely, any $\mb F[\partial]$-linear map $\varphi:\,M\to N$
can be thought of as an element of $\chom(M,N)$ which is independent of $\lambda$.
It follows that any element in $d(C^0(A,\chom(M,N)))$,
when written in the above notation, is of the form
\begin{equation}\label{eq:ago9_3}
a^{(d\varphi)}_\lambda m \,=\, a^N_\lambda \varphi(m)-\varphi(a^M_\lambda m)\,,
\end{equation}
for an $\mb F[\partial]$-linear map $\varphi:\,M\to N$.
In conclusion,
\begin{equation}\label{eq:vic_2}
H^1(A,\chom(M,N))=
\Big\{c:\, A\otimes M\to \mb F[\lambda]\otimes N 
\,\Big|\, \eqref{eq:ago9_1}-\eqref{eq:ago9_2} \text{ hold}\Big\}
\Big/\Big\{c \text{ of the form } \eqref{eq:ago9_3}\Big\}\,.
\end{equation}
On the other hand, as discussed at the beginning of the section,
a structure of $\mb F[\partial]$-split extension $E$ of $M$ by $N$
on the space $M\oplus N$
is uniquely determined by the elements 
$a^E_\lambda m-a^M_\lambda m=:\,a^c_\lambda m\in \mb F[\lambda]\otimes N$,
and the requirement that $E=M\oplus N$ is an $A$-module
exactly says that $a^c_\lambda m$ satisfies conditions \eqref{eq:ago9_1}
and \eqref{eq:ago9_2}.
Furthermore, let $E$ and $E'$ be two such extensions,
associated to the closed elements $c$ and $c'$ respectively.
An isomorphism $\sigma:\,E\to E'$ is uniquely determined by
the elements $\sigma(m)-m=:\varphi(m)\in N$.
The condition that $\sigma$ commutes with the action of $\partial=\partial^M\oplus\partial^N$,
i.e. $\sigma(\partial^M m)=(\partial^M\oplus\partial^N)\sigma(m)$,
is equivalent to $\varphi(\partial^M m)=\partial^N\varphi(m)$,
namely $\varphi:\,M\to N$ is an $\mb F[\partial]$-linear map.
The condition that $\sigma$ commutes with the $\lambda$-action of $A$,
i.e. $\sigma(a^E_\lambda m)=a^{E'}_\lambda \sigma(m)$,
is equivalent to
$$
a^c_\lambda m+\varphi(a^M_\lambda m)
=
a^{c'}_\lambda m+a^N_\lambda\varphi(m)\,,
$$
which means that $c$ and $c'$ differ by an exact element.
This proves part (b).

We are left to prove part (c).
The space $C^2(A,M)$ consists of $\mb F$-linear maps 
$c:\,A^{\otimes 2}\to\mb F[\lambda]\otimes M$,
denoted by
$a\otimes b\mapsto\{a_\lambda b\}_c$,
satisfying the conditions of sesquilinearity
\begin{equation}\label{eq:ago9_4}
\{\partial a_\lambda b\}_c=-\lambda\{a_\lambda b\}_c
\,\,\,\,\,\,\,\,,\,\,\,\,\,\,\,\,\,\,\,\,
\{a_\lambda\partial  b\}_c=(\lambda+\partial^M)\{a_\lambda b\}_c\,,
\end{equation}
and skew-symmetry
\begin{equation}\label{eq:ago9_5}
\{b_\lambda a\}_c = -\{a_{-\lambda-\partial^M}b\}_c\,.
\end{equation}
Recalling the definition \eqref{eq:d>} of $d$ 
and using the skew-symmetry of the $\lambda$-bracket on $A$,
the equation $dc=0$ for $c$ to be closed can be written as follows:
\begin{eqnarray}\label{eq:ago9_6}
&&
a_\lambda\{b_\mu z\}_c - b_\mu\{a_\lambda z\}_c + z_{-\lambda-\mu-\partial^M}\{a_\lambda b\}_c \\
&&
+\{a_\lambda[b_\mu z]\}_c - \{b_\mu[a_\lambda z]\}_c + \{z_{-\lambda-\mu-\partial^M}[a_\lambda b]\}_c
\,=\,0\,,\nonumber
\end{eqnarray}
for every $a,b,z\in A$.
Recall that $C^1(A,M)$ consists of $\mb F[\partial]$-linear maps $\varphi:\,A\to M$.
Hence exact elements $c=d\varphi$ are of the form
\begin{equation}\label{eq:ago9_7}
\{a_\lambda b\}_{d\varphi}
\,=\,
a_\lambda\varphi(b)-b_{-\lambda-\partial^M}\varphi(a)-\varphi([a_\lambda b])\,.
\end{equation}
We thus have
\begin{equation}\label{eq:vic_3}
H^2(A,M)=
\Big\{c:\, A^{\otimes 2}\to \mb F[\lambda]\otimes M 
\,\Big|\, \eqref{eq:ago9_4}-\eqref{eq:ago9_6} \text{ hold}\Big\}
\Big/\Big\{c \text{ of the form } \eqref{eq:ago9_7}\Big\}\,.
\end{equation}
Once we fix an $\mb F[\partial]$-splitting $E=M\oplus A$,
an ``abelian" extension $E$ of the Lie conformal algebra $A$ 
by the $A$-module $M$
is determined by a $\lambda$-bracket 
$[\cdot\,_\lambda\,\cdot]^E:\, (M\oplus A)^{\otimes 2}\to\mb F[\lambda]\otimes(M\oplus A)$,
satisfying the axioms of a Lie conformal algebra,
and such that $[m_\lambda m']^E=0$ for $m,m'\in M$,
$[a_\lambda m]^E=a_\lambda m$ for $a\in A$ and $m\in M$,
and $[a_\lambda b]^E-[a_\lambda b]\in\mb F[\lambda]\otimes M$.
Let
$$
\{a_\lambda b\}_c
\,:=\,
[{a}_\lambda b]^E-[{a}_\lambda b]\,\in\mb F[\lambda]\otimes M\,.
$$
It is not hard to check that
the axioms of sesquilinearity, skew-symmetry and Jacobi identity
for $[\cdot\,_\lambda\, \cdot ]^E$
become equations \eqref{eq:ago9_4}, \eqref{eq:ago9_5} and \eqref{eq:ago9_6} respectively.
Namely $[\cdot\,_\lambda\,\cdot ]^E$ defines a structure of Lie conformal algebra extension on $E$
if and only if $c$ is a closed element of $C^2(A,M)$.
Let $E$ and $E'$ be two such extensions,
associated to the closed elements $c$ and $c'$ respectively.
An isomorphism $\sigma:\,E\to E'$ is uniquely determined by
the elements $\sigma(a)-a=:\varphi(a)\in M$.
It is easy to check that $\sigma$ commutes with the action of $\partial=\partial^M\oplus\partial^A$
if and only if $\varphi(\partial a)=\partial^M\varphi(a)$,
namely $\varphi:\,A\to M$ is an $\mb F[\partial]$-linear map.
Finally, 
$\sigma$ defines a Lie conformal algebra isomorphism,
i.e. $\sigma([a_\lambda b]^E)=[\sigma(a)_\lambda \sigma(b)]^{E'}$,
if and only if
$$
\{a_\lambda b\}_c+\varphi([a_\lambda b])
=
\{a_\lambda b\}_{c'}+a_\lambda\varphi(b)-b_{-\lambda-\partial^M}\varphi(a)\,,
$$
which means that $c$ and $c'$ differ by an exact element.
\end{proof}
\begin{remark}\label{rem:ago8}
Part (a) of Theorem \ref{th:ago8} is the same statement as \cite[Theorem 3.1-2]{BKV}.
Part (b) is equivalent to \cite[Theorem 3.1-3]{BKV}. This is due to Theorem \ref{th:red}
and the fact that $\chom(M,N)$ is free as an $\mb F[\partial]$-module,
hence $C^\bullet(A,\chom(M,N))=\bar C^\bullet(A,\chom(M,N))$.
However, \cite[Theorem 3.1-4]{BKV} is false,
unless $A$ is free as an $\mb F[\partial]$-module.
Part (c) of Theorem \ref{th:ago8} is the corrected version of it.
This lends some support to our opinion that 
the cohomology complex $(C^\bullet,d)$ is a more correct definition
of a Lie conformal algebra cohomology complex.
Moreover, as it appears from the proof of Theorem \ref{th:ago8}, the identification of the cohomology
of the complex $(C^\bullet,d)$ with the extensions of Lie conformal algebras and their representations
is more direct and natural than for the complex $(\Gamma^\bullet,\delta)$.
\end{remark}
\begin{example}\label{ex:ago8}
Consider the centerless Virasoro Lie conformal algebra $\Vir^0=\mb F[\partial]L$,
with $\lambda$-bracket given by $[L_\lambda L]^0=(\partial+2\lambda)L$.
We have $C^1(\Vir^0,\mb F)=\mb Fa$,
where $a:\,\Vir^0\to\mb F[\lambda]$ is determined by $a(L)=1$,
and $C^2(\Vir^0,\mb F)=\mb F\alpha\oplus\mb F\beta$,
where $\alpha,\beta:\,{\Vir^0}^{\otimes 2}\to\mb F[\lambda]$
are determined by $\{L_\lambda L\}_\alpha=\lambda$ 
and $\{L_\lambda L\}_\beta=\lambda^3$.
In particular $da=2\alpha$.
Therefore $H^2(\Vir^0,\mb F)$ is one-dimensional,
meaning that, up to isomorphism, there is a unique 1-dimensional
central extension of $\Vir^0$,
namely $\Vir=\mb F[\partial]L\oplus\mb FF$,
with $C$ central and $[L_\lambda L]=(\partial+2\lambda)L+\frac{\lambda^3}{12}C$.
Note that, since $\Vir^0$ is free as $\mb F[\partial]$-module, this is the same answer that
we get if we consider the cohomology complex $\Gamma^\bullet\simeq\bar C^\bullet$.

On the other hand, we have $C^1(\Vir,\mb F)=\mb Fa\oplus\mb Fb$
where $a,b:\,\Vir\to\mb F$ are determined by $a(L)=1,\,a(C)=0,\,b(L)=0,\,b(C)=1$,
which is strictly bigger than $\bar C^1(\Vir,\mb F)=\mb Fa$.
The 2-cochains are as before: $C^2(\Vir,\mb F)=\mb F\alpha\oplus\mb F\beta$,
with $\alpha$ and $\beta$
determined by $\{L_\lambda L\}_\alpha=\lambda$ and $\{L_\lambda L\}_\beta=\lambda^3$.
In particular $da=2\alpha$ and $db=\frac1{12}\beta$.
Therefore $H^2(\Vir,\mb F)=0$, 
which corresponds to the fact that there are no non-trivial
1-dimensional central extensions of $\Vir$.
On the contrary, the second cohomology of the complex 
$\Gamma^\bullet\simeq\bar C^\bullet$ is one-dimensional.
\end{example}
\begin{remark}\label{rem:vg1}
Since $\bar C^k= C^k$ for $k\neq1$,
the corresponding cohomologies $H^n(\bar C^\bullet)$ and $H^n(C^\bullet)$
are isomorphic unless $n=1$ or $2$.
In particular, it follows from \cite[Theorem 7.1]{BKV},
that for the complex $C^\bullet(\Vir,\mb F)$ we have:
$H^n(\Vir,\mb F)=\mb F$ for $n=0$ or $3$, and $H^n(\Vir,\mb F)=0$ otherwise.
\end{remark}

\section{The space of $k$-chains, contractions and Lie derivatives}\label{sec:3}

\subsection{$\mf g$-complexes.}~~
\label{sec:-1}
Recall that a cohomology complex is a $\mb Z$-graded vector space
$B^\bullet$, endowed with an endomorphism $d$,
such that $d(B^k)\subset B^{k+1}$ and $d^2=0$.
We view $B^\bullet$ as a vector superspace,
where elements of $ B^k$ have the same parity as $k$ in $\mb Z/2\mb Z$,
so that $d$ is an odd operator.

Let $\mf g$ be a Lie algebra,
and let $\hat{\mf g}=\eta\mf g\oplus\mf g\oplus\mb F\partial_\eta$,
where $\eta$ is odd such that $\eta^2=0$,
be the associated $\mb Z$-graded Lie superalgebra.
A $\mf{g}$-\emph{structure} on the complex $B^\bullet$ is 
a $\mb Z$-grading preserving Lie algebra homomorphism
$$
\varphi\,:\,\,\hat{\mf{g}}\,\to\,\End B^\bullet\,,
$$
such that $\varphi(\partial_\eta)=d$.
A complex with a given $\mf g$-structure is called a $\mf g$-\emph{complex}.

Given $X\in\mf g$, the operator $\iota_X=\varphi(\eta X)$ on $B^\bullet$
is called the \emph{contraction},
and the operator $L_X=\varphi(X)$
is called the \emph{Lie derivative} (along $X$).
Note that we have Cartan's formula
\begin{equation}\label{eq:3.1}
L_X\,=\,[d,\iota_X]\,,
\end{equation}
and the commutation relations
\begin{equation}\label{eq:3.2}
[d,L_X]=0\,,\,\,
[\iota_X,\iota_Y]=0\,,\,\,
[L_X,\iota_Y]=[\iota_X,L_Y]=\iota_{[X,Y]}\,,\,\,
[L_X,L_Y]=L_{[X,Y]}\,.
\end{equation}
\begin{remark}\label{rem:vsep}
In order to construct a $\mf g$-structure on a complex $(B^\bullet,d)$,
it suffices to construct commuting odd operators $\iota_X$ on $B^\bullet$,
depending linearly on $X$, such that
$\iota_X(B^k)\subset B^{k-1}$, and
\begin{equation}\label{eq:3.3}
[[d,\iota_X],\iota_Y]=\iota_{[X,Y]}\,,\,\,\forall X,Y\in\mf g\,.
\end{equation}
Indeed, if we define $L_X$ by \eqref{eq:3.1},
all commutation relations \eqref{eq:3.2} hold.
\end{remark}

Let $\partial$ be an endomorphism of the complex $(B^\bullet,d)$,
i.e. such that $\partial(B^k)\subset B^k$ and $[d,\partial]=0$.
Let
$$
\mf g^\partial\,=\,\big\{X\in\mf g\,|\,[\iota_X,\partial]=0\big\}\subset\mf g\,.
$$
Notice that $[L_X,\partial]=0$ for all $X\in\mf g^\partial$.
It follows that $\mf g^\partial$ is a Lie subalgebra of $\mf g$,
and that $(\partial B^\bullet,d)$ is a subcomplex of $(B^\bullet,d)$
with a $\mf g^\partial$-structure.
The corresponding quotient complex
$$
(B^\bullet/\partial B^\bullet,d)\,,
$$
has an induced $\mf g^\partial$-structure,
and it is called the \emph{reduced} $\mf g^\partial$-\emph{complex}.

A \emph{morphism} of a $\mf g$-complex $(B^\bullet, \varphi)$ to an $\mf h$-complex
$(C^\bullet , \psi)$ is a Lie algebra homomorphism $\pi: \mf g \to \mf h$
and a $\mb Z$-grading preserving linear map $\rho : B^\bullet \to C^\bullet$, 
such that
$$
\rho (\varphi(g)b)\,=\,\psi(\pi(g))\rho(b)\,,
$$
for all $b \in B^\bullet$ and $g\in\hat{\mf g}$, 
where $\pi$ is extended to a Lie superalgebra homomorphism
$\hat{\mf g}\to\hat{\mf h}$ by letting $\pi(\eta X)=\eta\pi(X)$ and $\pi(\partial_\eta)=\partial_\eta$.
Such a morphism is an isomorphism if both $\pi$ and $\rho$ are isomorphisms.

\vspace{3ex}
\subsection{The basic and reduced spaces of chains 
$\tilde\Gamma_\bullet$ and $\Gamma_\bullet$.}~~
\label{sec:3.2}
The definitions of the basic and reduced spaces of $k$-chains are obtained,
following \cite{BKV}, by dualizing, respectively, the definitions 
of the spaces $\tilde\Gamma^k$ and $\Gamma^k$
introduced in Section \ref{sec:1.1}.
In particular, the basic space $\tilde\Gamma_k(A,M)$ of $k$-\emph{chains} 
of the Lie conformal algebra $A$
with coefficients in the $A$-module $M$ is, by definition, the quotient of the space
$A^{\otimes k} \otimes \Hom(\mb F[\lambda_1,\dots,\lambda_{k}],M)$,
where $\Hom(\mb F[\lambda_1,\dots,\lambda_{k}],M)$ is the space of $\mb F$-linear
maps from $\mb F[\lambda_1,\dots,\lambda_{k}]$ to $M$,
by the following relations:
\begin{enumerate}
\item[C1.] 
$a_1\otimes\cdots\partial a_i\cdots\otimes a_{k}\otimes\phi
= -a_1\otimes\cdots\otimes a_{k}\otimes(\lambda^*_i\phi)$,
where $\lambda_i^*\phi\in \Hom(\mb F[\lambda_1,\dots,\lambda_{k}],M)$ 
is defined by 
\begin{equation}\label{eq:ago20_2}
(\lambda_i^*\phi)(f(\lambda_1,\cdots,\lambda_{k}))
=\phi(\lambda_if(\lambda_1,\cdots,\lambda_{k}))\,;
\end{equation}
\item[C2.] 
$a_{\sigma(1)}\otimes\cdots\otimes a_{\sigma(k)}\otimes(\sigma^*\phi)
= \text{sign}(\sigma) a_1\otimes\cdots\otimes a_{k}\otimes\phi$,
for every permutation $\sigma\in S_k$,
where $\sigma^*\phi\in \Hom(\mb F[\lambda_1,\dots,\lambda_{k}],M)$ 
is defined by 
\begin{equation}\label{eq:ago20_3}
(\sigma^*\phi)(f(\lambda_1,\cdots,\lambda_{k}))
=\phi(f(\lambda_{\sigma(1)},\cdots,\lambda_{\sigma(k)}))\,.
\end{equation}
\end{enumerate}
We let, for brevity, $\tilde\Gamma_k=\tilde\Gamma_k(A,M)$ and
$\tilde\Gamma_\bullet=\tilde\Gamma_\bullet(A,M)=\bigoplus_{k\in\mb Z_+}\tilde\Gamma_k$.

The following statement is the analogue of Remark \ref{rem:anniv} for the space of $k$-chains.
\begin{lemma}\label{rem:ago19}
If one of the elements $a_i$ is a torsion element of the $\mb F[\partial]$-module $A$,
we have $a_1\otimes\cdots\otimes a_{k}\otimes\phi=0$ in $\tilde\Gamma_k$.
In particular, $\tilde\Gamma_k$ can be identified with the quotient of the space
$\bar A^{\otimes k}\otimes \Hom(\mb F[\lambda_1,\dots,\lambda_{k}],M)$
by the relations C1. and C2. above,
where $\bar A=A/\Tor A$
denotes the quotient of the $\mb F[\partial]$-module $A$ by its torsion.
\end{lemma}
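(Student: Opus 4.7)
My plan is to use the sesquilinearity relation C1 to transfer the action of $\mb F[\partial]$ on the torsion factor $a_i$ onto the map $\phi$, and then exploit torsion together with a surjectivity argument to force the tensor to vanish.

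First, I would establish by induction on $n$ the identity
\[
a_1 \otimes \cdots \otimes \partial^n a_i \otimes \cdots \otimes a_{k} \otimes \phi
\,=\, a_1 \otimes \cdots \otimes a_i \otimes \cdots \otimes a_{k} \otimes (-\lambda_i^*)^n \phi
\]
in $\tilde\Gamma_k$, which extends by $\mb F$-linearity to the relation
\[
a_1 \otimes \cdots \otimes Q(\partial) a_i \otimes \cdots \otimes a_{k} \otimes \phi
\,=\, a_1 \otimes \cdots \otimes a_i \otimes \cdots \otimes a_{k} \otimes Q(-\lambda_i^*)\phi
\]
for every $Q(\partial) \in \mb F[\partial]$. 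Since $a_i$ is torsion, choose a nonzero $P \in \mb F[\partial]$ with $P(\partial) a_i = 0$. Taking $Q = P$ makes the left-hand side vanish, giving
\[
a_1 \otimes \cdots \otimes a_i \otimes \cdots \otimes a_{k} \otimes P(-\lambda_i^*)\phi \,=\, 0
\]
for every $\phi \in \Hom(\mb F[\lambda_1,\dots,\lambda_k],M)$.

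The key step is then to prove that the endomorphism $P(-\lambda_i^*)$ of the space $\Hom(\mb F[\lambda_1,\dots,\lambda_k],M)$ is surjective. I would argue this by ``uncurrying'': writing $W = \Hom(\mb F[\lambda_1, \stackrel{i}{\check{\cdots}}, \lambda_k], M)$, I identify the Hom-space with the space of $W$-valued sequences $(\phi_n)_{n \geq 0}$ via $\phi_n = \phi(\lambda_i^n\,\cdot\,)$, so that $\lambda_i^*$ becomes the shift $(\phi_n) \mapsto (\phi_{n+1})$. If $P(\partial) = \sum_{j=0}^d c_j \partial^j$ with $c_d \neq 0$, then solving $P(-\lambda_i^*)\phi = \psi$ is equivalent to the linear recursion
\[
\sum_{j=0}^d (-1)^j c_j \phi_{n+j} \,=\, \psi_n, \qquad n \geq 0,
\]
which is solvable for arbitrary $\psi$: set $\phi_0 = \cdots = \phi_{d-1} = 0$ and determine $\phi_{n+d}$ recursively for $n \geq 0$ using invertibility of $c_d$. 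Consequently every $\phi$ is of the form $P(-\lambda_i^*)\psi$, and the displayed identity yields $a_1 \otimes \cdots \otimes a_i \otimes \cdots \otimes a_k \otimes \phi = 0$ in $\tilde\Gamma_k$.

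For the second claim, the first part shows that the subspace $K \subset A^{\otimes k} \otimes \Hom(\mb F[\lambda_1,\dots,\lambda_k],M)$ spanned by tensors having at least one factor in $\Tor A$ projects to zero in $\tilde\Gamma_k$. Since $K$ is exactly the kernel of the natural surjection $A^{\otimes k} \otimes \Hom(\cdot) \twoheadrightarrow \bar A^{\otimes k} \otimes \Hom(\cdot)$ induced by $A \twoheadrightarrow \bar A$, and since $\partial$ preserves $\Tor A$ (so the relations C1 and C2 descend well-defined to the tensor powers of $\bar A$), the quotient $\tilde\Gamma_k$ is naturally identified with the quotient of $\bar A^{\otimes k} \otimes \Hom(\mb F[\lambda_1,\dots,\lambda_k],M)$ by C1 and C2. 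The main obstacle is the surjectivity step for $P(-\lambda_i^*)$, but once the Hom-space is uncurried this reduces to the elementary recursion above.
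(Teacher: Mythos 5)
Your proposal is correct and follows essentially the same route as the paper: use the sesquilinearity relation C1 to convert $P(\partial)a_i=0$ into the relation $a_1\otimes\cdots\otimes a_k\otimes P(-\lambda_i^*)\phi=0$, and then show that $P(-\lambda_i^*)$ is a surjective endomorphism of $\Hom(\mb F[\lambda_1,\dots,\lambda_k],M)$. The only (immaterial) difference is in the surjectivity step: the paper constructs a preimage using a complement to the subspace $P(-\lambda_i)\mb F[\lambda_1,\dots,\lambda_k]$, whereas you solve the equivalent linear recursion after decomposing the Hom-space along powers of $\lambda_i$ — both are valid and elementary.
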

\begin{proof}
If $P(\partial)a_i=0$ for some polynomial $P$,
we have, by the relation C1.,
$$
0=a_1\otimes\cdots(P(\partial)a_i)\cdots\otimes a_{k}\otimes\phi
\equiv a_1\otimes\cdots a_i\cdots\otimes a_{k}\otimes(P(-\lambda_i^*)\phi)\,.
$$
To conclude the lemma we are left to prove that the linear endomorphism
$P(-\lambda_i^*)$ of the space $\Hom(\mb F[\lambda_1,\dots,\lambda_{k}],M)$
is surjective.
For this, consider the subspace
$P(-\lambda_i)\mb F[\lambda_1,\dots,\lambda_{k}]\subset \mb F[\lambda_1,\dots,\lambda_{k}]$,
and fix a complementary subspace $U\subset\mb F[\lambda_1,\dots,\lambda_{k}]$, 
so that 
$\mb F[\lambda_1,\dots,\lambda_{k}]=P(-\lambda_i)\mb F[\lambda_1,\dots,\lambda_{k}]\oplus U$.
Given $\phi\in\Hom(\mb F[\lambda_1,\dots,\lambda_{k}],M)$,
we define the linear map 
$\psi:\,\mb F[\lambda_1,\dots,\lambda_{k}]\to M$ 
by letting $\psi|_U=0$ and 
$\psi(P(-\lambda_i)f(\lambda_1,\cdots,\lambda_{k}))=\phi(f(\lambda_1,\cdots,\lambda_{k}))$
for every $f\in\mb F[\lambda_1,\dots,\lambda_{k}]$.
Clearly, $P(-\lambda_i^*)\psi=\phi$.
\end{proof}

The space $\tilde\Gamma_\bullet$ is endowed with a structure 
of a $\mb Z$-graded $\mb F[\partial]$-module,
with the action of $\partial$ induced by the natural action on
$A^{\otimes k} \otimes \Hom(\mb F[\lambda_1,\dots,\lambda_{k}],M)$:
\begin{eqnarray}\label{eq:sfor_1}
\partial\big(a_1\otimes\cdots\otimes a_{k}\otimes\phi\big)
&=&
\sum_{i=1}^{k}a_1\otimes\cdots(\partial a_i)\cdots\otimes a_{k}\otimes\phi
+ a1\otimes\cdots\otimes a_{k}\otimes(\partial\phi) \nonumber\\
&=&
a_1\otimes\cdots\otimes a_{k}\otimes
\big((-\lambda_1^*-\cdots-\lambda_{k}^*+\partial)\phi\big) \,,
\end{eqnarray}
where $\partial\phi\in\Hom(\mb F[\lambda_1,\dots,\lambda_k],M)$ is defined by
$(\partial\phi)(f)=\partial^M(\phi(f))$.
The \emph{reduced} space of chains $\Gamma_\bullet=\bigoplus_{k\in\mb Z_+}\Gamma_k$
is, by definition, the subspace of $\partial$-invariant chains:
$\Gamma_k=
\{\xi\in\tilde\Gamma_k\,|\,\partial \xi=0\}\subset\tilde\Gamma_k$.

For example, for $k=0$ we have $\tilde\Gamma_0=M$ and $\Gamma_0=\{m\in M\,|\,\partial m=0\}$.
Next, consider the case $k=1$ and
suppose that the $\mb F[\partial]$-module $A$ admits the decomposition \eqref{eq:anniv_1}, 
as a direct sum of $\Tor A$ and a complementary free submodule $\mb F[\partial]\otimes U$.
We already pointed out in Lemma \ref{rem:ago19} that $a\otimes\phi=0$ in $\tilde\Gamma_1$
if $a\in\Tor(A)$.
Moreover, by the sesquilinearity condition C1. we have
$\big(P(\partial)u\big)\otimes\phi= u\otimes\big(P(-\lambda^*)\phi\big)$ in $\tilde\Gamma_1$,
for every $u\in U,\,\phi\in\Hom(\mb F[\lambda],M)$ and every polynomial $P$.
Hence we can identify $\tilde\Gamma_1\simeq U\otimes\Hom(\mb F[\lambda],M)$.
Under this identification, an element $u\otimes\phi\in\tilde\Gamma_1$ is annihilated by $\partial$
if and only if the map $\phi:\,\mb F[\lambda]\to M$, satisfies the equation
$(-\lambda^*+\partial)\phi=0$,
namely if $\phi(\lambda^n)=\partial^n\phi(1)$ for every $n\in\mb Z_+$.
Clearly, there is a bijective correspondence between such maps and the elements of $M$,
given by $\phi\mapsto\phi(1)\in M$.
In conclusion, we have an isomorphism $\Gamma_1\simeq U\otimes M$.

\begin{remark}\label{rem:sfor}
Apparently, there is no natural way to define 
a differential $\delta$ on $k$-chains,
making $\tilde\Gamma_\bullet$ and $\Gamma_\bullet$ homology complexes.
The one given in \cite[Section 4]{BKV} is divergent,
unless any $m\in M$ is annihilated by a power of $\partial^M$.
\end{remark}

\vspace{3ex}
\subsection{Contraction operators acting on $\tilde\Gamma^\bullet$ and $\Gamma^\bullet$.}~~
\label{sec:3.3}
Assume, as in Section \ref{sec:3.1}, that $A$ is a Lie conformal algebra
and $M$ is an $A$-module endowed with a commutative, associative product
$\mu:\,M\otimes M\to M$,
such that $\partial^M:\, M\to M$, 
and $a_\lambda:\, M\to\mb C[\lambda]\otimes M$, satisfy the Leibniz rule.
Given an $h$-chain $\xi\in\tilde\Gamma_h$, we define the \emph{contraction operator}
$\iota_\xi:\,\tilde\Gamma^k\to\tilde\Gamma^{k-h},\,k\geq h$, as follows.
If $a_1\otimes\cdots\otimes a_{h}\otimes\phi
\in A^{\otimes h}\otimes\Hom(\mb F[\lambda_1,\dots,\lambda_{h}],M)$ 
is a representative of $\xi\in\tilde\Gamma_h$,
and $\tilde\gamma\in\tilde\Gamma^k$, we let
\begin{equation}\label{eq:ago15p_1}
(\iota_\xi\tilde\gamma)_{\lambda_{h+1},\cdots,\lambda_{k}}(a_{h+1},\cdots,a_{k})
=
\phi^\mu\big(\tilde\gamma_{\lambda_1,\cdots,\lambda_{k}}(a_1,\cdots,a_{k})\big)\,,
\end{equation}
where, in the RHS, $\phi^\mu$ denotes the composition of maps, 
commuting with $\lambda_{h+1},\dots,\lambda_{k}$,
\begin{equation}\label{eq:ago16_2}
\mb F[\lambda_1,\dots,\lambda_{h}]\otimes M
\stackrel{\phi\otimes\id}{\longrightarrow}
M\otimes M
\stackrel{\mu}{\longrightarrow}
M\,.
\end{equation}
We extend the definition of $\iota_\xi$ to all elements $\xi\in\tilde\Gamma_h$ by linearity on $\xi$,
and we let $\iota_\xi(\tilde\gamma)=0$ if $k<h$.
We also define the Lie derivative $L_\xi$ by Cartan's formula:
$L_\xi=[\delta,\iota_\xi]$.

It is immediate to check, using the sesquilinearity and skew-symmetry 
conditions A1. and A2. for $\tilde\gamma$ (cf. Section \ref{sec:1.1}),
that the RHS in \eqref{eq:ago15p_1} does not depend on the choice of the representative
for $\xi$ in $A^{\otimes h}\otimes\Hom(\mb F[\lambda_1,\dots,\lambda_{h}],M)$.
Moreover, if $\tilde\gamma\in\tilde\Gamma^k$, it follows that $\iota_\xi(\tilde\gamma)$
satisfies both conditions A1. and A2., 
namely $\iota_\xi(\tilde\gamma)\in\tilde\Gamma^{k-h}$.

\begin{proposition}\label{prop:dic17_1}
The contraction operators on the superspace $\tilde\Gamma^\bullet$ commute, i.e.
for $\xi\in\tilde\Gamma_h$ and $\zeta\in\tilde\Gamma_j$ we have
$$
\iota_\xi\iota_\zeta\,=\,(-1)^{hj}\iota_\zeta\iota_\xi\,.
$$
\end{proposition}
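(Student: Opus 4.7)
The plan is to verify the identity by unwinding both sides of $\iota_\xi\iota_\zeta\tilde\gamma = (-1)^{hj}\iota_\zeta\iota_\xi\tilde\gamma$ directly from the definition \eqref{eq:ago15p_1}, and then seeing that the required sign comes from the skew-symmetry condition A2.\ on $\tilde\gamma$, while commutativity of the product $\mu$ on $M$ accounts for the fact that the auxiliary maps $\phi^\mu$ and $\psi^\mu$ commute.

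More precisely, fix representatives $a_1\otimes\cdots\otimes a_h\otimes\phi$ of $\xi$ and $b_1\otimes\cdots\otimes b_j\otimes\psi$ of $\zeta$, and let $\tilde\gamma\in\tilde\Gamma^k$ with $k\geq h+j$ (otherwise both sides vanish by definition). First I would compute
\[
(\iota_\xi\iota_\zeta\tilde\gamma)_{\nu_1,\ldots,\nu_{k-h-j}}(c_1,\ldots,c_{k-h-j})
= \phi^\mu\Bigl(\psi^\mu\bigl(\tilde\gamma_{\mu_1,\ldots,\mu_j,\lambda_1,\ldots,\lambda_h,\nu_1,\ldots,\nu_{k-h-j}}(b_1,\ldots,b_j,a_1,\ldots,a_h,c_1,\ldots,c_{k-h-j})\bigr)\Bigr),
\]
where $\psi^\mu$ acts on the $\mu$-variables and $\phi^\mu$ on the $\lambda$-variables (both commuting through the remaining $\nu_i$'s). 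Analogously
\[
(\iota_\zeta\iota_\xi\tilde\gamma)_{\nu_1,\ldots,\nu_{k-h-j}}(c_1,\ldots,c_{k-h-j})
= \psi^\mu\Bigl(\phi^\mu\bigl(\tilde\gamma_{\lambda_1,\ldots,\lambda_h,\mu_1,\ldots,\mu_j,\nu_1,\ldots,\nu_{k-h-j}}(a_1,\ldots,a_h,b_1,\ldots,b_j,c_1,\ldots,c_{k-h-j})\bigr)\Bigr).
\]

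Next I would apply the skew-symmetry condition A2.\ to the argument of the second expression: moving the block $(a_1,\ldots,a_h)$ (together with its $\lambda$-variables) past $(b_1,\ldots,b_j)$ (together with its $\mu$-variables) requires $hj$ transpositions, and therefore produces exactly the factor $(-1)^{hj}$ with the arguments now in the same order as in the first expression. It remains to see that $\phi^\mu\circ\psi^\mu=\psi^\mu\circ\phi^\mu$ when applied to any element of $\mb F[\lambda_1,\ldots,\lambda_h,\mu_1,\ldots,\mu_j,\nu_1,\ldots,\nu_{k-h-j}]\otimes M$: writing such an element as $\sum_\alpha f_\alpha(\lambda)g_\alpha(\mu)h_\alpha(\nu)\otimes m_\alpha$ and chasing through the definition \eqref{eq:ago16_2}, both orderings yield $\sum_\alpha h_\alpha(\nu)\,\phi(f_\alpha)\cdot\psi(g_\alpha)\cdot m_\alpha$, which agree thanks to the commutativity of the product $\mu$ on $M$. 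This gives the claimed identity on the level of representatives.

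The final step is to check that the computation is independent of the choice of representatives of $\xi$ and $\zeta$, but this has already been observed in the paragraph following \eqref{eq:ago15p_1} (using A1.\ and A2.\ for $\tilde\gamma$) and need only be invoked. The one place where care is required is the bookkeeping of which variables each of $\phi^\mu$ and $\psi^\mu$ treats as scalars versus as arguments, since in the two orderings they act in reverse roles with respect to the outer map; I expect this to be the only mildly delicate point, and it is handled by the convention that $\phi^\mu$ (respectively $\psi^\mu$) is defined to commute with all other formal variables, which is precisely what makes the commutativity argument above go through.
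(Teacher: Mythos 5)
Your proposal is correct and follows essentially the same route as the paper: compute both iterated contractions on representatives, observe that $\phi^\mu$ and $\psi^\mu$ commute (via commutativity of the product on $M$), and obtain the sign $(-1)^{hj}$ from the skew-symmetry condition A2.\ applied to the block swap of the $h$ and $j$ arguments. The only difference is that you spell out the commutation of $\phi^\mu$ and $\psi^\mu$ and the well-definedness in more detail than the paper, which simply calls these obvious or refers back to the remarks after \eqref{eq:ago15p_1}.
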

\begin{proof}
Let $a_1\otimes \cdots\otimes a_h\otimes\phi
\in A^{\otimes h}\otimes\Hom(\mb F[\lambda_1,\cdots,\lambda_h],M)$ 
be a representative for $\xi\in\tilde\Gamma_h$,
$b_1\otimes \cdots\otimes b_j\otimes\psi
\in A^{\otimes j}\otimes\Hom(\mb F[\mu_1,\cdots,\mu_j],M)$ 
be a representative for $\zeta\in\tilde\Gamma_j$,
and let $\tilde\gamma\in\tilde\Gamma^k$.
By the definition \eqref{eq:ago15p_1} of the contraction operators, we have
\begin{eqnarray*}
&& (\iota_\zeta\iota_\xi\tilde\gamma)_{\nu_1,\cdots,\nu_{k-h-j}}(c_1,\cdots,c_{k-h-j}) \\
&& =
\psi^\mu\big(\phi^\mu\big(
\tilde\gamma_{\lambda_1,\cdots,\lambda_h,\mu_1\cdots,\mu_j,\nu_1,\cdots,\nu_{k-h-j}}
(a_1,\cdots,a_h,b_1,\cdots,b_j,c_1,\cdots,c_{k-h-j})\big)\big)\,.
\end{eqnarray*}
Since obviously $\phi^\mu$ and $\psi^\mu$ commute, 
the proposition follows from condition A2. for $\tilde\gamma$.
\end{proof}

\begin{proposition}\label{prop:ago16_1}
For every basic $h$-chain $\xi\in\tilde\Gamma_h$, we have
\begin{equation}\label{eq:ago16_1}
[\partial,\iota_\xi] = \partial\circ\iota_\xi-\iota_\xi\circ\partial=\iota_{\partial \xi}\,.
\end{equation}
In particular, if $\xi\in\Gamma_h$ is a reduced $h$-chain, then
$\iota_\xi$ commutes with $\partial$,
and it induces a well-defined contraction operator on the reduced cohomology complex:
$\iota_\xi:\,\Gamma^k\to\Gamma^{k-h}$.
\end{proposition}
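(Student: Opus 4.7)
The plan is a direct computation, writing out both sides of the desired identity $[\partial,\iota_\xi]\tilde\gamma = \iota_{\partial\xi}\tilde\gamma$ on an arbitrary cochain $\tilde\gamma\in\tilde\Gamma^k$ in terms of the explicit formulas. Fix a representative $a_1\otimes\cdots\otimes a_h\otimes\phi$ of $\xi\in\tilde\Gamma_h$. Applying \eqref{eq:july24_8} and then \eqref{eq:ago15p_1} yields $(\partial\iota_\xi\tilde\gamma)_{\lambda_{h+1},\cdots,\lambda_k}(a_{h+1},\cdots,a_k) = (\partial^M+\lambda_{h+1}+\cdots+\lambda_k)\phi^\mu\bigl(\tilde\gamma_{\lambda_1,\cdots,\lambda_k}(a_1,\cdots,a_k)\bigr)$, whereas applying them in the opposite order gives $(\iota_\xi\partial\tilde\gamma)_{\lambda_{h+1},\cdots,\lambda_k}(a_{h+1},\cdots,a_k) = \phi^\mu\bigl((\partial^M+\lambda_1+\cdots+\lambda_k)\tilde\gamma_{\lambda_1,\cdots,\lambda_k}(a_1,\cdots,a_k)\bigr)$.

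The substantive input is commuting $\partial^M$ through $\phi^\mu$. Since $\partial^M$ is a derivation of the product $\mu$ on $M$ (the standing hypothesis of Section \ref{sec:3.1}) and $\phi^\mu$ is the composition \eqref{eq:ago16_2}, a short Leibniz computation delivers the identity $\partial^M\circ\phi^\mu = (\partial\phi)^\mu + \phi^\mu\circ(\mathrm{id}\otimes\partial^M)$, where $\partial\phi$ denotes $\partial^M\circ\phi$. Combined with the tautology $(\lambda_i^*\phi)^\mu = \phi^\mu\circ(\lambda_i\otimes\mathrm{id})$ that follows from \eqref{eq:ago20_2}, this causes the $\partial^M$-term and the $\lambda_{h+1}+\cdots+\lambda_k$-term to cancel between the two expressions above, so that their difference collapses to $\bigl((-\lambda_1^*-\cdots-\lambda_h^*+\partial)\phi\bigr)^\mu\bigl(\tilde\gamma_{\lambda_1,\cdots,\lambda_k}(a_1,\cdots,a_k)\bigr)$.

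By the definition \eqref{eq:sfor_1} of the action of $\partial$ on $\tilde\Gamma_\bullet$, the map $(-\lambda_1^*-\cdots-\lambda_h^*+\partial)\phi$ is precisely the second tensor factor in a representative of $\partial\xi$, so by \eqref{eq:ago15p_1} the expression above is exactly $(\iota_{\partial\xi}\tilde\gamma)_{\lambda_{h+1},\cdots,\lambda_k}(a_{h+1},\cdots,a_k)$, which establishes \eqref{eq:ago16_1}. The ``in particular'' clause is then a formal consequence: for $\xi\in\Gamma_h$ one has $\partial\xi=0$, so $[\partial,\iota_\xi]=0$; hence $\iota_\xi$ preserves the subcomplex $\partial\tilde\Gamma^\bullet$ and descends to a well-defined operator $\Gamma^k\to\Gamma^{k-h}$ on the quotient $\Gamma^\bullet = \tilde\Gamma^\bullet/\partial\tilde\Gamma^\bullet$.

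I do not anticipate a serious obstacle: the argument is essentially bookkeeping of the Leibniz rule for $\partial^M$ on $\mu$ against the definitions of $\lambda_i^*$ and $\phi^\mu$. The only point requiring genuine care is verifying that, when $\partial^M$ on the left-hand side lands (via Leibniz) on the $M$-factor produced by $\phi$, the resulting term $(\partial\phi)^\mu$ combines correctly with the $\lambda_1+\cdots+\lambda_h$ mismatch between the two $\lambda$-sums to reassemble exactly the representative of $\partial\xi$ prescribed by \eqref{eq:sfor_1}.
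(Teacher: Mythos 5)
Your proposal is correct and follows essentially the same route as the paper: compute $\partial\circ\iota_\xi$ and $\iota_\xi\circ\partial$ from \eqref{eq:july24_8} and \eqref{eq:ago15p_1}, use the Leibniz identity $\partial^M\circ\phi^\mu-\phi^\mu\circ(\id\otimes\partial^M)=(\partial\phi)^\mu$ (the paper's Lemma \ref{lem:ago16}) together with $(\lambda_i^*\phi)^\mu=\phi^\mu\circ(\lambda_i\otimes\id)$ to identify the difference with $\iota_{\partial\xi}$ via \eqref{eq:sfor_1}. The deduction of the reduced statement from $\partial\xi=0$ is also the same.
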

\begin{proof}
Let $a_1\otimes\cdots\otimes a_{h}\otimes\phi$ be a representative of $\xi\in\tilde\Gamma_h$,
and let $\tilde\gamma\in\tilde\Gamma^k$.
By the definition \eqref{eq:july24_8} of the action of $\partial$ on $\tilde\Gamma^k$, we have
\begin{equation}\label{eq:ago16_3}
\big(\partial\iota_\xi\tilde\gamma\big)_{\lambda_{h+1},\cdots,\lambda_{k}}(a_{h+1},\cdots,a_{k})
=
(\partial^M+\lambda_{h+1}+\cdots+\lambda_{k})
\phi^\mu\big(\tilde\gamma_{\lambda_1,\cdots,\lambda_{k}}(a_1,\cdots,a_{k})\big)\,,
\end{equation}
and, similarly,
\begin{equation}\label{eq:ago16_4}
\big(\iota_\xi\partial\tilde\gamma\big)_{\lambda_{h+1},\cdots,\lambda_{k}}(a_{h+1},\cdots,a_{k})
=
\phi^\mu\big((\partial^M+\lambda_1+\cdots+\lambda_{k})
\tilde\gamma_{\lambda_1,\cdots,\lambda_{k}}(a_1,\cdots,a_{k})\big)\,.
\end{equation}
On the other hand, by the definition \eqref{eq:sfor_1} of the action of $\partial$ on $\tilde\Gamma_h$,
we have
\begin{eqnarray}\label{eq:ago16_5}
& \displaystyle{
\big(\iota_{\partial \xi}\tilde\gamma\big)_{\lambda_{h+1},\cdots,\lambda_{k}}(a_{h+1},\cdots,a_{k})
=
(\partial \phi)^\mu\big(\tilde\gamma_{\lambda_1,\cdots,\lambda_{k}}(a_1,\cdots,a_{k})\big) 
} \\
& \displaystyle{
-\phi^\mu\big((\lambda_1+\cdots+\lambda_{h})
\tilde\gamma_{\lambda_1,\cdots,\lambda_{k}}(a_1,\cdots,a_{k})\big)
\,.}\nonumber
\end{eqnarray}
Equation \eqref{eq:ago16_1} then follows by \eqref{eq:ago16_3}, \eqref{eq:ago16_4}, 
\eqref{eq:ago16_5}, and by the following result.
\begin{lemma}\label{lem:ago16}
For every linear map $\phi:\,\mb F[\lambda_1,\dots,\lambda_{h}]\to M$,
we have
\begin{equation}\label{eq:ago16_6}
[\partial^M,\phi^\mu]=\partial^M\circ\phi^\mu-\phi^\mu\circ(id\otimes\partial^M)
= (\partial\phi)^\mu\,,
\end{equation}
where $\phi^\mu:\,\mb F[\lambda_1,\dots,\lambda_{h}]\otimes M\to M$
is defined in \eqref{eq:ago16_2}.
\end{lemma}
\begin{proof}
Given $f\otimes m\in\mb F[\lambda_1,\dots,\lambda_{h}]\otimes M$ we have
\begin{eqnarray*}
&& \big(\partial^M\circ\phi^\mu\big)(f\otimes m) \,=\, \partial^M\big(\phi(f)\cdot m)\,,\\
&& \big(\phi^\mu\circ(\id\otimes\partial^M)\big)(f\otimes m) \,=\, \phi(f)\cdot(\partial^M m)\,,\\
&& (\partial\phi)^\mu(f\otimes m) \,=\, (\partial^M\phi(f))\cdot m\,.
\end{eqnarray*}
Equation \eqref{eq:ago16_6} follows since, by assumption,
$\partial^M$ is a derivation of $M$.
\end{proof}
\end{proof}

For example, for $h=0$, the contraction by $m\in M=\tilde\Gamma_0$
is given by the commutative associative product in $M$:
$(\iota_m\tilde\gamma)_{\lambda_1,\cdots,\lambda_{k}}(a_1,\cdots,a_{k})
= m\tilde\gamma_{\lambda_1,\cdots,\lambda_{k}}(a_1,\cdots,a_{k})$.
(Which is the same as the exterior multiplication by $m\in \tilde\Gamma^0=M$).
If, moreover, $m\in M$ is such that $\partial m=0$, we have 
$\iota_m\partial\tilde\gamma=\partial\iota_m\tilde\gamma$,
so that $\iota_m$ induces a well-defined map $\Gamma^k\to\Gamma^k$.
Next, consider the case $h=1$.
Recall from the previous section that, if $A$ decomposes as in \eqref{eq:anniv_1},
we have $\tilde\Gamma_1\simeq U\otimes\Hom(\mb F[\lambda],M)$.
The contraction operator associated to $\xi=u\otimes\phi\in\tilde\Gamma_1$
is given by
$(\iota_\xi\tilde\gamma)_{\lambda_2,\cdots,\lambda_{k}}(a_2,\cdots,a_{k})
=\phi^\mu\big(\tilde\gamma_{\lambda,\lambda_2,\cdots,\lambda_{k}}(u,a_2,\cdots,a_{k})\big)$.
Moreover, we have $\Gamma_1\simeq U\otimes M$,
and the contraction operator associated to $\xi=u\otimes m$ is given by
\begin{equation}\label{eq:ago19_1}
(\iota_\xi\tilde\gamma)_{\lambda_2,\cdots,\lambda_{k}}(a_2,\cdots,a_{k})
=\tilde\gamma_{\partial^M,\lambda_2,\cdots,\lambda_{k}}(u,a_2,\cdots,a_{k})_\to m\,,
\end{equation}
where the arrow in the RHS means that $\partial^M$ should be moved to the right.
Clearly, $\iota_\xi\partial\tilde\gamma=\partial\iota_\xi\tilde\gamma$, and
$\iota_\xi$ induces a well-defined map $\Gamma^k\to\Gamma^{k-1}$.

\vspace{3ex}
\subsection{The Lie algebra structure on $\mf g=\Pi\tilde\Gamma_1$
and the $\mf g$-structure on the complex $(\tilde\Gamma^\bullet,\delta)$.}~~
\label{sec:3.2.5}
In this section we want to define a Lie algebra structure on the space of 1-chains $\tilde\Gamma_1$,
thus making $\tilde\Gamma^\bullet$ a $\Pi\tilde\Gamma_1$-complex
(recall the definition in Section \ref{sec:-1}),
where $\Pi$ means
that we take opposite parity, namely we consider $\tilde\Gamma_1$ as an even vector space.
We start by describing the space of 1-chains in a slightly different form.
Recall that $\tilde\Gamma_1$ is the quotient of the space $A\otimes\Hom(\mb F[\lambda],M)$
by the image of the operator $\partial\otimes1+1\otimes\lambda^*$.
We shall identify $\Hom(\mb F[\lambda],M)$ with $M[[x]]$ via the map
$$
\phi\,\mapsto\,\sum_{n\in\mb Z_+}\frac1{n!}\phi(\lambda^n)x^n\,.
$$
It is immediate to check that, under this identification,
the action of $\partial$ on $\Hom(\mb F[\lambda],M)$ corresponds to the natural action 
of $\partial$ on $M[[x]]$,
while the operator $\lambda^*$ acting on $\Hom(\mb F[\lambda],M)$ 
corresponds to the operator $\partial_x=\frac d{dx}$ on $M[[x]]$.
Thus, the space of 1-chains is
$$
\tilde\Gamma_1
\,=\, (A\otimes M[[x]])\big/(\partial\otimes1+1\otimes\partial_x)(A\otimes M[[x]])\,.
$$
Recalling \eqref{eq:sfor_1}, 
the corresponding action of $\partial$ on $\tilde\Gamma_1$ is given by
\begin{equation}\label{eq:nov22_1}
\partial(a\otimes m(x))
\,=\, a\otimes(\partial-\partial_x) m(x)\,,
\end{equation}
and the reduced space of 1-chains is 
$\Gamma_1=\{\xi=a\otimes m(x)\in\tilde\Gamma_1\,|\,\partial\xi=0\}$.
In particular, if $A$ admits a decomposition \eqref{eq:anniv_1}
as a direct sum of $\Tor A$ and a complementary free submodule $\mb F[\partial]\otimes U$,
we have $\tilde\Gamma_1\simeq U\otimes M[[x]]$,
and the reduced subspace $\Gamma_1\subset\tilde\Gamma_1$ consists
of elements of the form
\begin{equation}\label{eq:nov22_4}
\xi = u\otimes (e^{x\partial}m)\,,\,\,u\in U,\,m\in M\,.
\end{equation}

Given $\xi\in\tilde\Gamma_1$,
we can write the action of the contraction operator
$\iota_\xi:\,\tilde\Gamma^k\to\tilde\Gamma^{k-1}$, defined by \eqref{eq:ago15p_1}.
Consider the pairing $M[[x]]\otimes\mb F[\lambda]\to M$ given by
\begin{equation}\label{eq:nov23_6}
\langle x^m,\lambda^n\rangle=n!\, \delta_{m,n}\,,\,\,m,n\in\mb Z_+\,.
\end{equation}
It induces a pairing
$\langle\,,\,\rangle:\,M[[x]]\otimes (\mb F[\lambda]\otimes M)\to M$, given by
\begin{equation}\label{eq:nov22_3}
M[[x]]\otimes \mb F[\lambda]\otimes M
\stackrel{\langle\,,\,\rangle\otimes\id}{\longrightarrow}
M\otimes M
\stackrel{\mu}{\longrightarrow} M\,,
\end{equation}
where $\mu$ in the last step denotes the commutative associative product on $M$.
Then, if $a_1\otimes m(x_1)\in A\otimes M[[x_1]]$ 
is a representative of $\xi\in\tilde\Gamma_1$,
the contraction operator $\iota_\xi:\,\tilde\Gamma^k\to\tilde\Gamma^{k-1}$
acts as follows:
\begin{equation}\label{eq:nov22_2}
(\iota_\xi\tilde\gamma)_{\lambda_{2},\cdots,\lambda_{k}}(a_{2},\cdots,a_{k})
\,=\,
\big\langle  m(x_1),
\tilde\gamma_{\lambda_1,\lambda_2,\cdots,\lambda_{k}}(a_1,a_2,\cdots,a_{k})\big\rangle\,,
\end{equation}
where, in the RHS, $\langle\,,\,\rangle$ denotes the contraction of $x_1$ with $\lambda_1$
defined in \eqref{eq:nov22_3}.
Clearly, if $\xi$ is as in \eqref{eq:nov22_4},
equation \eqref{eq:nov22_2} reduces to \eqref{eq:ago19_1}.

We can also write down the formula for the Lie derivative 
$L_\xi=\delta\circ\iota_\xi+\iota_\xi\circ\delta$.
Let $a_1\otimes m(x_1)\in A\otimes M[[x_1]]$ be a representative of $\xi\in\tilde\Gamma_1$.
Recalling the expression \eqref{eq:july24_7} of the differential $\delta$,
we have
\begin{eqnarray*}
(\delta\iota_\xi\tilde\gamma)_{\lambda_2,\cdots,\lambda_{k+1}}(a_2,\cdots,a_{k+1})
=\sum_{i=2}^{k+1} (-1)^{i} {a_i}_{\lambda_i}
\Big\langle m(x_1),\tilde\gamma_{\lambda_1,\lambda_2,\stackrel{i}{\check{\cdots}},\lambda_{k+1}}
(a_1,a_2,\stackrel{i}{\check{\cdots}},a_{k+1}) \Big\rangle \\
+ \sum_{\substack{i,j=2\\i<j}}^{k+1} (-1)^{k+i+j} 
\Big\langle m(x_1),
\tilde\gamma_{\lambda_1,\lambda_2,\stackrel{i}{\check{\cdots}}\stackrel{j}{\check{\cdots}},
\lambda_{k+1},\lambda_i+\lambda_j}
(a_1,a_2,\stackrel{i}{\check{\cdots}}\stackrel{j}{\check{\cdots}},a_{k+1},[{a_i}_{\lambda_i} a_j])
\Big\rangle\,, \nonumber
\end{eqnarray*}
and
\begin{eqnarray*}
(\iota_\xi\delta\tilde\gamma)_{\lambda_2,\cdots,\lambda_{k+1}}(a_2,\cdots,a_{k+1})
=\sum_{i=1}^{k+1} (-1)^{i+1} 
\Big\langle m(x_1),
{a_i}_{\lambda_i}
\Big(\tilde\gamma_{\lambda_1,\stackrel{i}{\check{\cdots}},\lambda_{k+1}}
(a_1,\stackrel{i}{\check{\cdots}},a_{k+1}) \Big) 
\Big\rangle\\
+ \sum_{\substack{i,j=1\\i<j}}^{k+1} (-1)^{k+i+j+1} 
\Big\langle m(x_1),
\tilde\gamma_{\lambda_1,\stackrel{i}{\check{\cdots}}\stackrel{j}{\check{\cdots}},
\lambda_{k+1},\lambda_i+\lambda_j}
(a_1,\stackrel{i}{\check{\cdots}}\stackrel{j}{\check{\cdots}},a_{k+1},[{a_i}_{\lambda_i} a_j])
\Big\rangle\,. \nonumber
\end{eqnarray*}
We then use the assumption that the $\lambda$-action of $A$ on $M$ is by derivations
of the commutative associative product of $M$, to get, from the above two equations,
\begin{eqnarray}\label{eq:nov23_1}
& \displaystyle{
(L_\xi\tilde\gamma)_{\lambda_2,\cdots,\lambda_{k+1}}(a_2,\cdots,a_{k+1})
=
\Big\langle m(x_1),
{a_1}_{\lambda_1}
\Big(\tilde\gamma_{\lambda_2,\cdots,\lambda_{k+1}}(a_2,\cdots,a_{k+1}) \Big) \Big\rangle 
}\nonumber\\
& \displaystyle{
+
\sum_{i=2}^{k+1} (-1)^{i} 
\Big\langle \big({a_i}_{\lambda_i}m(x_1)\big),
\tilde\gamma_{\lambda_1,\lambda_2,\stackrel{i}{\check{\cdots}},\lambda_{k+1}}
(a_1,a_2,\stackrel{i}{\check{\cdots}},a_{k+1}) \Big\rangle 
}\\
& \displaystyle{
-
\sum_{j=2}^{k+1}
\Big\langle m(x_1),
\tilde\gamma_{\lambda_2,\cdots,\lambda_1+\lambda_j,\cdots,\lambda_{k+1}}
(a_2,\cdots,[{a_1}_{\lambda_1} a_j],\cdots,a_{k+1})\Big\rangle\,. 
}\nonumber
\end{eqnarray}

We next introduce a Lie algebra structure on $\mf g=\Pi\tilde\Gamma_1$ 
and the corresponding
$\mf g$-structure on the complex $(\tilde\Gamma^\bullet,\delta)$.
Define the following bracket on the space $A\otimes M[[x]]$:
\begin{eqnarray}\label{eq:nov23_2}
[a\otimes m(x),b\otimes n(x)]
=
[a_{\partial_{x_1}}b]\otimes m(x_1)n(x)\,\big|_{x_1=x} \\
-a\otimes \big\langle n(x_1),b_{\lambda_1}m(x)\big\rangle
+b\otimes \big\langle m(x_1),a_{\lambda_1}n(x)\big\rangle\,,\nonumber
\end{eqnarray}
where, as before, $\langle\,,\,\rangle$ in the RHS denotes the contraction of $x_1$ with $\lambda_1$
defined in \eqref{eq:nov22_3}.
\begin{lemma}\label{lem:nov23}
\begin{enumerate}
\alphaparenlist
\item The bracket \eqref{eq:nov23_2} on $A\otimes M[[x]]$
induces a well-defined Lie algebra bracket on the space $\mf g=\Pi\tilde\Gamma_1$.
\item The operator $\partial\otimes1+1\otimes\partial$ on $A\otimes M[[x]]$
is a derivation of the bracket \eqref{eq:nov23_2}.
In particular, $\partial$ defined in \eqref{eq:nov22_1} is a derivation 
of the Lie algebra $\mf g=\Pi\tilde\Gamma_1$,
and $\mf g^\partial=\Pi\Gamma_1\subset\mf g$ is a Lie subalgebra.
\end{enumerate}
\end{lemma}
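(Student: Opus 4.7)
The plan for part (a) is a direct verification of the Lie algebra axioms in three steps. First, I would show that \eqref{eq:nov23_2} descends to the quotient $\tilde\Gamma_1$. Replacing $a\otimes m(x)$ by $(\partial a)\otimes m(x)+a\otimes\partial_x m(x)$ in either argument of the bracket should produce an element in the image of $\partial\otimes 1+1\otimes\partial_x$. This follows from the sesquilinearity of $[\cdot_\lambda\cdot]$ on $A$ and of the $\lambda$-action on $M$, combined with the integration-by-parts identity $\langle m(x_1),\lambda_1 f(\lambda_1)\rangle=-\langle\partial_{x_1}m(x_1),f(\lambda_1)\rangle$, which is immediate from \eqref{eq:nov23_6}.

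Skew-symmetry is straightforward for the last two terms of \eqref{eq:nov23_2}, which manifestly swap up to sign under the exchange $(a,m(x))\leftrightarrow(b,n(x))$. For the first term, I would apply the skew-symmetry \eqref{eq:0.2} of the $\lambda$-bracket on $A$, namely $[a_{\partial_{x_1}}b]=-[b_{-\partial-\partial_{x_1}}a]$, and then translate the $\partial$ into $x$- and $x_1$-derivatives using the defining relations of $\tilde\Gamma_1$ together with the Leibniz rule for $\partial^M$ on the commutative product of $M$. After this, the first term matches, up to sign, what is obtained from the swapped expression.

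The main obstacle is the Jacobi identity. I would expand $[[a\otimes m(x),b\otimes n(x)],c\otimes p(x)]+\mathrm{cyclic}$ using \eqref{eq:nov23_2} twice, obtaining $27$ terms in total (three per cyclic permutation, times three per nested bracket). These split naturally into three classes: terms containing an iterated $\lambda$-bracket $[[a_\mu b]_\nu c]$ in $A$, which cancel under cyclic summation by the Jacobi identity \eqref{eq:0.3} for $A$; mixed terms containing one $\lambda$-bracket on $A$ and one $\lambda$-action on $M$, which cancel by the same identity applied with the third argument in $M$ (i.e.~the $A$-module axiom); and terms with two $\lambda$-actions on $M$ applied to products of elements of $M$, which cancel pairwise thanks to the Leibniz rule for $a_\lambda$ on the commutative product of $M$. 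Careful bookkeeping of the signs and of the integration-by-parts identities for the pairing $\langle\,,\,\rangle$ is required throughout, but no new ingredients enter.

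For part (b), I would check by inspection that $\partial\otimes 1+1\otimes\partial$ is a derivation of \eqref{eq:nov23_2}: on the first term, $\partial^M$ distributes via the Leibniz rule on the product $m(x_1)n(x)$; on the second and third terms, $\partial^M$ commutes with the pairing $\langle\,,\,\rangle$ and is a derivation of $M$, so it distributes across the pairing and the product. The induced operator on $\tilde\Gamma_1$ is the $\partial$ of \eqref{eq:nov22_1}, hence a derivation of $\mf g=\Pi\tilde\Gamma_1$; consequently its kernel $\Gamma_1=\mf g^\partial$ is automatically a Lie subalgebra.
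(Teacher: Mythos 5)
Your plan reproduces the paper's proof essentially step for step: a direct verification of well-definedness, skew-symmetry and Jacobi by expanding \eqref{eq:nov23_2} and matching terms, using sesquilinearity, skew-symmetry \eqref{eq:0.2} and Jacobi \eqref{eq:0.3} on $A$, the module axiom, the Leibniz rule, and the adjointness of multiplication by $\lambda_1$ and $\partial_{x_1}$ under the pairing \eqref{eq:nov23_6}; part (b) is likewise the same explicit Leibniz check. Two concrete corrections are needed. First, your ``integration-by-parts'' identity has the wrong sign: from \eqref{eq:nov23_6} one computes $\langle x^m,\lambda^{n+1}\rangle=(n+1)!\,\delta_{m,n+1}=\langle\partial_x x^m,\lambda^n\rangle$, so $\langle m(x_1),\lambda_1 f(\lambda_1)\rangle=+\langle\partial_{x_1}m(x_1),f(\lambda_1)\rangle$ (the paper's \eqref{eq:nov23_4}); with your minus sign the contributions of $(\partial a)\otimes m(x)$ and $a\otimes\partial_x m(x)$ to the term $b\otimes\langle m(x_1),a_{\lambda_1}n(x)\rangle$ add instead of cancelling, and the well-definedness check fails. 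Second, your three-class taxonomy for the Jacobi identity does not match how the cancellation actually runs: the module axiom $a_\lambda(b_\mu m)-b_\mu(a_\lambda m)=[a_\lambda b]_{\lambda+\mu}m$ pairs one term of your class (ii) against two terms of class (iii), so those classes cancel against each other rather than each within itself; and the class-(ii) terms of the form $[a_{\partial_{x_1}}b]\otimes(\cdots)$ do not vanish identically but only combine, via skew-symmetry of the $\lambda$-bracket and the Leibniz rule, into an element of $(\partial\otimes1+1\otimes\partial_x)(A\otimes M[[x]])$ --- i.e.\ the Jacobi identity holds only in the quotient $\tilde\Gamma_1$, not on $A\otimes M[[x]]$. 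Neither point changes the strategy, but both must be set right when the bookkeeping is carried out.
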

\begin{proof}
Notice that, by the definition \eqref{eq:nov22_3} of the inner product $\langle\,,\,\rangle$,
\begin{equation}\label{eq:nov23_4}
\langle f(x_1),\lambda_1g(x_1)\rangle = \langle \partial_{x_1}f(x_1),g(\lambda_1)\rangle\,.
\end{equation}
Hence, by \eqref{eq:nov23_2} and the sesquilinearity conditions, we have
$$
\big[(\partial\otimes1+1\otimes\partial_x)\big(a\otimes m(x)\big),b\otimes n(x)\big]
=
-(\partial\otimes1+1\otimes\partial_x)
\Big(
a\otimes \big\langle n(x_1),b_{\lambda_1}m(x)\big\rangle
\big)\,,
$$
and
\begin{eqnarray*}
&& \big[a\otimes m(x),(\partial\otimes1+1\otimes\partial_x)\big(b\otimes n(x)\big)\big] \\
&& =
(\partial\otimes1+1\otimes\partial_x)
\Big(
[a_{\partial_{x_1}}b]\otimes m(x_1)n(x)\,\big|_{x_1=x} 
+b\otimes \big\langle m(x_1),a_{\lambda_1}n(x)\big\rangle
\Big)\,.
\end{eqnarray*}
It follows that $(\partial\otimes1+1\otimes\partial_x)$ is a derivation of the bracket \eqref{eq:nov23_2},
and that \eqref{eq:nov23_2} induces a well-defined bracket on the quotient
$\tilde\Gamma_1=A\otimes M[[x]]\big/(\partial\otimes1+1\otimes\partial_x)(A\otimes M[[x]])$.
Next, let us prove skew-symmetry. We have
\begin{eqnarray*}
[a\otimes m(x),b\otimes n(x)]
+ [b\otimes n(x),a\otimes m(x)]
=
\Big(\big([a_{\partial_{x_1}}b]+ [b_{\partial_{x}}a]\big)\otimes m(x_1)n(x)\Big)\,\Big|_{x_1=x} 
\,,\nonumber
\end{eqnarray*}
and the RHS belongs to $(\partial\otimes1+1\otimes\partial_x)(A\otimes M[[x]])$,
due to the skew-symmetry of the $\lambda$-bracket on $A$.
For part (a), we are left to prove the Jacobi identity.
Applying twice \eqref{eq:nov23_2}, we have
\begin{eqnarray}\label{eq:nov23_3}
&&
 [a\otimes m(x),[b\otimes n(x),c\otimes p(x)]] 
=
[a_{\partial_{x_1}}[b_{\partial_{x_2}}c]]\otimes m(x_1)n(x_2)p(x)\,\Big|_{x_1=x_2=x} 
\\
&&\,\,\,\,\,\,\,\,\, -
[a_{\partial_{x_1}}b]\otimes m(x_1)\big\langle p(x_2),c_{\lambda_2}n(x)\big\rangle\,\big|_{x_1=x} 
+
[a_{\partial_{x_1}}c]\otimes m(x_1)\big\langle n(x_2),b_{\lambda_2}p(x)\big\rangle\,\big|_{x_1=x}
\nonumber \\
&&\,\,\,\,\,\,\,\,\, +
[b_{\partial_{x_2}}c]\otimes \big\langle m(x_1),a_{\lambda_1}(n(x_2)p(x))\big\rangle\,\big|_{x_2=x}
-
a\otimes \big\langle\big\langle n(x_1)p(x_2),
[b_{\lambda_1}c]_{\lambda_1+\lambda_2}m(x)\big\rangle\big\rangle 
\nonumber \\
&&\,\,\,\,\,\,\,\,\, +
a\otimes \big\langle \big\langle p(x_2),c_{\lambda_2}n(x_1)\big\rangle,b_{\lambda_1}m(x)\big\rangle 
-
a\otimes \big\langle \big\langle n(x_2),b_{\lambda_2}p(x_1)\big\rangle,c_{\lambda_1}m(x)\big\rangle 
\nonumber \\
&&\,\,\,\,\,\,\,\,\, -
b\otimes \big\langle m(x_1),a_{\lambda_1}\big\langle p(x_2),c_{\lambda_2}n(x)\big\rangle\big\rangle
+
c\otimes \big\langle m(x_1),a_{\lambda_1}\big\langle n(x_2),b_{\lambda_2}p(x)\big\rangle\big\rangle\,,\nonumber
\end{eqnarray}
For the fifth term in the RHS we used \eqref{eq:nov23_4} and the following obvious
identity:
\begin{equation}\label{eq:nov23_5}
\langle f(x_1)g(x_2)\big|_{x_1=x_2},h(\lambda_2)\rangle 
= \langle\langle f(x_1)g(x_2),h(\lambda_1+\lambda_2)\rangle\rangle\,,
\end{equation}
where, in the RHS, we denote by $\langle\langle\,,\,\rangle\rangle$ the pairing 
of $\mb F[[x_1,x_2]]$ and $\mb F[\lambda_1,\lambda_2]$,
defined by contracting $x_1$ with $\lambda_1$ and $x_2$ with $\lambda_2$,
as in \eqref{eq:nov23_6}.
Similarly, we have
\begin{eqnarray}\label{eq:nov23_7}
&&
 [b\otimes n(x),[a\otimes m(x),c\otimes p(x)]] 
=
[b_{\partial_{x_2}}[a_{\partial_{x_1}}c]]\otimes m(x_1)n(x_2)p(x)\,\Big|_{x_1=x_2=x} 
\\
&&\,\,\,\,\,\,\,\,\, -
[b_{\partial_{x_1}}a]\otimes n(x_1)\big\langle p(x_2),c_{\lambda_2}m(x)\big\rangle\,\big|_{x_1=x} 
+
[b_{\partial_{x_2}}c]\otimes n(x_2)\big\langle m(x_1),a_{\lambda_1}p(x)\big\rangle\,\big|_{x_2=x}
\nonumber \\
&&\,\,\,\,\,\,\,\,\, +
[a_{\partial_{x_1}}c]\otimes \big\langle n(x_2),b_{\lambda_2}(m(x_1)p(x))\big\rangle\,\big|_{x_1=x}
-
b\otimes \big\langle\big\langle m(x_1)p(x_2),
[a_{\lambda_1}c]_{\lambda_1+\lambda_2}n(x)\big\rangle\big\rangle 
\nonumber \\
&&\,\,\,\,\,\,\,\,\, +
b\otimes \big\langle \big\langle p(x_2),c_{\lambda_2}m(x_1)\big\rangle,a_{\lambda_1}n(x)\big\rangle 
-
b\otimes \big\langle \big\langle m(x_1),a_{\lambda_1}p(x_2)\big\rangle,c_{\lambda_2}n(x)\big\rangle 
\nonumber \\
&&\,\,\,\,\,\,\,\,\, -
a\otimes \big\langle n(x_1),b_{\lambda_1}\big\langle p(x_2),c_{\lambda_2}m(x)\big\rangle\big\rangle
+
c\otimes \big\langle n(x_2),b_{\lambda_2}\big\langle m(x_1),a_{\lambda_1}p(x)\big\rangle\big\rangle\,,\nonumber
\end{eqnarray}
and, for the third term of Jacobi identity,
\begin{eqnarray}\label{eq:nov23_8}
&&
[[a\otimes m(x),b\otimes n(x)],c\otimes p(x)] 
=
[[a_{\partial_{x_1}}b]_{\partial_{x_1}+\partial_{x_2}}c]\otimes m(x_1)n(x_2)p(x)
\,\big|_{x_1=x_2=x} 
\\
&&\,\,\,\,\,\,\,\,\, -
[a_{\partial_{x_1}}b]\otimes \big\langle p(x_2),c_{\lambda_2}(m(x_1)n(x))\big\rangle \,\big|_{x_1=x}
-
[a_{\partial_{x_1}}c]\otimes 
\big\langle n(x_2),b_{\lambda_2}m(x_1)\big\rangle p(x)\,\big|_{x_1=x} 
\nonumber \\
&&\,\,\,\,\,\,\,\,\, +
[b_{\partial_{x_2}}c]\otimes 
\big\langle m(x_1),a_{\lambda_1}n(x_2)\big\rangle
p(x)\,\big|_{x_2=x} 
+
c\otimes \big\langle\big\langle 
m(x_1)n(x_2),[a_{\lambda_1}b]_{\lambda_1+\lambda_2}p(x)
\big\rangle\big\rangle
\nonumber \\
&&\,\,\,\,\,\,\,\,\, -
c\otimes \big\langle 
\big\langle n(x_2),b_{\lambda_2}m(x_1)\big\rangle,a_{\lambda_1}p(x)\big\rangle
+
c\otimes \big\langle 
\big\langle m(x_1),a_{\lambda_1}n(x_2)\big\rangle,b_{\lambda_2}p(x)\big\rangle
\nonumber \\
&&\,\,\,\,\,\,\,\,\, +
a\otimes \big\langle p(x_2),c_{\lambda_2}\big\langle n(x_1),b_{\lambda_1}m(x)\big\rangle\big\rangle 
-
b\otimes \big\langle p(x_2),c_{\lambda_2}
\big\langle m(x_1),a_{\lambda_1}n(x)\big\rangle\big\rangle 
\nonumber\,.
\end{eqnarray}
We now combine equations \eqref{eq:nov23_3}, \eqref{eq:nov23_7} and \eqref{eq:nov23_8},
to get Jacobi identity.
In particular, 
the first terms in the RHS of \eqref{eq:nov23_3}, \eqref{eq:nov23_7} and \eqref{eq:nov23_8}
combine to zero, due to the Jacobi identity for the $\lambda$-bracket on $A$.
For the second terms in the RHS of \eqref{eq:nov23_3}, \eqref{eq:nov23_7} and \eqref{eq:nov23_8},
we use the skew-symmetry of the $\lambda$-bracket on $A$
and the Leibniz rule for the $\lambda$-action of $A$ on $M$,
to conclude that their combination belongs to $(\partial\otimes1+1\otimes\partial_x)(A\otimes M[[x]])$.
The third term in the RHS of \eqref{eq:nov23_3} combines with
the fourth term in the RHS of \eqref{eq:nov23_7} and the third term in the RHS of \eqref{eq:nov23_8}
to give zero,
and similarly for the combination of 
the fourth term in the RHS of \eqref{eq:nov23_3}, 
the third term in the RHS of \eqref{eq:nov23_7} 
and the fourth term in the RHS of \eqref{eq:nov23_8}.
Furthermore, the combination of the fifth, sixth and seventh terms in the RHS of \eqref{eq:nov23_3},
the eighth term in the RHS of \eqref{eq:nov23_7} 
and the eighth term in the RHS of \eqref{eq:nov23_8} give
$$
a\otimes \Big\langle\Big\langle n(x_1)p(x_2),
\Big\{
-[b_{\lambda_1}c]_{\lambda_1+\lambda_2}m(x)
+b_{\lambda_1}c_{\lambda_2}m(x)
-c_{\lambda_2}b_{\lambda_1}m(x)
\Big\}
\Big\rangle\Big\rangle \,,
$$
which is zero due to the Jacobi identity for the $\lambda$-actio of $A$ on $M$.
Similarly for the remaining terms in \eqref{eq:nov23_3}, \eqref{eq:nov23_7} and \eqref{eq:nov23_8}.
We are left to prove part (b). We have
\begin{eqnarray}\label{eq:nov24_1}
&& \big[(\partial\otimes1+1\otimes\partial)(a\otimes m(x)),b\otimes n(x)\big] \\
&& =
[\partial a_{\partial_{x_1}}b]\otimes m(x_1)n(x)\,\big|_{x_1=x} 
+[a_{\partial_{x_1}}b]\otimes (\partial m(x_1))n(x)\,\big|_{x_1=x} \nonumber\\
&& -(\partial a)\otimes \big\langle n(x_1),b_{\lambda_1}m(x)\big\rangle
-a\otimes \big\langle n(x_1),b_{\lambda_1}(\partial m(x))\big\rangle \nonumber\\
&& +b\otimes \big\langle m(x_1),(\partial a)_{\lambda_1}n(x)\big\rangle
+b\otimes \big\langle (\partial m(x_1)),a_{\lambda_1}n(x)\big\rangle\,,\nonumber
\end{eqnarray}
and
\begin{eqnarray}\label{eq:nov24_2}
&& \big[a\otimes m(x),(\partial\otimes1+1\otimes\partial)(b\otimes n(x))\big] \\
&& =
[a_{\partial_{x_1}}\partial b]\otimes m(x_1)n(x)\,\big|_{x_1=x} 
+[a_{\partial_{x_1}}b]\otimes m(x_1)(\partial n(x))\,\big|_{x_1=x}  \nonumber\\
&& -a\otimes \big\langle n(x_1),(\partial b)_{\lambda_1}m(x)\big\rangle
-a\otimes \big\langle (\partial n(x_1)),b_{\lambda_1}m(x)\big\rangle \nonumber\\
&& +(\partial b)\otimes \big\langle m(x_1),a_{\lambda_1}n(x)\big\rangle
+b\otimes \big\langle m(x_1),a_{\lambda_1}(\partial n(x))\big\rangle\,.\nonumber
\end{eqnarray}
Putting equations \eqref{eq:nov24_1} and \eqref{eq:nov24_2} together we get
\begin{eqnarray*}
& (\partial\otimes1+1\otimes\partial)\big[a\otimes m(x),b\otimes n(x)\big]
=
\big[(\partial\otimes1+1\otimes\partial)(a\otimes m(x)),b\otimes n(x)\big] \\
& +
\big[a\otimes m(x),(\partial\otimes1+1\otimes\partial)(b\otimes n(x))\big]\,.
\end{eqnarray*}
This completes the proof of the lemma.
\end{proof}
\begin{proposition}\label{prop:vsep30_1}
The basic cohomoloy complex $(\tilde\Gamma^\bullet,\delta)$ admits a $\mf g$-structure,
$\varphi:\,\hat{\mf g}\to\End\tilde\Gamma^\bullet$,
where $\mf g=\Pi\tilde\Gamma_1$ is the Lie algebra with the Lie bracket induced by \eqref{eq:nov23_2},
given by
$\varphi(\partial_\eta)=\delta,\,\varphi(\eta\xi)=\iota_\xi,\,\varphi(\xi)=L_\xi,\,\xi\in\tilde\Gamma_1$.
The corresponding reduced (by $\partial$) $\mf g^\partial$-complex is $(\Gamma^\bullet,\delta)$.
\end{proposition}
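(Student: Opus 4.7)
The plan is to invoke Remark \ref{rem:vsep}. Section \ref{sec:3.3} has already produced, for each $\xi\in\mf g=\Pi\tilde\Gamma_1$, a well-defined odd endomorphism $\iota_\xi$ of $\tilde\Gamma^\bullet$ with $\iota_\xi(\tilde\Gamma^k)\subset\tilde\Gamma^{k-1}$, depending linearly on $\xi$; and Proposition \ref{prop:dic17_1}, applied with $h=j=1$, shows that the $\iota_\xi$ commute as odd operators. By Remark \ref{rem:vsep}, it therefore suffices to establish the single identity
\begin{equation}\label{eq:pp_cartan}
[L_\xi,\iota_\zeta]\,=\,\iota_{[\xi,\zeta]}
\,,\quad\text{ where }L_\xi:=[\delta,\iota_\xi]\,,\,\,\xi,\zeta\in\tilde\Gamma_1\,,
\end{equation}
with $[\xi,\zeta]$ the Lie bracket of Lemma \ref{lem:nov23}. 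Once \eqref{eq:pp_cartan} is proved, the assignments $\varphi(\partial_\eta)=\delta$, $\varphi(\eta\xi)=\iota_\xi$, $\varphi(\xi)=L_\xi$ assemble, by Cartan's identity and its standard consequences \eqref{eq:3.2}, into a grading-preserving Lie superalgebra homomorphism $\hat{\mf g}\to\End\tilde\Gamma^\bullet$.

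To verify \eqref{eq:pp_cartan}, pick representatives $a\otimes m(x_1)$ of $\xi$ and $b\otimes n(x_2)$ of $\zeta$, and apply both sides to $\tilde\gamma\in\tilde\Gamma^k$ using the explicit formulas \eqref{eq:nov23_1} for $L_\xi$ and \eqref{eq:nov22_2} for $\iota_\zeta$. The resulting $(k-2)$-cochain $(L_\xi\iota_\zeta-\iota_\zeta L_\xi)\tilde\gamma$ decomposes into contributions that reorganise into the three summands of the bracket \eqref{eq:nov23_2}. The contributions in which $L_\xi$ and $\iota_\zeta$ act on two \emph{disjoint} slots of $\tilde\gamma$ appear symmetrically in $L_\xi\iota_\zeta$ and $\iota_\zeta L_\xi$ and cancel, after using the sesquilinearity and skew-symmetry properties A1--A2 of $\tilde\gamma$. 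The remaining ``diagonal'' contributions come from two sources: first, the terms in which $a_{\lambda_1}$ of $L_\xi$ falls on the same slot that $\iota_\zeta$ will contract, which, by the Leibniz rule for $a_{\lambda_1}$ on the commutative product of $M$ and by the adjunction \eqref{eq:nov23_4}, produce pairings of the form $\langle n(x_2),b_{\lambda_2}m(x_1)\rangle$ and $\langle m(x_1),a_{\lambda_1}n(x_2)\rangle$, which recombine into the second and third summands of \eqref{eq:nov23_2}; and second, the commutator terms $[a_{\lambda_1}a_j]$ in $L_\xi\tilde\gamma$ with $a_j=b$, which, via the identity \eqref{eq:nov23_5} merging $m(x_1)n(x_2)$ at $x_1=x_2$, produce precisely the first summand $[a_{\partial_{x_1}}b]\otimes m(x_1)n(x_2)|_{x_1=x_2}$.

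The main obstacle is the combinatorial bookkeeping underlying the ``disjoint vs.\ diagonal'' split: each of $L_\xi\iota_\zeta\tilde\gamma$ and $\iota_\zeta L_\xi\tilde\gamma$ contains both single sums (from the $a_{\lambda_i}$ pieces of $\delta$) and double sums (from the $[a_{\lambda_i}a_j]$ pieces), and matching them requires carefully tracking which slot is contracted against $m(x_1)$ and which against $n(x_2)$, followed by systematic use of A1--A2 to align terms before cancellation. The computation is formally parallel to, and no harder than, the verification of Proposition \ref{prop:sfor}(c).

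For the final assertion, $\partial$ is an even endomorphism of the complex $(\tilde\Gamma^\bullet,\delta)$, as noted after \eqref{eq:july24_8}. By Proposition \ref{prop:ago16_1}, $[\partial,\iota_\xi]=\iota_{\partial\xi}$; hence for $\xi\in\Gamma_1$ (so that $\partial\xi=0$) both $\iota_\xi$ and $L_\xi=[\delta,\iota_\xi]$ commute with $\partial$. Thus $\Pi\Gamma_1\subset\mf g^\partial$, and, by Lemma \ref{lem:nov23}(b), $\Pi\Gamma_1$ is a Lie subalgebra of $\mf g$. The general construction of Section \ref{sec:-1} then exhibits the reduced $\mf g^\partial$-complex $(\tilde\Gamma^\bullet/\partial\tilde\Gamma^\bullet,\delta)=(\Gamma^\bullet,\delta)$, completing the proof.
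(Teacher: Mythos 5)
Your proposal is correct and follows essentially the same route as the paper: reduce via Remark \ref{rem:vsep} and Proposition \ref{prop:dic17_1} to the single identity $[L_\xi,\iota_\zeta]=\iota_{[\xi,\zeta]}$, then verify it by the explicit computation with representatives, using the Leibniz rule for the $\lambda$-action together with \eqref{eq:nov23_4} and \eqref{eq:nov23_5} to reassemble the surviving terms into the three summands of the bracket \eqref{eq:nov23_2}. Your treatment of the final sentence (via Proposition \ref{prop:ago16_1} and Lemma \ref{lem:nov23}(b)) is if anything slightly more explicit than the paper's.
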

\begin{proof}
In view of Remark \ref{rem:vsep} and Proposition \ref{prop:dic17_1}, we only have to check that
\begin{equation}\label{eq:nov24_3}
[L_{\xi_1},\iota_{\xi_2}]\,=\,\iota_{[\xi_1,\xi_2]}\,,
\end{equation}
where, for $\xi\in\tilde\Gamma_1$, $\iota_\xi$ is given by \eqref{eq:nov22_2} 
and $L_\xi$ is given by \eqref{eq:nov23_1}.
For $i=1,2$, let then $a_i\otimes m_i(x)\in A\otimes M[[x]]$ be a representative 
of $\xi_i\in\tilde\Gamma_1$.
We have
\begin{eqnarray}\label{eq:nov24_4}
&& (L_{\xi_1}\iota_{\xi_2}\tilde\gamma)_{\lambda_3,\cdots,\lambda_{k+1}}(a_3,\cdots,a_{k+1}) 
\\
&&\,\,\,\,\,\,\,\,\,
=
\big\langle m_1(x_1),
{a_1}_{\lambda_1}
\big\langle m_2(x_2),
\tilde\gamma_{\lambda_2,\lambda_3,\cdots,\lambda_{k+1}}(a_2,a_3,\cdots,a_{k+1})
\big\rangle\big\rangle
\nonumber\\
&&\,\,\,\,\,\,\,\,\,
+
\sum_{i=3}^{k+1} (-1)^i
\big\langle ({a_i}_{\lambda_i}m_1(x_1)),
\big\langle m_2(x_2),
\tilde\gamma_{\lambda_1,\lambda_2,\lambda_3,\stackrel{i}{\check{\cdots}},\lambda_{k+1}}
(a_1,a_2,a_3,\stackrel{i}{\check{\cdots}},a_{k+1})
\big\rangle\big\rangle 
\nonumber\\
&&\,\,\,\,\,\,\,\,\,
-
\sum_{j=3}^{k+1} 
\big\langle m_1(x_1),
\big\langle m_2(x_2),
\tilde\gamma_{\lambda_2,\lambda_3,\cdots,\lambda_1+\lambda_j,\cdots,\lambda_{k+1}}
(a_2,a_3,\cdots,[{a_1}_{\lambda_1}a_j],\cdots,a_{k+1})
\big\rangle\big\rangle\,,\nonumber
\end{eqnarray}
where, as in\eqref{eq:nov23_6}, with $\langle\,,\,\rangle$ we contract $x_1$ with $\lambda_1$
and $x_2$ with $\lambda_2$.
Similarly, we have
\begin{eqnarray}\label{eq:nov24_5}
&& (\iota_{\xi_2}L_{\xi_1}\tilde\gamma)_{\lambda_3,\cdots,\lambda_{k+1}}(a_3,\cdots,a_{k+1}) 
\\
&&\,\,\,\,\,\,\,\,\,
=
\big\langle m_2(x_2),
\big\langle m_1(x_1),
{a_1}_{\lambda_1}\big(
\tilde\gamma_{\lambda_2,\cdots,\lambda_{k+1}}(a_2,\cdots,a_{k+1})
\big)
\big\rangle\big\rangle
\nonumber\\
&&\,\,\,\,\,\,\,\,\,
+
\sum_{i=2}^{k+1} (-1)^i
\big\langle m_2(x_2),
\big\langle ({a_i}_{\lambda_i}m_1(x_1)),
\tilde\gamma_{\lambda_1,\lambda_2,\stackrel{i}{\check{\cdots}},\lambda_{k+1}}
(a_1,a_2,\stackrel{i}{\check{\cdots}},a_{k+1})
\big\rangle\big\rangle 
\nonumber\\
&&\,\,\,\,\,\,\,\,\,
-
\sum_{j=2}^{k+1} 
\big\langle m_2(x_2),
\big\langle m_1(x_1),
\tilde\gamma_{\lambda_2,\cdots,\lambda_1+\lambda_j,\cdots,\lambda_{k+1}}
(a_2,\cdots,[{a_1}_{\lambda_1}a_j],\cdots,a_{k+1})
\big\rangle\big\rangle\,.\nonumber
\end{eqnarray}
Combining equations \eqref{eq:nov24_4} and \eqref{eq:nov24_5}, we get
\begin{eqnarray}\label{eq:nov24_6}
&& ([L_{\xi_1},\iota_{\xi_2}]\tilde\gamma)_{\lambda_3,\cdots,\lambda_{k+1}}(a_3,\cdots,a_{k+1}) 
\\
&&\,\,\,\,\,\,\,\,\,
=
\big\langle\big\langle m_1(x_1),
 ({a_1}_{\lambda_1}m_2(x_2))\big\rangle,
\tilde\gamma_{\lambda_2,\lambda_3,\cdots,\lambda_{k+1}}(a_2,a_3,\cdots,a_{k+1})
\big\rangle
\nonumber\\
&&\,\,\,\,\,\,\,\,\,
-
\big\langle\big\langle m_2(x_2),
 ({a_2}_{\lambda_2}m_1(x_1))\big\rangle,
\tilde\gamma_{\lambda_1,\lambda_3,\cdots,\lambda_{k+1}}
(a_1,a_3,\cdots,a_{k+1})
\big\rangle 
\nonumber\\
&&\,\,\,\,\,\,\,\,\,
+
\big\langle\big\langle 
m_1(x_1) m_2(x_2),
\tilde\gamma_{\lambda_1+\lambda_2,\lambda_3,\cdots,\lambda_{k+1}}
([{a_1}_{\lambda_1}a_2],a_3,\cdots,a_{k+1})
\big\rangle\big\rangle\,,\nonumber
\end{eqnarray}
where, for the first term, we used the fact that the $\lambda$-action of $A$ on $M$
is by derivations of the commutative associative product on $M$.
To conclude, we use equations \eqref{eq:nov23_4} and \eqref{eq:nov23_5} 
to rewrite the RHS of \eqref{eq:nov24_6}
as $(\iota_\xi\tilde\gamma)_{\lambda_3,\cdots,\lambda_{k+1}}(a_3,\cdots,a_{k+1})$,
where
\begin{eqnarray*}
\xi &=& 
a_2\otimes\big\langle m_1(x_1),({a_1}_{\lambda_1}m_2(x))\big\rangle
-a_1\otimes\big\langle m_2(x_2),({a_2}_{\lambda_2}m_1(x))\big\rangle \\
&& +[{a_1}_{\partial_{x_1}}a_2]\otimes m_1(x_1) m_2(x)\,\big|_{x_1=x}
\,=\,
[\xi_1,\xi_2]\,.
\end{eqnarray*}
\end{proof}

\vspace{3ex}
\subsection{The space of chains $C_\bullet$.}~~
\label{sec:3.4}
Recall from Theorem \ref{th:red} that the cohomology complex $\Gamma^\bullet$
is a subcomplex of the cohomology complex $C^\bullet$
defined in Section \ref{sec:1.3_b}.
One may ask whether, for a reduced $h$-chain $\xi\in\Gamma_h$,
there is a natural way to extend the definition of the contraction operator 
$\iota_\xi$ to the complex $C^\bullet$.
In order to formulate the statement, in Theorem \ref{th:ago21} below, 
we first define a new space of chains, 
obtained by dualizing the definition of the complex $C^\bullet$.

We let $C_\bullet=\bigoplus_{k\in\mb Z_+}C_k$,
where
$C_0=\{m\in M\,|\,\partial m=0\}\,\big(=\Gamma_0\big)$,
and, for $k\geq1$, 
we define the space $C_k$ of $k$-\emph{chains} of $A$ with coefficients in $M$
as the quotient of the space
$A^{\otimes k} \otimes \Hom(\mb F[\lambda_1,\dots,\lambda_{k-1}],M)$
by the following relations:
\begin{enumerate}
\item[D1.] 
$a_1\otimes\cdots\partial a_i\cdots\otimes a_{k}\otimes\phi
\equiv -a_1\otimes\cdots\otimes a_{k}\otimes(\lambda^*_i\phi)$,
for every $1\leq i\leq k-1$;
\item[D2.] 
$a_1\otimes\cdots\otimes a_{k-1}\otimes(\partial a_k)\otimes \phi
\equiv a_1\otimes\cdots\otimes a_{k}\otimes((\lambda_1^*+\cdots+\lambda_{k-1}^*-\partial)\phi)$;
\item[D3.] 
$a_{\sigma(1)}\otimes\cdots\otimes a_{\sigma(k)}\otimes(\sigma^*\phi)
\equiv \text{sign}(\sigma) a_1\otimes\cdots\otimes a_{k}\otimes\phi$,
for every permutation $\sigma\in S_k$,
where $\sigma^*\phi\in \Hom(\mb F[\lambda_1,\dots,\lambda_{k-1}],M)$ 
is defined by 
\begin{equation}\label{eq:ago20_4}
(\sigma^*\phi)(f(\lambda_1,\cdots,\lambda_{k-1}))
=\phi\big(f(\lambda_{\sigma(1)},\cdots,\lambda_{\sigma(k-1)})
\big|_{\lambda_k\mapsto{\lambda_k}_\dagger}
\big)\,,
\end{equation}
where in the RHS we have to replace $\lambda_k$ by 
${\lambda_k}_\dagger=-\lambda_1-\cdots-\lambda_{k-1}+\partial^M$ and 
move $\partial^M$ to the left of $\phi$.
\end{enumerate}

For example, $C_1=(A\otimes M)/\partial(A\otimes M)$.
In particular, in $C_1$ it is not necessarily true that $a\otimes m$ is equivalent to zero 
for every torsion element $a$ of the $\mb F[\partial]$-module $A$.
On the other hand the analogue of Lemma \ref{rem:ago19} holds for $k\geq 2$:
\begin{lemma}\label{lem:ago20_1}
If $k\geq 2$ and $a_i\in\Tor A$ for some $i$,
we have $a_1\otimes\cdots\otimes a_{k}\otimes\phi = 0$ in $C_k$.
\end{lemma}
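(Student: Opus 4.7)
The plan is to adapt the proof of Lemma \ref{rem:ago19}, handling separately the case when the torsion element occupies one of the first $k-1$ tensor positions (where the sesquilinearity relation D1 applies directly) and reducing the case when it sits in the last position to the previous one by means of the skew-symmetry relation D3. The hypothesis $k \geq 2$ is what makes the second step possible.

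First I would treat the case $1 \leq i \leq k-1$. Suppose $P(\partial) a_i = 0$ for some nonzero polynomial $P \in \mb F[\partial]$. Iterating relation D1 yields
\[
0 \,=\, a_1 \otimes \cdots \otimes (P(\partial) a_i) \otimes \cdots \otimes a_k \otimes \phi
\,\equiv\, a_1 \otimes \cdots \otimes a_i \otimes \cdots \otimes a_k \otimes \big( P(-\lambda_i^*) \phi \big)
\]
in $C_k$. It then suffices to show that $P(-\lambda_i^*)$ is a surjective endomorphism of $\Hom(\mb F[\lambda_1,\ldots,\lambda_{k-1}], M)$, which is the same linear-algebraic step used at the end of the proof of Lemma \ref{rem:ago19}: fix a vector space complement $U$ to $P(-\lambda_i) \mb F[\lambda_1,\ldots,\lambda_{k-1}]$ inside $\mb F[\lambda_1,\ldots,\lambda_{k-1}]$, and for any given $\psi$ build a preimage $\phi$ by setting $\phi|_U = 0$ and $\phi(P(-\lambda_i) g) = \psi(g)$. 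Applied with $\psi = \phi$ itself, the surjectivity forces $a_1 \otimes \cdots \otimes a_k \otimes \phi = 0$.

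For the remaining case $i = k$, the assumption $k \geq 2$ allows me to apply the transposition $\sigma = (1\,k)$. Relation D3 gives
\[
a_k \otimes a_2 \otimes \cdots \otimes a_{k-1} \otimes a_1 \otimes (\sigma^* \phi)
\,\equiv\, -\, a_1 \otimes \cdots \otimes a_{k-1} \otimes a_k \otimes \phi
\]
in $C_k$. The torsion element $a_k$ now occupies position $1 \in \{1,\ldots,k-1\}$, so the previous case applies and annihilates the left-hand side, hence also the right-hand side.

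I do not foresee a serious obstacle: the only non-formal input is the surjectivity of $P(-\lambda_i^*)$, which is identical to the argument already carried out in Lemma \ref{rem:ago19}. The twist introduced by relation D3 (the substitution $\lambda_k \mapsto {\lambda_k}_\dagger$ built into $\sigma^*$) is harmless for this argument, because the conclusion of the first case holds for every choice of last-factor element, so the specific form of $\sigma^*\phi$ plays no role.
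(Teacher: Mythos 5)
Your proof is correct, and it diverges from the paper's in the case $i=k$. For $1\leq i\leq k-1$ both arguments are identical: relation D1 coincides with C1, and the surjectivity of $P(-\lambda_i^*)$ is exactly the linear-algebra step from Lemma \ref{rem:ago19}. For $i=k$, however, the paper works directly with relation D2, which reduces the claim to the surjectivity of the operator $P(\lambda_1^*+\cdots+\lambda_{k-1}^*-\partial)$ on $\Hom(\mb F[\lambda_1,\dots,\lambda_{k-1}],M)$; this is established by a recursive construction of a preimage (induction on the degree in $\lambda_1$, after normalizing $P$ to be monic), and it is precisely the availability of $\lambda_1^*$ for $k\geq 2$ that makes that recursion work. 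You instead invoke the skew-symmetry relation D3 with the transposition $(1\,k)$ to move the torsion element into a slot where D1 applies, and the specific form of $\sigma^*\phi$ (including the substitution $\lambda_k\mapsto{\lambda_k}_\dagger$) is indeed irrelevant because the first case kills the pure tensor for \emph{every} choice of the $\Hom$-factor. Your route is shorter and avoids the second, more delicate surjectivity argument entirely; the paper's route is more self-contained in the sense that it stays within the sesquilinearity relations D1--D2 and does not lean on D3, and it incidentally records the surjectivity of $P(\lambda_1^*+\cdots+\lambda_{k-1}^*-\partial)$, but nothing elsewhere in the paper appears to need that extra fact. Only a cosmetic remark: in your first case you reuse the letter $\phi$ both for the given map and for the constructed preimage; renaming the preimage (say $\chi$, with $P(-\lambda_i^*)\chi=\phi$) would remove the ambiguity.
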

\begin{proof}
For $1\leq i\leq k-1$, relation D1. is the same as relation C1., hence the same argument 
as in the proof of Lemma \ref{rem:ago19} works.
Similarly, for $i=k$, if $P(\partial)a_k=0$, we have by the relation D2.,
$$
0=a_1\otimes\cdots\otimes a_{k-1}\otimes(P(\partial)a_k)\otimes\phi
= a_1\otimes\cdots\otimes a_{k}\otimes(P(\lambda_1^*+\cdots+\lambda_{k-1}^*-\partial)\phi)\,,
$$
and to conclude the lemma we need to prove that the linear endomorphism
$P(\lambda_1^*+\cdots+\lambda_{k-1}^*-\partial)$ 
of $\Hom(\mb F[\lambda_1,\dots,\lambda_{k-1}],M)$ is surjective.
In other words, given $\phi\in\Hom(\mb F[\lambda_1,\dots,\lambda_{k-1}],M)$,
we want to find $\psi\in\Hom(\mb F[\lambda_1,\dots,\lambda_{k-1}],M)$
such that $P(\lambda_1^*+\cdots+\lambda_{k-1}^*-\partial)\psi=\phi$.
Suppose, for simplicity, that the polynomial $P$ is monic of degree $N$.
Hence
$$
P(\lambda_1^*+\cdots+\lambda_{k-1}^*-\partial)
=(\lambda_1^*)^N+\sum_{n=0}^N\partial^n R_n(\lambda_1^*,\cdots,\lambda_{k-1}^*)\,,
$$
where the polynomials $R_n\in\mb F[\lambda_1^*,\dots,\lambda_{k-1}^*]$,
considered as polynomials in $\lambda_1$, have degree strictly less than $N$.
Then $\psi$ can be constructed recursively by saying that
$\psi(\lambda_1^{n_1}\lambda_2^{n_2}\cdots\lambda_{k-1}^{n-1})=0$ for $n_1<N$,
and 
$$
\psi(\lambda_1^{N+n_1}\lambda_2^{n_2}\cdots\lambda_{k-1}^{n-1})
=
\phi(\lambda_1^{n_1}\cdots\lambda_{k-1}^{n-1})
-\sum_{n=0}^N{\partial^M}^n
\psi\big(R_n(\lambda_1,\cdots,\lambda_{k-1})
\lambda_1^{n_1}\cdots\lambda_{k-1}^{n-1}\big)\,.
$$
Since the RHS only depends on 
$\psi(\lambda_1^{m_1}\lambda_2^{m_2}\cdots\lambda_{k-1}^{m-1})$
with $m_1<N+n_1$, the above equation defines $\psi$ by induction on $n_1$.
Clearly, 
$P(\lambda_1^*+\cdots+\lambda_{k-1}^*-\partial)\psi=\phi$.
\end{proof}
In analogy with the notation used in Section \ref{sec:1.3_a},
we introduce the space $\bar C_\bullet=\bigoplus_{k\in\mb Z_+}\bar C_k$,
by taking the quotient of the space $C_\bullet$ by the torsion of $A$.
More precisely, 
let $\bar C_0=\{m\in M\,|\,\partial m=0\}=C_0$
and, for $k\geq 1$, $\bar C_k$ is the quotient of the space
$\bar A^{\otimes k}\otimes\Hom(\mb F[\lambda_1,\dots,\lambda_{k-1}],M)$,
where $\bar A=A/\Tor A$,
by the relations D1., D2. and D3. above.
In particular, by Lemma \ref{lem:ago20_1},
$\bar C_k=C_k$ for $k\neq 1$,
and there is a natural surjective map $C_1\twoheadrightarrow\bar C_1$.

We next want to describe the relation between the spaces $C_k$ and $\Gamma_k$.
In particular, 
we are going to define a canonical map $\chi_k:\,C_k\to\Gamma_k$,
and we will prove in Proposition \ref{th:ago20} that,
if the $\mb F[\partial]$-module $A$ decomposes as direct sum of its torsion
and a free submodule, $\chi_k$ factors through an isomorphism 
$\bar C_k\simeq\Gamma_k$.

For $k\geq1$, let $\rho_k:\,\Hom(\mb F[\lambda_1,\dots,\lambda_{k}],M)
\twoheadrightarrow \Hom(\mb F[\lambda_1,\dots,\lambda_{k-1}],M)$,
be the restriction map associated to the inclusion 
$\mb F[\lambda_1,\dots,\lambda_{k-1}]\subset\mb F[\lambda_1,\dots,\lambda_{k}]$.
Let 
$$
\chi_k\,:\,\,\Hom(\mb F[\lambda_1,\dots,\lambda_{k-1}],M)
\,\hookrightarrow\, \Hom(\mb F[\lambda_1,\dots,\lambda_{k}],M)\,,
$$
be the injective linear map defined by
\begin{equation}\label{eq:ago20_1}
(\chi_k\phi)\big(f(\lambda_1,\cdots,\lambda_{k})\big)
=\phi\big(f(\lambda_1,\cdots,\lambda_{k-1},{\lambda_k}_\dagger)\big)\,,
\end{equation}
where in the RHS we let ${\lambda_k}_\dagger=-\sum_{j=1}^{k-1}\lambda_j+\partial^M$
and we move $\partial^M$ to the left.
\begin{lemma}\label{lem:ago20_2}
\begin{enumerate}
\alphaparenlist
\item
We have $\rho_k\circ\chi_k=\id$ on $\Hom(\mb F[\lambda_1,\dots,\lambda_{k-1}],M)$.
Hence $\chi_k\circ\rho_k$ is a projection operator 
on $\Hom(\mb F[\lambda_1,\dots,\lambda_{k}],M)$,
whose image is naturally isomorphic to $\Hom(\mb F[\lambda_1,\dots,\lambda_{k-1}],M)$.
\item
The image of $\chi_k$ consists of the elements 
$\phi\in\Hom(\mb F[\lambda_1,\dots,\lambda_{k}],M)$ such that
\begin{equation}\label{eq:ago20_5}
(\lambda_1^*+\cdots+\lambda_{k}^*)\phi\,=\,\partial\phi\,.
\end{equation}
\item
We have the commutation relations
\begin{equation}\label{eq:ago20_6}
\lambda_i^*\circ\chi_k=\chi_k\circ\lambda_i^*\,\,\,\,\forall 1\leq i\leq k-1
\,\,,\,\,\,\,
\lambda_k^*\circ\chi_k=\chi_k\circ(-\lambda_1^*-\cdots-\lambda_{k-1}^*+\partial)\,,
\end{equation}
where $\lambda_i^*$ is the linear endomorphism of $\Hom(\mb F[\lambda_1,\dots,\lambda_{k}],M)$
defined by \eqref{eq:ago20_2}.
\item
For every permutation $\sigma\in S_k$ we have
\begin{equation}\label{eq:ago20_7}
\sigma^*\circ\chi_k=\chi_k\circ\sigma^*\,,
\end{equation}
where $\sigma^*$ in the LHS denotes the endomorphism 
of $\Hom(\mb F[\lambda_1,\dots,\lambda_{k}],M)$
defined by \eqref{eq:ago20_3},
while in the RHS it denotes the endomorphism 
of $\Hom(\mb F[\lambda_1,\dots,\lambda_{k-1}],M)$
defined by \eqref{eq:ago20_4}.
\end{enumerate}
\end{lemma}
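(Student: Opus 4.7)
The plan is to deduce all four statements directly from the definition \eqref{eq:ago20_1} of $\chi_k$. I would treat parts (a), (c), (d) as essentially one-line computations, and then deduce part (b) from (c) together with a short induction.

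For part (a), observe that a polynomial $f\in\mb F[\lambda_1,\dots,\lambda_{k-1}]$ does not involve $\lambda_k$, so the substitution $\lambda_k\mapsto\lambda_k^\dagger$ is vacuous and $(\chi_k\phi)(f)=\phi(f)$; this is $\rho_k\circ\chi_k=\id$, which forces $\chi_k\circ\rho_k$ to be an idempotent whose image is $\im(\chi_k)\simeq\Hom(\mb F[\lambda_1,\dots,\lambda_{k-1}],M)$. For the first relation in part (c), the operator $\lambda_i^*$ with $i<k$ manifestly commutes with a substitution only in $\lambda_k$. For the second, I would compute
$(\lambda_k^*\chi_k\psi)(f)=\psi\!\left((\lambda_k f)|_{\lambda_k=\lambda_k^\dagger}\right)=\psi\!\left(\lambda_k^\dagger\cdot f|_{\lambda_k=\lambda_k^\dagger}\right)$,
and then expand $\lambda_k^\dagger=-\sum_{j<k}\lambda_j+\partial^M$, moving $\partial^M$ to the left of $\psi$ (which is exactly how $\partial$ acts on the $\Hom$ space); this gives $\chi_k\circ(-\sum_{j<k}\lambda_j^*+\partial)$ as required.

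Part (d) is the one that needs a short case split. Comparing $(\sigma^*\chi_k\phi)(f)=\phi\!\left(f(\lambda_{\sigma(1)},\dots,\lambda_{\sigma(k)})|_{\lambda_k\mapsto\lambda_k^\dagger}\right)$ against $(\chi_k\sigma^*\phi)(f)$, which by \eqref{eq:ago20_4} equals $\phi\!\left(f(\lambda_{\sigma(1)},\dots,\lambda_{\sigma(k-1)},-\!\sum_{j<k}\lambda_{\sigma(j)}+\partial^M)|_{\lambda_k\mapsto\lambda_k^\dagger}\right)$, it suffices to verify that the $k$-th arguments of $f$ agree after the final substitution $\lambda_k\mapsto\lambda_k^\dagger$. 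If $\sigma(k)=k$, the multisets $\{\sigma(1),\dots,\sigma(k-1)\}$ and $\{1,\dots,k-1\}$ coincide and both arguments reduce to $\lambda_k^\dagger$. If $\sigma(k)=m<k$ and $\sigma(i)=k$ for some $i<k$, then after $\lambda_k\mapsto\lambda_k^\dagger$ one computes $-\!\sum_{j<k}\lambda_{\sigma(j)}|_{\lambda_k\mapsto\lambda_k^\dagger}+\partial^M=-\lambda_m+\partial^M+\partial^M$ cancels correctly, giving $\lambda_m=\lambda_{\sigma(k)}$, matching the other side. This is where care with signs and bookkeeping is needed, but it is purely algebraic.

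Finally, for part (b): the inclusion $\im(\chi_k)\subset\{\phi:(\lambda_1^*+\cdots+\lambda_k^*)\phi=\partial\phi\}$ is just the sum of the two identities in (c), using that $\chi_k$ commutes with $\partial$ (evident from the definition since $\partial^M$ commutes with the substitution in $\lambda_k$). For the reverse inclusion, given $\phi$ with $(\sum_i\lambda_i^*)\phi=\partial\phi$, set $\psi=\rho_k\phi$ and prove $\phi=\chi_k\psi$ by induction on the degree of $f$ in $\lambda_k$. The hypothesis rewrites as $\phi(\lambda_k h)=\partial^M\phi(h)-\sum_{j<k}\phi(\lambda_j h)$, which matches exactly the effect of the substitution $\lambda_k\mapsto\lambda_k^\dagger$ followed by moving $\partial^M$ to the left of $\psi$; iterating gives $\phi(\lambda_k^n h)=(\chi_k\psi)(\lambda_k^n h)$ for all $n\ge 0$ and all $h\in\mb F[\lambda_1,\dots,\lambda_{k-1}]$. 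The main (mild) obstacle throughout is keeping straight the two meanings of $\partial^M$: as an operator on $M$ acting after $\phi$, and as the piece of $\lambda_k^\dagger$ moved through $\phi$ by definition; but these two conventions are identical by construction, so the induction goes through cleanly.
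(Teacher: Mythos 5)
Your proposal is correct and follows essentially the same computations as the paper; the only organizational difference is that you prove the second identity of (c) directly and deduce the forward inclusion of (b) from it, whereas the paper proves (b) first (establishing the reverse inclusion by Taylor expanding in ${\lambda_k}_\dagger-\lambda_k$, which is just the closed form of your induction on the $\lambda_k$-degree) and then derives the second identity of (c) from (b). One tiny slip: in your case $\sigma(k)=m<k$ of part (d), the intermediate expression should read $-\bigl(\partial^M-\lambda_m\bigr)+\partial^M=\lambda_m$ rather than $-\lambda_m+\partial^M+\partial^M$, but the conclusion and the argument are unaffected.
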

\begin{proof}
Part (a) is obvious.
Given $\phi\in\Hom(\mb F[\lambda_1,\dots,\lambda_{k-1}],M)$, we have, 
by the definition \eqref{eq:ago20_1} of $\chi_k$,
$$
\big((\lambda_1^*+\cdots+\lambda_{k}^*-\partial)\chi_k\phi\big)
\big(f(\lambda_1,\cdots,\lambda_{k})\big)
=
(\chi_k\phi)
\big((\lambda_1+\cdots+\lambda_{k}-\partial^M)
f(\lambda_1,\cdots,\lambda_{k})\big) = 0\,,
$$
namely $\chi_k\phi$ satisfies equation \eqref{eq:ago20_5}.
Conversely, if $\phi\in\Hom(\mb F[\lambda_1,\dots,\lambda_{k}],M)$ 
solves equation \eqref{eq:ago20_5}, we have,
by Taylor expanding in 
${\lambda_k}_\dagger-\lambda_k=-\lambda_1-\cdots-\lambda_{k}+\partial^M$,
\begin{eqnarray*}
&& (\chi_k\rho_k\phi)\big(f(\lambda_1,\cdots,\lambda_{k})\big)
=
\phi\big(f(\lambda_1\cdots,\lambda_{k-1},{\lambda_k}_\dagger)\big) \\
&& =
\sum_{n\in\mb Z_+}\frac1{n!}
\big((-\lambda_1^*-\cdots-\lambda_{k}^*+\partial)^n\phi\big)
\big((\partial_{\lambda_k}^nf)(\lambda_1,\cdots,\lambda_{k})\big) 
=
\phi\big(f(\lambda_1,\cdots,\lambda_{k})\big)\,.
\end{eqnarray*}
Hence, $\phi$ is in the image of $\chi_k$, as we wanted.
This proves part (b).
For part (c), the first equation in \eqref{eq:ago20_6} is clear. 
The second equation follows by part (b).
We are left to prove part (d).
Given $\phi\in\Hom(\mb F[\lambda_1,\dots,\lambda_{k-1}],M)$ we have,
for every permutation $\sigma\in S_k$,
$$
(\sigma^*\chi_k\phi)\big(f(\lambda_1,\cdots,\lambda_{k})\big)
=
\phi\big(f(\lambda_{\sigma(1)},\cdots,\lambda_{\sigma(k)})
\big|_{\lambda_k\mapsto{\lambda_k}_\dagger}\big)\,,
$$
and
$$
(\chi_k\sigma^*\phi)\big(f(\lambda_1,\cdots,\lambda_{k})\big)
=
\phi\Big(f(
\lambda_{\sigma(1)},\cdots,\lambda_{\sigma(k-1)},
-\lambda_{\sigma(1)}-\cdots-\lambda_{\sigma(k-1)}+\partial^M)
\Big|_{\lambda_k\mapsto{\lambda_k}_\dagger}\Big)\,.
$$
Equation \eqref{eq:ago20_7} follows by the fact that, for $\sigma(k)\neq k$,
when we replace
$\lambda_k$ by ${\lambda_k}_\dagger=-\lambda_1-\cdots-\lambda_{k-1}+\partial^M$,
the expression 
$-\lambda_{\sigma(1)}-\cdots-\lambda_{\sigma(k-1)}+\partial^M$ 
is replaced by $\lambda_{\sigma(k)}$.
\end{proof}

We extend $\chi_k$ to an injective linear map
$\chi_k:\,A^{\otimes k}\otimes\Hom(\mb F[\lambda_1,\dots,\lambda_{k-1}],M)
\hookrightarrow A^{\otimes k}\otimes\Hom(\mb F[\lambda_1,\dots,\lambda_{k}],M)$,
given by
\begin{equation}\label{eq:sat_4}
\chi_k(a_1\otimes\cdots\otimes a_{k}\otimes\phi)
= a_1\otimes\cdots\otimes a_{k}\otimes\chi_k(\phi)\,.
\end{equation}
Moreover, we denote by 
$\langle C1,C2\rangle\subset A^{\otimes k}\otimes\Hom(\mb F[\lambda_1,\dots,\lambda_{k}],M)$
the subspace generated by the relations C1. and C2. from Section \ref{sec:3.2},
and by 
$\langle D1,D2,D3\rangle\subset A^{\otimes k}\otimes\Hom(\mb F[\lambda_1,\dots,\lambda_{k-1}],M)$
the subspace generated by the relations D1., D2. and D3.
\begin{proposition}\label{prop:ago20}
\begin{enumerate}
\alphaparenlist
\item
$\chi_k\big(\langle D1,D2,D3\rangle\big)\subset \langle C1,C2\rangle$.
\item
For every $x\in A^{\otimes k}\otimes\Hom(\mb F[\lambda_1,\dots,\lambda_{k-1}],M)$,
we have $\partial\chi_k(x)\in\langle C1,C2\rangle$.
\item
$\chi_k$ induces a well-defined linear map $\chi_k:\,C_k\to\Gamma_k$.
\end{enumerate}
\end{proposition}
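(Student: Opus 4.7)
The plan is to deduce all three parts almost mechanically from the commutation relations between $\chi_k$ and the operators $\lambda_i^*$, $\sigma^*$, $\partial$ that were already established in Lemma \ref{lem:ago20_2}. More precisely, part (a) is a term-by-term verification that each of the defining relations D1, D2, D3 of $C_k$ is mapped by $\chi_k$ into the space $\langle C1,C2\rangle$ of defining relations of $\tilde\Gamma_k$; part (b) uses the characterization of $\im(\chi_k)$ given in Lemma \ref{lem:ago20_2}(b); and part (c) is a formal consequence of (a) and (b) together with the definition $\Gamma_k=\{\xi\in\tilde\Gamma_k\mid\partial\xi=0\}$.

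For (a), I would consider separately each of the three generating families. For a D1 relation (with $1\le i\le k-1$), applying $\chi_k$ and using the first identity in \eqref{eq:ago20_6}, namely $\chi_k\circ\lambda_i^*=\lambda_i^*\circ\chi_k$, gives exactly a C1 relation in $A^{\otimes k}\otimes\Hom(\mb F[\lambda_1,\dots,\lambda_k],M)$. For a D2 relation, applying $\chi_k$ and using the second identity in \eqref{eq:ago20_6}, which rewrites as $\chi_k\circ(\lambda_1^*+\cdots+\lambda_{k-1}^*-\partial)=-\lambda_k^*\circ\chi_k$, converts D2 into a C1 relation with $i=k$. For a D3 relation, applying $\chi_k$ and using \eqref{eq:ago20_7} delivers precisely a C2 relation. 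Thus $\chi_k(\langle D1,D2,D3\rangle)\subset\langle C1,C2\rangle$.

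For (b), I would start with the formula \eqref{eq:sfor_1} for the action of $\partial$ on $A^{\otimes k}\otimes\Hom(\mb F[\lambda_1,\dots,\lambda_k],M)$. Modulo $\langle C1\rangle$, the $\sum_i a_1\otimes\cdots(\partial a_i)\cdots\otimes a_k\otimes\phi$ terms can be rewritten as $-\sum_i a_1\otimes\cdots\otimes a_k\otimes\lambda_i^*\phi$, giving the congruence
\begin{equation*}
\partial\big(a_1\otimes\cdots\otimes a_k\otimes\phi\big)\,\equiv\,
a_1\otimes\cdots\otimes a_k\otimes\big((-\lambda_1^*-\cdots-\lambda_k^*+\partial)\phi\big)\pmod{\langle C1\rangle}.
\end{equation*}
Now for any $x=a_1\otimes\cdots\otimes a_k\otimes\psi$ with $\psi\in\Hom(\mb F[\lambda_1,\dots,\lambda_{k-1}],M)$, the element $\chi_k(x)$ has its last tensor factor equal to $\chi_k(\psi)\in\im(\chi_k)$, and by Lemma \ref{lem:ago20_2}(b) this image is exactly the kernel of $\lambda_1^*+\cdots+\lambda_k^*-\partial$. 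Therefore the right-hand side above vanishes, and $\partial\chi_k(x)\in\langle C1\rangle\subset\langle C1,C2\rangle$, as claimed.

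Finally, part (c) is a direct consequence: by (a), $\chi_k$ descends to a linear map $C_k\to\tilde\Gamma_k$; by (b), the image of this descended map lies in the $\partial$-invariants $\Gamma_k\subset\tilde\Gamma_k$. I do not expect any genuine obstacle here: essentially all the content has been packaged into Lemma \ref{lem:ago20_2}, and the only care needed is to track signs correctly when rewriting the D2 relation as a C1 relation with $i=k$, and to remember that in the action of $\partial$ on the tensor product the action on the $\Hom$ factor (via $\partial^M$) combines with the actions on the $A$-factors to produce, modulo $\langle C1\rangle$, exactly the operator whose kernel is $\im(\chi_k)$.
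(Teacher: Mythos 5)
Your proposal is correct and follows essentially the same route as the paper: part (a) by checking each generating relation D1, D2, D3 against the commutation identities \eqref{eq:ago20_6} and \eqref{eq:ago20_7} of Lemma \ref{lem:ago20_2}, part (b) by combining \eqref{eq:sfor_1} with the characterization of $\im(\chi_k)$ in Lemma \ref{lem:ago20_2}(b), and part (c) as a formal consequence. The sign bookkeeping for the D2 relation (rewriting the second identity of \eqref{eq:ago20_6} as $\chi_k\circ(\lambda_1^*+\cdots+\lambda_{k-1}^*-\partial)=-\lambda_k^*\circ\chi_k$) matches the paper's computation exactly.
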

\begin{proof}
For $1\leq i\leq k-1$, we have
\begin{eqnarray*}
&& \chi_k\big(
a_1\otimes\cdots(\partial a_i)\cdots\otimes a_{k}\otimes\phi
+a_1\otimes\cdots\otimes a_{k}\otimes(\lambda_i^*\phi)\big) \\
&& =
a_1\otimes\cdots(\partial a_i)\cdots\otimes a_{k}\otimes\chi_k(\phi)
+a_1\otimes\cdots\otimes a_{k}\otimes\chi_k(\lambda_i^*\phi)\big)\,,
\end{eqnarray*}
and this is in $\langle C1,C2\rangle$ thanks to Lemma \ref{lem:ago20_2}(c).
Similarly, by the second equation in \eqref{eq:ago20_6},
\begin{eqnarray*}
&& \chi_k\big(
 a_1\otimes\cdots\otimes a_{k-1}\otimes(\partial a_k)\otimes\phi
-a_1\otimes\cdots\otimes a_{k}\otimes(\lambda_1^*+\cdots+\lambda_{k-1}^*-\partial)\phi)\big) \\
&& =
a_1\otimes\cdots\otimes a_{k-1}\otimes(\partial a_k)\otimes \chi_k(\phi)
+a_1\otimes\cdots\otimes a_{k}\otimes
\lambda_k^*\chi_k(\phi)\,\in\langle C1,C2\rangle\,.
\end{eqnarray*}
Furthermore, by Lemma \ref{lem:ago20_2}(d), we have,
for every permutation $\sigma\in S_k$,
\begin{eqnarray*}
&& \chi_k\big(
a_1\otimes\cdots\otimes a_{k}\otimes\phi
-\text{sign}(\sigma)a_{\sigma(1)}\otimes\cdots\otimes a_{\sigma(k)}\otimes(\sigma^*\phi)\big) \\
&& 
= a_1\otimes\cdots\otimes a_{k}\otimes\chi_k(\phi)
-\text{sign}(\sigma)a_{\sigma(1)}\otimes\cdots\otimes a_{\sigma(k)}\otimes(\sigma^*\chi_k(\phi))
\,\in\langle C1,C2\rangle\,.
\end{eqnarray*}
This proves part (a).
From \eqref{eq:sfor_1} and Lemma \ref{lem:ago20_2}(b), we have
$$
\partial\chi_k(a_1\otimes\cdots\otimes a_{k}\otimes\phi)
\equiv a_1\otimes\cdots\otimes a_{k}\otimes
(-\lambda_1^*-\cdots-\lambda_{k}^*+\partial)\chi_k(\phi)
=0\,,
$$
thus proving (b). Part (c) follows from (a) and (b).
\end{proof}
\begin{proposition}\label{th:ago20}
If the Lie conformal algebra $A$ decomposes, as an $\mb F[\partial]$-module,
in a direct sum of the torsion and a complementary free $\mb F[\partial]$-submodule,
the identity map on $C_0=\{m\in M\,|\,\partial m=0\}$
and the maps $\chi_k:\,C_k\to\Gamma_k,\,k\geq1$,
factor through a bijective map $\bar C_\bullet\simeq\Gamma_\bullet$.
\end{proposition}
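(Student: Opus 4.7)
The case $k=0$ is immediate, since $C_0=\bar C_0=\Gamma_0$ by definition. For $k\geq 1$ the strategy is to prove bijectivity of the induced map $\bar\chi_k:\bar C_k\to\Gamma_k$ by constructing an explicit inverse using the restriction map $\rho_k$ from Lemma \ref{lem:ago20_2}(a), after reducing both $\bar C_k$ and $\Gamma_k$ to a common concrete model via the decomposition $A=T\oplus(\mb F[\partial]\otimes U)$.

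First I would put both spaces into canonical form. By Lemma \ref{rem:ago19} any element of $\tilde\Gamma_k$, and hence of $\Gamma_k$, vanishes once a torsion factor is inserted, so $\chi_k$ automatically factors through $\bar C_k$. Using relation C1 on the $\tilde\Gamma_k$ side (respectively D1 and D2 on the $\bar C_k$ side) to absorb each $\partial$ on an $A$-factor into a differential operator on the $\Hom$-factor, every element of $\tilde\Gamma_k$ is represented by an element of $U^{\otimes k}\otimes\Hom(\mb F[\lambda_1,\dots,\lambda_k],M)$ modulo only the permutation relations C2, while every element of $\bar C_k$ is represented by an element of $U^{\otimes k}\otimes\Hom(\mb F[\lambda_1,\dots,\lambda_{k-1}],M)$ modulo only the permutation relations D3. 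In this model $\bar\chi_k$ is induced by $\id_{U^{\otimes k}}\otimes\chi_k$; it is well defined since Lemma \ref{lem:ago20_2}(d) intertwines the two $S_k$-actions defining D3 and C2, and its image lies in $\Gamma_k$ by Proposition \ref{prop:ago20}(b).

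For surjectivity, take $\xi\in\Gamma_k$ with representative $\sum_i u_1^i\otimes\cdots\otimes u_k^i\otimes\phi_i$, $u_j^i\in U$. Via \eqref{eq:sfor_1} the condition $\partial\xi=0$ says that $\sum_i u_1^i\otimes\cdots\otimes u_k^i\otimes\bigl((\partial-\lambda_1^*-\cdots-\lambda_k^*)\phi_i\bigr)$ lies in the C2-relations. I would decompose $U^{\otimes k}$ into $S_k$-orbits and identify each orbit summand of the quotient by C2 with a sign-isotypic subspace of $\Hom(\mb F[\lambda_1,\dots,\lambda_k],M)$ for the stabilizer $H\subset S_k$ of an orbit representative; projecting each $\phi_i$ to the relevant sign-isotypic component reduces the equivalence modulo C2 to the honest identity $(\partial-\lambda_1^*-\cdots-\lambda_k^*)\phi_i=0$. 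By Lemma \ref{lem:ago20_2}(b), $\phi_i$ then lies in $\im\chi_k$, and $\sum_i u_1^i\otimes\cdots\otimes u_k^i\otimes\rho_k\phi_i\in\bar C_k$ is the desired preimage under $\bar\chi_k$. Injectivity is dual: given $\sum_i u_1^i\otimes\cdots\otimes u_k^i\otimes\psi_i\in\bar C_k$ whose image in $\Gamma_k\subset\tilde\Gamma_k$ vanishes, applying $\rho_k$ on the $\Hom$-factor of each term and using $\rho_k\chi_k=\id$ from Lemma \ref{lem:ago20_2}(a), together with the equivariance (c), (d) to translate C1 and C2 relations back into D1, D2, D3 relations, forces the chain to be zero in $\bar C_k$.

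The main obstacle is precisely this symmetrization step: one has to verify that after the $S_k$-orbit decomposition, the quotient of $U^{\otimes k}\otimes\Hom$ by C2 really is the direct sum of the sign-isotypic subspaces $\Hom^{H,\mathrm{sign}}$ for the orbit stabilizers $H$, and that the operator $\partial-\sum_j\lambda_j^*$ as well as $\chi_k$ and $\rho_k$ preserve this decomposition. All the required equivariance is packaged in Lemma \ref{lem:ago20_2}; once that bookkeeping is in place, the two-sided inverse $\rho_k$ assembles the proof.
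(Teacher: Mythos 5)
Your argument is correct and follows essentially the same route as the paper's proof: reduce both $\bar C_k$ and $\Gamma_k$ to $U^{\otimes k}\otimes\Hom(\mb F[\lambda_1,\dots,\lambda_{k-1}],M)$ and $U^{\otimes k}\otimes\Hom(\mb F[\lambda_1,\dots,\lambda_{k}],M)$ modulo the permutation relations only, identify $\Gamma_k$ with the kernel of $\partial-\sum_j\lambda_j^*$ and hence with the image of $\chi_k$ via Lemma \ref{lem:ago20_2}(b), and use the equivariance of Lemma \ref{lem:ago20_2}(d) to match relation D3 with relation C2. The symmetrization step you single out as the main obstacle is precisely what the paper passes over in silence, and it does go through: in characteristic $0$ the operator $\partial-\sum_j\lambda_j^*$ commutes with the sign-twisted $S_k$-action, so its kernel on the coinvariants is the image of its kernel, which is all that is needed.
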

\begin{proof}
Suppose that the $\mb F[\partial]$-module $A$ decomposes as in \eqref{eq:anniv_1}.
By definition,
in the space $\bar C_k$ we have that
$a_1\otimes\cdots\otimes a_{k}\otimes\phi\equiv 0$
if one of the elements $a_i$ is in $T=\Tor A$.
The same is true in the space $\tilde\Gamma_k$ by Lemma \ref{rem:ago19}.
It follows that $\chi_k$ induces a well-defined map 
\begin{equation}\label{eq:ago21_2}
\chi_k:\,\bar C_k\to\Gamma_k\subset\tilde\Gamma_k\,.
\end{equation}
Moreover, in the space $\tilde\Gamma_k$ we have,
using relation C1., that
$$
(P_1(\partial)u_1)\otimes\cdots\otimes(P_{k}(\partial)u_{k})\otimes\phi
\equiv u_1\otimes\cdots\otimes u_{k}\otimes
\big(P_{1}(-\lambda_1^*)\cdots P_{k}(-\lambda_{k}^*)\phi\big)\,,
$$
for every $u_i\in U$ and $\phi\in\Hom(\mb F[\lambda_1,\dots,\lambda_{k}],M)$.
Hence,
we can identify the space $\tilde\Gamma_k$ with the quotient 
of the space $U^{\otimes k}\otimes\Hom(\mb F[\lambda_1,\dots,\lambda_{k}],M)$
by the relation C2.
Similarly, in the space $\bar C_k$ we have,
using the relations D1. and D2., that
\begin{eqnarray*}
&& (P_1(\partial)u_1)\otimes\cdots\otimes(P_{k}(\partial)u_{k})\otimes\phi \\
&& \equiv u_1\otimes\cdots\otimes u_{k}\otimes
\big(
P_{1}(-\lambda_1^*)\cdots P_{k-1}(-\lambda_{k-1}^*)
P_{k}(\lambda_1^*+\cdots+\lambda_{k-1}^*-\partial)
\phi\big)\,,
\end{eqnarray*}
for every $u_i\in U$ and $\phi\in\Hom(\mb F[\lambda_1,\dots,\lambda_{k-1}],M)$.
Hence,
we can identify the space $\bar C_k$ with the quotient 
of the space $U^{\otimes k}\otimes\Hom(\mb F[\lambda_1,\dots,\lambda_{k-1}],M)$
by the relation D3.
The map $\chi_k$ in \eqref{eq:ago21_2} is then induced by the map
$U^{\otimes k}\otimes\Hom(\mb F[\lambda_1,\dots,\lambda_{k-1}],M)
\to
U^{\otimes k}\otimes\Hom(\mb F[\lambda_1,\dots,\lambda_{k}],M)$,
given by
$$
u_1\otimes\cdots\otimes u_{k}\otimes\phi
\,\mapsto\,
u_1\otimes\cdots\otimes u_{k}\otimes\chi_k(\phi)\,,
$$
for every $u_i\in U$ and $\phi\in\Hom(\mb F[\lambda_1,\dots,\lambda_{k-1}],M)$.
Recalling \eqref{eq:sfor_1},
the action of $\partial$ on $\tilde\Gamma_k$ is induced by the map
$U^{\otimes k}\otimes\Hom(\mb F[\lambda_1,\dots,\lambda_{k}],M)
\to
U^{\otimes k}\otimes\Hom(\mb F[\lambda_1,\dots,\lambda_{k}],M)$,
given by
$$
u_1\otimes\cdots\otimes u_{k}\otimes\phi
\,\mapsto\,
u_1\otimes\cdots\otimes u_{k}\otimes
\big((-\lambda_1^*-\cdots-\lambda_{k}^*)\phi\big)\,.
$$
Hence,  the subspace $\Gamma_k\subset\tilde\Gamma_k$
is spanned by elements of the form
$u_1\otimes\cdots\otimes u_{k}\otimes\phi$,
such that $(-\lambda_1^*-\cdots-\lambda_{k}^*)\phi=0$.
By Lemma \ref{lem:ago20_2}(b), this is the same as the image of $\chi_k$.
Therefore the map \eqref{eq:ago21_2} is surjective.
Finally, injectiveness of \eqref{eq:ago21_2} is clear since, by Lemma \eqref{lem:ago20_2}(d),
relation D3. corresponds, via $\chi_k$, to relation C2.
\end{proof}

\vspace{3ex}
\subsection{Contraction operators acting on $C^\bullet$.}~~
\label{sec:3.5}
Assume, as in Section \ref{sec:3.3}, that $A$ is a Lie conformal algebra
and $M$ is an $A$-module endowed with a commutative, associative product
$\mu:\,M\otimes M\to M$,
such that $\partial^M:\,M\to M$, and $a_\lambda:\,M\to\mb C[\lambda]\otimes M$, 
satisfy the Leibniz rule.
Given an $h$-chain $x\in C_h$, we define the \emph{contraction operator}
$\iota_x:\, C^k\to C^{k-h},\,k\geq h$, 
in the same way as we defined, in Section \ref{sec:3.3},
the contraction operator associated to an element of $\tilde\Gamma_h$.
If $a_1\otimes\cdots\otimes a_{h}\otimes\phi
\in A^{\otimes h}\otimes\Hom(\mb F[\lambda_1,\dots,\lambda_{h-1}],M)$ 
is a representative of $x\in C_h$,
and $c\in C^k$, we let, for $h<k$,
\begin{equation}\label{eq:ago21_3}
\{{a_{h+1}}_{\lambda_{h+1}}\cdots{a_{k-1}}_{\lambda_{k-1}}a_k\}_{\iota_x c}
=
(\chi_h\phi)^\mu\big(
\{{a_1}_{\lambda_1}\cdots
{a_h}_{\lambda_{h}}{a_{h+1}}_{\lambda_{h+1}}
\cdots{a_{k-1}}_{\lambda_{k-1}}a_k\}_c
\big)\,,
\end{equation}
where, in the RHS, $\phi^\mu$ is defined by \eqref{eq:ago16_2}
and $\chi_h$ is given by \eqref{eq:ago20_1}.
For $h=k$ , equation \eqref{eq:ago21_3} has to be modified as follows:
\begin{equation}\label{eq:giulia}
\iota_x c
=
\int \phi^\mu\big(
\{{a_1}_{\lambda_1}\cdots
{a_{k-1}}_{\lambda_{k-1}}a_k\}_c
\big)\,\in M/\partial^MM = C^0\,.
\end{equation}
\begin{lemma}\label{lem:ago21}
\begin{enumerate}
\alphaparenlist
\item
For $c\in C^k$, the RHS of \eqref{eq:ago21_3} does not depend on the choice of the representative
for $x$ in $A^{\otimes h}\otimes\Hom(\mb F[\lambda_1,\dots,\lambda_{h-1}],M)$.
Hence the contraction operator $\iota_x$ is well defined for $x\in C_h$.
\item
For $c\in C^k$, the RHS of \eqref{eq:ago21_3} satisfies conditions B1., B2. and B3.
Hence $\iota_xc\in C^{k-h}$.
\end{enumerate}
\end{lemma}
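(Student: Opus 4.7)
The plan is to treat the two parts separately.

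For part (a), I must verify that the formula \eqref{eq:ago21_3} (and \eqref{eq:giulia} when $h=k$) descends to the quotient space $C_h$, that is, respects the three generating relations D1, D2, D3 of Section~\ref{sec:3.4}. Relations D1 (at position $i < h$) and D3 reduce directly to conditions B1 and B3 of $c$ combined with the commutation identities of Lemma~\ref{lem:ago20_2}(c)--(d); for D3, I extend a permutation $\sigma \in S_h$ by the identity on $\{h+1,\ldots,k\}$ to an element $\tilde\sigma \in S_k$ which fixes $k$, so that the $\lambda_k \mapsto \lambda_k^\dagger$ substitution in B3 of $c$ is trivial. Relation D2 with $h < k$ reduces similarly to B1 of $c$ at $i=h$ combined with the second formula of Lemma~\ref{lem:ago20_2}(c). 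The only subtle subcase is D2 with $h=k$, governed by \eqref{eq:giulia}: condition B2 of $c$ produces the factor $\lambda_1 + \cdots + \lambda_{k-1} + \partial^M$, and pulling $\partial^M$ through $\phi^\mu$ via the Leibniz rule for the commutative product on $M$ introduces an additional summand of the form $\partial^M(\phi^\mu\{\cdots\}_c)$, which lies in $\partial^M M$ and hence vanishes after applying $\int$.

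For part (b), with $h < k$, the conditions B1, B2, B3 of $\iota_x c$ as a $(k-h)$-$\lambda$-bracket need to be verified. Condition B1 at an interior index $i \in \{h+1,\ldots,k-1\}$ is immediate: the factor $-\lambda_i$ supplied by B1 of $c$ commutes past $(\chi_h\phi)^\mu$, since it involves only variables not affected by $\chi_h\phi$. Condition B3 for a permutation $\tau$ of $\{h+1,\ldots,k\}$ is obtained by extending $\tau$ to $\tilde\tau \in S_k$ acting as the identity on $\{1,\ldots,h\}$ and applying B3 of $c$; the resulting substitution $\lambda_k \mapsto \lambda_k^\dagger = -\lambda_1 - \cdots - \lambda_{k-1} - \partial^M$ must then be converted, under $(\chi_h\phi)^\mu$, to the $(k-h)$-dagger $\lambda_k \mapsto -\lambda_{h+1} - \cdots - \lambda_{k-1} - \partial^M$ appropriate for $\iota_x c$.

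The main obstacle is a single intertwining identity, which also supplies condition B2 directly: namely, that $(\chi_h\phi)^\mu$ carries the operator $\sum_{i=1}^{k-1}\lambda_i + \partial^M$ acting on $\mb F[\lambda_1,\ldots,\lambda_{k-1}] \otimes M$ (with $\partial^M$ acting on the $M$-factor) to the operator $\sum_{i=h+1}^{k-1}\lambda_i + \partial^M$ acting on $\mb F[\lambda_{h+1},\ldots,\lambda_{k-1}] \otimes M$. Its proof combines Lemma~\ref{lem:ago20_2}(b), which says that multiplication by $\lambda_1 + \cdots + \lambda_h$ inside $(\chi_h\phi)$ equals $\partial^M$ applied to $(\chi_h\phi)(\cdot)$, with the Leibniz rule $\partial^M((\chi_h\phi)(f) \cdot m) = (\partial^M(\chi_h\phi)(f)) \cdot m + (\chi_h\phi)(f) \cdot \partial^M m$ for the product on $M$; the two $\partial^M$-contributions then recombine into a single $\partial^M$ acting as a derivation on $(\chi_h\phi)^\mu\{\cdots\}_c$, yielding the $(k-h)$-dagger. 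Condition B2 of $\iota_x c$ follows by applying $(\chi_h\phi)^\mu$ to B2 of $c$ and invoking this identity, and the dagger conversion needed for B3 follows by extending it to the full substitution $\lambda_k \mapsto \lambda_k^\dagger$.
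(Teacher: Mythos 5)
Your proposal follows essentially the same route as the paper's proof: well-definedness is checked against the generating relations D1--D3 via the commutation identities of Lemma~\ref{lem:ago20_2}(c)--(d) and conditions B1, B3 for $c$, while B2 and the dagger conversion in B3 for $\iota_x c$ both rest on the intertwining identity obtained by combining Lemma~\ref{lem:ago20_2}(b)--(c) with the Leibniz-rule Lemma~\ref{lem:ago16}. Your explicit treatment of the $h=k$ case of D2 (where the extra $\partial^M M$ term is killed by $\tint$) is a small refinement the paper leaves implicit; otherwise the arguments coincide.
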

\begin{proof}
If 
$x=a_1\otimes\cdots(\partial a_i)\cdots\otimes a_{h}\otimes\phi
+a_1\cdots\otimes a_{h}\otimes(\lambda_i^*\phi)$, 
for $1\leq i\leq h-1$, we have
\begin{eqnarray*}
&\displaystyle{
\{{a_{h+1}}_{\lambda_{h+1}}\cdots{a_{k-1}}_{\lambda_{k-1}}a_k\}_{\iota_xc}
\,=\,
(\chi_h\phi)^\mu\big(
\{{a_1}_{\lambda_1}\cdots(\partial a_i)_{\lambda_i}\cdots{a_{k-1}}_{\lambda_{k-1}}a_k\}_c
}\\
&\displaystyle{
+\lambda_i
\{{a_1}_{\lambda_1}\cdots{a_{k-1}}_{\lambda_{k-1}}a_k\}_c
\big)\,,
}
\end{eqnarray*}
and this is zero since, by assumption, $c$ satisfies condition B1.
Similarly, if 
$x= a_1\otimes\cdots\otimes a_{h-1}\otimes(\partial a_h)\otimes\phi
-a_1\cdots\otimes a_{h}\otimes((\lambda_1^*+\cdots+\lambda_{h-1}^*-\partial)\phi)$, 
we have
\begin{eqnarray}\label{eq:ago21_4}
&&\displaystyle{
\{{a_{h+1}}_{\lambda_{h+1}}\cdots{a_{k-1}}_{\lambda_{k-1}}a_k\}_{\iota_xc}
\,=\,
(\chi_h\phi)^\mu\big(
\{{a_1}_{\lambda_1}\cdots{(\partial a_h)}_{\lambda_h}\cdots{a_{k-1}}_{\lambda_{k-1}}a_k\}_c 
}\\
&&\displaystyle{
-(\lambda_1+\cdots+\lambda_{h-1})
\{{a_1}_{\lambda_1}\cdots{a_{k-1}}_{\lambda_{k-1}}a_k\}_c
\big)
+(\chi_h\partial\phi)^\mu\big(
\{{a_1}_{\lambda_1}\cdots{a_{k-1}}_{\lambda_{k-1}}a_k\}_c
\big)\,.
}\nonumber
\end{eqnarray}
Using the condition B1. for $c$,
we can rewrite the RHS of \eqref{eq:ago21_4} as
$$
(\chi_h\phi)^\mu\big(
-(\lambda_1+\cdots+\lambda_{h-1}+\lambda_h)
\{{a_1}_{\lambda_1}\cdots{a_{k-1}}_{\lambda_{k-1}}a_k\}_c 
\big)
+(\chi_h\partial\phi)^\mu\big(
\{{a_1}_{\lambda_1}\cdots{a_{k-1}}_{\lambda_{k-1}}a_k\}_c
\big)\,,
$$
which is zero thanks to Lemma \ref{lem:ago20_2}(c).
Furthermore, if 
$x=a_1\otimes\cdots\otimes a_{h}\otimes\phi
-\text{sign}(\sigma)a_{\sigma(1)}\otimes\cdots\otimes a_{\sigma(h)}\otimes(\sigma^* \phi)$,
for a permutation $\sigma\in S_h$, we have
\begin{eqnarray}\label{eq:ago21_5}
&&\displaystyle{
\{{a_{h+1}}_{\lambda_{h+1}}\cdots{a_{k-1}}_{\lambda_{k-1}}a_k\}_{\iota_xc}
\,=\,
(\chi_h\phi)^\mu\big(
\{{a_1}_{\lambda_1}\cdots{a_{k-1}}_{\lambda_{k-1}}a_k\}_c
\big)
}\nonumber\\
&&\displaystyle{
\,\,\,\,\,\,\,\,\,
-\text{sign}(\sigma)
(\chi_h\sigma^*\phi)^\mu\big(
\{{a_1}_{\lambda_1}\cdots{a_{k-1}}_{\lambda_{k-1}}a_k\}_c
\big)
\,=\,
(\chi_h\phi)^\mu\big(
\{{a_1}_{\lambda_1}\cdots{a_{k-1}}_{\lambda_{k-1}}a_k\}_c
}\\
&&\displaystyle{
\,\,\,\,\,\,\,\,\,\,\,\,\,\,\,\,\,\,
-\text{sign}(\sigma)
\{{a_{\sigma(1)}}_{\lambda_{\sigma(1)}}\cdots
{a_{\sigma(h)}}_{\lambda_{\sigma(h)}}
{a_{h+1}}_{\lambda_{h+1}}
\cdots
{a_{k-1}}_{\lambda_{k-1}}a_k\}_c
\big)\,,
}\nonumber
\end{eqnarray}
where, in the second equality, we used Lemma \eqref{lem:ago20_2}(d)
and the definition \eqref{eq:ago20_3} of $\sigma^*$ acting 
on $\Hom(\mb F[\lambda_1,\dots,\lambda_h],M)$.
Clearly, the RHS of \eqref{eq:ago21_5} is zero since,
by assumption, $c$ satisfies condition B3.
This proves part (a).
For part (b), condition B1. for $\iota_xc$
follows immediately from the same condition on $c$.
We have
\begin{eqnarray}\label{eq:sep21_1}
&\displaystyle{
\{{a_{h+1}}_{\lambda_{h+1}}\cdots{a_{k-1}}_{\lambda_{k-1}}(\partial a_k)\}_{\iota_xc}
\,=\,
(\chi_h\phi)^\mu\big(
\{{a_1}_{\lambda_1}\cdots{a_{k-1}}_{\lambda_{k-1}}(\partial a_k)\}_c
\big) 
}\\
&\displaystyle{
= (\chi_h\phi)^\mu\big(
(\lambda_1+\cdots+\lambda_{k-1}+\partial^M)
\{{a_1}_{\lambda_1}\cdots{a_{k-1}}_{\lambda_{k-1}}a_k\}_c
\big)\,.
}\nonumber
\end{eqnarray}
By Lemmas \ref{lem:ago16} and \ref{lem:ago20_2}(c),
the RHS of \eqref{eq:sep21_1} is the same as
\begin{eqnarray*}
&& (\lambda_{h+1}+\cdots+\lambda_{k-1}+\partial^M)
(\chi_h\phi)^\mu\big(
\{{a_1}_{\lambda_1}\cdots{a_{k-1}}_{\lambda_{k-1}}a_k\}_c
\big) \\
&& =
(\lambda_{h+1}+\cdots+\lambda_{k-1}+\partial^M)
\{{a_{h+1}}_{\lambda_{h+1}}\cdots{a_{k-1}}_{\lambda_{k-1}}a_k\}_{\iota_xc}\,,
\end{eqnarray*}
namely $\iota_xc$ satisfies condition B2.
Similarly, for condition B3.,
let $\sigma$ be a permutation of the set $\{h+1,\dots,k\}$.
We have
\begin{eqnarray}\label{eq:sep21_2}
&& \{{a_{\sigma(h+1)}}_{\lambda_{\sigma(h+1)}}\cdots{a_{\sigma(k-1)}}_{\lambda_{\sigma(k-1)}}
a_{\sigma(k)}\}_{\iota_xc} \\
&& =
(\chi_h\phi)^\mu\big(
\{{a_1}_{\lambda_1}\cdots{a_h}_{\lambda_h}
{a_{\sigma(h+1)}}_{\lambda_{\sigma(h+1)}}\cdots
{a_{\sigma(k-1)}}_{\lambda_{\sigma(k-1)}}a_{\sigma(k)}\}_c
\big)\,.\nonumber
\end{eqnarray}
We then observe that, replacing in the above equation
$\lambda_k$ by $-\sum_{j=h+1}^{k-1}\lambda_j-\partial^M$,
$\partial^M$ acting from the left,
is the same as replacing it,
inside the argument of $(\chi_h\phi)^\mu$ in the RHS,
by $-\sum_{j=1}^{k-1}\lambda_j-\partial^M$.
For this we use Lemmas \ref{lem:ago16} and \ref{lem:ago20_2}(c).
After this substitution,
the RHS of \eqref{eq:sep21_2} becomes,
using the condition B3. for $c$,
$$
\text{sign}(\sigma)
(\chi_h\phi)^\mu\big(
\{{a_1}_{\lambda_1}\cdots{a_{k-1}}_{\lambda_{k-1}}a_k\}_c
\big)
\,=\,
\{{a_{h+1}}_{\lambda_{h+1}}\cdots{a_{k-1}}_{\lambda_{k-1}}a_k\}_{\iota_xc}
\,.
$$
\end{proof}

\begin{proposition}\label{prop:dic17_2}
The contraction operators on the superspace $C^\bullet$ commute, i.e.
for $x\in C_h$ and $y\in C_j$ we have
$$
\iota_x\iota_y\,=\,(-1)^{hj}\iota_y\iota_x\,.
$$
\end{proposition}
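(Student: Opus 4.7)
The argument parallels that of Proposition~\ref{prop:dic17_1}, modified to track the operators $\chi_h,\chi_j$ that appear in the definition \eqref{eq:ago21_3} of contractions on $C^\bullet$. Fix representatives
\[
x = a^x_1 \otimes \cdots \otimes a^x_h \otimes \phi, \qquad y = a^y_1 \otimes \cdots \otimes a^y_j \otimes \psi,
\]
with $\phi\in\Hom(\mb F[\alpha_1,\dots,\alpha_{h-1}],M)$, $\psi\in\Hom(\mb F[\beta_1,\dots,\beta_{j-1}],M)$, and let $c\in C^k$. If $h+j>k$ both sides are zero, so we may assume $h+j\le k$.

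First suppose $h+j<k$. Iterating \eqref{eq:ago21_3} we compute
\[
\{e_{1,\lambda_1}\cdots e_{k-h-j-1,\lambda_{k-h-j-1}}e_{k-h-j}\}_{\iota_y\iota_x c}
\;=\;(\chi_j\psi)^\mu(\chi_h\phi)^\mu\bigl\{a^x_{1,\alpha_1}\cdots a^x_{h,\alpha_h}\,a^y_{1,\beta_1}\cdots a^y_{j,\beta_j}\,e_{1,\lambda_1}\cdots e_{k-h-j}\bigr\}_c,
\]
and the analogous expression for $\iota_x\iota_y c$ with the roles of the two blocks reversed. Two observations then close the argument. First, $(\chi_h\phi)^\mu$ and $(\chi_j\psi)^\mu$ act on disjoint families of formal variables (the $\alpha_i$'s versus the $\beta_i$'s), and since $\mu$ is commutative and associative they commute as operators on $\mb F[\alpha,\beta,\lambda]\otimes M$. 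Second, condition B3 on $c$, applied to the permutation swapping the two consecutive blocks of lengths $h$ and $j$ (a permutation of sign $(-1)^{hj}$ which fixes the last position $k$, so that no $\lambda_k^\dagger$ substitution is required), gives
\[
\bigl\{a^y_{1,\beta_1}\cdots a^y_{j,\beta_j}\,a^x_{1,\alpha_1}\cdots a^x_{h,\alpha_h}\,e_1\cdots\bigr\}_c
\;=\;(-1)^{hj}\bigl\{a^x_{1,\alpha_1}\cdots a^x_{h,\alpha_h}\,a^y_{1,\beta_1}\cdots a^y_{j,\beta_j}\,e_1\cdots\bigr\}_c.
\]
Combining these yields $\iota_y\iota_x c=(-1)^{hj}\iota_x\iota_y c$.

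It remains to handle the boundary case $h+j=k$, where $\iota_y\iota_x c,\iota_x\iota_y c\in C^0=M/\partial^MM$ and the final contraction uses \eqref{eq:giulia}. The same two-step computation applies, but the B3 block-swap now moves the last argument, forcing the substitution $\lambda_k\mapsto\lambda_k^\dagger=-\sum_{i<k}\lambda_i-\partial^M$. The plan is to show that the resulting discrepancy between $\iota_y\iota_x c$ and $(-1)^{hj}\iota_x\iota_y c$ collects into expressions of the form $\partial^M(\text{element of }M)$, which vanish upon passage to $M/\partial^MM$. This follows from the intertwining identity of Lemma~\ref{lem:ago16} between $\partial^M$ and the pairings $\phi^\mu,\psi^\mu$, together with the elementary algebraic fact that, after pulling all $\partial^M$'s to the left, the contributions from the B3 $\dagger$-substitution and those produced by the $\chi_j$-substitution in $(\chi_j\psi)^\mu$ agree up to an exact term.

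The main obstacle I anticipate is the bookkeeping in the boundary case $h+j=k$: one must track how the $\partial^M$ introduced by B3's $\dagger$ interacts with the $\partial^M$ built into $\chi_j\psi$ (and the analogous asymmetry on the other side with $\chi_h\phi$), and verify that everything not matching modulo $\partial^MM$ is a total derivative. The bulk case $h+j<k$, by contrast, is essentially formal and follows from (a) commutativity of $\mu$ and (b) B3 without any $\dagger$ substitution, exactly as in the proof of Proposition~\ref{prop:dic17_1}.
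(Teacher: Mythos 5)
Your proposal follows essentially the same route as the paper: the bulk case $h+j<k$ reduces to commutativity of the two $\mu$-contractions on disjoint variable blocks plus condition B3 applied to the block-swap permutation of sign $(-1)^{hj}$ (no $\dagger$ needed), exactly as in Proposition \ref{prop:dic17_1}; and the boundary case $h+j=k$ is handled, as in the paper, by the B3 block swap (which now forces the $\lambda_k\mapsto\lambda_k^\dagger$ substitution), the definition \eqref{eq:ago20_1} of $\chi$, and integration by parts in $M/\partial^M M$. Your identification of the boundary case as the only delicate point, and of the mechanism reconciling the two $\partial^M$'s, matches the paper's (equally terse) argument.
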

\begin{proof}
Let $a_1\otimes \cdots\otimes a_h\otimes\phi
\in A^{\otimes h}\otimes\Hom(\mb F[\lambda_1,\cdots,\lambda_{h-1}],M)$ 
be a representative for $x\in C_h$,
$b_1\otimes \cdots\otimes b_j\otimes\psi
\in A^{\otimes j}\otimes\Hom(\mb F[\mu_1,\cdots,\mu_{j-1}],M)$ 
be a representative for $y\in C_j$,
and let $c\in C^k$.
For $k>h+j$, the proof is similar to that of Proposition \ref{prop:dic17_1}.
Thus we only have to consider the case $k=h+j$.
Recalling \eqref{eq:ago21_3} and \eqref{eq:giulia}, we have
$$
\iota_y(\iota_xc)
\,=\,
\tint\psi^\mu\big(
(\chi_h\phi)^\mu\big(
\{{a_1}_{\lambda_1}\cdots
{a_{h-1}}_{\lambda_{h-1}}
{a_h}_{\lambda_h}
{b_1}_{\mu_1}\cdots
{b_{j-1}}_{\mu_{j-1}}b_j\}_c
\big)\big)\,.
$$
Applying the skew-symmetry condition B3. for $c$
and using the definition \eqref{eq:ago20_1} of $\chi_h$,
we get, after integration by parts, that the RHS is
$$
(-1)^{hj}\tint(\chi_j\psi)^\mu\big(
\phi^\mu\big(
\{
{b_1}_{\mu_1}\cdots
{b_{j-1}}_{\mu_{j-1}}{b_j}_{\mu_j}
{a_1}_{\lambda_1}\cdots
{a_{h-1}}_{\lambda_{h-1}}
{a_h}
\}_c
\big)\big)\,,
$$
which is the same as $(-1)^{hj}\iota_x(\iota_yc)$.
\end{proof}

For example, for $m\in C_0=\{m'\in M\,|\,\partial m'=0\}$,
we have
$\{{a_1}_{\lambda_1}\cdots{a_{k-1}}_{\lambda_{k-1}}a_k\}_{\iota_mc}
=m\{{a_1}_{\lambda_1}\cdots{a_{k-1}}_{\lambda_{k-1}}a_k\}_c$.
Recall also that $C_1=A\otimes M/\partial(A\otimes M)$.
The contraction operators associated to 1-chains are given by the following formulas:
if $c\in C^1=\Hom_{\mb F[\partial]}(A,M)$, then
\begin{equation}\label{eq:sat_8_1}
\iota_{a\otimes m}c \,=\, \int mc(a)\,,
\end{equation}
while if $c\in C^k$, with $k\geq2$, then
\begin{equation}\label{eq:sat_8}
\{{a_2}_{\lambda_2}\cdots{a_{k-1}}_{\lambda_{k-1}}a_k\}_{\iota_{a_1\otimes m}c}
\,=\,
{\{{a_1}_{\partial^M}{a_2}_{\lambda_2}\cdots{a_{k-1}}_{\lambda_{k-1}}a_k\}_c}_\to m\,,
\end{equation}
where the arrow in the RHS means, as usual, that $\partial^M$ should be moved to the right.

Also we have the following formulas for the Lie derivative $L_x=[d,\iota_x]$
by a 1-chain $x\in C_1$ acting on $C^0=M/\partial^MM$ 
and $C^1=\Hom_{\mb F[\partial]}(A,M)$:
\begin{eqnarray}\label{eq:sat_10}
L_{a\otimes m}\tint n &=& \tint (a_{\partial^M}n)_\to m\,,\\
(L_{a\otimes m}c)(b) 
&=&
\big(a_{\partial^M}c(b)\big)_\to m
+_\leftarrow\big((b_{-\partial^M}m)c(a)\big)
-c\big([a_{\partial^M}b]\big)_\to m\,,\nonumber
\end{eqnarray}
where the left arrow in the RHS means, as usual, 
that $\partial^M$ should be moved to the left.

The definitions of the contraction operators associated to elements 
of $\Gamma_\bullet$ and $C_\bullet$ are ``compatible".
This is stated in the following:
\begin{theorem}\label{th:ago21}
For $x\in C_h$ and $\gamma\in\Gamma^k$, with $k\geq h$,
we have
$$
\iota_x(\psi^k(\gamma))\,=\,
\psi^{k-h}(\iota_{\chi_h (x)}(\gamma))\,,
$$
where $\psi^k:\,\Gamma^k\hookrightarrow C^k$, denotes
the injective linear map defined in Theorem \ref{th:red},
and $\chi_h:\,C_h\to\Gamma_h$, denotes
the linear map defined in Proposition \ref{prop:ago20}.
In other words, there is a commutative diagram of linear maps:
\begin{equation}\label{eq:ago16_7}
\UseTips
\xymatrix{
C^k &   \ar[r]^{\iota_x} & &  C^{k-h} \\
\ar@{^{(}->}[u]^{\psi^k} \Gamma^k & \ar[r]^{\iota_\xi}  
& & \Gamma^{k-h} \ar@{^{(}->}[u]^{\psi^{k-h}} 
}\,,
\end{equation}
provided that $\xi\in\Gamma_h$ and $x\in C_h$ are related by $\xi=\chi_h(x)$.
\end{theorem}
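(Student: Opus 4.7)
The strategy is to prove the diagram \eqref{eq:ago16_7} commutes by direct computation on representatives. Fix $\tilde\gamma \in \tilde\Gamma^k$ representing $\gamma \in \Gamma^k$ and $a_1 \otimes \cdots \otimes a_h \otimes \phi \in A^{\otimes h} \otimes \Hom(\mb F[\lambda_1,\ldots,\lambda_{h-1}],M)$ representing $x \in C_h$, so that $\xi = \chi_h(x) \in \Gamma_h$ is represented by $a_1 \otimes \cdots \otimes a_h \otimes \chi_h(\phi)$. The crucial property of $\chi_h(\phi)$, packaged in Lemma \ref{lem:ago20_2}(b), is the intertwining relation
\[
(\chi_h\phi)\big((\lambda_1 + \cdots + \lambda_h)\,f\big) = \partial^M\big((\chi_h\phi)(f)\big), \qquad f \in \mb F[\lambda_1,\ldots,\lambda_h],
\]
which expresses exactly the $\partial$-invariance of $\xi$ in $\tilde\Gamma_h$ and will serve as the main rewriting rule.

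Treat first the case $k>h$. Evaluating both sides on $(a_{h+1},\ldots,a_k)$ with parameters $\lambda_{h+1},\ldots,\lambda_{k-1}$, and using \eqref{eq:5}, \eqref{eq:ago15p_1}, and \eqref{eq:ago21_3}, the LHS unpacks to
\[
(\chi_h\phi)^\mu\big(\tilde\gamma_{\lambda_1,\ldots,\lambda_{k-1},\lambda_k^\dagger}(a_1,\ldots,a_k)\big),
\]
with $\lambda_k^\dagger = -\sum_{j=1}^{k-1}\lambda_j - \partial^M$ and $\partial^M$ acting on the $M$-factor of $\tilde\gamma$ \emph{before} contraction; the RHS unpacks to
\[
(\chi_h\phi)^\mu\big(\tilde\gamma_{\lambda_1,\ldots,\lambda_k}(a_1,\ldots,a_k)\big)\Big|_{\lambda_k = -\sum_{j=h+1}^{k-1}\lambda_j - \partial^M},
\]
with $\partial^M$ acting on the \emph{result} of the contraction. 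The two substitutions differ by the missing factor $-\lambda_1-\cdots-\lambda_h$ in the RHS and by whether $\partial^M$ acts inside or outside. Writing $\tilde\gamma_{\lambda_1,\ldots,\lambda_k}(a_1,\ldots,a_k)=\sum_n P_n(\lambda_1,\ldots,\lambda_k)m_n$ and Taylor-expanding both substitutions in powers of $\partial^M$, each internal factor $\lambda_1+\cdots+\lambda_h$ generated on the LHS is converted into an outer $\partial^M$ by the intertwining relation; the Leibniz rule for $\partial^M$ on the product in $M$ (our standing hypothesis) then redistributes that $\partial^M$ across the two factors of the product, yielding precisely the RHS expansion. Matching of coefficients reduces to the binomial identity
\[
\binom{a+b+c}{c}\binom{a+b}{b} = \binom{a+b+c}{a+c}\binom{a+c}{a} = \frac{(a+b+c)!}{a!\,b!\,c!}.
\]

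Treat next the case $k=h$. Here both sides live in $C^0 = M/\partial^M M$, the LHS via \eqref{eq:giulia} and the RHS via the fact that $\psi^0$ is the identity on $M/\partial^M M$. The same Taylor expansion reduces both sides to the single class $\sum_{n,q}(\partial^M)^q\phi(P_{n,q})\cdot m_n$, where $P_{n,q}$ is the Taylor coefficient of $G_n$ in $\lambda_h$ evaluated at $\lambda_h = -\sum_{j<h}\lambda_j$; the LHS requires one extra integration by parts modulo $\partial^M M$ to move the powers of $\partial^M$ from $m_n$ onto $\phi(P_{n,q})$.

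The principal obstacle is combinatorial bookkeeping: tracking how $\partial^M$ moves through the nested substitutions in $\tilde\gamma$, the contraction by $\chi_h(\phi)$, and the final substitution $\lambda_{k-h}\to -\sum \lambda_j - \partial^M$ in $\psi^{k-h}$. The cleanest route is to use the intertwining relation of Lemma \ref{lem:ago20_2}(b) as an abstract rewriting rule rather than expand everything at once; this relation is designed precisely to trade the surplus $\lambda_1+\cdots+\lambda_h$ appearing in $\lambda_k^\dagger$ (from $\psi^k$ on the LHS) against the missing $\partial^M$ needed to match the corresponding $\lambda_{k-h}^\dagger$ appearing in $\psi^{k-h}$ (on the RHS), which is why $\chi_h$ is the correct map in the hypothesis $\xi = \chi_h(x)$.
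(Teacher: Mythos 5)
Your proof is correct and follows essentially the same route as the paper: unpack both sides on representatives to $(\chi_h\phi)^\mu$ applied to $\tilde\gamma$ with the two substitutions for $\lambda_k$, then trade the surplus $-(\lambda_1+\cdots+\lambda_h)$ against moving $\partial^M$ from inside to outside the contraction, using the intertwining property of $\chi_h$ (Lemma \ref{lem:ago20_2}) together with the Leibniz rule for $\partial^M$ (Lemma \ref{lem:ago16}). The Taylor-expansion/binomial bookkeeping you mention is unnecessary once the abstract rewriting rule is in place, as you yourself note at the end.
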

\begin{proof}
Let $\tilde\gamma\in\tilde\Gamma^k$ be a representative of $\gamma\in\Gamma^k$,
and let $a_1\otimes\cdots\otimes a_{h}\otimes\phi
\in A^{\otimes h}\otimes\Hom(\mb F[\lambda_1,\dots,\lambda_{h-1}],M)$
be a representative of $x\in C_h$.
Recalling the definition \eqref{eq:5} of $\psi^k$ 
and the definition \eqref{eq:ago21_3} of $\iota_x$, we have
\begin{equation}\label{eq:ago21_6}
\{{a_{h+1}}_{\lambda_{h+1}}\cdots{a_{k-1}}_{\lambda_{k-1}}a_k\}_{\iota_x\psi^k(\tilde\gamma)}
\,=\,
(\chi_h\phi)^\mu\big(
\tilde\gamma_{
\lambda_1,\cdots,\lambda_{k-1},\lambda_k^\dagger}
(a_1,\cdots,a_{k})\big)\,,
\end{equation}
where, in the RHS, $\lambda_k^\dagger$ stands for $-\sum_{j=1}^{k-1}\lambda_j-\partial^M$,
with $\partial^M$ acting on the argument of $(\chi_h\phi)^\mu$.
By Lemmas \ref{lem:ago16} and \eqref{lem:ago20_2}(c), 
we can replace $\lambda_k^\dagger$ by $-\sum_{j=h+1}^{k-1}\lambda_j-\partial^M$,
where now $\partial^M$ is moved to the left of $(\chi_h\phi)^\mu$.
Hence, the RHS of \eqref{eq:ago21_6} is the same as
$$
(\chi_h\phi)^\mu\big(\tilde\gamma_{\lambda_1,\lambda_2,\cdots,\lambda_{k}}
(a_1,\cdots,a_{k})\big)
\big|_{\lambda_k\mapsto\lambda_k^\dagger}
=
\{{a_{h+1}}_{\lambda_{h+1}}\cdots{a_{k-1}}_{\lambda_{k-1}}a_k
\}_{\psi^{k-h}(\iota_{\chi_h(x)}(\tilde\gamma))}
\,,
$$
thus completing the proof of the theorem.
\end{proof}

\vspace{3ex}
\subsection{Lie conformal algeroids.}~~
\label{sec:3.4.5.a}
A Lie conformal algebroid is an analogue of a Lie algebroid.
\begin{definition}\label{def:algebroid}
A \emph{Lie conformal algebroid} is a pair $(A,M)$,
where $A$ is a Lie conformal algebra,
$M$ is a commutative associative differential algebra with derivative $\partial^M$,
such that $A$ is a left $M$-module and $M$ is a left $A$-module,
satisfying the following compatibility conditions ($a,b\in A,\,m,n\in M$):
\begin{enumerate}
\item[L1.] $\partial(ma)=(\partial^Mm)a+m(\partial a)$,
\item[L2.] $a_\lambda (mn)=(a_\lambda m)n+m(a_\lambda n)$,
\item[L3.] $[a_\lambda mb]=(a_\lambda m)b+m[a_\lambda b]$.
\end{enumerate}
\end{definition}
It follows from condition L3. and skew-symmetry \eqref{eq:0.2} of the $\lambda$-bracket,
that
\begin{enumerate}
\item[L3'.] $[ma_\lambda b]
=
\big(e^{\partial^M\partial_\lambda}m\big)[a_\lambda b]+(a_{\lambda+\partial} m)_\to b$,
\end{enumerate}
where the first term in the RHS is
$\sum_{i=0}^\infty \frac1{i!}
\big((\lambda+\partial^M)^i m\big)(a_{(i)} b)$,
and in the second term the arrow means, as usual, 
that $\partial$ should be moved to the right, acting on $b$.

We next give two examples analogous to those in the Lie algebroid case.
Let $M$ be, as above, a commutative associative differential algebra.
Recall from Section \ref{sec:2} that
a conformal endomorphism on $M$
is an $\mb F$-linear map $\varphi(=\varphi_\lambda):\,M\to\mb F[\lambda]\otimes M$
satisfying 
$\varphi_\lambda(\partial^Mm)=(\partial^M+\lambda)\varphi_\lambda(m)$.
The space $\cend(M)$ of conformal endomorphism
is then a Lie conformal algebra
with the $\mb F[\partial]$-module structure given by 
$(\partial\varphi)_\lambda=-\lambda\varphi_\lambda$,
and the $\lambda$-bracket given by
$$
[\varphi_\lambda\psi]_\mu
=
\varphi_\lambda\circ\psi_{\mu-\lambda}
-\psi_{\mu-\lambda}\circ\varphi_\lambda\,.
$$
\begin{example}\label{ex:dic12_1}
Let $\cder(M)$ be the subalgebra of the Lie conformal algebra $\cend(M)$
consisting of all conformal derivations on $M$,
namely of the the conformal endomorphisms satisfying the Leibniz rule:
$\varphi_\lambda(mn)=\varphi_\lambda(m)n+m\varphi_\lambda(n)$.
Then the pair $(\cder(M),M)$ is a Lie conformal algebroid,
where $M$ carries the tautological $\cder(M)$-module structure,
and $\cder(M)$ carries the following $M$-module structure:
\begin{equation}\label{eq:dic12_1}
(m\varphi)_\lambda
=
\big(e^{\partial^M\partial_\lambda}m)\varphi_\lambda\,.
\end{equation}
This is indeed an $M$-module, since $e^{x\partial^M}(mn)=(e^{x\partial^M}m)(e^{x\partial^M}n)$.
Furthermore, condition L1. holds thanks to the obvious identity
$e^{\partial^M\partial_\lambda}\lambda=(\lambda+\partial^M)e^{\partial^M\partial_\lambda}$.
Condition L2. holds by definition.
Finally, for condition L3. we have
\begin{eqnarray*}
[\varphi_\lambda m\psi]_\mu(n)
&=&
\varphi_\lambda\big((m\psi)_{\mu-\lambda}(n)\big) - (m\psi)_{\mu-\lambda}\big(\varphi_\lambda(n)\big) \\
&=&
\varphi_\lambda\big(\big(e^{\partial^M\partial_\mu}m\big)\psi_{\mu-\lambda}(n)\big) 
- \big(e^{\partial^M\partial_\mu}m\big)\psi_{\mu-\lambda}\big(\varphi_\lambda(n)\big) \\
&=&
\big(e^{(\lambda+\partial^M)\partial_\mu}
\varphi_\lambda(m)\big)\psi_{\mu-\lambda}(n) 
+
\big(e^{\partial^M\partial_\mu}m\big) 
\Big(
\varphi_\lambda\big(\psi_{\mu-\lambda}(n)\big) 
- \psi_{\mu-\lambda}\big(\varphi_\lambda(n)\big) 
\Big) \\
&=&
\big(e^{\partial^M\partial_\mu}
\varphi_\lambda(m)\big)\psi_{\mu}(n) 
+
\big(e^{\partial^M\partial_\mu}m\big) 
[\varphi_\lambda\psi]_\mu(n) 
=
\big(
\varphi_\lambda(m)\psi
+
m[\varphi_\lambda\psi]
\big)_\mu(n)\,.
\end{eqnarray*}
\end{example}
\begin{example}\label{ex:dic12_2}
Assume, as in Section \ref{sec:3.3}, that $A$ is a Lie conformal algebra
and $M$ is an $A$-module endowed with a commutative, associative product,
such that $\partial^M:\,M\to M$, and $a_\lambda:\,M\to\mb C[\lambda]\otimes M$,
for $a\in A$, satisfy the Leibniz rule.
The space $M\otimes A$ has a natural structure of $\mb F[\partial]$-module,
where $\partial$ acts as 
\begin{equation}\label{eq:dic12_4}
\tilde\partial(m\otimes a)=(\partial^Mm)\otimes a+m\otimes(\partial a)\,.
\end{equation}
Clearly, $M\otimes A$ is a left $M$-module via multiplication on the first factor.
We define a left $\lambda$-action of $M\otimes A$ on  $M$ by
\begin{equation}\label{eq:dic12_2}
(m\otimes a)_\lambda n
=
\big(e^{\partial^M\partial_\lambda}m\big) (a_\lambda n)\,,
\end{equation}
and a $\lambda$-bracket on $M\otimes A$ by
\begin{equation}\label{eq:dic12_3}
\big[(m\otimes a) _\lambda (n\otimes b)\big]
=
\big(\big(e^{\partial^M\partial_\lambda}m\big)n\big)\otimes [a_\lambda b]
+\big((m\otimes a)_\lambda n\big) \otimes b
-e^{\tilde\partial\partial_\lambda}\big((n\otimes b)_{-\lambda} m \otimes a\big)\,.
\end{equation}
We claim that \eqref{eq:dic12_4} and \eqref{eq:dic12_3} make $M\otimes A$ a Lie conformal algebra,
\eqref{eq:dic12_2} makes $M$ an $M\otimes A$-module,
and the pair $(M\otimes A,M)$ is a Lie conformal algebroid. 
This will be proved in Proposition \ref{prop:dic6}, using Lemmas \ref{lem:dic12_1} 
and \ref{lem:dic6_2}.
\end{example}

\begin{lemma}\label{lem:dic12_1}
\begin{enumerate}
\alphaparenlist
\item
The following $\lambda$-bracket defines a Lie conformal algebra structure
on the $\mb C[\partial]$-module $M\otimes A$:
\begin{equation}\label{eq:dic6_2}
\big[(m\otimes a)\,_\lambda\,(n\otimes b)\big]_0
\,=\,
\big(\big(e^{\partial^M\partial_\lambda}m\big)n\big)\otimes [a _\lambda b]\,.
\end{equation}
\item
For $x,y\in M\otimes A$ and $m\in M$, we have
\begin{equation}\label{eq:sat_1}
[mx_\lambda y]_0 = \big(e^{\partial^M\partial_\lambda}m\big)[x_\lambda y]_0\,\,,\,\,\,\,
[x_\lambda my]_0 = m[x_\lambda y]_0\,.
\end{equation}
\end{enumerate}
\end{lemma}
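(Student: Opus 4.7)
The plan is to verify both parts by direct computation, relying on standard shift properties of the operator $e^{\partial^M\partial_\lambda}$, which encodes the Taylor shift $\lambda\mapsto\lambda+\partial^M$. In particular I will use the identity
\begin{equation*}
(e^{\partial^M\partial_\lambda}m)\cdot\lambda\,Y(\lambda)
\,=\,
\lambda(e^{\partial^M\partial_\lambda}m)Y(\lambda)
+(e^{\partial^M\partial_\lambda}(\partial^M m))Y(\lambda)
\end{equation*}
for any $Y(\lambda)\in\mb F[\lambda]\otimes N$, which follows from differentiation by parts. Together with sesquilinearity, skew-symmetry, and Jacobi for $[\cdot_\lambda\cdot]$ in $A$, and the Leibniz rule together with commutativity and associativity for the product in $M$, this suffices.

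For part (a), I would verify the three Lie conformal algebra axioms in turn. For sesquilinearity in the first slot, I expand $\tilde\partial(m\otimes a)=(\partial^Mm)\otimes a+m\otimes(\partial a)$ and apply $[\partial a_\lambda b]=-\lambda[a_\lambda b]$; the two resulting terms combine via the shift identity above to $-\lambda[x_\lambda y]_0$. Sesquilinearity in the second slot is analogous, using $[a_\lambda\partial b]=(\lambda+\partial)[a_\lambda b]$ and the Leibniz rule for $\partial^M$ acting on the product $mn$ in $M$. For skew-symmetry, I would expand both $[(n\otimes b)_\lambda(m\otimes a)]_0$ and $[(m\otimes a)_{-\tilde\partial-\lambda}(n\otimes b)]_0$ and match them using $[b_\lambda a]_A=-[a_{-\partial-\lambda}b]_A$, the commutativity of $M$ to exchange $m$ and $n$, and the shift property of $e^{\partial^M\partial_\lambda}$, which reconciles the action of $\tilde\partial$ on the result with the substitution $\mu\mapsto-\tilde\partial-\lambda$ in the spectral parameter. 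For the Jacobi identity, after reorganizing the exponentials via the shift identity, the computation reduces to Jacobi for $[\cdot_\lambda\cdot]_A$ combined with the associativity of the product in $M$.

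For part (b), the first identity follows from the binomial expansion of $(\partial^M)^k(mm_1)$ via the Leibniz rule for $\partial^M$: the coefficient of $\partial_\lambda^k[a_\lambda b]$ on the left is $(\partial^M)^k(mm_1)n/k!$, which equals $\sum_{i+j=k}((\partial^M)^im)((\partial^M)^jm_1)n/(i!j!)$, and this is exactly the coefficient on the right of $(e^{\partial^M\partial_\lambda}m)(e^{\partial^M\partial_\lambda}m_1)n\otimes[a_\lambda b]$. The second identity is immediate from commutativity and associativity of the product in $M$: the scalar $m$ slides unchanged past each $((\partial^M)^km_1)n$, so $(e^{\partial^M\partial_\lambda}m_1)(mn)\otimes[a_\lambda b]=m(e^{\partial^M\partial_\lambda}m_1)n\otimes[a_\lambda b]=m[x_\lambda y]_0$.

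The main technical obstacle will be purely notational bookkeeping: keeping straight the scope of $\partial_\lambda$ (which acts only on the polynomial $[a_\lambda b]\in\mb F[\lambda]\otimes A$, not on the $\lambda$-independent factors $m$ and $n$), and the position of $\partial^M$ in products on $M$. Once the conventions are fixed, each axiom becomes a routine match of coefficients of $\lambda^n$.
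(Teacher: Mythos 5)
Your proposal is correct and follows essentially the same route as the paper: direct verification of sesquilinearity, skew-symmetry, and Jacobi from the corresponding axioms in $A$, using the commutation identity $e^{\partial^M\partial_\lambda}\lambda=(\lambda+\partial^M)e^{\partial^M\partial_\lambda}$ (your ``shift identity'') together with the Leibniz rule for $\partial^M$, and reducing Jacobi to Jacobi in $A$ after setting $\nu=\lambda+\mu$. The paper's proof is in fact terser --- it leaves the second sesquilinearity, skew-symmetry, and all of part (b) to the reader --- so your write-up supplies strictly more detail along the same lines.
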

\begin{proof}
For the first sesquilinearity condition, we have
\begin{eqnarray*}
\big[\tilde\partial(m\otimes a)\,_\lambda\,(n\otimes b)\big]_0
&=&
\big(\big(e^{\partial^M\partial_\lambda}\partial^Mm\big) n\big) \otimes  [a _\lambda b]
-\big(\big(e^{\partial^M\partial_\lambda}m\big)n\big)\otimes \lambda[a _\lambda b] \\
&=&
-\lambda\big[(m\otimes a)\,_\lambda\,(n\otimes b)\big]_0\,.
\end{eqnarray*}
The second sesquilinearity condition and skew-symmetry can be proved in a similar way,
and they are left to the reader.
Let us check the Jacobi identity.
We have,
$$
\big[(m\otimes a)\,_\lambda\,\big[(n\otimes b)\,_\mu\,(p\otimes c)\big]_0\big]_0
=
\big(e^{\partial^M\partial_\lambda}m\big)\big(e^{\partial^M\partial_\mu}n\big)p
\otimes [a_\lambda[b_\mu c]]\,.
$$
Exchanging $a\otimes m$ with $b\otimes n$ and $\lambda$ with $\mu$, we get
$$
\big[(n\otimes b)\,_\mu\,\big[(m\otimes a)\,_\lambda\,(p\otimes c)\big]_0\big]_0
=
\big(e^{\partial^M\partial_\lambda}m\big)\big(e^{\partial^M\partial_\mu}n\big)p
\otimes [b_\mu[a_\lambda c]]\,.
$$
Furthermore, we have
$$
[[m\otimes a\,_\lambda\, n\otimes b]_0\,_\nu\,p\otimes c]_0
=
\big(e^{\partial^M\partial_\nu}\big(e^{\partial^M\partial_\lambda}m\big)n\big)p
\otimes [[a_\lambda b]_\nu c]\,.
$$
Putting $\nu=\lambda+\mu$, the RHS becomes
$$
\big(e^{\partial^M\partial_\lambda}m\big)\big(e^{\partial^M\partial_\mu}n\big)p
\otimes [[a_\lambda b]_{\lambda+\mu} c]\,.
$$
Hence, the Jacobi identity for the $\lambda$-bracket \eqref{eq:dic6_2}
follows immediately from the Jacobi identity for the $\lambda$-bracket on $A$.
This proves part (a). Part (b) is immediate.
\end{proof}
We define another $\lambda$-product on $M\otimes A$:
\begin{equation}\label{eq:dic12_5}
(m\otimes a)_\lambda(n\otimes b)=\big((m\otimes a)_\lambda n\big)\otimes b\,.
\end{equation}
Notice that the $\lambda$-bracket \eqref{eq:dic12_3} can be nicely written in terms
of the $\lambda$-bracket \eqref{eq:dic6_2} 
and the $\lambda$-product \eqref{eq:dic12_5}:
\begin{equation}\label{eq:dic12_6}
[x_\lambda y]= {[x_\lambda y]}_0+x_\lambda y-y_{-\lambda-\tilde{\partial}}x\,.
\end{equation}
\begin{lemma}\label{lem:dic6_2}
\begin{enumerate}
\alphaparenlist
\item
The $\lambda$-product \eqref{eq:dic12_5} satisfies both sesquilinearity conditions 
($x,y\in M\otimes A$):
\begin{equation}\label{eq:dic6_4}
(\tilde\partial x)_\lambda y = -\lambda\, x_\lambda y\,\,,\,\,\,\,
x_\lambda(\tilde\partial y) = (\lambda+\tilde\partial) (x_\lambda y)\,.
\end{equation}
\item 
For $x\in M\otimes A,\,m\in M$ and $y$ either in $M\otimes A$ or in $M$, we have
\begin{equation}\label{eq:sat_2}
(mx)_\lambda y = \big(e^{\partial^M\partial_\lambda}m\big) x_\lambda y\,\,,\,\,\,\,
x_\lambda(my) = (x_\lambda m)y+m(x_\lambda y)\,.
\end{equation}
\item
We have the following identity for $x,y,z\in M\otimes A$:
\begin{equation}\label{eq:dic6_5}
x_\lambda[y_\mu z]_0
=
[(x_\lambda y)_{\lambda+\mu}z]_0+[y_\mu(x_\lambda z)]_0\,.
\end{equation}
\item
We have the following identity for $x,y\in M\otimes A$ and $z$ either in $M$ or in $M\otimes A$:
\begin{equation}\label{eq:dic6_6}
x_\lambda(y_\mu z)
-y_\mu(x_\lambda z)
=
[x_\lambda y]_{\lambda+\mu}z\,.
\end{equation}
\end{enumerate}
\end{lemma}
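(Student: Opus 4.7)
The plan is to verify each of parts (a)--(d) by direct algebraic computation from the defining formulas \eqref{eq:dic12_2}, \eqref{eq:dic6_2}, and \eqref{eq:dic12_5}. The fundamental technical device is to interpret $e^{\partial^M\partial_\lambda}m$ as an endomorphism of $\mathbb{F}[\lambda]\otimes M$ acting by $(e^{\partial^M\partial_\lambda}m)f(\lambda) = \sum_{k\geq0}\frac{1}{k!}(\partial^M)^km\cdot\partial_\lambda^kf(\lambda)$. Two identities will be used repeatedly: \textbf{(i)} $(e^{\partial^M\partial_\lambda}m)(\lambda f(\lambda)) = \lambda(e^{\partial^M\partial_\lambda}m)f(\lambda) + (e^{\partial^M\partial_\lambda}\partial^Mm)f(\lambda)$, a consequence of $[\partial_\lambda,\lambda]=1$; and \textbf{(ii)} the multiplicativity $e^{\partial^M\partial_\lambda}(mm') = (e^{\partial^M\partial_\lambda}m)\circ(e^{\partial^M\partial_\lambda}m')$, which encodes the Leibniz rule for $\partial^M$ on $M$. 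Moreover, operators in distinct dummy variables $\lambda,\mu$ commute.

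For part (a), write $x=m\otimes a$, $y=n\otimes b$, and expand $(\tilde\partial x)_\lambda n = (e^{\partial^M\partial_\lambda}\partial^Mm)(a_\lambda n) + (e^{\partial^M\partial_\lambda}m)((\partial a)_\lambda n)$. Using $(\partial a)_\lambda n = -\lambda(a_\lambda n)$ together with identity (i), the two terms collapse to $-\lambda\, x_\lambda n$, giving the first equation of \eqref{eq:dic6_4}. The second equation follows analogously from $a_\lambda(\partial^M n) = (\partial^M+\lambda)(a_\lambda n)$ together with the companion identity $(e^{\partial^M\partial_\lambda}m)(\partial^Mg(\lambda)) = \partial^M(e^{\partial^M\partial_\lambda}m)g(\lambda) - (e^{\partial^M\partial_\lambda}\partial^Mm)g(\lambda)$.

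Part (b) is then essentially formal. For $x=m'\otimes a$ and $y=n\otimes b$, the first equation in \eqref{eq:sat_2} reads $(mx)_\lambda y = (e^{\partial^M\partial_\lambda}(mm'))(a_\lambda n)\otimes b$, and identity (ii) rewrites this as $(e^{\partial^M\partial_\lambda}m)((e^{\partial^M\partial_\lambda}m')(a_\lambda n))\otimes b = (e^{\partial^M\partial_\lambda}m)(x_\lambda y)$. The second equation in \eqref{eq:sat_2} is a direct consequence of the Leibniz rule $a_\lambda(mn') = (a_\lambda m)n' + m(a_\lambda n')$ for the $A$-action on $M$, applied to the defining formulas.

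Parts (c) and (d) are the main content. For (c), set $y=n\otimes b$, $z=p\otimes c$ and expand $[y_\mu z]_0 = ((e^{\partial^M\partial_\mu}n)p)\otimes[b_\mu c]$; then apply $x_\lambda$ with $x=m\otimes a$, commuting $e^{\partial^M\partial_\mu}$ past $(m\otimes a)_\lambda$, and invoke the Leibniz rule for $a_\lambda$ acting on a product in $M$ to split the result into two pieces. These match, respectively, $[(x_\lambda y)_{\lambda+\mu}z]_0$ and $[y_\mu(x_\lambda z)]_0$ after unpacking the definitions and using commutativity of $e^{\partial^M\partial_\lambda}$ and $e^{\partial^M\partial_\mu}$. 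Part (d) follows a parallel pattern: expand $x_\lambda(y_\mu z)$ and $y_\mu(x_\lambda z)$ using \eqref{eq:dic12_2} and \eqref{eq:dic12_5} in both cases $z\in M$ and $z\in M\otimes A$, apply the Jacobi identity $a_\lambda(b_\mu z) - b_\mu(a_\lambda z) = [a_\lambda b]_{\lambda+\mu}z$ for the $A$-module structure on $M$, and identify the result with $[x_\lambda y]_{\lambda+\mu}z$ (using part (b) to incorporate the $M$-coefficient factors).

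The main obstacle is bookkeeping: keeping straight the layered structure of the operators $e^{\partial^M\partial_\lambda}$, $e^{\partial^M\partial_\mu}$ as they shuffle past $a_\lambda$-actions, $\lambda$-brackets, and products in $M$. Once identities (i), (ii) together with the commutativity of exponential operators in distinct variables are systematized, all four statements reduce to tedious but mechanical algebra.
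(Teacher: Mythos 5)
Your proposal is correct and follows essentially the same route as the paper: direct expansion of the defining formulas, organized around the operator identities $e^{\partial^M\partial_\lambda}(\partial^M\pm\lambda)=(\partial^M\pm\lambda\mp\partial^M)e^{\partial^M\partial_\lambda}$ and the multiplicativity $e^{\partial^M\partial_\lambda}(mm')=(e^{\partial^M\partial_\lambda}m)(e^{\partial^M\partial_\lambda}m')$, together with the Leibniz rule, sesquilinearity, and the Jacobi identity for the $A$-action on $M$. Your identities (i)--(ii) are exactly the tools the paper uses (implicitly), so the only difference is that you have named and systematized them up front.
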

\begin{proof}
We have
$$
(\tilde\partial(m\otimes a))_\lambda(n\otimes b)
=
\big(e^{\partial^M\partial_\lambda}(\partial^M-\lambda)m\big) (a_\lambda n)\otimes b\,.
$$
The first sesquilinearity condition follows from the obvious identity
$e^{\partial^M\partial_\lambda}(\partial^M-\lambda)=-\lambda e^{\partial^M\partial_\lambda}$.
The second sesquilinearity condition can be proved in a similar way.
This proves part (a).
Part (b) is immediate.
For part (c) and (d), let $x=a\otimes m,\,y=b\otimes n,\,z=c\otimes p\,\in A\otimes M$.
We have
\begin{equation}\label{eq:dic6_8}
x_\lambda[y_\mu z]_0
=
\big(e^{\partial^M\partial_\lambda}m\big)\big(a_\lambda \big(e^{\partial^M\partial_\mu}n\big) p\big)
\otimes [b_\mu c] \,,
\end{equation}
Similarly,
\begin{equation}\label{eq:dic6_9}
[(x_\lambda y)_{\nu} z]_0
=
\Big(e^{\partial^M\partial_\nu}
\big(e^{\partial^M\partial_\lambda}m\big)
(a_\lambda n)\Big) p
\otimes [b_{\nu}c] \,.
\end{equation}
Hence, if we put $\nu=\lambda+\mu$, the RHS becomes
\begin{equation}\label{eq:dic6_9_1}
\big(e^{\partial^M\partial_\lambda}m\big)
\big(e^{\partial^M\partial_\mu}
(a_\lambda n)\big) p
\otimes [b_{\lambda+\mu}c] 
=
\big(e^{\partial^M\partial_\lambda}m\big)
\big(a_\lambda \big(e^{\partial^M\partial_\mu}n\big)\big) p
\otimes [b_{\mu}c] \,,
\end{equation}
where we used the sesquilinearity of the $\lambda$-bracket on $A$.
Furthermore, we have
\begin{equation}\label{eq:dic6_10}
[y_\mu(x_\lambda z)]_0
=
\big(e^{\partial^M\partial_\mu}n\big)
\big(e^{\partial^M\partial_\lambda}m\big)(a_\lambda p)
\otimes
[b_{\mu}c] \,.
\end{equation}
Combining equations \eqref{eq:dic6_8}, \eqref{eq:dic6_9_1} and \eqref{eq:dic6_10},
we immediately get \eqref{eq:dic6_5}, thanks to the assumption that the $\lambda$-action
of $A$ on $M$ is a derivation of the commutative associative product on $M$.
We are left to prove part (d).
We have
\begin{eqnarray}\label{eq:dic6_piove_1}
&& x_\lambda(y_\mu p)
=
\big(e^{\partial^M\partial_\lambda}m\big)
a_\lambda \Big(\big(e^{\partial^M\partial_\mu}n\big)(b_\mu p)\Big) 
\nonumber\\
&&\,\,\, =
\big(e^{\partial^M\partial_\lambda}m\big)\big(e^{\partial^M\partial_\mu}n\big)
a_\lambda (b_\mu p)
+
\big(e^{\partial^M\partial_\lambda}m\big)\big(e^{\partial^M\partial_\mu}(a_\lambda n)\big)
(b_{\lambda+\mu} p)\,.
\end{eqnarray}
For the second equality, we used the Leibniz rule and the sesquilinearity condition
for the $\lambda$-action of $A$ on $M$.
Exchanging $x$ with $y$ and $\lambda$ with $\mu$, we have
\begin{equation}\label{eq:dic6_piove_2}
y_\mu(x_\lambda p)
=
\big(e^{\partial^M\partial_\lambda}m\big)\big(e^{\partial^M\partial_\mu}n\big)
b_\mu (a_\lambda p)
+
\big(e^{\partial^M\partial_\mu}n\big)\big(e^{\partial^M\partial_\lambda}(b_\mu m)\big)
(a_{\lambda+\mu} p)\,.
\end{equation}
By similar computations, we get
\begin{equation}\label{eq:dic6_piove_3}
(x_\lambda y)_{\lambda+\mu} z
=
\big(e^{\partial^M\partial_\lambda}m\big)\big(e^{\partial^M\partial_\mu}(a_\lambda n)\big)
(b_{\lambda+\mu} p)\,,
\end{equation}
and
\begin{equation}\label{eq:dic6_piove_4}
(y_{-\lambda-\partial} x)_{\lambda+\mu} p
=
\big(e^{\partial^M\partial_\mu}n\big)\big(e^{\partial^M\partial_\lambda}(b_\mu m)\big)
(a_{\lambda+\mu} p)\,.
\end{equation}
Finally, it follows by a straightforward computation that
\begin{equation}\label{eq:dic6_piove_5}
{[x_\lambda y]_0}_{\lambda+\mu} z
=
\big(e^{\partial^M\partial_\lambda}m\big)\big(e^{\partial^M\partial_\mu}n\big)
[a_\lambda b]_{\lambda+\mu} p\,.
\end{equation}
Equation \eqref{eq:dic6_6} is obtained combining
equations \eqref{eq:dic6_piove_1}, \eqref{eq:dic6_piove_2}, \eqref{eq:dic6_piove_3}, 
\eqref{eq:dic6_piove_4} and \eqref{eq:dic6_piove_5}.
\end{proof}
\begin{proposition}\label{prop:dic6}
\begin{enumerate}
\alphaparenlist
\item
The $\lambda$-bracket \eqref{eq:dic12_3} defines a Lie conformal algebra structure
on the $\mb F[\partial]$-module $M\otimes A$.
\item
The $\lambda$-action \eqref{eq:dic12_2} defines a structure of a $M\otimes A$-module on $M$.
\item The pair $(M\otimes A,M)$ is a Lie conformal algebroid.
\item We have a homomorphism of Lie conformal algebroids 
$(M\otimes A,M)\to(\cder(M),M)$,
given by the identity map on $M$ and the following Lie conformal algebra homomorphism
from $M\otimes A$ to $\cder(M)$:
$$
m\otimes a\mapsto \big(e^{\partial^M\partial_\lambda}m\big)a_\lambda\,.
$$
\end{enumerate}
\end{proposition}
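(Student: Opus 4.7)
The strategy is to exploit the decomposition \eqref{eq:dic12_6}, which writes the candidate $\lambda$-bracket as
$[x_\lambda y] = [x_\lambda y]_0 + x_\lambda y - y_{-\lambda-\tilde\partial}x$,
so that each axiom for $(M\otimes A, M)$ can be reduced to identities already established in Lemmas \ref{lem:dic12_1} and \ref{lem:dic6_2}. For part (a), sesquilinearity in the first argument is immediate: by Lemma \ref{lem:dic12_1}(a) the term $[x_\lambda y]_0$ is sesquilinear in $x$, by Lemma \ref{lem:dic6_2}(a) so is $x_\lambda y$, and for the last term one observes that replacing $x$ by $\tilde\partial x$ and using $y_{-\lambda-\tilde\partial}(\tilde\partial x) = (-\lambda-\tilde\partial +\tilde\partial)y_{-\lambda-\tilde\partial}x = -\lambda\, y_{-\lambda-\tilde\partial}x$ after $e^{\tilde\partial\partial_\lambda}$ is applied gives the desired $-\lambda$ scaling. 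Sesquilinearity in $y$ is checked similarly, and skew-symmetry is essentially built-in: Lemma \ref{lem:dic12_1}(a) gives $[y_\lambda x]_0 = -[x_{-\lambda-\tilde\partial}y]_0$, and the two remaining terms exchange under $(x,y,\lambda) \leftrightarrow (y,x,-\lambda-\tilde\partial)$.

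The main step is the Jacobi identity. I would substitute the decomposition \eqref{eq:dic12_6} into both sides of
$[x_\lambda[y_\mu z]] - [y_\mu[x_\lambda z]] = [[x_\lambda y]_{\lambda+\mu}z]$
and group the resulting terms into three blocks: (i) terms containing only $[\,_\lambda\,]_0$, which cancel by Jacobi for $[\,_\lambda\,]_0$ (Lemma \ref{lem:dic12_1}(a)); (ii) mixed terms combining one $[\,_\lambda\,]_0$ with one $\lambda$-product, which cancel by the ``derivation identity'' \eqref{eq:dic6_5}; and (iii) terms involving only $\lambda$-products, which cancel by \eqref{eq:dic6_6}. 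Keeping track of the twist $e^{\tilde\partial\partial_\lambda}$ and of which $\lambda$-variables are involved in $y_{-\lambda-\tilde\partial}x$ is the delicate bookkeeping step; the key technical identity needed repeatedly is the commutation $e^{\tilde\partial\partial_\lambda}\circ(\lambda+\tilde\partial) = \tilde\partial\circ e^{\tilde\partial\partial_\lambda}$, which is what makes the third term of \eqref{eq:dic12_6} genuinely skew-symmetric to the second. I expect this to be the main obstacle, essentially because it is the only place where the $e^{\tilde\partial\partial_\lambda}$ twists have to be tracked carefully across several terms, but once the three blocks are identified, each cancellation is a direct application of the cited identities.

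Part (b) is comparatively short: sesquilinearity of \eqref{eq:dic12_2} is Lemma \ref{lem:dic6_2}(a) specialized to $z \in M$, and the module axiom $x_\lambda(y_\mu m) - y_\mu(x_\lambda m) = [x_\lambda y]_{\lambda+\mu}m$ is Lemma \ref{lem:dic6_2}(d) for $z=m$, after verifying that the non-bracket parts $x_\lambda y - y_{-\lambda-\tilde\partial}x$ of $[x_\lambda y]$ acting on $m$ via \eqref{eq:dic12_5} match exactly those produced by the $\lambda$-product side. For part (c), L1 is immediate from \eqref{eq:dic12_4} and the Leibniz rule for $\partial^M$; L2 is the assumed Leibniz property of the $A$-action on $M$; and L3 follows by writing $[x_\lambda (my)] = [x_\lambda my]_0 + x_\lambda(my) - (my)_{-\lambda-\tilde\partial}x$, then applying Lemma \ref{lem:dic12_1}(b) to the bracket term, Lemma \ref{lem:dic6_2}(b) to the second term, and Lemma \ref{lem:dic6_2}(b) (first identity) to the third, and collecting.

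Finally, for part (d), I would check that $\pi(m\otimes a) := (e^{\partial^M\partial_\lambda}m)\,a_\lambda$ is (i) $\mb F[\partial]$-linear, which is the identity $e^{\partial^M\partial_\lambda}\partial^M = (\partial^M - \partial\text{-on-}\lambda)\,e^{\partial^M\partial_\lambda}$ combined with $(\partial\varphi)_\lambda=-\lambda\varphi_\lambda$ in $\cder(M)$, (ii) $M$-linear, which follows from the identity $e^{\partial^M\partial_\lambda}(mn) = (e^{\partial^M\partial_\lambda}m)(e^{\partial^M\partial_\lambda}n)$ already used in Example \ref{ex:dic12_1}, and (iii) preserves $\lambda$-brackets: I would compute $[\pi(m\otimes a)_\lambda \pi(n\otimes b)]$ in $\cder(M)$ directly, expand via the definition of the $\cder(M)$-bracket, and match it term-by-term against $\pi$ applied to \eqref{eq:dic12_3}; the three terms on the right of \eqref{eq:dic12_3} correspond respectively to the commutator of the derivation-with-coefficient parts, the Leibniz contribution $a_\lambda(n)\cdot b_\mu$, and its skew-symmetric partner from $b_\mu(m)\cdot a_\lambda$. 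Compatibility with the $M$-action on both sides is automatic from the definition of $\pi$.
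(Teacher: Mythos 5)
Your plan follows the paper's own proof exactly: part (a) is reduced via the decomposition \eqref{eq:dic12_6} to Lemma \ref{lem:dic12_1} and Lemma \ref{lem:dic6_2}(a) for sesquilinearity and skew-symmetry, and to \eqref{eq:dic6_5} and \eqref{eq:dic6_6} for Jacobi; part (b) is \eqref{eq:dic6_6} with $z\in M$; part (c) L3 is \eqref{eq:sat_1} together with \eqref{eq:sat_2}; and part (d) is a direct check, which the paper likewise leaves to the reader. The proposal is correct and adds only some welcome bookkeeping detail (the three-block cancellation for Jacobi and the commutation of $e^{\tilde\partial\partial_\lambda}$ with $\lambda+\tilde\partial$) that the paper's terse proof omits.
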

\begin{proof}
It immediately follows from Lemma \ref{lem:dic12_1} and Lemma \ref{lem:dic6_2}(a)
that the $\lambda$-bracket \eqref{eq:dic12_6}
satisfies sesquilinearity and skew-symmetry.
Furthermore, the Jacobi identity for the $\lambda$-bracket \eqref{eq:dic12_3}
follows from Lemma \ref{lem:dic12_1} and equations \eqref{eq:dic6_5} and \eqref{eq:dic6_6}.
This proves part (a).
Part (b) is Lemma \ref{eq:dic6_4}(c), in the case $z\in M$.
For part (c) we need to check conditions L1., L2. and L3.
The first two conditions are immediate. The last one follows from equations 
\eqref{eq:sat_1} and \eqref{eq:sat_2}.
Finally, part (d) is straightforward and is left to the reader.
\end{proof}

\vspace{3ex}
\subsection{The Lie algera structure on $\Pi C_1$
and the $\Pi C_1$-structure on the complex $(C^\bullet,d)$.}~~
\label{sec:3.4.5.b}
Recall that the space of 1-chains of the complex $(C^\bullet,d)$ is 
$C_1=(A\otimes M)/\partial(A\otimes M)$ with odd parity.
We want to define a Lie algebra structure on $\Pi C_1$, where, as usual, 
$\Pi$ denotes parity reversing, making $C^\bullet$ into a $\Pi C_1$-complex.
By Proposition \ref{prop:dic6}(a), we have a Lie conformal algebra structure on $M\otimes A$.
Hence, if we identify $M\otimes A$ with $A\otimes M$ by exchanging  the two factors,
we get a structure of a Lie algebra on the quotient space $(A\otimes M)/\partial(A\otimes M)$,
induced by the $\lambda$-bracket at $\lambda=0$ \cite{K}. 

Explicitly, we get the following well-defined Lie algebra bracket on
$\Pi C_1=(A\otimes M)/\partial(A\otimes M)$:
\begin{equation}\label{eq:sat_3}
[a\otimes m,b\otimes n]
\,=\,
[a_{\partial^M_1}b]_\to \otimes mn
+b\otimes \big(a_{\partial^M}n\big)_\to m
-a\otimes \big(b_{\partial^M}m\big)_\to n\,,
\end{equation}
where in the RHS, as usual, the right arrow means that $\partial^M$ should be moved to the right,
and in the first summand $\partial^M_1$ denotes $\partial^M$ acting only on
the first factor $m$.

Recall from Section \ref{sec:3.2.5} that
$\tilde\Gamma_1
= (A\otimes M[[x]])\big/(\partial\otimes1+1\otimes\partial_x)(A\otimes M[[x]])$,
and $\Gamma_1=\big\{\xi\in\tilde\Gamma_1\,|\,\partial\xi=0\big\}$,
where the action of $\partial$ on $\tilde\Gamma_1$ is given by \eqref{eq:nov22_1}.
Under this identification,
the map $\chi_1:\,C_1\to\Gamma_1$ defined by \eqref{eq:ago20_1} and \eqref{eq:sat_4}
is given by
\begin{equation}\label{eq:sat_5}
\chi_1(a\otimes m)\,=\,a\otimes e^{x\partial^M}m\,.
\end{equation}
\begin{proposition}\label{prop:sat}
The map $\chi_1:\,C_1\to\Gamma_1$
is a Lie algebra homomorphism,
which factors through a Lie algebra  isomorphism $\chi_1:\,\bar C_1\to\Gamma_1$,
provided that $A$ decomposes as in \eqref{eq:anniv_1}.
\end{proposition}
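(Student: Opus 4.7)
The plan is as follows. The bijectivity statement (under the decomposition \eqref{eq:anniv_1}) is essentially Proposition~\ref{th:ago20} specialized to $k=1$: since $\chi_1(a\otimes m)=a\otimes e^{x\partial^M}m$ vanishes in $\tilde\Gamma_1$ whenever $a\in\Tor A$ (by Lemma~\ref{rem:ago19}), the map $\chi_1$ factors through $\bar C_1$, and Proposition~\ref{th:ago20} then identifies $\bar C_1\simeq\Gamma_1$ as $\mathbb{F}$-vector spaces. To promote this to an isomorphism of Lie algebras one first checks that the bracket \eqref{eq:sat_3} on $C_1$ descends to $\bar C_1$: if $a\in\Tor A$, then $[a_\lambda b]=0$ and $a_\lambda n=0$ for all $b\in A$, $n\in M$, so two of the three summands on the RHS of \eqref{eq:sat_3} vanish and the remaining summand lies in $\Tor A\otimes M$, which is zero in $\bar C_1$.

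The heart of the proof is therefore to show that $\chi_1:C_1\to\Gamma_1\subset\tilde\Gamma_1$ intertwines the bracket \eqref{eq:sat_3} on $C_1$ with the bracket \eqref{eq:nov23_2} on $\tilde\Gamma_1$. I plan to match the three summands of the two expressions term by term, relying on two elementary identities: first, that $e^{x\partial^M}$ is an algebra homomorphism of $M$ (since $\partial^M$ is a derivation of the product), so that $e^{x\partial^M}(mn)=(e^{x\partial^M}m)(e^{x\partial^M}n)$; second, that sesquilinearity of the $A$-module structure on $M$ yields
\[
b_{\lambda_1}(e^{x\partial^M}m)\,=\,e^{x\lambda_1}e^{x\partial^M}(b_{\lambda_1}m).
\]
Combining these, the first summand of \eqref{eq:nov23_2} computes as $\sum_k\frac{1}{k!}(a_{(k)}b)\otimes e^{x\partial^M}((\partial^M)^k m\cdot n)$, which is precisely $\chi_1$ of the first summand of \eqref{eq:sat_3}.

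For the second summand, after commuting $b_{\lambda_1}$ past $e^{x\partial^M}$ one is reduced to evaluating a pairing of the form $\langle e^{x_1\partial^M}n,e^{x_1\lambda_1}P(\lambda_1)\otimes r\rangle$, which unfolds via the basic identity $\langle e^{x_1\partial^M}n,\lambda_1^j\rangle=(\partial^M)^j n$ and the derivation property of $\partial^M$ into $e^{x\partial^M}\bigl((b_{\partial^M}m)_\to n\bigr)$; this matches $\chi_1$ of the third summand of \eqref{eq:sat_3}. The third summand is treated by the symmetric calculation. I expect no real obstacle: the only slightly delicate point is bookkeeping in the pairing calculation for the second summand, where one must shepherd the factor $e^{x\lambda_1}$ through the contraction of $x_1$ with $\lambda_1$ and then reassemble the result as $e^{x\partial^M}$ of an element of $M$; the derivation property of $\partial^M$ does all the work.
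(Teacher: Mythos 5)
Your proposal is correct and follows essentially the same route as the paper: apply $\chi_1(a\otimes m)=a\otimes e^{x\partial^M}m$ to the bracket \eqref{eq:sat_3}, expand the bracket \eqref{eq:nov23_2} of the images, and match the three summands term by term using that $e^{x\partial^M}$ is an algebra homomorphism together with sesquilinearity, with the isomorphism statement then imported from Proposition \ref{th:ago20}. Your explicit verification that the bracket descends to $\bar C_1$ is a small, welcome addition that the paper leaves implicit.
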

\begin{proof}
We have, by \eqref{eq:sat_3} and \eqref{eq:sat_5} that
\begin{eqnarray}\label{eq:sat_6}
&& \chi_1\big([a\otimes m,b\otimes n]\big)
\\
&& =
[a_{\partial^M_1}b]_\to\otimes\big(e^{x\partial^M}m\big)\big(e^{x\partial^M}n\big)
+b\otimes e^{x\partial^M} \big((a_{\partial^M}n)_\to m\big)
-a\otimes e^{x\partial^M} \big((b_{\partial^M}m)_\to n\big)\,.\nonumber
\end{eqnarray}
Recalling formula \eqref{eq:nov23_2} for the Lie bracket on $\tilde\Gamma_1$,
we have
\begin{eqnarray}\label{eq:sat_7}
&& [\chi_1(a\otimes m),\chi_1(b\otimes n)]\big)
\\
&& =
[a_{\partial_{x_1}}b]\otimes\big(e^{x_1\partial^M}m\big)\big(e^{x\partial^M}n\big)\,\Big|_{x_1=x}
+b\otimes \big\langle m(x_1),a_{\lambda_1}n(x)\big\rangle
-a\otimes \big\langle n(x_1),b_{\lambda_1}m(x)\big\rangle\,.\nonumber
\end{eqnarray}
Clearly, the first term in the RHS of \eqref{eq:sat_6}
is the same as the first term in the RHS of \eqref{eq:sat_7}.
Recalling the definition \eqref{eq:nov22_3} of the pairing $\langle\,,\,\rangle$,
and using the sesquilinearity of the $\lambda$-action of $A$ on $M$, 
we have that the second term in the RHS of \eqref{eq:sat_6}
is the same as the second term in the RHS of \eqref{eq:sat_7},
and similarly for the third terms.
The last statement follows from Proposition \ref{th:ago20}.
\end{proof}
\begin{proposition}\label{prop:sat_2}
The cohomoloy complex $(C^\bullet,d)$
has a $\Pi C_1$-structure $\varphi:\,\hat{\Pi C_1}\to\End C^\bullet$,
given by $\varphi(\partial_\eta)=d,\,\varphi(\eta x)=\iota_x,\,\varphi(x)=L_x=[d,\iota_x]$.
Moreover, $(\bar C^\bullet,d)$ is a $\Pi C_1$-subcomplex.
\end{proposition}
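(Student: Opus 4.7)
The plan is to apply Remark \ref{rem:vsep}. We already have the odd operators $\iota_x$ on $C^\bullet$ for $x \in C_1$ satisfying $\iota_x(C^k) \subset C^{k-1}$ (Lemma \ref{lem:ago21}(b)) and pairwise graded commutativity (Proposition \ref{prop:dic17_2}). It therefore suffices to establish the single relation
\begin{equation}\label{eq:plan_key}
[[d,\iota_x],\iota_y] \,=\, \iota_{[x,y]}, \qquad x,y\in C_1,
\end{equation}
where $[x,y]$ denotes the Lie algebra bracket on $\Pi C_1$ given explicitly by \eqref{eq:sat_3}. Setting $L_x = [d,\iota_x]$, Cartan's formula and all commutation relations in \eqref{eq:3.2} then follow automatically, and defining $\varphi(\partial_\eta)=d$, $\varphi(\eta x)=\iota_x$, $\varphi(x)=L_x$ produces the required Lie superalgebra homomorphism $\hat{\Pi C_1}\to\End C^\bullet$.

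To verify \eqref{eq:plan_key} I would take representatives $x=a\otimes m$ and $y=b\otimes n$ in $A\otimes M$ and perform a direct computation on an arbitrary $c\in C^k$, parallel to the proof of Proposition \ref{prop:vsep30_1}. Using the explicit formulas \eqref{eq:sat_8_1}--\eqref{eq:sat_10} for $\iota_x$ and $L_x$ in low degree, their natural extensions to higher degree via \eqref{eq:ago21_3}, the defining formula \eqref{eq:d>} of $d$, and the Leibniz and Jacobi properties of the $\lambda$-action of $A$ on $M$, one expects the bulk of the terms in $L_x(\iota_y c) - \iota_y(L_x c)$ to cancel pairwise, leaving precisely the three contributions that recombine via the skew-symmetry of $[\cdot\,_\lambda\,\cdot]$ into $\iota_{[x,y]}c$ as dictated by \eqref{eq:sat_3}. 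The terms requiring most care are those involving $\partial^M$ acting on products in $M$; these are controlled by Lemmas \ref{lem:ago16} and \ref{lem:ago20_2}(c), which govern how $\partial^M$ is moved across the evaluation map $\phi^\mu$ and how the substitution $\lambda_k\mapsto-\sum_{j<k}\lambda_j-\partial^M$ commutes with the various operations.

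The main obstacle is bookkeeping in this direct computation, which is tedious but essentially combinatorial in the same spirit as the proof of Proposition \ref{prop:vsep30_1}. A tempting shortcut would be to transfer the identity from $\tilde\Gamma^\bullet$ via the maps $\psi^k:\Gamma^k\hookrightarrow\bar C^k\subset C^k$, using Proposition \ref{prop:sat} (which asserts that $\chi_1$ is a Lie algebra homomorphism) together with the commutative diagram \eqref{eq:ago16_7}; however, this only recovers \eqref{eq:plan_key} on the image of $\psi^\bullet$, so on the complement the direct verification is unavoidable.

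Finally, to see that $(\bar C^\bullet,d)$ is a $\Pi C_1$-subcomplex, note that $d(\bar C^\bullet)\subset\bar C^\bullet$ by Proposition \ref{prop:anniv}(b), while $\iota_x(\bar C^\bullet)\subset\bar C^\bullet$ is immediate from \eqref{eq:ago21_3}: if $c\in\bar C^k$ vanishes whenever one of its arguments is a torsion element of $A$, then $\iota_x c$ inherits this property on its remaining arguments. Consequently $L_x=[d,\iota_x]$ also preserves $\bar C^\bullet$, and restricting $\varphi$ yields the desired $\Pi C_1$-substructure.
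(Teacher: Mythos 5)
Your proposal is correct and follows essentially the same route as the paper: reduce via Remark \ref{rem:vsep} and Proposition \ref{prop:dic17_2} to the single identity $[L_x,\iota_y]=\iota_{[x,y]}$ and verify it by the direct (tedious) computation with the explicit formulas, which the paper likewise leaves to the reader. The only difference is that the paper additionally observes that, since $\bar C^k=C^k$ for $k\neq 1$, the ``shortcut'' via $\psi^\bullet$ and Theorem \ref{th:ago21} does suffice in the decomposable case once one checks the identity by hand on $C^1$ alone, whereas you dismiss it and commit to the full computation.
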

\begin{proof}
Due to Remark \ref{rem:vsep} and Proposition \ref{prop:dic17_2}, 
we only need to check that, for $x,y\in\Pi C_1$, we have
\begin{equation}\label{eq:sat_9}
[L_x,\iota_y]\,=\,\iota_{[x,y]}\,.
\end{equation}
This follows from a long but straightforward computation, 
using the explicit formulas \eqref{eq:d>} and \eqref{eq:sat_8}
for the differential and the contraction operators. 
It is left to the reader.

Notice though that, in the special case when $A$ decomposes as in \eqref{eq:anniv_1},
equation \eqref{eq:sat_9} is a corollary of Proposition \ref{prop:vsep30_1},
Theorem \ref{th:red} and Theorem \ref{th:ago21} for $h=1$.
Indeed, due to these results, it suffices to check that both sides of \eqref{eq:sat_9}
coincide when acting on $C^1=\Hom_{\mb F[\partial]}(A,M)$.
In the latter case,
using equations \eqref{eq:d0}, \eqref{eq:july29_1}, 
\eqref{eq:sat_8_1}, \eqref{eq:sat_8}, \eqref{eq:sat_10} and \eqref{eq:sat_3}, 
we have, for $c\in C^1$,
\begin{eqnarray*}
L_{a\otimes m}(\iota_{b\otimes n}c)
&=&
\tint c(b)\big(a_{\partial^M}n\big)_\to m
+\tint n\big(a_{\partial^M}c(b)\big)_\to m\,,\\
\iota_{b\otimes n}(L_{a\otimes m}c)
&=&
\tint n\big(a_{\partial^M}c(b)\big)_\to m
+\tint c(a)\big(b_{\partial^M}m\big)_\to n
-\tint nc\big([a_{\partial^M}b]\big)_\to m\,, \\
\iota_{[a\otimes m,b\otimes n]}c
&=&
\tint nc\big([a_{\partial^M}b]\big)_\to m
+\tint c(b)\big(a_{\partial^M}n\big)_\to m
-\tint c(a)\big(b_{\partial^M}m\big)_\to n\,.
\end{eqnarray*}
It follows that \eqref{eq:sat_9} holds when applied to elements of $C^1$.
\end{proof}
The above results imply the following
\begin{theorem}\label{th:sat}
The maps $\psi^\bullet:\,\Gamma^\bullet\to\bar C^\bullet\subset C^\bullet$
and $\chi_1:\,C_1\to\Gamma_1$ define a homomorphism of $\mf g$-complexes.
Provided that $A$ decomposes as in \eqref{eq:anniv_1},
we obtain an isomorphism of $\Pi C_1\simeq\Pi\Gamma_1$-complexes 
$\psi^\bullet:\,\Gamma^\bullet\stackrel{\sim}{\rightarrow}\bar C^\bullet$.
\end{theorem}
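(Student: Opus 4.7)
The plan is essentially to assemble the compatibility results that have been established in the preceding subsections into a single statement. To show that $\psi^\bullet$ together with $\chi_1$ define a homomorphism of the relevant $\mf g$-complexes in the sense of Section \ref{sec:-1}, I need to verify three pieces of compatibility for every $x \in C_1$ and every $\gamma \in \Gamma^k$: (1) $\psi^{k+1}(\delta\gamma) = d\,\psi^k(\gamma)$, i.e.\ $\psi^\bullet$ is a cochain map; (2) $\psi^{k-1}(\iota_{\chi_1(x)}\gamma) = \iota_x\,\psi^k(\gamma)$, i.e.\ $\psi^\bullet$ intertwines contractions via $\chi_1$; and (3) $\psi^k(L_{\chi_1(x)}\gamma) = L_x\,\psi^k(\gamma)$, i.e.\ $\psi^\bullet$ intertwines Lie derivatives.

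Item (1) is exactly Theorem \ref{th:red}. Item (2) is Theorem \ref{th:ago21} specialized to $h=1$, with $\chi_1$ identified under the presentation of the space of $1$-chains given in \eqref{eq:sat_5}. Item (3) then follows automatically from (1) and (2) by Cartan's formula $L = [d,\iota]$ (applied on both $C^\bullet$ and $\Gamma^\bullet$), without further computation. For this to genuinely constitute a morphism of $\mf g$-complexes, I also need $\chi_1:\,\Pi C_1 \to \Pi\Gamma_1$ to be a Lie algebra homomorphism, which is exactly the content of Proposition \ref{prop:sat}.

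For the isomorphism statement under the decomposition assumption $A = T\oplus(\mb F[\partial]\otimes U)$, no new verification is required: Theorem \ref{th:red} upgrades $\psi^\bullet$ to an isomorphism $\Gamma^\bullet\stackrel{\sim}{\rightarrow}\bar C^\bullet$ of cochain complexes, and Proposition \ref{prop:sat} together with Proposition \ref{th:ago20} (or directly the latter) gives a Lie algebra isomorphism $\chi_1:\,\bar C_1\stackrel{\sim}{\rightarrow}\Gamma_1$. Since all the structural compatibilities above have already been proved on the ambient spaces $C^\bullet$ and $\Gamma^\bullet$, they restrict to $\bar C^\bullet$ and assemble into the required isomorphism of $\Pi C_1\simeq\Pi\Gamma_1$-complexes.

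The main conceptual point to get right---rather than a technical obstacle, since all the real calculational work has been carried out in the earlier subsections---is the direction of the Lie algebra map: $\chi_1$ runs $\Pi C_1\to\Pi\Gamma_1$, so one should view $\Gamma^\bullet$ as a $\Pi C_1$-complex by pulling the action back along $\chi_1$. Under this convention, $\psi^\bullet$ becomes $\Pi C_1$-equivariant with $\pi=\mathrm{id}_{\Pi C_1}$, and the present theorem is a clean packaging of Theorems \ref{th:red} and \ref{th:ago21} with Propositions \ref{prop:sat} and \ref{th:ago20}.
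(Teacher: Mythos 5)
Your proposal is correct and follows essentially the same route as the paper, whose proof is precisely the assembly of Theorem \ref{th:red}, Theorem \ref{th:ago21} (with $h=1$), Proposition \ref{prop:sat} and Proposition \ref{th:ago20}; your explicit deduction of the Lie-derivative compatibility from Cartan's formula, and your remark on the direction of $\chi_1$ (pulling the $\Pi\Gamma_1$-action back to a $\Pi C_1$-action), merely make explicit what the paper leaves implicit.
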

\begin{proof}
It follows from Theorem \ref{th:red},
Proposition \ref{th:ago20},
Theorem \ref{th:ago21}
and Proposition \ref{prop:sat}.
\end{proof}

\vspace{3ex}
\subsection{Pairings between 1-chains and 1-cochains.}~~
\label{sec:3.pair}
Recall that $\tilde\Gamma^0=M$.
Hence, the contraction operators of 1-chains, restricted to the space of 1-cochains,
define a natural pairing
$\tilde\Gamma_1\times\tilde\Gamma^1\to M$,
which, to $\xi\in\tilde\Gamma_1$ and $\tilde\gamma\in\tilde\Gamma^1$, associates
\begin{equation}\label{eq:ago21_9}
\iota_\xi\tilde\gamma\,=\,\phi^\mu(\tilde\gamma_\lambda(a))\,\in\, M\,,
\end{equation}
where $a\otimes\phi\in A\otimes\Hom(\mb F[\lambda],M)$
is a representative of $\xi$.

When we consider the reduced spaces, we have $\Gamma^0=M/\partial M$,
and the above map induces a natural pairing
$\Gamma_1\times\Gamma^1\to M/\partial M$,
which, to $\xi\in\tilde\Gamma_1$ and $\gamma\in\Gamma^1$, associates
\begin{equation}\label{eq:ago21_10}
\iota_\xi\gamma\,=\,\tint\phi^\mu(\tilde\gamma_\lambda(a))\,\in\, M/\partial M\,,
\end{equation}
where again $a\otimes\phi\in A\otimes\Hom(\mb F[\lambda],M)$
is a representative of $\xi$,
and $\tilde\gamma\in\tilde\Gamma^1$ is a representative of $\gamma$.

A similar pairing can be defined for 1-chains in $C_1$ and 1-cochains in $C^1$.
Recall that $C^0=M/\partial M$,
$C^1$ is the space of $\mb F[\partial]$-module homomorphisms $c:\,A\to M$,
and $C_1=A\otimes M/\partial(A\otimes M)$.
The corresponding pairing
$C_1\times C^1\to M/\partial M$, 
is obtained as follows.
To $x\in C_1$ and $c\in C^1$, we associate,
recalling \eqref{eq:giulia},
\begin{equation}\label{eq:ago21_11}
\iota_x(c)=\tint m\cdot c(a)\,\in\, M/\partial M\,,
\end{equation}
where $a\otimes m\in A\otimes M$ is a representative of $x$.

Recalling Theorems \ref{th:red} and \ref{th:ago21},
the above pairings \eqref{eq:ago21_10} and \eqref{eq:ago21_11}
are compatible in the sense that
$\iota_x(c)\,=\,\iota_\xi(\gamma)$,
provided that $\gamma\in\Gamma^1$ and $c\in C^1$ are related by $c=\psi^1(\gamma)$,
and $\xi\in\Gamma_1$ and $x\in C_1$ are related by $\xi=\chi_1(x)$.

\vspace{3ex}
\subsection{Contraction by a $1$-chain as an odd derivation of $\tilde\Gamma^\bullet$.}~~
\label{sec:3.6}
Recall that, if the $A$-module $M$ has a commutative associative product,
and $\partial^M$ and $a_\lambda^M$ are even derivations of it,
then the basic cohomology complex $\tilde\Gamma^\bullet$
is a $\mb Z$-graded commutative associative superalgebra
with respect to the exterior product \eqref{eq:sfor_2},
and the differential $\delta$ is an odd derivation of degree +1.
\begin{proposition}\label{prop:ago21}
The contraction operator $\iota_\xi$,
associated to a 1-chain $\xi\in\tilde\Gamma_1$, is an odd derivation 
of the superalgebra $\tilde\Gamma^\bullet$ of degree -1.
\end{proposition}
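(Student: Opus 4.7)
The plan is to verify the graded Leibniz rule
$$
\iota_\xi(\tilde\alpha\wedge\tilde\beta) \,=\, (\iota_\xi\tilde\alpha)\wedge\tilde\beta + (-1)^h\tilde\alpha\wedge(\iota_\xi\tilde\beta)\,,
$$
for $\tilde\alpha\in\tilde\Gamma^h$, $\tilde\beta\in\tilde\Gamma^k$ and a $1$-chain $\xi\in\tilde\Gamma_1$ with representative $a\otimes\phi\in A\otimes\Hom(\mb F[\lambda_1],M)$, by comparing coefficients directly. First I would unwind \eqref{eq:ago15p_1} and \eqref{eq:sfor_2}, writing the LHS applied to $(a_2,\cdots,a_{h+k})$ with parameters $(\lambda_2,\cdots,\lambda_{h+k})$ as
$$
\frac{1}{h!k!}\sum_{\sigma\in S_{h+k}}\text{sign}(\sigma)\,\phi^\mu\Big(
\tilde\alpha_{\lambda_{\sigma(1)},\cdots,\lambda_{\sigma(h)}}(a_{\sigma(1)},\cdots,a_{\sigma(h)})\cdot
\tilde\beta_{\lambda_{\sigma(h+1)},\cdots,\lambda_{\sigma(h+k)}}(a_{\sigma(h+1)},\cdots,a_{\sigma(h+k)})
\Big)\,,
$$
with $a_1=a$ inserted in the first slot, and then split the sum according to the position $s=\sigma^{-1}(1)$ in which $a$ appears.

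For $1\le s\le h$, the element $a$ sits as the $s$-th argument of $\tilde\alpha$, and the skew-symmetry A2 of $\tilde\alpha$ lets me transport it (together with $\lambda_1$) to the leftmost slot at the cost of a sign $(-1)^{s-1}$. Since $\tilde\beta$ is independent of $\lambda_1$ and $\phi^\mu$ acts by multiplication by an element of $M$ (which is a commutative algebra), $\phi^\mu$ commutes past the $\tilde\beta$-factor and converts the $\tilde\alpha$-factor into $(\iota_\xi\tilde\alpha)_{\lambda_{\sigma(1)},\stackrel{s}{\check{\cdots}},\lambda_{\sigma(h)}}(a_{\sigma(1)},\stackrel{s}{\check{\cdots}},a_{\sigma(h)})$. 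Applying Lemma~\ref{lem:ago15} with $t=1$, permutations with $\sigma(s)=1$ are in bijection with bijections $\tau:\{1,\cdots,h+k-1\}\stackrel{\sim}{\to}\{2,\cdots,h+k\}$, and the signs transform as $\text{sign}(\sigma)=(-1)^{s+1}\text{sign}(\tau)$. The combined sign $(-1)^{s-1}(-1)^{s+1}=+1$ is independent of $s$, so summing over $s\in\{1,\cdots,h\}$ produces a factor $h$ that reduces the prefactor $1/(h!k!)$ to $1/((h-1)!k!)$, exactly as required to recognize this contribution as $\big((\iota_\xi\tilde\alpha)\wedge\tilde\beta\big)_{\lambda_2,\cdots,\lambda_{h+k}}(a_2,\cdots,a_{h+k})$.

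For $h+1\le s\le h+k$ the argument is entirely symmetric: now $a$ is the $(s-h)$-th argument of $\tilde\beta$, moving it to the front by A2 contributes $(-1)^{s-h-1}$, $\phi^\mu$ pulls through the $\lambda_1$-independent factor $\tilde\alpha$ to produce $\iota_\xi\tilde\beta$, and Lemma~\ref{lem:ago15} again gives $\text{sign}(\sigma)=(-1)^{s+1}\text{sign}(\tau)$. The combined sign $(-1)^{2s-h}=(-1)^h$ is once more independent of $s$, the sum over the $k$ values of $s$ turns $1/(h!k!)$ into $(-1)^h/(h!(k-1)!)$, and we recover $(-1)^h\big(\tilde\alpha\wedge(\iota_\xi\tilde\beta)\big)$. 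Adding the two contributions yields the graded Leibniz rule; the fact that $\iota_\xi$ lowers the degree by $1$ and reverses parity is immediate from \eqref{eq:ago15p_1}.

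The main obstacle is the combinatorial sign bookkeeping, in the spirit of the proof of Proposition~\ref{prop:sfor}(c): one must verify that the skew-symmetry sign combines with the reparameterization sign from Lemma~\ref{lem:ago15} so as to produce a factor independent of $s$, allowing the sum over $s$ to collapse to a single factor of $h$ (resp.~$k$) which cancels the denominator $h!k!$ from the wedge formula. The algebraic step of commuting $\phi^\mu$ past the opposite factor is essentially automatic, relying only on the $\mb F[\lambda]$-linearity of $\phi^\mu$ and on the commutativity of the product on $M$.
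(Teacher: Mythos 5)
Your proposal is correct and follows essentially the same route as the paper's proof: expand $\iota_\xi(\tilde\alpha\wedge\tilde\beta)$ via \eqref{eq:ago15p_1} and \eqref{eq:sfor_2}, split the sum over $\sigma\in S_{h+k}$ according to the slot in which $a_1$ lands, move it to the front by A2, and use Lemma~\ref{lem:ago15} to collapse the sum over that slot into the factors $h$ and $(-1)^hk$ that cancel the denominator $h!k!$. The sign bookkeeping you carry out matches \eqref{eq:ago21_7}--\eqref{eq:ago21_8} exactly.
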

\begin{proof}
Let $a_1\otimes\phi$, with $a_1\in A$ and $\phi\in\Hom(\mb F[\lambda_1],M)$,
be a representative of $\xi\in\tilde\Gamma_1$.
By the definition \eqref{eq:sfor_2} of the exterior product, we have
\begin{eqnarray}\label{eq:ago21_7}
&\displaystyle{
(\iota_\xi(\tilde\alpha\wedge\tilde\beta))_{\lambda_2,\cdots,\lambda_{h+k}}(a_2,\cdots,a_{h+k}) 
=
\sum_{\sigma\in S_{h+k}} \frac{\text{sign}(\sigma)}{h!k!}
\phi^\mu\Big(
\tilde\alpha_{\lambda_{\sigma(1)},\cdots,\lambda_{\sigma(h)}}
(a_{\sigma(1)},\cdots,a_{\sigma(h)}) \times
}\nonumber\\
&\displaystyle{
\tilde\beta_{\lambda_{\sigma(h+1)},\cdots,\lambda_{\sigma(h+k)}}
(a_{\sigma(h+1)},\cdots,a_{\sigma(h+k)})
\Big)
\,.
}
\end{eqnarray}
By the skew-symmetry condition A2. for $\tilde\alpha$ and $\tilde\beta$, we can rewrite 
the RHS of \eqref{eq:ago21_7} as
\begin{eqnarray}\label{eq:ago21_8}
&& 
\sum_{i=1}^{h}\sum_{\sigma\,|\,\sigma(i)=1} \frac{\text{sign}(\sigma)}{h!k!}(-1)^{i+1}
\phi^\mu\Big(
\tilde\alpha_{\lambda_1,\lambda_{\sigma(1)},\stackrel{i}{\check{\cdots}},\lambda_{\sigma(h)}}
(a_1,a_{\sigma(1)},\stackrel{i}{\check{\cdots}},a_{\sigma(h)})
\Big) \times\nonumber\\
&&\,\,\,\,\,\,\,\,\,\,\,\,\,\,\,\,\,\,\,\,\,\,\,\,\,\,\,\,\,\,\,\,\,\,\,\,\times
\tilde\beta_{\lambda_{\sigma(h+1)},\cdots,\lambda_{\sigma(h+k)}}
(a_{\sigma(h+1)},\cdots,a_{\sigma(h+k)}) \\
&& +
\sum_{i=h+1}^{h+k}\sum_{\sigma\,|\,\sigma(i)=1} \frac{\text{sign}(\sigma)}{h!k!}(-i)^{i-h+1}
\tilde\alpha_{\lambda_{\sigma(1)},\cdots,\lambda_{\sigma(h)}}
(a_{\sigma(1)},\cdots,a_{\sigma(h)})
\times\nonumber\\
&&\,\,\,\,\,\,\,\,\,\,\,\,\,\,\,\,\,\,\,\,\,\,\,\,\,\,\,\,\,\,\,\,\,\,\,\,\times
\phi^\mu\Big(
\tilde\beta_{\lambda_1,\lambda_{\sigma(h+1)},\stackrel{i}{\check{\cdots}},\lambda_{\sigma(h+k)}}
(a_1,a_{\sigma(h+1)},\stackrel{i}{\check{\cdots}},a_{\sigma(h+k)})
\Big)
\,.\nonumber
\end{eqnarray}
By Lemma \ref{lem:ago15},
the set of all permutations $\sigma\in S_{h+k}$ such that $\sigma(i)=1$,
is naturally in bijection with the set of all permutations $\tau$ of $\{2,\dots,h+k\}$,
and the correspondence between the signs is $\text{sign}(\tau)=(-1)^{i+1}\text{sign}(\sigma)$.
Hence, \eqref{eq:ago21_8} can be rewritten as
\begin{eqnarray*}
&\displaystyle{
\sum_{\tau} \frac{\text{sign}(\tau)}{h!k!}
\Big(
h(\iota_\xi\tilde\alpha)_{\lambda_{\tau(2)},\cdots,\lambda_{\tau(h)}}(a_{\tau(2)},\cdots,a_{\tau(h)})
\tilde\beta_{\lambda_{\tau(h+1)},\cdots,\lambda_{\tau(h+k)}}
(a_{\tau(h+1)},\cdots,a_{\tau(h+k)})
}\\
&\displaystyle{
+k(-1)^{h}
\tilde\alpha_{\lambda_{\tau(2)},\cdots,\lambda_{\tau(h+1)}}
(a_{\tau(2)},\cdots,a_{\tau(h+1)})
(\iota_\xi\tilde\beta)_{\lambda_{\tau(h+2)},\cdots,\lambda_{\tau(h+k)}}
(a_{\tau(h+2)},\cdots,a_{\tau(h+k)})
\Big)
}\\
&\displaystyle{
=(\iota_\xi(\tilde\alpha)\wedge\tilde\beta)_{\lambda_2,\cdots,\lambda_{h+k}}(a_2,\cdots,a_{h+k})
+(-1)^h
(\tilde\alpha\wedge\iota_\xi(\tilde\beta))_{\lambda_2,\cdots,\lambda_{h+k}}(a_2,\cdots,a_{h+k})
\,.}
\end{eqnarray*}
\end{proof}

\begin{remark}\label{rem:reallylast}
One can show that the $\mf g$-structure of all our complexes
$\tilde\Gamma^\bullet,\,\Gamma^\bullet$ and $C^\bullet$
can be extended to a structure of a calculus algebra, 
as defined in \cite{DTT}.
Namely,
one can extend the Lie algebra bracket from the space of 1-chains 
to the whole space of chains (with reverse parity),
and define there a commutative superalgebra structure,
which extends our $\mf g$-structure
and satisfies all the properties of a calculus algebra.
\end{remark}

\section{The complex of variational calculus as a Lie conformal algebra cohomology complex}
\label{sec:4}

\subsection{Algebras of differentiable functions.}~~
\label{sec:5.1}
An \emph{algebra of differentiable functions} $\mc V$
in the variables $u_i$, indexed by a finite set $I=\{1,\dots,\ell\}$,
is, by definition, a differential algebra
(i.e. a unital commutative associative algebra with a derivation $\partial$),
endowed with commuting derivations
$\frac{\partial}{\partial u_i^{(n)}}\,:\,\,\mc V\to\mc V$, for all $i\in I$ and $n\in\mb Z_+$,
such that, given $f\in\mc V$,
$\frac{\partial}{\partial u_i^{(n)}}f=0$ for all but finitely many $i\in I$ and $n\in\mb Z_+$,
and the following commutation rules with $\partial$ hold:
\begin{equation}\label{eq:comm_frac}
\Big[\frac{\partial}{\partial u_i^{(n)}} , \partial\Big] = \frac{\partial}{\partial u_i^{(n-1)}}\,,
\end{equation}
where the RHS is considered to be zero if $n=0$.
As in the previous sections, 
we denote by $f\mapsto\tint f$ the canonical quotient map $\mc V\to\mc V/\partial\mc V$.

Denote by $\mc C\subset\mc V$ the subspace of constant functions, i.e. 
\begin{equation}\label{eq:4.2}
\mc C=\big\{f\in\mc V\,\big|\,\frac{\partial f}{\partial u_i^{(n)}}=0\,\,\forall i\in I,\,n\in\mb Z_+\big\}\,.
\end{equation}
It follows from \eqref{eq:comm_frac}
by downward induction that
\begin{equation}\label{eq:dic20_2}
\Ker(\partial)\subset\mc C\,.
\end{equation}
Also, clearly, $\partial\mc C\subset\mc C$.

Typical examples of algebras of differentiable functions are: 
the ring of polynomials 
\begin{equation}\label{eq:dic20_1}
R_\ell
\,=\,
\mb F[u_i^{(n)}\,|\,i\in I,n\in\mb Z_+]\,,
\end{equation}
where $\partial(u_i^{(n)})=u_i^{(n+1)}$,
any localization of it by some multiplicative subset $S\subset R$,
such as the whole field of fractions $Q=\mb F(u_i^{(n)}\,|\,i\in I,n\in\mb Z_+)$,
or any algebraic extension of the algebra $R$ or of the field $Q$ obtained by adding
a solution of certain polynomial equation.
In all these examples the action of $\partial:\, \mc V\to\mc V$
is given by
$\displaystyle{
\partial=\sum_{i\in I,n\in\mb Z_+} u_i^{(n+1)}\frac{\partial}{\partial u_i^{(n)}}
}$.
Another example of an algebra of differentiable functions
is the ring $R_\ell[x]=\mb F[x,u_i^{(n)}\,|\,i\in I,n\in\mb Z_+]$,
where 
$\displaystyle{
\partial=\frac{\partial}{\partial x}
+\sum_{i\in I,n\in\mb Z_+} u_i^{(n+1)}\frac{\partial}{\partial u_i^{(n)}}
}$.

The \emph{variational derivative}
$\frac\delta{\delta u}:\,\mc V\to\mc V^{\oplus \ell}$
is defined by 
\begin{equation}\label{eq:varder}
\frac{\delta f}{\delta u_i}\,:=\,\sum_{n\in\mb Z_+}(-\partial)^n\frac{\partial f}{\partial u_i^{(n)}}\,.
\end{equation}
It follows immediately from \eqref{eq:comm_frac} that
\begin{equation}\label{eq:mar11_2}
\frac{\delta}{\delta u_i}(\partial f) = 0\,,
\end{equation}
for every $i\in I$ and $f\in\mc V$,
namely, $\partial\mc V\subset\Ker \frac{\delta}{\delta u}$.

A \emph{vector field} is, by definition, a derivation of $\mc V$ of the form
\begin{equation}\label{2006_X}
X=\sum_{i\in I,n\in\mb{Z}_+} P_{i,n} \frac{\partial}{\partial u_i^{(n)}}\,\,,  \quad P_{i,n} \in \mc V\,.
\end{equation}
We let $\mf g$ be the Lie algebra of all vector fields.
The subalgebra of \emph{evolutionary vector fields} is $\mf g^\partial\subset\mf g$,
consisting of the vector fields commuting with $\partial$.
By \eqref{eq:comm_frac}, a vector field $X$ is evolutionary 
if and only if it has the form
\begin{equation}\label{2006_X2}
X_P=\sum_{i\in I,n\in\mb{Z}_+} (\partial^n P_i) \frac{\partial}{\partial u_i^{(n)}}
\,\,,\,\,\,\,
\text{ where } P=(P_i)_{i\in I}\in\mc V^\ell\,.
\end{equation}

\vspace{3ex}
\subsection{Normal algebras of differentiable functions.}~~
\label{sec:very_final}
Let $\mc V$ be an algebra of differentiable functions in the variables $u_i,\,i\in I=\{1,\dots,\ell\}$.
For $i\in I$ and $n\in\mb Z_+$ we let 
\begin{equation}\label{eq:july21_1}
\mc V_{n,i}\,:=\,\Big\{ f\in\mc V\,\Big|\,
\frac{\partial f}{\partial u_j^{(m)}}=0\,\,
\text{ if } (m,j)>(n,i) \text{ in lexicographic order }\Big\}\,.
\end{equation}
We also let $\mc V_{n,0}=\mc V_{n-1,\ell}$.

A natural assumption on $\mc V$ is to contain elements
$u_i^{(n)}$, for $i\in I,n\in\mb Z_+$,
such that 
\begin{equation}\label{eq:dic20_4}
\frac{\partial u_i^{(n)}}{\partial u_j^{(m)}}
\,=\,
\delta_{ij}\delta_{mn}\,.
\end{equation}
Clearly, such elements are uniquely defined up to adding constant functions.
Moreover, choosing these constants appropriately, we can assume
that $\partial u_i^{(n)}=u_i^{(n+1)}$.
Thus, under this assumption $\mc V$ is an algebra of differentiable functions 
extension of the algebra $R_\ell$ in \eqref{eq:dic20_1}.
\begin{lemma}\label{lem:fine}
Let $\mc V$ be an algebra of differentiable functions extension of the algebra $R_\ell$.
Then:
\begin{enumerate}
\alphaparenlist
\item We have $\partial=\partial_R+\partial^\prime$, where
\begin{equation}\label{eq:dic20_3}
\partial_R
\,=\,
\sum_{i\in I,n\in\mb Z_+}u_i^{(n+1)}\frac{\partial}{\partial u_i^{(n)}}\,,
\end{equation}
and $\partial^\prime$ is a derivation of $\mc V$ which 
commutes with all $\frac{\partial}{\partial u_i^{(n)}}$
and which vanishes on $R_\ell\subset\mc V$.
In particular, $\partial^\prime\mc V_{n,i}\subset\mc V_{n,i}$.
\item If $f\in\mc V_{n,i}\backslash\mc V_{n,i-1}$,
then $\partial f\in\mc V_{n+1,i}\backslash\mc V_{n+1,i-1}$, and it has the form
\begin{equation}\label{dec27_1_new}
\partial f\,=\, \sum_{j\leq i} h_ju_j^{(n+1)}+r\,,
\end{equation}
where $h_j\in\mc V_{n,i}$ for all $j\leq i,\, r\in\mc V_{n,i}$, 
and $h_i\neq0$.
\item For $f\in\mc V$, $\tint fg=0$ for every $g\in\mc V$ if and only if $f=0$.
\end{enumerate}
\end{lemma}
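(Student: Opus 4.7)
For part (a), the plan is to define $\partial^\prime := \partial - \partial_R$ and verify each claimed property. It is a derivation of $\mc V$ as the difference of derivations. A direct computation yields $[\partial_R,\, \frac{\partial}{\partial u_j^{(m)}}] = -\frac{\partial}{\partial u_j^{(m-1)}}$ on $\mc V$ (understood as $0$ if $m = 0$), which combined with \eqref{eq:comm_frac} gives $[\partial^\prime,\, \frac{\partial}{\partial u_j^{(m)}}] = 0$. Since $\partial u_i^{(n)} = u_i^{(n+1)} = \partial_R u_i^{(n)}$, $\partial^\prime$ annihilates the generators $u_i^{(n)}$, and hence all of $R_\ell$ by the derivation property. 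Finally, for $f \in \mc V_{n,i}$ and $(m,j) > (n,i)$ in lex order, $\frac{\partial}{\partial u_j^{(m)}}(\partial^\prime f) = \partial^\prime\big(\frac{\partial f}{\partial u_j^{(m)}}\big) = 0$, so $\partial^\prime\mc V_{n,i} \subset \mc V_{n,i}$.

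For part (b), I decompose $\partial f = \partial_R f + \partial^\prime f$; by (a), $\partial^\prime f \in \mc V_{n,i}$. Since $f \in \mc V_{n,i}$, the sum $\partial_R f = \sum_{(m,k)} u_k^{(m+1)} \frac{\partial f}{\partial u_k^{(m)}}$ is supported on $(m,k) \leq (n,i)$, and I isolate the $m = n$ contribution as $\sum_{k \leq i} u_k^{(n+1)} h_k$ with $h_k := \frac{\partial f}{\partial u_k^{(n)}}$. Each $h_k$ lies in $\mc V_{n,i}$ by commutativity of the partial derivations, with $h_i \neq 0$ by the hypothesis $f \notin \mc V_{n,i-1}$; the remainder $r = \partial^\prime f + \sum_{m < n,\, k \in I} u_k^{(m+1)} \frac{\partial f}{\partial u_k^{(m)}}$ lies in the claimed space by a term-by-term inspection. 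Finally, each displayed summand lies in $\mc V_{n+1, i}$, so $\partial f \in \mc V_{n+1, i}$, while $\frac{\partial}{\partial u_i^{(n+1)}}(\partial f) = h_i \neq 0$ shows $\partial f \notin \mc V_{n+1, i-1}$.

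For part (c), the ``if'' direction is trivial. For the converse, I argue by contradiction: suppose $f \neq 0$ and let $(n,i)$ be lex-maximal with $\frac{\partial f}{\partial u_i^{(n)}} \neq 0$, so $f \in \mc V_{n,i}\setminus\mc V_{n,i-1}$. Choose $g := u_i^{(n+2)} \in R_\ell \subset \mc V$; it suffices to show $fg \notin \partial\mc V$, which by \eqref{eq:mar11_2} will follow from $\frac{\delta(fg)}{\delta u_i} \neq 0$. Expanding,
\[
\frac{\delta(f\, u_i^{(n+2)})}{\delta u_i} = (-\partial)^{n+2} f \,+\, \sum_{m=0}^{n} (-\partial)^m\!\left(\frac{\partial f}{\partial u_i^{(m)}}\, u_i^{(n+2)}\right),
\]
I extract the coefficient of $u_i^{(2n+2)}$. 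An induction on $N \geq 1$ using $[\frac{\partial}{\partial u_i^{(n+N)}}, \partial] = \frac{\partial}{\partial u_i^{(n+N-1)}}$ together with $\partial^{N-1} f \in \mc V_{n+N-1,i}$ yields $\frac{\partial}{\partial u_i^{(n+N)}}\,\partial^N f = \frac{\partial f}{\partial u_i^{(n)}}$; since this last expression lies in $\mc V_{n,i}$ and is annihilated by $\frac{\partial}{\partial u_i^{(2n+2)}}$, it follows that $\partial^{n+2} f$ is linear in $u_i^{(2n+2)}$ with linear coefficient $\frac{\partial f}{\partial u_i^{(n)}}$. A Leibniz expansion of each summand shows that $u_i^{(2n+2)}$ appears only in the $m = n$ term (via the piece where all $n$ derivatives fall on $u_i^{(n+2)}$), contributing $(-1)^n \frac{\partial f}{\partial u_i^{(n)}}$; for $m < n$, only $u_i$-derivatives of order at most $2n+1$ occur. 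The total coefficient is $\big((-1)^{n+2} + (-1)^n\big)\frac{\partial f}{\partial u_i^{(n)}} = 2(-1)^n\frac{\partial f}{\partial u_i^{(n)}} \neq 0$, the sought contradiction. The main obstacle will be the careful bookkeeping in (c)---establishing the inductive identity and ruling out spurious contributions to the $u_i^{(2n+2)}$-coefficient---which relies crucially on the lex-maximality of $(n,i)$ for $f$.
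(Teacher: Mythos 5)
Your parts (a) and (b) follow the paper's route exactly: the paper declares (a) ``clear'', and for (b) it writes down the same decomposition $\partial f=\partial_R f+\partial'f$ with $h_j=\partial f/\partial u_j^{(n)}$. Part (c) is where you genuinely diverge. The paper's argument is structural: taking $g=1$ gives $f\in\partial\mc V$, so by (b) $f$ itself has the form \eqref{dec27_1_new} for some $(n,i)$, i.e.\ it is linear in its top variable $u_i^{(n+1)}$ with nonzero coefficient; hence $u_i^{(n+1)}f$ is quadratic in $u_i^{(n+1)}$, cannot have the form \eqref{dec27_1_new}, and so $\tint u_i^{(n+1)}f\neq 0$. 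You instead take the top variable $u_i^{(n)}$ of $f$, multiply by $u_i^{(n+2)}$, and show $\frac{\partial}{\partial u_i^{(2n+2)}}\frac{\delta(fu_i^{(n+2)})}{\delta u_i}=2(-1)^n\frac{\partial f}{\partial u_i^{(n)}}\neq0$, then invoke \eqref{eq:mar11_2}. I checked the bookkeeping: the inductive identity $\frac{\partial}{\partial u_i^{(n+N)}}\partial^Nf=\frac{\partial f}{\partial u_i^{(n)}}$ and the claim that only the $s=m=n$ piece of the Leibniz expansion reaches $u_i^{(2n+2)}$ (because $\partial^{m-s}\frac{\partial f}{\partial u_i^{(m)}}\in\mc V_{n+m-s,i}$ with $m\le n$) are both correct, and the shift by $n+2$ rather than $n+1$ is essential (with $n+1$ the two contributions cancel). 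Your route trades the paper's ``shape of $\partial\mc V$'' argument for a direct variational-derivative computation; both are of comparable length.

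Two loose ends. First, in (c) the choice of a lex-maximal $(n,i)$ with $\frac{\partial f}{\partial u_i^{(n)}}\neq0$ presupposes $f\notin\mc C$; the case of a nonzero constant $f$ needs a separate (easy) line, e.g.\ $\frac{\delta(fu_1)}{\delta u_1}=f\neq0$. (The paper's proof has the same silent omission when $f=\partial w$ with $w\in\mc C$.) Second, in (b) the ``term-by-term inspection'' of $r$ does not go through for the terms $u_k^{(n)}\frac{\partial f}{\partial u_k^{(n-1)}}$ with $k>i$: these need not lie in $\mc V_{n,i}$. Indeed the claim $r\in\mc V_{n,i}$ is false in general: for $\ell=2$ and $f=u_1^{(1)}u_2\in\mc V_{1,1}\setminus\mc V_{1,0}$ one gets $\partial f=u_2u_1^{(2)}+u_1^{(1)}u_2^{(1)}$, so $r=u_1^{(1)}u_2^{(1)}$ and $\frac{\partial r}{\partial u_2^{(1)}}=u_1^{(1)}\neq0$ while $(1,2)>(1,1)$. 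This is a defect of the statement itself, reproduced verbatim in the paper's proof; the correct claim, which your decomposition (and the paper's) actually delivers and which is all that part (c) requires, is $r\in\mc V_{n,\ell}=\mc V_{n+1,0}$, i.e.\ $r$ involves no $u_j^{(m)}$ with $m\ge n+1$. You should state and use that weaker containment rather than assert the one in the statement.
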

\begin{proof}
Part (a) is clear.
By part (a), we have that
$\partial f$ is as in \eqref{dec27_1_new},
where
$h_j=\frac{\partial f}{\partial u_j^{(n)}}\in\mc V_{n,i}$,
and $r=
\sum_{j\in I,m\leq n}u_j^{(m)}\frac{\partial f}{\partial u_j^{(m-1)}}+\partial^\prime f\in\mc V_{n,i}$.
We are left to prove part (c).
Suppose $f\neq0$ is such that $\tint fg=0$ for every $g\in\mc V$.
By taking $g=1$, we have that $f\in\partial\mc V$.
Hence $f$ has the form \eqref{dec27_1_new}
for some $i\in I$ and $n\in\mb Z_+$.
But then $u_i^{(n+1)}f$ does not have this form, so that $\tint u_i^{(n+1)}f\neq0$.
\end{proof}
\begin{definition}\label{def:normal}
The algebra of differentiable functions $\mc V$
is called \emph{normal} if 
$\frac\partial{\partial u_i^{(n)}}\big(\mc V_{n,i}\big)=\mc V_{n,i}$ 
for all $i\in I,n\in\mb Z_+$.
Given $f\in\mc V_{n,i}$, we denote by $\tint du_i^{(n)}f\in\mc V_{n,i}$
a preimage of $f$ under the map $\frac{\partial}{\partial u_i^{(n)}}$.
This integral is defined up to adding elements from $\mc V_{n,i-1}$.
\end{definition}
\begin{proposition}\label{prop:dic20}
Any normal algebra of differentiable functions $\mc V$
is an extension of $R_\ell$.
\end{proposition}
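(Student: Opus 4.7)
The strategy is to construct elements $u_i^{(n)}\in\mc V$ for $i\in I,\,n\in\mb Z_+$ satisfying both $\frac{\partial u_i^{(n)}}{\partial u_j^{(m)}} = \delta_{ij}\delta_{mn}$ and $\partial u_i^{(n)} = u_i^{(n+1)}$, and then to verify their algebraic independence so that the resulting map $R_\ell\to\mc V$ is an injective morphism of differential algebras compatible with the partial derivatives.

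First, I would construct $u_i:=u_i^{(0)}\in\mc V_{0,i}$ for $i=1,\dots,\ell$ by induction on $i$. For $i=1$, normality directly furnishes $u_1\in\mc V_{0,1}$ with $\frac{\partial u_1}{\partial u_1^{(0)}}=1$, and all other partials vanish simply because $u_1\in\mc V_{0,1}$. For the inductive step, normality at $(0,i)$ gives $v\in\mc V_{0,i}$ with $\frac{\partial v}{\partial u_i^{(0)}}=1$, and I would iteratively kill the unwanted dependencies on $u_{i-1}^{(0)},u_{i-2}^{(0)},\dots,u_1^{(0)}$. At stage $j$ (going from $j=i-1$ down to $j=1$), set $f_j:=\frac{\partial v^{(j)}}{\partial u_j^{(0)}}$ where $v^{(j)}$ is the current adjustment; using the commutativity of partial derivatives together with the cancellations already made, one checks that $f_j\in\mc V_{0,j}$ (its partials with respect to $u_i^{(0)}$ and to $u_k^{(0)}$ for $j<k<i$ vanish). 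Then normality inside $\mc V_{0,j}$ provides $g_j\in\mc V_{0,j}$ with $\frac{\partial g_j}{\partial u_j^{(0)}}=f_j$, and $v^{(j-1)}:=v^{(j)}-g_j$ preserves the previous cancellations while adding $\frac{\partial v^{(j-1)}}{\partial u_j^{(0)}}=0$. Setting $u_i:=v^{(0)}$ yields the required element.

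Next, I would define $u_i^{(n)}:=\partial^n u_i^{(0)}$ for $n\geq1$, so that $\partial u_i^{(n)}=u_i^{(n+1)}$ holds tautologically. The identity $\frac{\partial u_i^{(n)}}{\partial u_j^{(m)}}=\delta_{ij}\delta_{mn}$ is then proved by induction on $n$ using the commutation relation \eqref{eq:comm_frac}:
\[
\frac{\partial u_i^{(n+1)}}{\partial u_j^{(m)}} \,=\, \frac{\partial}{\partial u_j^{(m)}}\partial u_i^{(n)} \,=\, \partial\frac{\partial u_i^{(n)}}{\partial u_j^{(m)}} + \frac{\partial u_i^{(n)}}{\partial u_j^{(m-1)}} \,=\, \delta_{ij}\delta_{m,n+1}.
\]
For algebraic independence, suppose $P\in\mb F[X_i^{(n)}]$ is a nonzero polynomial with $P(u)=0$, and let $(N,J)$ be the largest multi-index (in lex order) appearing in $P$. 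Writing $P=\sum_{k=0}^{d}A_k(X)(X_J^{(N)})^k$ with $A_k$ not involving $X_J^{(N)}$, each $A_k(u)\in\mc V$ is built from $u_j^{(m)}$ with $(m,j)<(N,J)$, so $\frac{\partial A_k(u)}{\partial u_J^{(N)}}=0$; applying $\frac{\partial^d}{\partial(u_J^{(N)})^d}$ to $P(u)=0$ yields $d!\,A_d(u)=0$, and induction on the number of variables gives $A_d=0$. Iterating on $A_{d-1},\dots,A_0$ forces $P=0$, a contradiction. Hence the map $R_\ell\to\mc V$ sending $X_i^{(n)}\mapsto u_i^{(n)}$ is an injective homomorphism of algebras of differentiable functions, as required.

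The principal obstacle is the inductive construction in the first step: normality only guarantees surjectivity of $\frac{\partial}{\partial u_j^{(0)}}$ on the small subalgebra $\mc V_{0,j}$ (not on the larger $\mc V_{0,i}$ in which $v$ initially sits), so at each stage $j$ one must verify that the partial to be integrated has actually descended into $\mc V_{0,j}$. This requires a careful bookkeeping of which off-diagonal dependencies have been cancelled, exploiting the commutativity of the $\frac{\partial}{\partial u_k^{(0)}}$'s to ensure the relevant partial lies where normality applies.
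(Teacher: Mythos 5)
Your proof is correct, and its core step --- producing $u_i\in\mc V_{0,i}$ with the right partials by a downward induction on $j$ that integrates away the unwanted dependence on $u_j^{(0)}$, using commutativity of the partial derivatives to check that the function to be integrated has dropped into $\mc V_{0,j}$ where normality applies --- is exactly the paper's argument. Where you genuinely diverge is in the treatment of the higher derivatives: the paper invokes normality at every level $(n,i)$ to build each $u_i^{(n)}$ separately and then appeals to the earlier remark that the additive constants can be adjusted to force $\partial u_i^{(n)}=u_i^{(n+1)}$, whereas you set $u_i^{(n)}:=\partial^n u_i^{(0)}$, so that this identity is automatic, and you derive \eqref{eq:dic20_4} for $n\geq 1$ by induction from \eqref{eq:comm_frac}. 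This buys two small improvements: you only use normality at the levels $(0,i)$, and you avoid the constant-adjustment step entirely. Your final verification of algebraic independence (so that $R_\ell\to\mc V$ is actually injective) is an extra piece of diligence the paper leaves implicit; it is correct as written.
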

\begin{proof}
As pointed out above, we need to find elements $u_i^{(n)}\in\mc V$, for $i\in I,n\in\mb Z_+$,
such that \eqref{eq:dic20_4} holds.
By the normality assumption, 
there exists $v_i^n\in\mc V_{n,i}$ such that $\frac{\partial v_i^n}{\partial u_i^{(n)}}=1$.
Note that $ \frac{\partial}{\partial u_{i}^{(n)}} \frac{\partial v_i^n}{\partial u_{i-1}^{(n)}}=
\frac{\partial 1}{\partial u_{i-1}^{(n)}}=0$,
hence $\frac{\partial v_i^n}{\partial u_{i-1}^{(n)}}\in\mc V_{n,i-1}$.
If we then replace $v_i^n$ by
$w_i^n=v_i^n-\tint du_{i-1}^n\frac{\partial v_i^n}{\partial u_{i-1}^{(n)}}$,
we have that 
$\frac{\partial w_i^n}{\partial u_i^{(n)}}=1$
and $\frac{\partial w_i^n}{\partial u_{i-1}^{(n)}}=0$.
Proceeding by downward induction, we obtained the desired element $u_i^{(n)}$.
\end{proof}

Clearly, the algebra $R_\ell$ is normal.
Moreover, any extension $\mc V$ of $R_\ell$ can be further extended to a normal algebra,
by adding missing integrals.
For example, the localization of $R_1=\mb F[u^{(n)}\,|\,n\in\mb Z_+]$ by $u$
is not a normal algebra, since it doesn't contain $\tint\frac{du}{u}$.
Note that any differential algebra $(A,\partial)$
can be viewed as a trivial algebra of differentiable functions with $\frac{\partial}{\partial u_i^{(n)}}=0$.
Such an algebra does not contain $R_\ell$, hence it is not normal.

\vspace{3ex}
\subsection{The complex of variational calculus.}~~
\label{sec:5.2}
Let $\mc V$ be an algebra of differentiable functions.
The \emph{basic de Rham complex} $\tilde\Omega^\bullet=\tilde\Omega^\bullet(\mc V)$
is defined as the
free commutative superalgebra over $\mc V$
with odd generators $\delta u_i^{(n)},\,i\in I,n\in\mb{Z}_+$.
In other words $\tilde\Omega^\bullet$ consists of finite sums of the form
\begin{equation}\label{eq:apr24_1}
\tilde\omega=\sum_{i_r\in I, m_r\in\mb{Z}_+}
f^{m_1\cdots m_{k}}_{i_1\cdots i_{k}}\, \delta u_{i_1}^{(m_1)}\wedge\cdots\wedge
\delta u_{i_{k}}^{(m_{k})}
\,\,,  \quad f^{m_1\cdots m_{k}}_{i_1\cdots i_{k}} \in \mc V\,,
\end{equation}
and it has a (super)commutative product given by the wedge product $\wedge$.
We have a natural $\mb Z_+$-grading 
$\tilde\Omega^\bullet=\bigoplus_{k\in\mb Z_+}\tilde\Omega^k$
defined by saying that elements in $\mc V$ have degree 0,
while the generators $\delta u_i^{(n)}$ have degree 1.
Hence $\tilde\Omega^k$ is a free module over $\mc V$ with basis given by the elements
$\delta u_{i_1}^{(m_1)}\wedge\cdots\wedge\delta u_{i_{k}}^{(m_{k})}$,
with $(m_1,i_1)>\dots>(m_{k},i_{k})$ (with respect to the lexicographic order).
In particular $\tilde\Omega^0=\mc V$ 
and $\tilde\Omega^1=\bigoplus_{i\in I,n\in\mb Z_+}\mc V\delta u_i^{(n)}$.
Notice that there is a natural $\mc V$-linear pairing $\tilde\Omega^1\times\mf g\to\mc V$
defined on generators by 
$\big(\delta u_i^{(m)},\frac{\partial}{\partial u_j^{(n)}}\big)=\delta_{i,j}\delta_{m,n}$,
and extended to $\tilde\Omega^1\times\mf g$ by $\mc V$-bilinearity.

We let $\delta$ be an odd derivation of degree 1 of $\tilde\Omega^\bullet$,
such that $\delta f=\sum_{i\in I,\,n\in\mb Z_+}\frac{\partial f}{\partial u_i^{(n)}}\delta u_i^{(n)}$
for $f\in\mc V$, and $\delta(\delta u_i^{(n)})=0$.
It is immediate to check that $\delta^2=0$ and that,
for $\tilde\omega\in\tilde\Omega^k$ as in \eqref{eq:apr24_1}, we have
\begin{equation}\label{eq:apr24_2}
\delta(\tilde\omega)
\,=\,
\sum_{\substack{i_r\in I, m_r\in\mb{Z}_+\\ j\in I, n\in\mb Z_+}}
\frac{\partial f^{m_1\cdots m_k}_{i_1\cdots i_k}}{\partial u_{i_j}^{(n)}}\, 
\delta u_{j}^{(n)}\wedge \delta u_{i_1}^{(m_1)}\wedge\cdots\wedge\delta u_{i_k}^{(m_k)}\,.
\end{equation}

For $X\in\mf g$ we define the \emph{contraction operator}
$\iota_X:\,\tilde\Omega^\bullet\to\tilde\Omega^\bullet$,
as an odd derivation of $\tilde\Omega^\bullet$ of degree -1,
such that $\iota_X(f)=0$ for $f\in\mc V$,
and $\iota_X(\delta u_i^{(n)})=X(u_i^{(n)})$.
If $X\in\mf g$ is as in \eqref{2006_X}
and $\tilde\omega\in\tilde\Omega^k$ is as in \eqref{eq:apr24_1}, we have
\begin{equation}\label{eq:apr24_3}
\iota_X(\tilde\omega)
\,=\,
\sum_{i_r\in I, m_r\in\mb{Z}_+} \sum_{q=1}^{k} (-1)^{q+1}
f^{m_1\cdots m_{k}}_{i_1\cdots i_{k}} P_{i_q,m_q}\, 
\delta u_{i_1}^{(m_1)}\wedge\stackrel{q}{\check{\cdots}}\wedge\delta u_{i_{k}}^{(m_{k})}\,.
\end{equation}
In particular, for $f\in\mc V$ we have
\begin{equation}\label{eq:apr24_8}
\iota_X(\delta f)\,=\,X(f)\,.
\end{equation}
It is easy to check that the operators $\iota_X,\,X\in\mf g$, form an abelian
(purely odd) subalgebra of the Lie superalgebra $\Der\,\tilde\Omega^\bullet$, namely
\begin{equation}\label{eq:apr24_5}
[\iota_X,\iota_Y]=\iota_X\circ\iota_Y+\iota_Y\circ\iota_X=0\,.
\end{equation}

The \emph{Lie derivative} $L_X$ along $X\in\mf g$
is defined as a degree 0 derivation of the superalgebra $\tilde\Omega^\bullet$,
commuting with $\delta$,
and such that
\begin{equation}\label{eq:vsep5}
L_X(f) = X(f)\,\,\,\,\,\,
\text{ for }\,\,\,
f\in\tilde\Omega^0\,.
\end{equation}
One easily check (on generators) Cartan's formula (cf. \eqref{eq:3.1}):
\begin{equation}\label{eq:apr24_6}
L_X\,=\,[\delta,\iota_X] = \delta\circ\iota_X+\iota_X\circ\delta\,.
\end{equation}
We next prove the following:
\begin{equation}\label{eq:apr24_9}
[\iota_X,L_Y] = \iota_X\circ L_Y-L_Y\circ \iota_X = \iota_{[X,Y]}\,.
\end{equation}
It is clear by degree considerations that both sides of \eqref{eq:apr24_9}
act as zero on $\tilde\Omega^0=\mc V$.
Moreover, it follows by \eqref{eq:apr24_8} that
$[\iota_X,L_Y](\delta f)=\iota_X \delta \iota_Y \delta f -\iota_Y\delta \iota_X\delta f
=X(Y(f))-Y(X(f))=[X,Y](f)=\iota_{[X,Y]}(\delta f)$ for every $f\in\mc V$.
Equation \eqref{eq:apr24_9} then follows by the fact that both sides are even derivations
of the wedge product in $\tilde\Omega$.
Finally, as immediate consequence of equation \eqref{eq:apr24_9}, we get that
\begin{equation}\label{eq:apr24_10}
[L_X,L_Y] = L_X\circ L_Y-L_Y\circ L_X = L_{[X,Y]}\,.
\end{equation}
Thus, $\tilde\Omega^\bullet$ is a $\mf g$-complex,
$\hat{\mf g}$ acting on $\tilde\Omega^\bullet$ by derivations.

Note that the action of $\partial$ on $\mc V$ extends to a degree 0 derivation 
of $\tilde\Omega^\bullet$, such that
\begin{equation}\label{eq:vsep5_2}
\partial(\delta u_i^{(n)})\,=\,\delta u_i^{(n+1)}\,,\,\, i\in I,\,n\in\mb Z_+\,.
\end{equation}
This derivation commutes with $\delta$, hence we can
consider the corresponding 
\emph{reduced de Rham complex} $\Omega^\bullet=\Omega^\bullet(\mc V)$,
usually called the \emph{complex of variational calculus}:
$$
\Omega^\bullet=\bigoplus_{k\in\mb Z_+}\Omega^k
\,\,,\,\,\,\,
\Omega^k=\tilde\Omega^k/\partial\tilde\Omega^k\,,
$$
with the induced action of $\delta$.
%
With an abuse of notation, we denote by $\delta$ and, for $X\in\mf g^\partial$, by $\iota_X,\,L_X$,
the maps induced on the quotient space $\Omega^k$ by the corresponding maps 
on $\tilde\Omega^k$.
Obviously, $\Omega^\bullet$ is a $\mf g^\partial$-complex.

\vspace{3ex}
\subsection{Isomorphism of the cohomology $\mf g^\partial$-complexes $\Omega^\bullet$ and $\Gamma^\bullet$.}~~
\label{sec:5.4}
\begin{proposition}\label{prop:ago22}
Let $\mc V$ be an algebra of differentiable functions.
Consider the Lie conformal algebra $A=\oplus_{i\in I}\mb F[\partial]u_i$
with the zero $\lambda$-bracket.
Then $\mc V$ is a module over the Lie conformal algebra $A$,
with the $\lambda$-action given by 
\begin{equation}\label{eq:july19_2_new}
{u_i}_\lambda f=
\sum_{n \in \mb{Z}_+} \lambda^n \frac{\partial f}{\partial u_i ^{(n)}} \, .
\end{equation}
Moreover, the $\lambda$-action of $A$ on $\mc V$ is by derivations 
of the associative product in $\mc V$.
\end{proposition}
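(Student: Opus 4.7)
The plan is to verify that \eqref{eq:july19_2_new}, extended to all of $A$ by sesquilinearity, satisfies the axioms of an $A$-module listed in the introduction (the first two sesquilinearity properties and, since $A$ has zero $\lambda$-bracket, the vanishing of the commutator of two $\lambda$-actions on $\mc V$), and then separately that each $u_{i\lambda}$ is a derivation of the product in $\mc V$. Since $A = \bigoplus_{i\in I}\mb F[\partial]u_i$ is free as an $\mb F[\partial]$-module, once the axioms are verified on the generators $u_i$ they extend automatically.

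First, I would extend \eqref{eq:july19_2_new} to $A$ by declaring $(P(\partial)u_i)_\lambda f := P(-\lambda)\,u_{i\lambda}f$, which is the first sesquilinearity condition by construction. The key computation is the second sesquilinearity, $u_{i\lambda}(\partial f)=(\partial+\lambda)u_{i\lambda}f$. This uses the defining commutation rule \eqref{eq:comm_frac}:
\begin{equation*}
u_{i\lambda}(\partial f)
= \sum_{n\in\mb Z_+}\lambda^n\frac{\partial(\partial f)}{\partial u_i^{(n)}}
= \sum_{n\in\mb Z_+}\lambda^n\Big(\partial\frac{\partial f}{\partial u_i^{(n)}}+\frac{\partial f}{\partial u_i^{(n-1)}}\Big),
\end{equation*}
with the convention that the last term is $0$ for $n=0$. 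Splitting the sum and reindexing the shifted term as $\sum_{n\geq 0}\lambda^{n+1}\frac{\partial f}{\partial u_i^{(n)}}$ gives exactly $(\partial+\lambda)u_{i\lambda}f$.

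Next, I would verify the Jacobi identity. Since $[{u_i}_\lambda u_j]=0$ in $A$, what must be shown is that the operators ${u_i}_\lambda$ and ${u_j}_\mu$ commute as $\mb F[\lambda,\mu]$-linear endomorphisms of $\mc V$. But each is an $\mb F[\lambda]$-, resp.\ $\mb F[\mu]$-linear combination of the derivations $\frac{\partial}{\partial u_k^{(m)}}$, and these commute among themselves by the very definition of an algebra of differentiable functions in Section~\ref{sec:5.1}. Finally, the derivation statement is immediate: each $\frac{\partial}{\partial u_i^{(n)}}$ is by definition a derivation of $\mc V$, hence so is the $\mb F[\lambda]$-linear combination $u_{i\lambda}$, giving $u_{i\lambda}(fg)=(u_{i\lambda}f)g+f(u_{i\lambda}g)$.

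The only step that is not purely formal is the verification of the second sesquilinearity, which is precisely where \eqref{eq:comm_frac} is used; everything else reduces to the commutativity and Leibniz property of the $\frac{\partial}{\partial u_i^{(n)}}$. There is no real obstacle, but it is worth noting that no assumption on the existence of generators $u_i^{(n)}\in\mc V$ satisfying \eqref{eq:dic20_4} (normality, or being an extension of $R_\ell$) is needed: the proposition holds for arbitrary algebras of differentiable functions, and the only input from that definition is precisely the commutation rule \eqref{eq:comm_frac} together with the finiteness condition on $\frac{\partial f}{\partial u_i^{(n)}}$, the latter ensuring that the sum in \eqref{eq:july19_2_new} is finite and hence defines an element of $\mb F[\lambda]\otimes\mc V$.
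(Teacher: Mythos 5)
Your proof is correct and takes essentially the same approach as the paper, which simply asserts that the module structure "follows from the definition of an algebra of differentiable functions"; you have spelled out that verification, with the second sesquilinearity via \eqref{eq:comm_frac} and the Jacobi identity via commutativity of the $\frac{\partial}{\partial u_i^{(n)}}$ being exactly the intended content.
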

\begin{proof}
The fact that $\mc V$ is an $A$-module follows from the 
definition of an algebra of differentiable functions. 
The second statement is clear as well.
\end{proof}
Let $\tilde\Gamma^\bullet=\tilde\Gamma^\bullet(A,\mc V)$
and $\Gamma^\bullet=\Gamma^\bullet(A,\mc V)$
be the basic and reduced Lie conformal algebra cohomology complexes
for the $A$-module $\mc V$, defined in Proposition \ref{prop:ago22}.
Thus, to every algebra of differentiable functions $\mc V$
we can associate two apparently unrelated
types of cohomology complexes:
the basic and reduced de Rham cohomology complexes, 
$\tilde\Omega^\bullet(\mc V)$ and $\Omega^\bullet(\mc V)$,
defined in Section \ref{sec:5.2},
and the basic and reduced 
Lie coformal algebra cohomology complexes 
$\tilde\Gamma^\bullet(A,\mc V)$ and $\Gamma^\bullet(A,\mc V)$,
defined in Section \ref{sec:1.1},
for the Lie conformal algebra $A=\bigoplus_{i\in I}\mb F[\partial]u_i$, with the zero $\lambda$-bracket,
acting on $\mc V$, with the $\lambda$-action given by \eqref{eq:july19_2_new}.
We are going to prove that, in fact, these complexes are isomorphic,
and all the related structures (such as exterior products, contraction operators,
Lie derivatives,...) correspond via this isomorphism.

We denote, as in Section \ref{sec:3.2}, by
$\tilde\Gamma_\bullet=\tilde\Gamma_\bullet(A,\mc V)$
(resp. $\Gamma_\bullet=\Gamma_\bullet(A,\mc V)$)
the basic (resp. reduced) space of chains of $A$ with coefficients in $\mc V$.
Recall from Secton \ref{sec:3.2.5} that $\Pi\tilde\Gamma_1$ 
is identified with the space 
$(A\otimes \mc V[[x]])\big/(\partial\otimes1+1\otimes\partial_x)(A\otimes \mc V[[x]])$,
and it carries a Lie algebra structure
given by the Lie bracket \eqref{eq:nov23_2},
which in this case takes the form, for $i,j\in I$ 
and 
$P(x)=\sum_{m\in\mb Z_+}\frac1{m!}P_mx^m,\,Q(x)=\sum_{n\in\mb Z_+}\frac1{n!}Q_nx^n\,\in \mc V[[x]]$:
\begin{equation}\label{eq:sat_12}
[u_i\otimes P(x),u_j\otimes Q(x)]
=
-u_i\otimes \sum_{n\in\mb Z_+}Q_n\frac{\partial P(x)}{\partial u_j^{(n)}}
+u_j\otimes \sum_{m\in\mb Z_+}P_m\frac{\partial Q(x)}{\partial u_i^{(m)}}\,.
\end{equation}
Moreover, $\partial$ acts on $\tilde\Gamma_1$ by \eqref{eq:nov22_1}.
Its kernel $\Pi \Gamma_1$ 
consists of elements of the form
\begin{equation}\label{eq:sat_11}
\sum_{i\in I}u_i\otimes e^{x\partial}P_i
\,\,,\,\,\,\,
\text{ where } P_i\in\mc V\,,
\end{equation}
and it is a Lie subalgebra of $\Pi \tilde\Gamma_1$.
We also denote, as in Section \ref{sec:5.1}, 
by $\mf g$ the Lie algebra of all vector fields \eqref{2006_X}
acting on $\mc V$,
and by $\mf g^\partial\subset\mf g$ the Lie subalgebra  of evolutionary vector fields \eqref{2006_X2}.

\begin{proposition}\label{prop:voct3}
The map 
$\Phi_1:\,\Pi \tilde\Gamma_1\to\mf g$, which maps
\begin{equation}\label{eq:sept3_1}
\xi=\sum_{i\in I}u_i\otimes P_i(x)
=\sum_{i\in I,n\in\mb Z_+}\frac1{n!} u_i\otimes P_{i,n} x^n\,\in\,\tilde\Gamma_1\,,
\end{equation}
to
\begin{equation}\label{eq:ago22_1}
\Phi_1(\xi)\,=\,\sum_{i\in I,\,n\in\mb Z_+}P_{i,n}\frac{\partial}{\partial u_i^{(n)}}\,,
\end{equation}
is a Lie algebra isomorphism.
Moreover, the image of the space of reduced 1-chains via $\Phi_1$ 
is the space of evolutionary vector fields. Hence we have the induced
Lie algebra isomorphism 
$\Phi_1\,:\,\,\Pi \Gamma_1 \stackrel{\sim}{\to} \mf g^\partial$.
\end{proposition}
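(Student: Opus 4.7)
The plan is to first establish that $\Phi_1$ is a well-defined bijection of vector spaces, then verify it is a Lie algebra homomorphism by a direct bracket computation, and finally read off the restriction to reduced chains by inspecting the canonical form \eqref{eq:sat_11}.

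First I would exploit the decomposition $A=\bigoplus_{i\in I}\mb F[\partial]u_i$ (which is free as $\mb F[\partial]$-module) to choose a canonical representative for every class in $\tilde\Gamma_1$. Using the quotient relation $\partial u_i\otimes Q(x)\equiv -u_i\otimes\partial_xQ(x)$ repeatedly, any element of $A\otimes\mc V[[x]]$ is equivalent, modulo $(\partial\otimes1+1\otimes\partial_x)(A\otimes\mc V[[x]])$, to a unique expression $\sum_{i\in I}u_i\otimes P_i(x)$ with $P_i(x)\in\mc V[[x]]$. This makes the formula \eqref{eq:ago22_1} unambiguous and shows at once that $\Phi_1$ is a linear bijection onto $\mf g$: expanding $P_i(x)=\sum_n\tfrac1{n!}P_{i,n}x^n$ parametrizes elements of $\Pi\tilde\Gamma_1$ by arbitrary families $(P_{i,n})_{i\in I,n\in\mb Z_+}$ of elements of $\mc V$, which is exactly the same data parametrizing vector fields in $\mf g$ by \eqref{2006_X}.

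Next I would verify that $\Phi_1$ intertwines the Lie brackets. It suffices by bilinearity to take $\xi=u_i\otimes P(x)$ and $\zeta=u_j\otimes Q(x)$ with $P(x)=\sum_m\tfrac1{m!}P_mx^m$ and $Q(x)=\sum_n\tfrac1{n!}Q_nx^n$. The bracket \eqref{eq:sat_12} gives
\[
[\xi,\zeta]\,=\,-u_i\otimes\sum_nQ_n\frac{\partial P(x)}{\partial u_j^{(n)}}
+u_j\otimes\sum_mP_m\frac{\partial Q(x)}{\partial u_i^{(m)}},
\]
to which $\Phi_1$ assigns
\[
-\sum_{m,n}Q_n\frac{\partial P_m}{\partial u_j^{(n)}}\frac{\partial}{\partial u_i^{(m)}}
+\sum_{m,n}P_m\frac{\partial Q_n}{\partial u_i^{(m)}}\frac{\partial}{\partial u_j^{(n)}}.
\]
On the other hand, $\Phi_1(\xi)=\sum_mP_m\frac{\partial}{\partial u_i^{(m)}}$ and $\Phi_1(\zeta)=\sum_nQ_n\frac{\partial}{\partial u_j^{(n)}}$, and the commutator in $\mf g$ is computed by the Leibniz rule: the second-order differential terms $P_mQ_n\frac{\partial^2}{\partial u_i^{(m)}\partial u_j^{(n)}}$ cancel because the derivations $\frac{\partial}{\partial u_i^{(m)}}$ commute, and what remains matches $\Phi_1([\xi,\zeta])$ exactly. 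This cancellation of the second-order part is the only nontrivial point and is the main (if mild) obstacle; it reflects precisely the reason the bracket \eqref{eq:sat_12} was defined with the particular sign pattern appearing in Lemma~\ref{lem:nov23}.

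Finally, for the reduced statement, every element of $\Pi\Gamma_1$ has the canonical form \eqref{eq:sat_11}, namely $\xi=\sum_{i\in I}u_i\otimes e^{x\partial}P_i$ with $P_i\in\mc V$. Since $e^{x\partial}P_i=\sum_{n\in\mb Z_+}\tfrac1{n!}(\partial^nP_i)x^n$, the coefficient of $\tfrac1{n!}x^n$ is $\partial^nP_i$, whence
\[
\Phi_1(\xi)\,=\,\sum_{i\in I,\,n\in\mb Z_+}(\partial^nP_i)\frac{\partial}{\partial u_i^{(n)}}\,=\,X_P
\]
is exactly the evolutionary vector field \eqref{2006_X2}. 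Conversely every $X_P\in\mf g^\partial$ arises this way, so $\Phi_1$ restricts to a bijection $\Pi\Gamma_1\stackrel{\sim}{\to}\mf g^\partial$, which is automatically a Lie algebra isomorphism by what was established above.
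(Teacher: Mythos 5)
Your proposal is correct and follows essentially the same route as the paper's (very terse) proof: bijectivity via the canonical representatives $\sum_i u_i\otimes P_i(x)$ coming from the freeness of $A$, the bracket comparison read off from \eqref{eq:sat_12} against the commutator of vector fields, and the identification $\Phi_1(\Gamma_1)=\mf g^\partial$ from the canonical form \eqref{eq:sat_11}. You merely spell out the details (in particular the cancellation of the second-order terms and the expansion $e^{x\partial}P_i=\sum_n\frac{x^n}{n!}\partial^nP_i$) that the paper declares ``clear'' or ``immediate''.
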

\begin{proof}
Clearly, $\Phi_1$ is a bijective map, and, by \eqref{eq:sat_11},
$\Phi_1(\Gamma_1)=\mf g^\partial$.
Hence we only need to check $\Phi_1$ is a Lie algebra homomorphism.
This is immediate from equation \eqref{eq:sat_12}.
\end{proof}
\begin{theorem}\label{th:july24}
The map $\Phi^\bullet:\,\tilde\Gamma^\bullet\to\tilde\Omega^\bullet$, 
such that $\Phi^0=\id|_{\mc V}$ and, for $k\geq1$,
$\Phi^k:\,\tilde\Gamma^k\to\tilde\Omega^k$
is given by
\begin{equation}\label{eq:july24_9}
\Phi^k(\tilde\gamma)\,=\,
\frac1{k!} \sum_{i_r\in I, m_r\in\mb{Z}_+}
f^{m_1\cdots m_{k}}_{i_1\cdots i_{k}}\, 
\delta u_{i_1}^{(m_1)}\wedge\cdots\wedge\delta u_{i_{k}}^{(m_{k})}\,,
\end{equation}
where $f^{m_1\cdots m_{k}}_{i_1\cdots i_{k}}\in\mc V$ is the coefficient 
of $\lambda_1^{m_1}\cdots\lambda_{k}^{m_{k}}$ 
in $\tilde\gamma_{\lambda_1,\dots,\lambda_{k}}(u_{i_1},\dots,u_{i_{k}})$,
is an isomorphism of superalgebras,
and an isomorphism of $\mf g$-complexes,
(once we identify the Lie algebras $\mf g$ and $\Pi\tilde\Gamma_1$ via $\Phi_1$,
as in Proposition \ref{prop:voct3}).

Moreover, $\Phi^\bullet$ commutes with the action of $\partial$,
hence it induces an isomorphism of the corresponding reduced $\mf g^\partial$-complexes:
$\Phi^\bullet:\,\Gamma^\bullet\stackrel{\sim}{\to}\Omega^\bullet$.
\end{theorem}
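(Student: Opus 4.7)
The plan is to establish $\Phi^\bullet$ as a linear bijection in each degree, then verify, one at a time, its compatibility with the wedge product, with $\partial$, with the contractions $\iota_\xi$, and with the differential $\delta$; the Lie derivative compatibility is then automatic from Cartan's formula \eqref{eq:3.1}, \eqref{eq:apr24_6}, and the statement about reduced complexes follows by descent.

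First I would check that each $\Phi^k$ is a linear bijection. Since $A=\bigoplus_{i\in I}\mb F[\partial]u_i$ is a free $\mb F[\partial]$-module, sesquilinearity A1 implies that any $\tilde\gamma\in\tilde\Gamma^k$ is uniquely determined by the polynomials $\tilde\gamma_{\lambda_1,\ldots,\lambda_k}(u_{i_1},\ldots,u_{i_k})\in\mc V[\lambda_1,\ldots,\lambda_k]$, whose coefficients $f^{m_1\cdots m_k}_{i_1\cdots i_k}$ are, by A2, antisymmetric under simultaneous permutations of the pairs $(i_r,m_r)$. On the form side, elements of $\tilde\Omega^k$ are uniquely represented by their antisymmetric coefficients against the wedge basis; the factor $1/k!$ in \eqref{eq:july24_9} absorbs the overcounting from summing over all unordered indices, showing that $\Phi^k$ is a bijection.

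Next I would verify that $\Phi^\bullet$ is a homomorphism of superalgebras. Given $\tilde\alpha\in\tilde\Gamma^h$ and $\tilde\beta\in\tilde\Gamma^k$, the exterior product \eqref{eq:sfor_2} carries the combinatorial factor $1/(h!k!)$ and a sum over all permutations of $S_{h+k}$; extracting coefficients in $\Phi^{h+k}(\tilde\alpha\wedge\tilde\beta)$ and using the antisymmetry of the wedge basis $\delta u_{i_1}^{(m_1)}\wedge\cdots\wedge\delta u_{i_{h+k}}^{(m_{h+k})}$, the permutation sum collapses precisely into $\Phi^h(\tilde\alpha)\wedge\Phi^k(\tilde\beta)$ with the correct normalization $1/(h+k)!$. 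The compatibility $\Phi^\bullet\circ\partial=\partial\circ\Phi^\bullet$ is then a direct computation from \eqref{eq:july24_8}: the multiplication by $\lambda_1+\cdots+\lambda_k$ inside $\mc V[\lambda_1,\ldots,\lambda_k]$ corresponds, after extracting coefficients, to shifting $m_r\mapsto m_r+1$, which matches $\partial(\delta u_i^{(n)})=\delta u_i^{(n+1)}$ from \eqref{eq:vsep5_2}. For a $1$-chain $\xi=\sum_i u_i\otimes P_i(x)\in\tilde\Gamma_1$, the contraction formula \eqref{eq:ago15p_1} gives $\phi^\mu(\tilde\gamma_{\lambda_1,\ldots,\lambda_k}(u_{i_1},a_2,\ldots,a_k))$; expanding both $\phi$ and $\tilde\gamma$ in powers of $\lambda_1$, the coefficient extraction produces $\sum_{m_1}P_{i_1,m_1}f^{m_1\cdots m_k}_{i_1\cdots i_k}$, matching the action \eqref{eq:apr24_3} of $X=\Phi_1(\xi)$ from \eqref{eq:ago22_1} on $\Phi^k(\tilde\gamma)$.

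Finally, for the differential, I would exploit the fact that the $\lambda$-bracket on $A$ is zero, so formula \eqref{eq:july24_7} reduces to
\[
(\delta\tilde\gamma)_{\lambda_1,\ldots,\lambda_{k+1}}(u_{i_1},\ldots,u_{i_{k+1}})
=\sum_{j=1}^{k+1}(-1)^{j+1}\,{u_{i_j}}_{\lambda_j}\,\tilde\gamma_{\lambda_1,\stackrel{j}{\check\cdots},\lambda_{k+1}}(u_{i_1},\stackrel{j}{\check\cdots},u_{i_{k+1}}).
\]
By \eqref{eq:july19_2_new}, ${u_{i_j}}_{\lambda_j}$ acts on coefficients as $\sum_n\lambda_j^n\frac{\partial}{\partial u_{i_j}^{(n)}}$, so reading off the $\lambda_1^{m_1}\cdots\lambda_{k+1}^{m_{k+1}}$-coefficient yields $\frac{\partial}{\partial u_{i_j}^{(m_j)}}f^{m_1\stackrel{j}{\check\cdots}m_{k+1}}_{i_1\stackrel{j}{\check\cdots}i_{k+1}}$, wedged together with the missing $\delta u_{i_j}^{(m_j)}$ in the $j$-th position; after gathering the $(-1)^{j+1}$ signs needed to move this generator to the front, this is exactly formula \eqref{eq:apr24_2} for $\delta\Phi^k(\tilde\gamma)$. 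With $\delta$, $\partial$, and $\iota_\xi$ corresponding, the Lie derivatives correspond by Cartan's formula, proving that $\Phi^\bullet$ is an isomorphism of $\mf g$-complexes; since $\Phi^\bullet$ commutes with $\partial$ and, by Proposition \ref{prop:voct3}, $\Phi_1$ restricts to an isomorphism $\Pi\Gamma_1\stackrel{\sim}{\to}\mf g^\partial$, it descends to the reduced $\mf g^\partial$-complex isomorphism $\Gamma^\bullet\stackrel{\sim}{\to}\Omega^\bullet$. The main obstacle is bookkeeping: keeping the combinatorial $1/k!$-factors and the permutation signs consistent across the wedge-product and differential verifications, where the normalization in \eqref{eq:july24_9} must be matched against the factors in \eqref{eq:sfor_2} and \eqref{eq:july24_7} — everything else reduces to coefficient-by-coefficient comparison.
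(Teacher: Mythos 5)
Your proposal is correct and follows essentially the same route as the paper's proof: bijectivity via the coefficient/skew-symmetrization correspondence (the paper makes this explicit by writing down the inverse map $\Psi^k$), then coefficient-by-coefficient verification of compatibility with the wedge product, $\partial$, the contractions, and $\delta$ (using that the $\lambda$-bracket on $A$ is zero), with the Lie derivatives handled by Cartan's formula and the reduced statement by descent. The only detail worth adding is the observation that the sum in \eqref{eq:july24_9} is finite (since $I$ is finite and each $\tilde\gamma_{\lambda_1,\dots,\lambda_k}(u_{i_1},\dots,u_{i_k})$ is a polynomial), so that $\Phi^k$ indeed lands in $\tilde\Omega^k$.
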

\begin{proof}
Notice that since, by assumption, $I$ is a finite index set,
the RHS of \eqref{eq:july24_9} is a finite sum,
so that $\Phi^k(\tilde\Gamma^k)\subset\tilde\Omega^k$.
By the sesquilinearity and skew-symmetry conditions A1. and A2. in Section \ref{sec:1.1},
elements $\tilde\gamma\in\tilde\Gamma^k$ are uniquely determined by
the collection of polynomials 
$\tilde\gamma_{\lambda_1,\cdots,\lambda_{k}}(u_{i_1},\cdots,u_{i_{k}})
=\sum_{m_r\in\mb Z_+}
f^{m_1\cdots m_{k}}_{i_1\cdots i_{k}}\lambda_1^{m_1}\cdots\lambda_{k}^{m_{k}}$,
which are skew-symmetric with respect to simultaneous permutation 
of the variables $\lambda_r$ and the indices $i_r$.
We want to check that $\Phi^k$ is a bijective linear map from $\tilde\Gamma^k$ to $\tilde\Omega^k$.
In fact,
denote by $\Psi^k:\,\tilde\Omega^k\to\tilde\Gamma^k$ the linear map which to $\tilde\omega$
as in \eqref{eq:apr24_1} associates the $k$-cochain $\Psi^k(\tilde\omega)$, such that
$$
\Psi^k(\tilde\omega)_{\lambda_1,\dots,\lambda_{k}}
(u_{i_1},\dots,u_{i_{k}})
= \sum_{m_r\in\mb{Z}_+}
\langle f\rangle^{m_1\cdots m_{k}}_{i_1\cdots i_{k}}\, 
\lambda_1^{m_1}\cdots\lambda_{k}^{m_{k}}\,,
$$
where $\langle f\rangle$ denotes the skew-symmetrization of $f$:
$$
\langle f\rangle^{m_1\cdots m_{k}}_{i_1\cdots i_{k}}
=\sum_{\sigma} \text{sign}(\sigma) 
f^{m_{\sigma(1)}\cdots m_{\sigma(k)}}_{i_{\sigma(1)}\cdots i_{\sigma(k)}}\,,
$$
and $\Psi^k(\tilde\omega)$ is extended to $A^{\otimes k}$ by the sesquilinearity condition A1.
It is straightforward to check that $\Psi^k(\tilde\omega)$ is indeed a $k$-cochain,
and that the maps $\Phi^k$ and $\Psi^k$ are inverse to each other.
This proves that $\Phi^\bullet$ is a bijective map.

Next, let us prove that $\Phi^\bullet$ is an associative superalgebra homomorphism.
Let $\tilde\alpha\in\tilde\Gamma^h,\,\tilde\beta\in\tilde\Gamma^k$ and
let $\alpha^{m_1,\cdots,m_{h}}_{i_1,\cdots,i_{h}}$
be the coefficient of $\lambda_1^{m_1}\cdots\lambda_{h}^{m_{h}}$
in $\tilde\alpha_{\lambda_1,\cdots,\lambda_{h}}(u_{i_1},\cdots,u_{i_{h}})$,
and let $\beta^{n_1,\cdots,n_{k}}_{j_1,\cdots,j_{k}}$
be the coefficient of $\lambda_1^{n_1}\cdots\lambda_{k}^{n_{k}}$
in $\tilde\beta_{\lambda_1,\cdots,\lambda_{k}}(u_{j_1},\cdots,u_{j_{k}})$.
By \eqref{eq:sfor_2},
the coefficient of $\lambda_1^{m_1}\cdots\lambda_{h+k}^{m_{h+k}}$ in 
$(\tilde\alpha\wedge\tilde\beta)_{\lambda_1,\cdots,\lambda_{h+k}}(u_{i_1},\cdots,u_{i_{h+k}})$
is
$$
\sum_{\sigma\in S_{h+k}} \frac{\text{sign}(\sigma)}{h!k!}
\alpha^{m_{\sigma(1)},\cdots,m_{\sigma(h)}}_{i_{\sigma(1)},\cdots,i_{\sigma(h)}}
\beta^{m_{\sigma(h+1)},\cdots,m_{\sigma(h+k)}}_{i_{\sigma(h+1)},\cdots,i_{\sigma(h+k)}}\,.
$$
The identity 
$\Phi^{h+k}(\tilde\alpha\wedge\tilde\beta)=\Phi^h(\tilde\alpha)\wedge\Phi^k(\tilde\beta)$
follows by the definition \eqref{eq:july24_9} of $\Phi^k$.

Let $\tilde\gamma\in\tilde\Gamma^k$,
and denote by $f^{m_1\cdots m_{k}}_{i_1\cdots i_{k}}\in\mc V$ 
the coefficient of $\lambda_1^{m_1}\cdots\lambda_{k}^{m_{k}}$ 
in $\tilde\gamma_{\lambda_1,\dots,\lambda_{k}}(u_{i_1},\dots,u_{i_{k}})$.
We want to prove that $\Phi^{k+1}(\delta\tilde\gamma)=\delta\Phi^k(\tilde\gamma)$.
By assumption, the $\lambda$-bracket on $A$ is zero,
and the $\lambda$-action of $A$ on $\mc V$ is given by 
\eqref{eq:july19_2_new}.
Hence, recalling \eqref{eq:july24_7},
the coefficient of $\lambda_1^{m_1}\cdots\lambda_{k+1}^{m_{k+1}}$ in 
$(\delta\tilde\gamma)_{\lambda_1,\cdots,\lambda_{k+1}}(u_{i_1},\cdots,u_{i_{k+1}})$
is
$$
\sum_{r=1}^{k+1}(-1)^{r+1} 
\frac{\partial 
f^{m_1\stackrel{r}{\check{\cdots}} m_{k+1}}_{i_1\stackrel{r}{\check{\cdots}} i_{k+1}}}
{\partial u_{i_r}^{(m_r)}}\,.
$$
It follows that
\begin{eqnarray*}
\Phi^{k+1}(\delta\tilde\gamma)
&=& \frac1{(k+1)!} \sum_{i_r\in I, m_r\in\mb{Z}_+}
\sum_{q=1}^{k+1}(-1)^{q+1} 
\frac{\partial 
f^{m_1\stackrel{q}{\check{\cdots}} m_{k+1}}_{i_1\stackrel{q}{\check{\cdots}} i_{k+1}}}
{\partial u_{i_q}^{(m_q)}}
\delta u_{i_1}^{(m_1)}\wedge\cdots\wedge\delta u_{i_{k+1}}^{(m_{k+1})} \\
&=& \frac1{k!} \sum_{i_r\in I, m_r\in\mb{Z}_+}
\frac{\partial 
f^{m_1\cdots m_{k}}_{i_1\cdots i_{k}}}
{\partial u_{i_0}^{(m_0)}}
\delta u_{i_0}^{(m_0)}\wedge\cdots\wedge\delta u_{i_{k}}^{(m_{k})}
=
\delta\Phi^k(\tilde\gamma)\,,
\end{eqnarray*}
thus proving the claim.

Similarly, \,\,\,\,
the coefficient \,\,\,\,
of $\lambda_1^{m_1}\cdots\lambda_{k}^{m_{k}}$ \,\,\,\,
in $(\partial\tilde\gamma)_{\lambda_1,\cdots,\lambda_{k}}(u_{i_1},\cdots,u_{i_{k}})$
\,\,\,\, is
$\partial^M
f^{m_1\cdots m_{k}}_{i_1\cdots i_{k}}$\\ 
$+\sum_{r=1}^{k}f^{m_1\cdots m_r-1\cdots m_{k}}_{i_1\cdots i_{k}}$,
so that
\begin{eqnarray*}
\Phi^k(\partial\tilde\gamma)
&=& \frac1{k!} \sum_{i_r\in I, m_r\in\mb{Z}_+}
\Big(
\partial^M f^{m_1\cdots m_{k}}_{i_1\cdots i_{k}}
\delta u_{i_1}^{(m_1)}\wedge\cdots\wedge\delta u_{i_{k}}^{(m_{k})} \\
&& +f^{m_1\cdots m_{k}}_{i_1\cdots i_{k}}
\sum_{q=1}^{k} 
\delta u_{i_1}^{(m_1)}\wedge\cdots\wedge
\delta u_{i_{q}}^{(m_{q}+1)}\wedge\cdots\wedge\delta u_{i_{k}}^{(m_{k})} \Big) 
=
\partial\Phi^k(\tilde\gamma)\,.
\end{eqnarray*}
This proves that $\Phi^\bullet$ is compatible with the action of $\partial$.

Finally, we prove that $\Phi^\bullet$ is compatible with the contraction operators.
Let $\tilde\gamma\in\tilde\Gamma^k$ be as in the statement of the theorem,
and let $\xi\in\tilde\Gamma_1$ be as in \eqref{eq:sept3_1}.
By equation \eqref{eq:nov22_2}, we have the following formula for 
the contraction operator $\iota_\xi$,
$$
(\iota_\xi\tilde\gamma)_{\lambda_2,\cdots,\lambda_{k}}(u_{i_2},\cdots,u_{i_{k}})
=
\sum_{i_1\in I}
\big\langle P_{i_1}(x_1),\tilde\gamma_{\lambda_1,\lambda_2,\cdots,\lambda_{k}}
(u_{i_1},u_{i_2},\cdots,u_{i_{k}})\big\rangle\,,
$$
where $\langle\,,\,\rangle$ denotes the contraction of $x_1$ with $\lambda_1$
defined in \eqref{eq:nov22_3}.
Hence, the coefficient 
of $\lambda_2^{m_2}\cdots\lambda_{k}^{m_{k}}$ in 
$(\iota_\xi\tilde\gamma)_{\lambda_2,\cdots,\lambda_{k}}(u_{i_2},\cdots,u_{i_{k}})$
is
$$
\sum_{i_1\in I,m_1\in\mb Z_+} 
P_{i_1,m_1} f^{m_1 m_2\cdots m_{k}}_{i_1 i_2\cdots i_{k}}\,.
$$
It follows that
$$
\Phi^{k-1}(\iota_\xi(\tilde\gamma))
= \frac1{(k-1)!} \sum_{i_r\in I, m_r\in\mb{Z}_+}
P_{i_1,m_1} f^{m_1 m_2\cdots m_{k}}_{i_1 i_2\cdots i_{k}}
\delta u_{i_2}^{(m_2)}\wedge\cdots\wedge\delta u_{i_{k}}^{(m_{k})}\,,
$$
which, recalling \eqref{eq:apr24_3} and \eqref{eq:ago22_1}, 
is the same as $\iota_{\Phi_1(\xi)}(\Phi^{k}(\tilde\gamma))$.
This completes the proof of the theorem.
\end{proof}

\vspace{3ex}
\subsection{An explicit construction of the $\mf g^\partial$-complex of variational calculus.}~~
\label{sec:v5.4}
Let $\mc V$ be an algebra of differentiable functions in the variables $\{u_i\}_{i\in I}$,
let $A=\bigoplus_{i\in I}\mb F[\partial]u_i$ be the free $\mb F[\partial]$-module
of rank $\ell$, considered as a Lie conformal algebra with the zero $\lambda$-bracket,
and consider the $A$-module structure on $\mc V$,
with the $\lambda$-action given by \eqref{eq:july19_2_new}.
By Theorem \ref{th:july24},
the $\mf g^\partial$-complex of variational calculus $\Omega^\bullet(\mc V)$
is isomorphic to the $\Pi\Gamma_1$-complex $\Gamma^\bullet(A,\mc V)$.
Furthermore, due to Theorems \ref{th:red} and \ref{th:ago21},
the $\Pi\Gamma_1$-complex $\Gamma(A,\mc V)$
is isomorphic to the $\Pi C_1$-complex
$C^\bullet(A,\mc V)=\bigoplus_{k\in\mb Z_+}C^k$,
which is explicitly described in Sections \ref{sec:1.3_b} and \ref{sec:3.5}.

In this section we use this isomorphism to describe explicitly 
the $\Pi C_1\simeq\mf g^\partial$-complex
of variational calculus $C^\bullet(A,\mc V)\simeq\Omega^\bullet(\mc V)$,
both in terms of ``poly-symbols'',
and in terms of skew-symmetric ``poly-differential operators''.
We shall identify these two complexes via this isomorphism.

We start by describing all vector spaces $\Omega^k$ and the maps 
$d:\,\Omega^k\to \Omega^{k+1},\,k\in\mb Z_+$.
First, we have
\begin{equation}\label{eq:vsept28_1}
\Omega^0\,=\,\mc V/\partial\mc V\,.
\end{equation}
Next, $\Omega^1=\Hom_{\mb F[\partial]}(A,\mc V)$,
hence we have a canonical identification
\begin{equation}\label{eq:vsept28_2}
\Omega^1\,=\,\mc V^{\oplus \ell}\,.
\end{equation}
Comparing \eqref{eq:d0} and \eqref{eq:july19_2_new},
we see that $d:\, \Omega^0\to \Omega^1$ is given by the variational derivative:
\begin{equation}\label{eq:vsept28_3}
d\tint f\,=\, \frac{\delta f}{\delta u}\,.
\end{equation}

For arbitrary $k\geq1$, the space $\Omega^k$ can be identified with the space
of $k$-\emph{symbols} in $u_i,\,i\in I$.
By definition, a $k$-symbol is a collection of expressions of the form
\begin{equation}\label{eq:vsept28_4}
\big\{{u_{i_1}}_{\lambda_1} {u_{i_2}}_{\lambda_2} \cdots {u_{i_{k-1}}}_{\lambda_{k-1}} u_{i_k}\big\}
\,\in\,\mb F[\lambda_1,\dots,\lambda_{k-1}]\otimes\mc V\,,
\end{equation}
where $i_1,\dots,i_k\in I$,
satisfying the following skew-symmetry property:
\begin{equation}\label{eq:vsept28_4_2}
\big\{{u_{i_1}}_{\lambda_1} {u_{i_2}}_{\lambda_2} \cdots {u_{i_{k-1}}}_{\lambda_{k-1}} u_{i_k}\big\}
=
\text{sign}(\sigma)
\big\{{u_{i_{\sigma(1)}}}_{\lambda_{\sigma(1)}} 
\cdots {u_{i_{\sigma(k-1)}}}_{\lambda_{\sigma(k-1)}} u_{i_{\sigma(k)}}\big\}
\,,
\end{equation}
for every permutation $\sigma\in S_k$,
where $\lambda_k$ is replaced, if it occurs in the RHS, 
by $\lambda_k^\dagger=-\sum_{j=1}^{k-1}\lambda_j-\partial$,
with $\partial$ acting from the left.
Clearly, by sesquilinearity, for $k\geq1$, the space $\Omega^k=C^k$ of $k$-$\lambda$-brackets
is one-to-one correspondence with the space of $k$-symbols.

For example, the space of 1-symbols is the same as $\mc V^{\oplus \ell}$.
A 2-symbol is a collection of elements $\big\{{u_i}_\lambda u_j\big\}\in\mb F[\lambda]\otimes\mc V$, 
for $i,j\in I$, such that
$$
\big\{{u_i}_\lambda u_j\big\}
=
-\big\{{u_j}_{-\lambda-\partial} u_i\big\}\,.
$$
A 3-symbol is a collection of elements 
$\big\{{u_i}_\lambda {u_j}_\mu u_k\big\}\in\mb F[\lambda,\mu]\otimes\mc V$, 
for $i,j,k\in I$, such that
$$
\big\{{u_i}_\lambda {u_j}_\mu u_k\big\}
=
-\big\{{u_j}_\mu {u_i}_\lambda u_k\big\}
=
-\big\{{u_i}_\lambda {u_k}_{-\lambda-\mu-\partial} u_j\big\}\,,
$$
and similarly for $k>3$.

Comparing \eqref{eq:d>} and \eqref{eq:july19_2_new}
we see that, if $F\in\mc V^{\oplus \ell}$, its differential $dF$ corresponds 
to the following 2-symbol:
\begin{equation}\label{eq:vsept28_5}
\big\{{u_i}_{\lambda} u_j\big\}
=
\sum_{n\in\mb Z_+}
\Big(
\lambda^n\frac{\partial F_j}{\partial u_i^{(n)}} -(-\lambda-\partial)^n\frac{\partial F_i}{\partial u_j^{(n)}}
\Big)
=
(D_F)_{ji}(\lambda)-(D_F^*)_{ji}(\lambda)\,,
\end{equation}
where $D_F$ is the Frechet derivative defined by \eqref{eq:dic15_2}.
More generally, the differential of a $k$-symbol for $k\geq1$
is given by the following formula:
\begin{eqnarray}\label{eq:dic15_5}
&\displaystyle{
d\Big(
\{{u_{i_1}}_{\lambda_1}\cdots {u_{i_{k-1}}}_{\lambda_{k-1}} u_{i_{k}}\}
\Big)_{i_1,\dots,i_k\in I}
\,=\,
\Big(
\sum_{n\in\mb Z_+} \sum_{s=1}^k (-1)^{s+1}  \lambda_s^n \frac{\partial}{\partial u_{i_s}^{(n)}}
\big\{{u_{i_1}}_{\lambda_1}\stackrel{s}{\check{\cdots}}{u_{i_k}}_{\lambda_{k}} u_{i_{k+1}}\big\} 
}\nonumber\\
&\displaystyle{
+(-1)^k \sum_{n\in\mb Z_+} \Big(-\sum_{j=1}^k\lambda_j-\partial\Big)^n \frac{\partial}{\partial u_{i_{k+1}}^{(n)}}
\big\{{u_{i_1}}_{\lambda_1}\cdots {u_{i_{k-1}}}_{\lambda_{k-1}} {u_{i_k}}\big\} 
\Big)_{i_1,\dots,i_{k+1}\in I} \,.
}
\end{eqnarray}

Provided that $\mc V$ is an algebra of differentiable functions extension of $R_\ell$,
an equivalent language is that of skew-symmetric poly-differential operators.
By definition, a $k$-\emph{differential operator}
is an $\mb F$-linear map $S:\,(\mc V^{\ell})^{k}\to\mc V/\partial\mc V$, of the form
\begin{equation}\label{eq:dic15_1}
S(P^1,\cdots,P^k)
\,=\, 
\int \sum_{\substack{n_1,\cdots,n_{k}\in\mb Z_+ \\ i_1,\cdots,i_{k}\in I}}
f_{i_1,\cdots,i_k}^{n_1,\cdots,n_{k}}
(\partial^{n_1}P^1_{i_1})\cdots(\partial^{n_{k}}P^{k}_{i_{k}})\,.
\end{equation}
The operator $S$ is called skew-symmetric if
$$
\int S(P^1,\cdots,P^k)=\text{sign}(\sigma)\int S(P^{\sigma(1)},\cdots,P^{\sigma(k)})\,,
$$
for every $P^1,\cdots,P^k\in\mc V^\ell$ and every permutation $\sigma\in S_k$.
Given a $k$-symbol
\begin{equation}\label{eq:dic15_8}
\big\{{u_{i_1}}_{\lambda_1} \cdots {u_{i_{k-1}}}_{\lambda_{k-1}} u_{i_k}\big\}
=\sum_{n_1,\dots,n_{k-1}\in\mb Z_+}
f_{i_1,\cdots,i_{k-1},i_k}^{n_1,\cdots,n_{k-1}}\lambda_1^{n_1}\cdots\lambda_{k-1}^{n_{k-1}}
\,\,,\,\,\,\, i_1,\dots,i_k\in I\,,
\end{equation}
where $f_{i_1,\cdots,i_k}^{n_1,\cdots,n_{k-1}}\in\mc V$,
we associate to it the following poly-differential operator:
$S:\,(\mc V^{\ell})^{k}\to\mc V/\partial\mc V$, is 
\begin{equation}\label{eq:vsept28_6}
S(P^1,\cdots,P^k)
\,=\, 
\int \sum_{\substack{n_1,\cdots,n_{k-1}\in\mb Z_+ \\ i_1,\cdots,i_{k}\in I}}
f_{i_1,\cdots,i_{k-1},i_k}^{n_1,\cdots,n_{k-1}}
(\partial^{n_1}P^1_{i_1})\cdots(\partial^{n_{k-1}}P^{k-1}_{i_{k-1}})P^k_{i_k}\,.
\end{equation}
Clearly, the skew-symmetry property of the $k$-symbol
is translated to the skew-symmetry of the poly-differential operator.
Conversely, integrating by parts, any $k$-differential operator
can be written in the form \eqref{eq:vsept28_6}.
Thus we have a surjective map $\Xi$ form the space of $k$-symbols
to the space of skew-symmetric $k$-differential operators.
Provided that $\mc V$ is an algebra of differentiable functions extension of $R_\ell$,
by Lemma \ref{lem:fine}(c), the $k$-differential operator $S$ can be written uniquely
in the form \eqref{eq:vsept28_6}.
Hence, the map $\Xi$ is an isomorphism.

Note that the space of 1-differential operators $S:\,\mc V^\ell\to\mc V/\partial\mc V$
can be canonically identified with the space $\Omega^1=\mc V^{\oplus\ell}$.
Explicitly, to the 1-differential operator $S(P)=\int\sum_{i\in I,n\in\mb Z_+}f^n_i\partial^nP_i$,
we associate:
\begin{equation}\label{eq:dic15_15}
\Big(\sum_{n\in\mb Z_+}(-\partial)^nf^n_i\Big)_{i\in I}\,\in\,\mc V^{\oplus\ell}\,.
\end{equation}

We can write down the expression of the differential $d:\,\Omega^k\to \Omega^{k+1}$ in terms of 
poly-differential operators.
First, if $F\in\Omega^1=\mc V^{\oplus\ell}$, the 2-differential operator corresponding 
to $dF\in\Omega^2$ is obtained by looking at equation \eqref{eq:vsept28_5}:
\begin{equation}\label{eq:dic15_4}
dF(P,Q)
\,=\,
\int \sum_{i\in I}\big(
Q_i X_P(F_i)-P_iX_Q(F_i)
\big)
\,=\,
\int \sum_{i,j\in I}\big(
Q_i D_F(\partial)_{ij}P_j - P_i D_F(\partial)_{ij}Q_j 
\big)\,,
\end{equation}
where $X_P$ denotes the evolutionary vector field associated to $P\in\mc V^\ell$,
defined in \eqref{2006_X2},
and $D_F(\partial)$ is the Frechet derivative \eqref{eq:dic15_2}.
Next, if $S:\,(\mc V^\ell)^k\to\mc V/\partial\mc V$ is a skew-symmetric $k$-differential operator,
its differential $dS$, obtained by looking at \eqref{eq:dic15_5},
is the following $k+1$-differential operator:
\begin{equation}\label{eq:dic15_6}
dS(P^1,\cdots,P^{k+1})
\,=\,
\sum_{s=1}^{k+1}(-1)^{s+1}
\big(X_{P^s}S\big)(P^1,\stackrel{s}{\check{\cdots}},P^{k+1})\,.
\end{equation}
In the above formula, if $S$ is as in \eqref{eq:dic15_1},
$X_PS$ denotes the $k$-differential operator obtained from $S$
by replacing the coefficients $f_{i_1,\cdots,i_k}^{n_1,\cdots,n_{k}}$
by $X_P(f_{i_1,\cdots,i_k}^{n_1,\cdots,n_{k}})$.

\begin{remark}
For $k\geq2$, a $k$-differential operator can also be understood as a map
$S:\,(\mc V^\ell)^{k-1}\to \mc V^{\oplus\ell}$ of the following form:
\begin{equation}\label{eq:dic15_7}
S(P^1,\cdots,P^{k-1})_{i_k}
\,=\, 
\sum_{\substack{n_1,\cdots,n_{k-1}\in\mb Z_+ \\ i_1,\cdots,i_{k-1}\in I}}
f_{i_1,\cdots,i_{k-1},i_k}^{n_1,\cdots,n_{k-1}}
(\partial^{n_1}P^1_{i_1})\cdots(\partial^{n_{k-1}}P^{k-1}_{i_{k-1}})\,.
\end{equation}
This corresponds to the $k$-symbol \eqref{eq:dic15_8} in the obvious way.
With this notation, the differential $dS$ is the following map $(\mc V^\ell)^k\to\mc V^{\oplus\ell}$:
\begin{eqnarray}\label{eq:dic15_14}
&\displaystyle{
dS(P^1,\cdots,P^{k})_{i}
= 
\sum_{s=1}^k(-1)^{s+1}(X_{P^s}S)(P^1,\stackrel{s}{\check{\cdots}},P^{k})_{i} 
}\\
&\displaystyle{
+ (-1)^k\sum_{j\in I,n\in\mb Z_+}(-\partial)^n
\Big(
P^k_j\frac{\partial S}{\partial u_{i}^{(n)}}(P^1,\cdots,P^{k-1})_j
\Big)\,.
}\nonumber
\end{eqnarray}
\end{remark}

Recall that the Lie algebra $\mf g^\partial\simeq\Pi C_1$
is identified with the space $\mc V^\ell$ via the map $P\mapsto X_P$,
defined in \eqref{2006_X2}.
Given $P\in\mc V^\ell$, we want to describe explicitly the action 
of the corresponding contraction operator $\iota_P$
and the Lie derivative $L_P=[d,\iota_P]$.
First, for $F\in\mc V^{\oplus\ell}=\Omega^1$, we have (cf. \eqref{eq:sat_8_1}):
\begin{equation}\label{eq:dic15_9}
\iota_P(F)
\,=\,
\tint\sum_{i\in I}P_iF_i\,\in\mc V/\partial\mc V=\Omega^0\,.
\end{equation}
Next, the contraction of a $k$-symbol for $k\geq2$ is given by the following formula
(cf. \eqref{eq:sat_8}):
\begin{equation}\label{eq:dic15_10}
\iota_P
\Big(
\big\{{u_{i_1}}_{\lambda_1} \cdots {u_{i_{k-1}}}_{\lambda_{k-1}} u_{i_k}\big\}
\Big)_{i_1,\dots,i_k\in I}
\,=\,
\Big(
\sum_{i_1\in I}
\big\{{u_{i_1}}_{\partial}{u_{i_2}}_{\lambda_2} \cdots {u_{i_{k-1}}}_{\lambda_{k-1}} u_{i_k}\big\}_\to 
P_{i_1}
\Big)_{i_2,\dots,i_k\in I}\,,
\end{equation}
where, as usual, the arrow in the RHS means that $\partial$ is moved to the right.
For $k=2$, the above formula becomes
\begin{equation}\label{eq:dic15_11}
\iota_P
\Big(
\big\{{u_{i}}_{\lambda} u_{j}\big\}
\Big)_{i,j\in I}
\,=\,
\Big(
\sum_{j\in I}
\big\{{u_{j}}_{\partial}{u_{i}}\big\}_\to P_{j}
\Big)_{i\in I}\,\in\mc V^{\oplus\ell}=\Omega^1\,.
\end{equation}
We can write the above formulas in the language of poly-differential operators.
For a $k$-differential operator $S$, we have
\begin{equation}\label{eq:dic15_12}
(\iota_{P^1}S)(P^2,\cdots,P^k)=S(P^1,P^2,\cdots,P^k)\,.
\end{equation}
For $k=2$ $\iota_{P^1}S$ is a 1-differential operator
which, by \eqref{eq:dic15_15}, is the same as an element of $\mc V^{\oplus\ell}=\Omega^1$.

\begin{remark}
In the interpretation \eqref{eq:dic15_7} of a $k$-differential operator,
the action of the contraction operator is given by
$$
(\iota_{P^1}S)(P^2,\cdots,P^{k-1})_{i_k}=S(P^1,P^2,\cdots,P^{k-1})_{i_k}\,.
$$
\end{remark}

Next, we write the formula for the Lie derivative $L_Q:\,\Omega^k\to\Omega^k$,
associated to $Q\in\mc V^\ell\simeq\mf g^\partial$, using Cartan's formula $L_Q=[\iota_Q,d]$.
Recalling \eqref{eq:vsept28_3} and \eqref{eq:dic15_9}, after integration by parts
we obtain, for $\tint f\in\Omega^0=\mc V/\partial\mc V$:
\begin{equation}\label{eq:dic16_1}
L_Q\big(\tint f\big)
\,=\,
\tint X_Q(f)\,,
\end{equation}
where $X_Q$ is the evolutionary vector field corresponding to $Q$ (cf. \eqref{2006_X2}).
Similarly, recalling \eqref{eq:vsept28_5} and \eqref{eq:dic15_11}, we obtain,
for $F\in\Omega^1=\mc V^{\oplus\ell}$:
\begin{eqnarray*}
d\iota_Q(F)
&=&
D_F(\partial)^*Q+D_Q(\partial)^*F\,,\\
\iota_Qd(F)
&=&
D_F(\partial)Q-D_F(\partial)^*Q\,,
\end{eqnarray*}
where $D_F(\partial)$ denotes the Frechet derivative \eqref{eq:dic15_2},
and $D_F(\partial)^*$ is the adjoint differential operator.
Putting the above formulas together, we get:
\begin{equation}\label{eq:dic16_2}
L_QF
\,=\,
D_F(\partial)Q+D_Q(\partial)^*F\,.
\end{equation}
For $k\geq2$, $L_Q$ acts on a $k$-symbol in $\Omega^k$ by the following formula,
which can be derived from \eqref{eq:dic15_5} and \eqref{eq:dic15_10}:
\begin{eqnarray*}
&\displaystyle{
L_Q
\{{u_{i_1}}_{\lambda_1}\cdots {u_{i_{k-1}}}_{\lambda_{k-1}} u_{i_{k}}\}
\,=\,
X_Q \{{u_{i_1}}_{\lambda_1}\cdots {u_{i_{k-1}}}_{\lambda_{k-1}} u_{i_{k}}\}
}\\
&\displaystyle{
+
\sum_{s=1}^{k-1}(-1)^{s+1}\sum_{j\in I}
\{{u_{j}}_{\lambda_s+\partial}{u_{i_1}}_{\lambda_1}
\stackrel{s}{\check{\cdots}} {u_{i_{k-1}}}_{\lambda_{k-1}} u_{i_{k}}\}_\to {D_Q(\lambda_s)}_{ji_s}
}\\
&\displaystyle{
+
(-1)^{k+1}\sum_{j\in I}
\{{u_{j}}_{\lambda_k^\dagger+\partial}{u_{i_1}}_{\lambda_1}
\cdots {u_{i_{k-2}}}_{\lambda_{k-2}} u_{i_{k-1}}\}_\to {D_Q(\lambda_k^\dagger)}_{ji_k}\,.
}
\end{eqnarray*}
In the RHS the evolutionary vector field $X_Q$ is applied to the coefficients of the $k$-symbol,
in the last two terms the arrow means, as usual, that we move $\partial$ to the right,
$D_Q(\lambda)$ denotes the Frechet derivative \eqref{eq:dic15_2}
considered as a polynomial in $\lambda$,
and, in the last term, $\lambda_k^\dagger=-\lambda_1-\cdots-\lambda_{k-1}-\partial$,
where $\partial$ is moved to the left.
This formula takes a much nicer form in the language of $k$-differential operators.
Namely we have:
\begin{equation}\label{eq:dic16_last}
(L_QS)(P^1,\cdots,P^k)
\,=\,
(X_QS)(P^1,\cdots,P^k)
+\sum_{s=1}^k S(P^1,\cdots,X_QP^s,\cdots,P^k)\,.
\end{equation}
Here $X_QS$ has the same meaning as in equation \eqref{eq:dic15_6}.
This formula can be obtained from the previous one by integration by parts.

\vspace{3ex}
\subsection{An application to the classification of symplectic differential operators.}~~
\label{sec:final}
Recall that $\mc C\subset\mc V$ denotes the subspace \eqref{eq:4.2} of constant functions.
In \cite{BDK} we prove the following:
\begin{theorem}\label{th:aliaa}
If $\mc V$ is normal, then $H^k(\Omega^\bullet,d)=\delta_{k,0}\mc C/(\mc C\cap\partial\mc V)$.
\end{theorem}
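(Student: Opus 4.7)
The plan is to construct an explicit contracting chain homotopy on $\Omega^\bullet(\mc V)$, exploiting normality to invert $\partial/\partial u_i^{(n)}$ via the integration operators $\int du_i^{(n)}$ from Definition 4.2. I would work with the concrete poly-symbol description of $\Omega^\bullet$ from Section 4.5, where $\Omega^0 = \mc V/\partial\mc V$, $\Omega^1 = \mc V^{\oplus\ell}$, $d$ on $\Omega^0$ is the variational derivative $\delta/\delta u$, and $d$ on higher $\Omega^k$ is given by the explicit formula in Section 4.5.

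For $H^0$, the claim amounts to $\ker(\delta/\delta u) = \partial\mc V + \mc C$, since by the second isomorphism theorem $(\partial\mc V + \mc C)/\partial\mc V \cong \mc C/(\mc C \cap \partial\mc V)$. The inclusion $\supseteq$ follows from $\delta(\partial f)/\delta u = 0$ together with the definition of $\mc C$. For the converse, given $f \in \mc V$ with $\delta f/\delta u = 0$, I would argue by downward induction on the maximal lexicographic $(n,i)$ for which $\partial f/\partial u_i^{(n)} \neq 0$. For $n \geq 1$, set $g = \int du_i^{(n-1)}(\partial f/\partial u_i^{(n)})$, which exists in $\mc V_{n-1,i}$ by normality; Lemma 4.2(b) and a leading-term analysis of $\delta f/\delta u_i$ under the order filtration then imply $f - \partial g$ has strictly smaller order and still lies in $\ker(\delta/\delta u)$. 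Iteration reduces $f$ modulo $\partial\mc V$ to an element of $\mc C$.

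For $H^k$ with $k \geq 1$, I would construct a partial homotopy $h: \Omega^k \to \Omega^{k-1}$ with the property that, for any closed $\omega \in \Omega^k$, the form $\omega - d(h\omega)$ has strictly smaller maximal jet order than $\omega$. The building block: given a closed $k$-symbol of maximal order $(n,i)$, use the explicit formula for $d$ together with the skew-symmetry of poly-symbols to isolate the top-order component, and define $h(\omega)$ by applying $\int du_i^{(n)}$ to that component. Closedness of $\omega$ forces the leading obstruction to vanish modulo $\partial\tilde\Omega^\bullet$, so $\omega - d(h\omega)$ drops in order; iteration terminates at a form of trivial jet order, which is directly verified to be exact.

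\textbf{Main obstacle.} The principal subtlety is the interplay between closedness in the reduced complex $\Omega^\bullet$ and the ambiguity of $\int du_i^{(n)}$, which is canonical only modulo $\mc V_{n,i-1}$. This ambiguity must match precisely the equivalence $\omega \sim \omega + \partial\eta$ defining $\Omega^\bullet$, so that $h$, though not canonical on $\tilde\Omega^\bullet$, descends to a well-defined map on the reduced complex. Equally delicate is verifying that the reduced-level condition $d\omega \in \partial\tilde\Omega^\bullet$ (rather than $d\omega = 0$ in $\tilde\Omega^\bullet$) still constrains the leading part of $\omega$ enough to be cancelled by $d(h\omega)$, and that iteration of the reduction preserves closedness in $\Omega^\bullet$ without reintroducing top-order terms. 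These combinatorial checks, relying on the commutation relation between $\partial$ and $\partial/\partial u_i^{(n)}$ and on a careful analysis of the $d$-formula in the poly-symbol picture, form the core technical content.
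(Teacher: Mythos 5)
The paper itself does not prove Theorem \ref{th:aliaa} --- it is quoted from \cite{BDK} --- so I am measuring your proposal against the argument given there. Your $H^0$ step is essentially sound but has the logic in the wrong order: normality alone does not make $g=\int du_i^{(n-1)}\bigl(\partial f/\partial u_i^{(n)}\bigr)$ meaningful, since a priori $\partial f/\partial u_i^{(n)}$ lies only in $\mc V_{n,i}$, not in $\mc V_{n-1,i}$. One must \emph{first} use $\frac{\delta f}{\delta u_i}=0$ together with Lemma \ref{lem:fine}(b) to see that $(-\partial)^n\,\partial f/\partial u_i^{(n)}$ would contribute the unique top-order term of $\frac{\delta f}{\delta u_i}$ unless $\partial f/\partial u_i^{(n)}\in\mc V_{n-1,i}$; only then does Definition \ref{def:normal} produce $g\in\mc V_{n-1,i}$, and Lemma \ref{lem:fine}(b) guarantees $\partial g\in\mc V_{n,i}$, so that $f-\partial g\in\mc V_{n,i-1}$ and the lexicographic descent closes. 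The genuine gap is in degrees $k\ge 1$, which is where the theorem lives. There you never construct the homotopy $h$: the assertions that ``closedness forces the leading obstruction to vanish modulo $\partial\tilde\Omega^\bullet$'' and that the ambiguity of $\int du_i^{(n)}$ (which is modulo $\mc V_{n,i-1}$, a subspace bearing no relation to $\partial\tilde\Omega^{k-1}$) ``must match'' the reduction are exactly the statements to be proved, and the second is not even the right desideratum --- one needs $dh+hd=\mathrm{id}$ up to lower-order and exact terms. Verifying any order-reduction claim against the twisted skew-symmetry \eqref{eq:vsept28_4_2}, where $\lambda_k$ is replaced by $-\sum_j\lambda_j-\partial$, is precisely where $\partial$ re-enters and spoils naive order counting (as it already threatens to do in your $H^0$ step). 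Carried out honestly, your ``combinatorial checks'' amount to constructing the homotopy operators for the Euler--Lagrange complex, which is the entire content of the theorem.

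The proof in \cite{BDK} sidesteps this by working one level up. One first shows $H^k(\tilde\Omega^\bullet,\delta)=\delta_{k,0}\,\mc C$ for normal $\mc V$: in the \emph{basic} complex there is no quotient by $\partial$ to fight, and integrating the coefficient of the top generator $\delta u_i^{(n)}$ via Definition \ref{def:normal} genuinely lowers the order, so your homotopy strategy works there verbatim. One then transfers the computation to $\Omega^\bullet=\tilde\Omega^\bullet/\partial\tilde\Omega^\bullet$ through the long exact cohomology sequence of $0\to\partial\tilde\Omega^\bullet\to\tilde\Omega^\bullet\to\Omega^\bullet\to 0$. The only extra inputs are that $\partial$ is injective on $\tilde\Omega^k$ for $k\ge1$ (a Lemma \ref{lem:fine}(b)--type order argument) and that $\Ker(\partial|_{\mc V})\subset\mc C$, which is \eqref{eq:dic20_2}; these give $H^k(\partial\tilde\Omega^\bullet)=0$ for $k\ge1$ and $H^0(\partial\tilde\Omega^\bullet)=\mc C\cap\partial\mc V$, whence $H^k(\Omega^\bullet)=0$ for $k\ge1$ and $H^0(\Omega^\bullet)=\mc C/(\mc C\cap\partial\mc V)$ by pure homological algebra. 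I recommend restructuring your argument along these lines rather than attempting a direct homotopy on the reduced complex.
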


Recall  that a \emph{symplectic differential operator} (cf. \cite{D} and \cite{BDK})
is a skew-adjoint differential operator 
$S(\partial)=\big(S_{i,j}(\partial)\big)_{i,j\in I}:\,\mc V^{\ell}\to\mc V^{\oplus\ell}$,
which is closed, namely the following condition holds (cf. \eqref{eq:dic15_14}):
\begin{equation}\label{eq:dic16_symp}
{u_i}_\lambda S_{kj}(\mu)
-{u_j}_\mu S_{ki}(\lambda)
-{u_k}_{{}_{-\lambda-\mu-\partial}} S_{ji}(\lambda)\,=\,0\,,
\end{equation}
where the $\lambda$-action of $u_i$ on $\mc V$ is defined by \eqref{eq:july19_2_new}.
We have the following corollary of Theorem \ref{th:aliaa}.
\begin{corollary}\label{cor:dic16}
If $\mc V$ is a normal algebra of differentiable functions,
then any symplectic differential operator is of the form:
$S_F(\partial)=D_F(\partial)-D_F(\partial)^*$, for some $F\in\mc V^{\oplus\ell}$.
Moreover, $S_F=S_G$ if and only if $F-G=\frac{\delta f}{\delta u}$ for some $f\in\mc V$.
\end{corollary}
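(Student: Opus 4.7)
The plan is to recognize the corollary as a direct consequence of Theorem~\ref{th:aliaa} applied in degrees $1$ and $2$ of the variational complex, after identifying symplectic differential operators with closed $2$-cochains.

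First I would fix the dictionary. By \eqref{eq:vsept28_2} we have $\Omega^1=\mc V^{\oplus\ell}$, and the space of $2$-symbols in $\Omega^2$ is in bijection with the space of $\ell\times\ell$ skew-adjoint differential operators via $\bigl\{{u_j}_\partial u_i\bigr\}_{\to}=S_{ij}(\partial)$ (the skew-symmetry property \eqref{eq:vsept28_4_2} for $k=2$ translates to $S^*=-S$). Under this identification, formula \eqref{eq:vsept28_5} for the differential of a $1$-cochain $F\in\mc V^{\oplus\ell}$ reads precisely $dF=D_F(\partial)-D_F(\partial)^*=S_F(\partial)$. Moreover, writing the closedness condition $dS=0$ for the $2$-symbol associated to $S$ by specializing formula \eqref{eq:dic15_5} (with $k=2$) and using \eqref{eq:july19_2_new} gives exactly \eqref{eq:dic16_symp}. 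Hence a skew-adjoint differential operator is symplectic if and only if, viewed as a $2$-cochain in $\Omega^2=C^2(A,\mc V)$, it is closed.

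Next I would invoke Theorem~\ref{th:aliaa}. Since $\mc V$ is normal, $H^2(\Omega^\bullet,d)=0$, so any closed $2$-cochain is exact. Combined with the identification above, any symplectic operator $S$ may be written as $S=dF=S_F$ for some $F\in\mc V^{\oplus\ell}$, which is the first assertion.

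For the uniqueness statement, I would observe that $S_F=S_G$ is equivalent to $d(F-G)=0$, i.e.\ $F-G$ is a closed element of $\Omega^1$. Again by Theorem~\ref{th:aliaa}, $H^1(\Omega^\bullet,d)=0$, so $F-G$ is exact: there exists $\tint f\in\Omega^0=\mc V/\partial\mc V$ with $F-G=d\tint f$. By \eqref{eq:vsept28_3}, $d\tint f=\frac{\delta f}{\delta u}$, yielding $F-G=\frac{\delta f}{\delta u}$. The converse direction follows because $d^2=0$, so $dF=d(G+\frac{\delta f}{\delta u})=dG$, giving $S_F=S_G$. There is no real obstacle here once the dictionary is in place; the only point requiring a small verification is that the closedness condition \eqref{eq:dic16_symp} for $S$ coincides with $dS=0$ in the complex $\Omega^\bullet$, which is an immediate specialization of the general formula \eqref{eq:dic15_5} at $k=2$.
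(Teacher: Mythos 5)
Your proposal is correct and follows exactly the route the paper intends: the corollary is stated as an immediate consequence of Theorem~\ref{th:aliaa}, with the existence part coming from $H^2(\Omega^\bullet,d)=0$ and the uniqueness part from $H^1(\Omega^\bullet,d)=0$, after identifying symplectic operators with closed $2$-cochains via \eqref{eq:vsept28_5} and \eqref{eq:dic16_symp}. Nothing is missing.
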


A skew-symmetric $k$-differential operator $S:\,(\mc V^\ell)^k\to\mc V/\partial\mc V$
is called \emph{symplectic} if it is closed, i.e. 
$$
\sum_{s=1}^{k+1}(-1)^{s+1}
\big(X_{P^s}S\big)(P^1,\stackrel{s}{\check{\cdots}},P^{k+1})
\,=\, 0\,.
$$
The following corollary of Theorem \ref{th:aliaa} is a generalization of Corollary \ref{cor:dic16}
and uses Proposition \ref{prop:dic20}
\begin{corollary}\label{cor:dic18}
If $\mc V$ is a normal algebra of differentiable functions,
then any symplectic $k$-differential operator, for $k\geq1$, is of the form:
$$
S(P^1,\cdots,P^{k})
\,=\,
\sum_{s=1}^{k}(-1)^{s+1}
\big(X_{P^s}T\big)(P^1,\stackrel{s}{\check{\cdots}},P^{k})\,,
$$
for some skew-symmetric $k-1$-differential operator $T$.
Moreover, $T$ is defined up to adding a symplectic $k-1$-differential operator.
\end{corollary}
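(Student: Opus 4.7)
The proof plan is to recognize the corollary as a direct cohomological consequence of Theorem \ref{th:aliaa}, once one identifies symplectic $k$-differential operators with closed $k$-cochains in the variational complex. I would begin by recalling that, when $\mc V$ is normal, Proposition \ref{prop:dic20} guarantees $\mc V$ is an extension of $R_\ell$, so by Lemma \ref{lem:fine}(c) the map $\Xi$ from the space of $k$-symbols to skew-symmetric $k$-differential operators, given by \eqref{eq:vsept28_6}, is a bijection. Combined with Theorem \ref{th:july24} and the discussion in Section \ref{sec:v5.4}, this identifies $\Omega^k(\mc V) \simeq C^k(A,\mc V)$ with the space of skew-symmetric $k$-differential operators, and under this identification the coboundary $d:\Omega^{k-1}\to\Omega^k$ is given by formula \eqref{eq:dic15_6}:
$$
(dT)(P^1,\cdots,P^k)=\sum_{s=1}^k(-1)^{s+1}(X_{P^s}T)(P^1,\stackrel{s}{\check{\cdots}},P^k).
$$

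Next, I would observe that this is literally the expression asserted in the statement: by definition a skew-symmetric $k$-differential operator $S$ is symplectic precisely when it is a closed $k$-cochain, $dS=0$ (this is the closedness condition \eqref{eq:dic16_symp} in the symbol language for $k=2$, and its obvious generalization for higher $k$). Invoking Theorem \ref{th:aliaa}, we have $H^k(\Omega^\bullet,d)=0$ for every $k\geq1$ since $\mc V$ is normal. Hence any closed $S\in\Omega^k$ is exact, i.e., $S=dT$ for some $T\in\Omega^{k-1}$, which under the identification above is a skew-symmetric $(k-1)$-differential operator. This yields the required representation.

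For the uniqueness statement, if $S=dT=dT'$, then $d(T-T')=0$, so $T-T'$ is a closed $(k-1)$-cochain. For $k\geq 2$ this means exactly that $T-T'$ is a symplectic $(k-1)$-differential operator, matching the claim. In the degenerate case $k=1$, $T\in\mc V/\partial\mc V$ and closedness of $T-T'$ amounts, via \eqref{eq:vsept28_3}, to $\frac{\delta(t-t')}{\delta u}=0$, which by Theorem \ref{th:aliaa} in degree $0$ is equivalent to $t-t'\in\mc C+\partial\mc V$; this reproduces the second assertion of Corollary \ref{cor:dic16}, showing the two results are consistent.

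The main point is that, after all the preparatory work in Sections \ref{sec:1}--\ref{sec:4} identifying the variational complex with the Lie conformal algebra cohomology complex of $A$ with coefficients in $\mc V$, and after the vanishing result of Theorem \ref{th:aliaa}, the corollary is essentially a restatement of the equation $\operatorname{Im} d = \ker d$ in positive degrees. Consequently there is no genuine obstacle: the only verification needed is the routine check that the coboundary formula \eqref{eq:dic15_6} reproduces the precise shape of the expression displayed in the statement, and that the equivalence ``closed $=$ symplectic'' indeed holds with the conventions adopted in Section \ref{sec:v5.4}; both are immediate from the definitions.
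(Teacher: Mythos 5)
Your proposal is correct and is essentially the argument the paper intends: the paper offers no separate proof, merely noting that the corollary follows from Theorem \ref{th:aliaa} and uses Proposition \ref{prop:dic20}, which is exactly the chain of reasoning you spell out (normality $\Rightarrow$ $\mc V\supset R_\ell$ $\Rightarrow$ $\Xi$ is an isomorphism, symplectic $=$ closed, formula \eqref{eq:dic15_6} for $d$, and vanishing of $H^k$ for $k\geq 1$ giving closed $=$ exact, with the ambiguity in $T$ being precisely a closed, i.e.\ symplectic, $(k-1)$-differential operator). Your remark on the degenerate case $k=1$ and its consistency with Corollary \ref{cor:dic16} is a correct and welcome addition.
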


\begin{remark}\label{rem:lastissimo}
It follows from the proof of Theorem \ref{th:aliaa}
that,  Corollaries \ref{cor:dic16} and \ref{cor:dic18}
hold in any algebra of differentiable functions $\mc V$,
provided that we are allowed to take $F$ and $T$ respectively
in an extension of $\mc V$, obtained by adding finitely many integrals
of elements of $\mc V$
(an integral of an element $f\in\mc V_{n,i}$ is a preimage $\tint du_i^{(n)} f$
of $\frac{\partial}{\partial u_i^{(n)}}$
independent on $u_j^{(m)}$ with $(m,j)>(n,i)$).
\end{remark}

\begin{remark}\label{rem:nomore}
The map $\Xi$ defined in Section \ref{sec:v5.4} may have a non-zero kernel
if $\mc V$ is not an extension of the algebra $R_\ell$,
but, of course, for any $\mc V$ the image of $\Xi$ is a $\mf g^\partial$-complex.
The 0-th term of this complex is $\mc V/\partial\mc V$
and the $k$-th term, for $k\geq1$, is the space of skew-symmetric $k$-differential operators
$S:\,(\mc V^\ell)^k\to\mc V/\partial\mc V$.
\end{remark}

\begin{remark}\label{rem:last}
Throughout this section we assumed that the number $\ell$ of variables $u_i$ is finite,
but this assumption is not essential,
and our  arguments go through with minor modifications.
This is the reason for distinguishing $\mc V^{\ell}$ from $\mc V^{\oplus\ell}$,
in order to accommodate the case $\ell=\infty$.
\end{remark}



\begin{thebibliography}{DSK2}


\bibitem[BKV]{BKV}
B.~Bakalov, V.G.~Kac, and A.A.~Voronov,
\textit{Cohomology of conformal algebras}, 
Commun. Math. Phys. \textbf{200} (1999), 561--598. 

\bibitem[BDK]{BDK}
A.~Barakat, A.~De Sole, and V.G.~Kac,
\textit{Conformal algebras in the theory of Hamiltonian equations}, 
in preparation. 

\bibitem[D]{D}
I.~Dorfman,
\textit{Dirac structures and integrability of non-linear evolution equations}, 
John Wiley and sons, 1993.

\bibitem[Di]{Di}
L.A.~Dickey, 
\textit{Soliton equations and Hamiltonian systems},
Advanced Ser. Math. Phys. \textbf{26} (second edition), Word Sci., 2003.

\bibitem[DTT]{DTT}
V.~Dolgushev, D.~Tamarkin, and B.~Tsygan,
\textit{Formality of the homotopy calculus algebra of Hochschild (co)chains},
preprint arXiv:0807.5117.

\bibitem[H]{H}
H.~Helmholtz,
\textit{\:Uber der physikalische Bedentung des Princips der Klinstein Wirkung},
J. Reine Angen Math \textbf{100} (1887), 137--166.

\bibitem[K]{K}
V.G.~Kac,
\textit{Vertex algebras for beginners},
Univ. Lecture Ser., vol 10, AMS, 1996. Second edition, 1998.

\bibitem[Vi]{Vi}
A.M.~Vinogradov,
\textit{On the algebra-geometric foundations of Lagrangian field theory},
Sov. Math. Dokl. \textbf{18} (1977), 1200--1204.

\bibitem[V]{V}
V.~Volterra,
\textit{Le\c{c}ons sur les Fonctions de Lignes},
Gauthier-Villar, Paris, 1913.

\end{thebibliography}
\end{document}